\numberwithin{equation}{section}
\tikzstyle{braid}=[thick]
\tikzstyle{twocell}=[->,double, double equal sign distance,thick]
\tikzset{arr/.style={circle,draw,inner sep=0}}
\tikzset{empty/.style={inner sep=0pt, minimum size=0pt}}
\definecolor{darkgreen}{rgb}{0,0.45,0} 
\def\red{\color{red}}
 \def\blue{\color{blue}}
 \def\black{\color{black}}
 \definecolor{lightgrey}{rgb}{0.666666,0.666666,0.666666}
 \def\grey{\color{lightgrey}}
\def\green{\color{green}}
\newcommand{\sfc}{\ensuremath{\mathsf C}\xspace}
\newcommand{\rev}{\ensuremath{{}^{\mathsf{rev}}}\xspace}
\newcommand{\Hilb}{\ensuremath{\mathsf{Hilb}}\xspace}
\newcommand{\Vect}{\ensuremath{\mathsf{Vect}}\xspace}
\newcommand{\mM}{\ensuremath{\mathbb M}}
\newcommand{\nto}{\nrightarrow}
  \newtheorem{proposition}{Proposition}[section]
  \newtheorem{lemma}[proposition]{Lemma}
  \newtheorem{corollary}[proposition]{Corollary}
  \newtheorem{theorem}[proposition]{Theorem}
  \theoremstyle{definition}
  \newtheorem{definition}[proposition]{Definition}
  \newtheorem{example}[proposition]{Example}
\theoremstyle{remark}
  \newtheorem{remark}[proposition]{Remark}
 \newcommand{\Section}{\setcounter{definition}{0}\section}
  \newcounter{c}
  \renewcommand{\[}{\setcounter{c}{1}$$}
  \newcommand{\etyk}[1]{\vspace{-7.4mm}$$\begin{equation}\Label{#1}
  \addtocounter{c}{1}}
  \renewcommand{\]}{\ifnum \value{c}=1 $$\else \end{equation}\fi}
\renewcommand{\theequation}{\thesection.\arabic{equation}}
\newcommand*{\inlineequation}[2][]{%
  \begingroup
    \refstepcounter{equation}%
    \ifx\\#1\\%
    \else
      \label{#1}%
    \fi
    \relpenalty=10000 %
    \binoppenalty=10000 %
    \ensuremath{%
      #2%
    }%
    ~\@eqnnum
  \endgroup
}
\begin{document}

\title[Weak multiplier bimonoids]{Weak multiplier bimonoids}

\author{Gabriella B\"ohm} 
\address{Wigner Research Centre for Physics, H-1525 Budapest 114,
P.O.B.\ 49, Hungary}
\email{bohm.gabriella@wigner.mta.hu}
\author{Jos\'e G\'omez-Torrecillas}
\address{Departamento de \'Algebra and CITIC, Universidad de Granada, 
E-18071 Granada, Spain} 
\email{gomezj@ugr.es}
\author{Stephen Lack}
\address{Department of Mathematics, Macquarie University NSW 2109, Australia}
\email{steve.lack@mq.edu.au}

\date{March 2016}

\begin{abstract}
Based on the novel notion of `weakly counital fusion morphism', regular weak
multiplier bimonoids in braided monoidal categories are introduced.    
They generalize weak multiplier bialgebras over fields
\cite{BohmGomezTorrecillasLopezCentella:wmba} and multiplier bimonoids in 
braided monoidal categories \cite{BohmLack:braided_mba}. Under some
assumptions the so-called base object of a regular weak multiplier bimonoid is
shown to carry a coseparable comonoid structure; hence to possess a monoidal
category of bicomodules. In this case, appropriately defined modules over a
regular weak multiplier bimonoid are proven to constitute a monoidal category
with a strict monoidal forgetful type functor to the category of bicomodules
over the base object.  

Braided monoidal categories considered include various categories of modules
or graded modules, the category of complete bornological spaces, and the
category of complex Hilbert spaces and continuous linear transformations. 
\end{abstract}
  
\maketitle

\section{Introduction} \label{sec:intro}

{\em Hopf algebras} can be used to describe symmetries in various
situations. Classically, they are vector spaces equipped with the additional
structures of a compatible algebra and a coalgebra, and they
have categories of representations (modules or comodules) with a
monoidal structure, strictly preserved by the forgetful functor to
vector spaces. There are Tannaka-style results which
allow a Hopf algebra to be reconstructed from its monoidal category of
representations together with the forgetful functor. 

Various applications lead one to consider more general monoidal
categories, which none\-the\-less share many features with those of the
previous paragraph. One can then ask whether they might 
be the categories of representations of some object more general than 
a Hopf algebra in the classical sense. It turns out that this is 
often the case. This is perhaps reminiscent of non-commutative 
geometry, where non-commutative algebras can sometimes be seen as 
algebras of functions on some (hypothetical) ``non-commutative 
spaces''. Or for another example, the category of sheaves on a 
topological space is a topos; but any (Grothendieck) topos can be 
seen as a generalized space (or as the category of sheaves on a 
generalized space). 

Returning to Hopf algebras,
one direction of generalization is where the representations involve
an underlying object more general, or simply different, than a vector
space. Examples might include modules over commutative rings,  graded 
vector spaces,  Hilbert spaces, or  bornological vector spaces 
\cite{Hogbe-Nlend,Meyer}. A unified treatment of all these situations 
is possible using the notion of Hopf monoid in a braided monoidal category. 

There is another direction of generalization, in which the base
objects remain vector spaces, but one generalizes to structures which
are  not Hopf algebras.  
For example, functions on a finite group, with 
values in a field, constitute a Hopf algebra. But if the group is no 
longer finite, then the functions of finite support form 
neither an algebra (there is no unit for the pointwise 
multiplication) nor a coalgebra (the comultiplication which
is dual to the group multiplication does not land in the tensor square of the
vector space of finitely supported functions). To axiomatize this situation,
the notion of {\em multiplier Hopf algebra} over a field was proposed by Van
Daele in \cite{VanDaele:multiplier_Hopf}. 

Yet another generalization of Hopf algebras involves aspects of
both of these types of generalization, and proved important in the
study of fusion categories \cite{Etingof/Nikshych/Ostrik:2005}. The
basic examples of fusion categories are categories of
representations of (semi-simple) Hopf algebras. In a fusion category, every
irreducible object has an associated `dimension', and in the Hopf
algebra case this dimension is always an integer, but in general this
need not be the case. It turns out that these non-integral cases can
be seen as categories of representations of {\em weak Hopf
algebras} \cite{WHAI,Nill}. A weak Hopf algebra is both an algebra and a
coalgebra, but the compatibility conditions between these structures are
weaker than in an ordinary Hopf algebra, reflecting the fact that the
forgetful  functor from the category of
representations to vector spaces is no longer strict
monoidal. But there is a strict monoidal forgetful functor from
the category of representations of the weak Hopf algebra to the
category of bimodules over a certain separable Frobenius algebra, 
determined by the weak Hopf algebra and called the {\em base object}.
Note, however, that the monoidal category of bimodules over
the base object is not braided, so that this does not reduce to the
earlier generalization of Hopf monoids in a braided monoidal
category. 

A common generalization of multiplier Hopf algebras and weak Hopf 
algebras was proposed by Van Daele and Wang in 
\cite{VDaWa,VDaWa:Banach} under the name {\em weak multiplier Hopf 
algebra}. 

The axioms of a Hopf algebra, and all of the generalizations listed 
above, include the existence of a so-called antipode. Omitting this 
requirement one obtains the more general notion of bialgebra and its
various generalizations. The category of representations of a
bialgebra is still monoidal, with a strict monoidal forgetful functor
to the base category; what is lost in the absence of an antipode is
the ability to lift the closed structure of the base monoidal 
category to the category of representations.

{\em Weak multiplier bialgebras} and {\em multiplier bialgebras} over 
vector spaces were defined and analyzed in 
\cite{BohmGomezTorrecillasLopezCentella:wmba}. Their representations 
are certain non-degenerate modules, and once again the category of 
representations has a monoidal structure, not preserved by the 
forgetful functor to vector spaces. But as in the case of weak Hopf 
algebras, there is still a strict monoidal forgetful functor to an 
intermediate monoidal category. This time, rather than the monoidal 
category of bimodules over a separable Frobenius algebra, it is the 
monoidal category of bicomodules over a coseparable coalgebra 
constructed from the weak multiplier bialgebra, once again called the 
base object.   

We have recently begun the large program of studying all of these 
generalizations of Hopf algebras, along with their categories of 
representations, not just over vector spaces but in more general 
braided monoidal categories.  
In \cite{BohmLack:braided_mba} we defined multiplier bimonoids in
any braided monoidal category. Under further assumptions, we
constructed a monoidal category of representations of a multiplier
bimonoid. We further developed the theory of multiplier bimonoids in
two subsequent papers \cite{BohmLack:cat_of_mbm,BohmLack:simplicial}. 
Then in \cite{BohmLack:multiplier_Hopf_monoid} we turned to 
multiplier Hopf monoids; that is, multiplier bimonoids with a 
suitable antipode map.  
The present work can be seen as the next step of this program where we
generalize the regular weak multiplier bialgebras of
\cite{BohmGomezTorrecillasLopezCentella:wmba} to more general braided 
monoidal categories. 

The outline of the paper is as follows. In Section \ref{sec:axioms} 
we define the central notion of the paper, that of  {\em regular
weak multiplier bimonoid}; this in turn depends on  the notion of {\em
weakly counital fusion morphism}, also defined in Section \ref{sec:axioms}. We
also discuss various duality principles for these structures, arising 
from symmetry properties of the axioms. In  Section \ref{sec:base} we 
study the base objects, and under appropriate assumptions we show 
that they admit coseparable comonoid structure. While in the category
\Vect of vector spaces this was done
\cite{BohmGomezTorrecillasLopezCentella:wmba} under the assumption that 
the comultiplication is full, here we use a different assumption, 
which follows from fullness of the comultiplication in the case of 
\Vect. 
In Section \ref{sec:modules} we define and study the category of 
modules over a regular weak multiplier bimonoid. Under favorable 
conditions we prove that it is monoidal, via a monoidal structure 
lifted from the category of bicomodules over the base object. Again, 
in contrast to \cite{BohmGomezTorrecillasLopezCentella:wmba}, this is 
done assuming not fullness but a substitute which follows from it in 
the category of vector spaces.   
We do not address here the analogous question about comodules over a
regular weak multiplier bimonoid. We also do not discuss the
notion of antipode on a weak multiplier bimonoid (and its bearing on the
structure of the category of modules); that is, we do not study
weak multiplier Hopf monoids. Section \ref{sec:closed} is devoted to the
study of the particular case when the braided monoidal base category 
is also {\em closed}, as is the case in most of our examples of
interest, but not in the case of Hilbert spaces. We discuss 
consequences of closedness on the assumptions and constructions of 
the previous sections. 

{\bf Acknowledgement.} We thank Ralf Meyer and Christian Voigt for highly
enlightening discussions about bornological vector spaces. We gratefully
acknowledge the financial support of the Hungarian Scientific Research Fund
OTKA (grant K108384), of `Ministerio de Econom\' {\i}a y 
Competitividad' and `Fondo Europeo de Desarrollo Regional FEDER' 
(grant MTM2013-41992-P), as well as the Australian
Research Council Discovery Grant (DP130101969) and an ARC Future Fellowship
(FT110100385). GB expresses thanks for the kind invitations and the warm
hospitality that she experienced visiting the University of Granada in Nov
2014, Feb 2015 and Oct 2015. SL is grateful for the warm hospitality of his
hosts during visits to the Wigner Research Centre in Sept-Oct 2014 and
Aug-Sept 2015. 
  

\section{The axioms}\label{sec:axioms}

The subject of this section is the introduction of the central notion of the
paper: {\em regular weak multiplier bimonoid}.

Throughout, we work in a braided monoidal category  $\mathsf C$. We 
do not assume that its monoidal structure is strict but --- relying on
coherence --- we omit explicit mention of the associativity and unit 
isomorphisms. The composite of morphisms $f\colon A\to B$ and 
$g\colon B\to C$ will be denoted by $g.f\colon A\to C$. Any 
identity morphism will be denoted by $1$. The monoidal 
product will be denoted by juxtaposition, the monoidal unit by $I$, 
and the braiding by $c$. For $n$ copies of the same object $A$, we 
also use the power notation $AA\dots A=A^n$. The same category 
$\mathsf C$ with the reversed monoidal product but the same braiding 
will be denoted by $\mathsf C^{\mathsf{rev}}$. The same monoidal 
category with the inverse braiding will be denoted by 
$\overline\sfc$. Performing both these ``dualities'' (in either 
order) gives a braided monoidal category $\overline\sfc\rev$.
 
By a {\em semigroup} in a braided monoidal category ${\mathsf
C}$ we mean a pair $(A,m)$ consisting of an object $A$ of ${\mathsf C}$ and
a morphism $m\colon A^2\to A$ --- called the {\em multiplication} --- which
obeys the associativity condition $m.m1=m.1m$. The existence of a unit is not
required.

Recall from \cite{BohmLack:cat_of_mbm} the notion of {\em \mM-morphism} 
$X\nto A$ for a semigroup $A$ and an arbitrary object $X$. This means a pair 
$\xymatrix@C=20pt{XA \ar[r]|(.5){\,f_1\,}& A & \ar[l]|(.5){\,f_2\,} AX}$ of
morphisms in $\mathsf C$ making
commutative \begin{equation} \label{eq:components_compatibility} 
\xymatrix{
AXA \ar[r]^-{1f_{1}} \ar[d]_-{f_{2}1} &
A^{2} \ar[d]^-m \\
A^2 \ar[r]_-m &
A.}
\end{equation}
As will be explained in Section \ref{sec:closed}, \mM-morphisms $X \nto A$
are related to morphisms from $X$ to the multiplier monoid of $A$ whenever the
latter is available. 

We say that a morphism $v\colon ZV \to W$ in $\mathsf C$ is {\em
non-degenerate on the left with respect to some class $\mathcal Y$ of objects
in $\mathsf C$} if the map 
$$
{\mathsf C}(X,VY) \to {\mathsf C}(ZX,WY),\qquad
g \mapsto 
\xymatrix{ 
ZX \ar[r]^-{1g} &
ZVY \ar[r]^-{v1} &
WY}
$$
is injective for any object $X$, and any object $Y$ in $\mathcal
Y$. Symmetrically, $v$ is said to be {\em non-degenerate on the right with
respect to the class $\mathcal Y$} if $v.c$ is non-degenerate on the left with
respect to $\mathcal Y$. The multiplication $\xymatrix@C=18pt{A^2
\ar[r]|(.5){\,m\,}& A}$ of a semigroup $A$ is termed {\em non-degenerate with
respect to $\mathcal Y$} if it is non-degenerate with respect to $\mathcal Y$
both on the left and the right. If a morphism is (left or right)
non-degenerate with respect to the one-element class $\{I\}$ then we simply
call it {\em non-degenerate}.     

If $\xymatrix@C=20pt{XA \ar[r]|(.5){\,f_1\,}& A & \ar[l]|(.5){\,f_2\,} AX}$ is
an $\mM$-morphism, and the multiplication of $A$ is non-degenerate with
respect to some class $\mathcal Y$, then $f_1$ is non-degenerate on the right
with respect to $\mathcal Y$ if and only if $f_2$ is non-degenerate on the
left with respect to $\mathcal Y$.  
Moreover, the following diagrams commute (see
\cite{BohmLack:cat_of_mbm}).     
\begin{equation}\label{eq:components_are_A-linear}
\xymatrix{
XA^2 \ar[r]^-{1m} \ar[d]_-{f_1 1} & 
XA \ar[d]^-{f_1}
&&
A^2X\ar[r]^-{m1} \ar[d]_-{1f_2} &
AX \ar[d]^-{f_2} \\
A^2 \ar[r]_-m &
A
&&
A^2 \ar[r]_-m &
A}
\end{equation}


Throughout the paper, string diagrams will be used to denote morphisms in 
braided monoidal categories. In order to give a gentle introduction
to their use, in the following definition we use both commutative diagrams and
string diagrams in parallel to present the axioms --- this is meant to serve
as the {\em Rosetta-stone}.  

\begin{definition} \label{def:weakly_counital_fusion_morphism}
A {\em weakly counital fusion morphism} in a braided monoidal category
$\mathsf C$ is an object $A$ equipped with three morphisms $t\colon A^2\to A^2$
(called the {\em fusion morphism}), $e\colon A^2\to A^2$, and 
$j\colon A\to I$ (called the {\em counit}). We introduce the morphism
\begin{center}
\begin{tabular}{rl}
$\xymatrix@C=15pt{m:=A^2 \ar[r]^-t & A^2 \ar[r]^-{j1} & A}\qquad\qquad\qquad$
&\raisebox{-15pt}{$
\begin{tikzpicture}
\draw[braid] (0,1) to[out=270,in=180] (.5,.5) to [out=0,in=270] (1,1);
\draw [braid] (.5,.5) to [out=270,in=90] (.5,0);
\draw (1.5,.5) node {$:=$};
\path (2.5,.5) node[arr,name=t] {$\ t\ $}
(2.2,.2) node[arr,name=j] {$\ $};
\draw [braid] (2.2,1) to[out=270,in=135] (t);
\draw [braid] (2.8,1) to[out=270,in=45] (t);
\draw [braid] (t) to[out=225,in=45] (j);
\draw [braid] (t) to[out=315,in=90] (2.8,0);
\end{tikzpicture}
$}
\end{tabular}
\end{center}
and impose the following axioms. 

\hypertarget{AxI}{{\bf Axiom I.}}
 The morphism $t$ obeys the fusion equation:
\medskip

\begin{center}
\begin{tabular}{rl}
$\xymatrix@C=15pt{
A^3 \ar[r]^-{1t} \ar[d]_-{t1} &
A^3 \ar[r]^-{c1} &
A^3 \ar[r]^-{1t} &
A^3 \ar[r]^-{c^{-1}1} &
A^3\ar[d]^-{t1} \\
A^3 \ar[rrrr]_-{1t} &&&&
A^3}\qquad\qquad$ &
\raisebox{-48pt}{$
\begin{tikzpicture}[scale=.7]
\path (2,3) node[arr,name=t1u] {$\ t\ $} 
(2,2) node[arr,name=t1d]  {$\ t\ $} 
(1.3,1) node[arr,name=t1l] {$\ t\ $};
\path[braid,name path=s1] (1.3,3.5) to[out=270,in=135] (t1d);
\draw[braid] (1.7,3.5) to[out=270,in=135] (t1u);
\draw[braid] (2.3,3.5) to[out=270,in=45] (t1u);
\draw[braid,name path=s4]  (t1u) to[out=225,in=90] (1.1,2) to[out=270,in=45]
(t1l);
\fill[white, name intersections={of=s4 and s1}] (intersection-1) circle(0.1);
\draw[braid] (1.3,3.5) to[out=270,in=135] (t1d);
\path[braid,name path=s5] (t1d) to[out=225,in=135] (t1l);
\fill[white, name intersections={of=s4 and s5}] (intersection-1) circle(0.1);
\draw[braid] (t1d) to[out=225,in=135] (t1l);
\draw[braid] (t1u) to[out=315,in=45] (t1d);
\draw[braid] (t1d) to[out=315,in=90] (2.5,.5);
\draw[braid] (t1l) to[out=315,in=90] (1.6,.5);
\draw[braid] (t1l) to[out=225,in=90] (1,.5);
\draw (3,2) node {$=$};
\path (4,2.5) node[arr,name=t1ul] {$\ t\ $} 
(4.7,1.5) node[arr,name=t1dr] {$\ t\ $};
\draw[braid] (3.7,3.5) to[out=270,in=135] (t1ul);
\draw[braid] (4.3,3.5) to[out=270,in=45] (t1ul);
\draw[braid] (t1ul) to[out=225,in=90] (3.7,.5);
\draw[braid] (t1ul) to[out=315,in=135] (t1dr);
\draw[braid] (t1dr) to[out=225,in=90] (4.3,.5);
\draw[braid] (t1dr) to[out=315,in=90] (5,.5);
\draw[braid] (5,3.5) to[out=270,in=45] (t1dr);
\end{tikzpicture}
$}
\end{tabular}
\end{center}
\bigskip

\hypertarget{AxII}{{\bf Axiom II.}} 
 The morphism $e$ is idempotent:
\medskip

\begin{center}
\begin{tabular}{rl}
$\xymatrix{
A^2\ar[r]^-e\ar[rd]_-e &
A^2 \ar[d]^-e\\
& A^2}\qquad\qquad$ &
\raisebox{-42pt}{$\begin{tikzpicture}[scale=.9]
\path 
(1,2) node[arr,name=eu]  {$\ e\ $} 
(1,1) node[arr,name=ed] {$\ e\ $};
\draw[braid] (.7,2.5) to[out=270,in=135] (eu);
\draw[braid] (1.3,2.5) to[out=270,in=45] (eu);
\draw[braid] (eu) to[out=225,in=135] (ed);
\draw[braid] (eu) to[out=315,in=45] (ed);
\draw[braid] (ed) to[out=225,in=90] (.7,.5);
\draw[braid] (ed) to[out=315,in=90] (1.3,.5);
\draw (2,1.5) node {$=$};
\draw (3,1.5) node [arr,name=e] {$\ e\ $}; 
\draw[braid] (2.7,2.5) to[out=270,in=135] (e);
\draw[braid] (3.3,2.5) to[out=270,in=45] (e);
\draw[braid] (e) to[out=225,in=90] (2.7,.5);
\draw[braid] (e) to[out=315,in=90] (3.3,.5);
\end{tikzpicture}$}
\end{tabular}
\end{center}
\bigskip

\hypertarget{AxIII}{{\bf Axiom III.}}
The morphism $t$ is invariant under post-composition by $e$:
\bigskip

\begin{center}
\begin{tabular}{rl}
$\xymatrix{
A^2\ar[r]^-t\ar[rd]_-t &
A^2 \ar[d]^-e\\
& A^2}\qquad\qquad$ &
\raisebox{-43pt}{$
\begin{tikzpicture}[scale=.9]
\path 
(1,2) node[arr,name=eu]  {$\ t\ $} 
(1,1) node[arr,name=ed] {$\ e\ $};
\draw[braid] (.7,2.5) to[out=270,in=135] (eu);
\draw[braid] (1.3,2.5) to[out=270,in=45] (eu);
\draw[braid] (eu) to[out=225,in=135] (ed);
\draw[braid] (eu) to[out=315,in=45] (ed);
\draw[braid] (ed) to[out=225,in=90] (.7,.5);
\draw[braid] (ed) to[out=315,in=90] (1.3,.5);
\draw (2,1.5) node {$=$};
\draw (3,1.5) node [arr,name=e] {$\ t\ $}; 
\draw[braid] (2.7,2.5) to[out=270,in=135] (e);
\draw[braid] (3.3,2.5) to[out=270,in=45] (e);
\draw[braid] (e) to[out=225,in=90] (2.7,.5);
\draw[braid] (e) to[out=315,in=90] (3.3,.5);
\end{tikzpicture}$}
\end{tabular}
\end{center}
\bigskip

\hypertarget{AxIV}{{\bf Axiom IV.}}
The following composite morphism is invariant under pre-composition by
$e$: 
\begin{center}
\begin{tabular}{rl}
$\xymatrix@C=15pt{
A^3 \ar[d]_-{1e}\ar[r]^-{1c^{-1}} &
A^3 \ar[r]^-{t1} &
A^3 \ar[r]^-{1c} &
A^3 \ar[r]^-{m1} &
A^2 \ar@{=}[d]\\
A^3 \ar[r]_-{1c^{-1}} &
A^3 \ar[r]_-{t1} &
A^3 \ar[r]_-{1c} &
A^3 \ar[r]_-{m1} &
A^2}\qquad\qquad $
&\raisebox{-45pt}{$\begin{tikzpicture}[scale=.65]
\path 
(2,3) node[arr,name=e]  {$\ e\ $} 
(1,2) node[arr,name=tl] {$\ t\ $};
\path (2.3,2) node[empty,name=lt] {}
(1,1) node[empty,name=ml] {};
\draw[braid] (.7,3.5) to[out=270,in=135] (tl);
\draw[braid] (1.7,3.5) to[out=270,in=135] (e);
\draw[braid] (2.3,3.5) to[out=270,in=45] (e);
\path[braid,name path=s1] (e) to[out=315,in=45] (tl);
\path[braid,name path=s3] (tl) to[out=315,in=90] (2,.5);
\draw[braid,name path=s2] (e) to[out=225,in=90] (lt) to[out=270,in=0] (ml)
to[out=180, in=225] (tl);
\fill[white, name intersections={of=s2 and s1}] (intersection-1) circle(0.1);
\fill[white, name intersections={of=s2 and s3}] (intersection-1) circle(0.1);
\draw[braid] (e) to[out=315,in=45] (tl);
\draw[braid] (tl) to[out=315,in=90] (2,.5);
\draw[braid] (ml) to[out=270,in=90] (1,.5);
\draw (3,2) node {$=$};
\draw (4,2) node[arr,name=tr] {$\ t\ $};
\path (5,2) node[empty,name=rt] {}
(4,1) node[empty,name=mr] {};
\draw[braid] (3.7,3.5) to[out=270,in=135] (tr);
\path[braid,name path=s4] (5,3.5) to[out=270,in=45] (tr);
\path[braid,name path=s6] (tr) to[out=315,in=90] (5,.5);
\draw[braid,name path=s5] (4.3,3.5) to[out=270,in=90] (rt) to[out=270,in=0] (mr)
to[out=180, in=225] (tr);
\fill[white, name intersections={of=s4 and s5}] (intersection-1) circle(0.1);
\fill[white, name intersections={of=s6 and s5}] (intersection-1) circle(0.1);
\draw[braid] (5,3.5) to[out=270,in=45] (tr);
\draw[braid] (tr) to[out=315,in=90] (5,.5);
\draw[braid] (mr) to[out=270,in=90] (4,.5);
\end{tikzpicture}$}
\end{tabular}
\end{center}
\bigskip

\hypertarget{AxV}{{\bf Axiom V.}} 
The following commutativity relation holds between $t$ and $e$:
 
\begin{center}
\begin{tabular}{rl}
\xymatrix{
A^3 \ar[r]^-{1t}\ar[d]_-{e1} &
A^3 \ar[d]^-{e1} \\
A^3 \ar[r]_-{1t} &
A^3}\qquad\qquad&
\raisebox{-45pt}{$\begin{tikzpicture}[scale=.9]
\path (1,1) node[arr,name=el]  {$\ e\ $} 
(1.7,2) node[arr,name=tl] {$\ t\ $};
\draw[braid] (.7,2.5) to[out=270,in=135] (el);
\draw[braid] (1.4,2.5) to[out=270,in=135] (tl);
\draw[braid] (2,2.5) to[out=270,in=45] (tl);
\draw[braid] (tl) to[out=225,in=45] (el);
\draw[braid] (el) to[out=225,in=90] (.7,.5);
\draw[braid] (el) to[out=315,in=90] (1.3,.5);
\draw[braid] (tl) to[out=315,in=90] (2,.5);
\draw (2.5,1.5) node {$=$};
\path 
(3.3,2) node[arr,name=er]  {$\ e\ $} 
(4,1) node[arr,name=tr] {$\ t\ $};
\draw[braid] (3,2.5) to[out=270,in=135] (er);
\draw[braid] (3.7,2.5) to[out=270,in=45] (er);
\draw[braid] (4.3,2.5) to[out=270,in=45] (tr);
\draw[braid] (er) to[out=315,in=135] (tr);
\draw[braid] (er) to[out=225,in=90] (3,.5);
\draw[braid] (tr) to[out=225,in=90] (3.7,.5);
\draw[braid] (tr) to[out=315,in=90] (4.3,.5);
\end{tikzpicture}\qquad\qquad$}
\end{tabular}
\end{center}
\bigskip

\hypertarget{AxVI}{{\bf Axiom VI.}}
And also the following commutativity relation holds between
$t$ and $e$: 
 
\begin{center}
\begin{tabular}{rl}
$\xymatrix@C=15pt{
A^3\ar[r]^-{1c^{-1}}\ar[d]_-{t1} &
A^3 \ar[r]^-{e1} &
A^3\ar[r]^-{1c} &
A^3 \ar[d]^-{t1} \\
A^3 \ar[rrr]_-{1e} &&&
A^3}\qquad\qquad$
\raisebox{-45pt}{$\begin{tikzpicture}[scale=.95]
\path (1,2) node[arr,name=tl]  {$\ t\ $} 
(1.7,1) node[arr,name=el] {$\ e\ $};
\draw[braid] (.7,2.5) to[out=270,in=135] (tl);
\draw[braid] (1.3,2.5) to[out=270,in=45] (tl);
\draw[braid] (2,2.5) to[out=270,in=45] (el);
\draw[braid] (tl) to[out=315,in=135] (el);
\draw[braid] (tl) to[out=225,in=90] (.7,.5);
\draw[braid] (el) to[out=225,in=90] (1.4,.5);
\draw[braid] (el) to[out=315,in=90] (2,.5);
\draw (2.6,1.5) node {$=$};
\path (3.3,2) node[arr,name=er] {$\ e\ $}
(3.4,1) node[arr,name=tr] {$\ t\ $};
\draw (4.1,1.7) node[empty,name=tp] {};
\draw[braid] (3.1,2.5) to[out=270,in=135] (er);
\path[braid,name path=s1] (4.1,2.5) to[out=270,in=45] (er);  
\path[braid,name path=s3] (er) to[out=315,in=90] (4.1,.5);  
\draw[braid,name path=s2] (3.7,2.5) to[out=270,in=90] (tp) to[out=270,in=45]
(tr);   
\fill[white, name intersections={of=s2 and s1}] (intersection-1) circle(0.1);
\fill[white, name intersections={of=s2 and s3}] (intersection-1) circle(0.1);
\draw[braid] (4.1,2.5) to[out=270,in=45] (er);  
\draw[braid] (er) to[out=315,in=90] (4.1,.5);  
\draw[braid] (er) to[out=225,in=135] (tr);  
\draw[braid] (tr) to[out=225,in=90] (3.1,.5);
\draw[braid] (tr) to[out=315,in=90] (3.7,.5);
\end{tikzpicture}$}
\end{tabular}
\end{center}
\bigskip

\hypertarget{AxVII}{{\bf Axiom VII.}} 
 The following compatibility relation holds between $t$ and
$e$: 

\begin{center}
\begin{tabular}{rl}
\xymatrix@R=7pt{
A^3 \ar[r]^-{t1} \ar[d]_-{1c} &
A^3 \ar[r]^-{1j1} &
A^2 \ar[dd]^-m\\
A^3\ar[d]_-{1e} \\
A^3 \ar[r]_-{11j} &
A^2\ar[r]_-m &
A} \qquad \qquad &
\raisebox{-53pt}{$\begin{tikzpicture}
\path (1,2) node[arr,name=t] {$\ t\ $}
(1.5,1.5) node[arr,name=ul] {$\ $}
(1.5,1) node[empty,name=ml] {};
\draw[braid] (.7,2.5) to[out=270,in=135] (t);
\draw[braid] (1.3,2.5) to[out=270,in=45] (t);
\draw[braid] (2,2.5) to[out=270,in=0] (ml) to[out=180,in=225] (t);
\draw[braid] (t) to[out=315,in=135] (ul);
\draw[braid] (ml) to[out=270,in=90] (1.5,.5);
\draw (2.6,1.7) node {$=$};
\path (4,1.7) node[arr,name=e] {$\ e\ $}
(4.5,1.2) node[arr,name=ur] {$\ $}
(3.5,1) node[empty,name=mr] {};
\draw[braid] (3,2.5) to[out=270,in=180] (mr) to[out=0,in=225] (e);
\draw[braid] (mr) to[out=270,in=90] (3.5,.5);
\draw[braid] (e) to[out=315,in=135] (ur);
\path[braid,name path=s1] (3.7,2.5) to[out=270,in=45] (e);
\draw[braid,name path=s2] (4.3,2.5) to[out=270,in=135] (e);
\fill[white, name intersections={of=s1 and s2}] (intersection-1) circle(0.1);
\draw[braid] (3.7,2.5) to[out=270,in=45] (e);
\end{tikzpicture}$}
\end{tabular}
\end{center}
\bigskip

\hypertarget{AxVIII}{{\bf Axiom VIII.}}
And also the following compatibility relation holds between
$t$ and 
$e$:

\begin{center}
\begin{tabular}{rl}
\xymatrix@R=7pt{
A^3 \ar[r]^-{t1} \ar[d]_-{1c} &
A^3 \ar[r]^-{1c} &
A^3 \ar[d]^-{m1} \\
A^3 \ar[d]_-{1e} && A^2 \ar[d]^-{j1} \\
A^3 \ar[r]_-{1j1} &
A^2 \ar[r]_-m &
A}\qquad\qquad&
\raisebox{-52pt}{$
\begin{tikzpicture}
\draw (1,2) node[arr,name=tr] {$\ t\ $};
\path (2,1.5) node[empty,name=rt] {}
(1,1) node[empty,name=ml] {}
(1,.5) node[arr,name=lu] {$\ $};
\draw[braid] (.7,2.5) to[out=270,in=135] (tr);
\path[braid,name path=s4] (2,2.5) to[out=270,in=45] (tr);
\path[braid,name path=s6] (tr) to[out=315,in=90] (2,.5);
\draw[braid,name path=s5] (2,2.5) to[out=270,in=90] (rt) to[out=270,in=0] (ml)
to[out=180, in=225] (tr);
\fill[white, name intersections={of=s4 and s5}] (intersection-1) circle(0.1);
\fill[white, name intersections={of=s6 and s5}] (intersection-1) circle(0.1);
\draw[braid] (1.3,2.5) to[out=270,in=45] (tr);
\draw[braid] (tr) to[out=315,in=90] (2,.5);
\draw[braid] (ml) to[out=270,in=90] (lu);
\draw (2.6,1.7) node {$=$};

\path (4,2) node[arr,name=e] {$\ e\ $}
(3.5,1.5) node[arr,name=ru] {$\ $}
(3.5,1) node[empty,name=mr] {};
\path[braid,name path=s1] (3.7,2.5) to[out=270,in=45] (e);
\draw[braid,name path=s2] (4.3,2.5) to[out=270,in=135] (e);
\fill[white, name intersections={of=s1 and s2}] (intersection-1) circle(0.1);
\draw[braid] (3.7,2.5) to[out=270,in=45] (e);
\draw[braid] (e) to[out=225,in=45] (ru);
\draw[braid] (3,2.5) to[out=270,in=180] (mr) to[out=0,in=315] (e);
\draw[braid] (mr) to[out=270,in=90] (3.5,.5);
\end{tikzpicture}$}
\end{tabular}
\end{center}
\end{definition}

It follows by Axioms \hyperlink{AxI}{\textnormal{I}},
\hyperlink{AxVIII}{\textnormal{VIII}} and \hyperlink{AxIII}{\textnormal{III}}
that for a weakly counital fusion morphism $(A,t,e,j)$, 
$m:=j1.t\colon A^2 \to A$ is an associative multiplication:  
$$
\begin{tikzpicture}
\path (1,2) node[arr,name=t1ul] {$\ t\ $} 
(1.7,1.5) node[arr,name=t1dr] {$\ t\ $}
(.7,1.7) node[arr,name=jl] {$\ $} 
(1.4,1.2) node[arr,name=jr] {$\ $};
\draw[braid] (.7,3) to[out=270,in=135] (t1ul);
\draw[braid] (1.3,3) to[out=270,in=45] (t1ul);
\draw[braid] (t1ul) to[out=225,in=45] (jl);
\draw[braid] (t1ul) to[out=315,in=135] (t1dr);
\draw[braid] (t1dr) to[out=225,in=45] (jr);
\draw[braid] (t1dr) to[out=315,in=90] (2,.5);
\draw[braid] (2,3) to[out=270,in=45] (t1dr);
\draw (3,2) node {$=$};

\path (5,2.5) node[arr,name=t1u] {$\ t\ $} 
(5,1.9) node[arr,name=t1d]  {$\ t\ $} 
(4.3,1.2) node[arr,name=t1l] {$\ t\ $}
(4,.9) node[arr,name=jl] {$\ $} 
(4.6,.9) node[arr,name=jr] {$\ $};
\path[braid,name path=s1] (4.3,3.5) to[out=270,in=135] (t1d);
\draw[braid] (4.7,3) to[out=270,in=135] (t1u);
\draw[braid] (5.3,3) to[out=270,in=45] (t1u);
\draw[braid,name path=s4]  (t1u) to[out=225,in=90] (4.2,2) to[out=270,in=45]
(t1l);
\fill[white, name intersections={of=s4 and s1}] (intersection-1) circle(0.1);
\draw[braid] (4.3,3) to[out=270,in=135] (t1d);
\path[braid,name path=s5] (t1d) to[out=225,in=135] (t1l);
\fill[white, name intersections={of=s4 and s5}] (intersection-1) circle(0.1);
\draw[braid] (t1d) to[out=225,in=135] (t1l);
\draw[braid] (t1u) to[out=315,in=45] (t1d);
\draw[braid] (t1d) to[out=315,in=90] (5.3,.5);
\draw[braid] (t1l) to[out=315,in=135] (jr);
\draw[braid] (t1l) to[out=225,in=45] (jl);
\draw (6,2) node {$=$};

\path (7.9,2.5) node[arr,name=t1u] {$\ t\ $} 
(7.9,1.9) node[arr,name=e]  {$\ e\ $} 
(7.6,1.2) node[arr,name=t1d] {$\ t\ $}
(7.3,.9) node[arr,name=jl] {$\ $} 
(7.6,1.6) node[arr,name=jr] {$\ $};
\draw[braid] (7,3) to[out=270,in=135] (t1d);
\draw[braid] (7.6,3) to[out=270,in=135] (t1u);
\draw[braid] (8.2,3) to[out=270,in=45] (t1u);
\draw[braid] (t1u) to[out=225,in=135] (e);
\draw[braid] (t1u) to[out=315,in=45] (e);
\draw[braid] (e) to[out=315,in=45] (t1d);
\draw[braid] (e) to[out=225,in=45] (jr);
\draw[braid] (t1d) to[out=225,in=45] (jl);
\draw[braid] (t1d) to[out=315,in=90] (7.9,.5);
\draw (9,2) node {$=$};

\path (10.9,2.5) node[arr,name=t1u] {$\ t\ $} 
(10.6,1.2) node[arr,name=t1d] {$\ t\ $}
(10.3,.9) node[arr,name=jl] {$\ $} 
(10.6,2.2) node[arr,name=jr] {$\ $};
\draw[braid] (10,3) to[out=270,in=135] (t1d);
\draw[braid] (10.6,3) to[out=270,in=135] (t1u);
\draw[braid] (11.2,3) to[out=270,in=45] (t1u);
\draw[braid] (t1u) to[out=315,in=45] (t1d);
\draw[braid] (t1u) to[out=225,in=45] (jr);
\draw[braid] (t1d) to[out=225,in=45] (jl);
\draw[braid] (t1d) to[out=315,in=90] (10.9,.5);
\end{tikzpicture}\ .
$$
Composing the fusion axiom (Axiom \hyperlink{AxI}{\textnormal{I}})
with $j11$, we obtain the {\em short fusion equation}: 
\begin{equation}\label{eq:short}
\begin{tikzpicture}[scale=.7]
\path (2,3) node[arr,name=t1u] {$\ t\ $} 
(2,2) node[arr,name=t1d]  {$\ t\ $} ;
\path[braid,name path=s1] (1.3,3.5) to[out=270,in=135] (t1d);
\draw[braid] (1.7,3.5) to[out=270,in=135] (t1u);
\draw[braid] (2.3,3.5) to[out=270,in=45] (t1u);
\draw[braid,name path=s4]  (t1u) to[out=225,in=90] (1.1,2) 
to[out=270,in=90] (1.6,1.3) to[out=270,in=0] (1.3,1);
\fill[white, name intersections={of=s4 and s1}] (intersection-1) circle(0.1);
\draw[braid] (1.3,3.5) to[out=270,in=135] (t1d);
\path[braid,name path=s5] (t1d) to[out=225,in=90] (1,1.3) 
to[out=270,in=180] (1.3,1);
\fill[white, name intersections={of=s4 and s5}] (intersection-1) circle(0.1);
\draw[braid] (t1d) to[out=225,in=90] (1,1.3) to[out=270,in=180] (1.3,1);
\draw[braid] (t1u) to[out=315,in=45] (t1d);
\draw[braid] (t1d) to[out=315,in=90] (2.5,.5);
\draw[braid] (1.3,1) to[out=270,in=90] (1.3,.5);
\draw (3,2) node {$=$};
\path 
(4.7,1.5) node[arr,name=t1dr] {$\ t\ $};
\draw[braid] (3.7,3.5) to[out=270,in=180] (4,2.7) to[out=0,in=270] (4.3,3.5);
\draw[braid] (4,2.7) to[out=270,in=135] (t1dr);
\draw[braid] (t1dr) to[out=225,in=90] (4.3,.5);
\draw[braid] (t1dr) to[out=315,in=90] (5.1,.5);
\draw[braid] (5,3.5) to[out=270,in=45] (t1dr);
\end{tikzpicture}
\end{equation}
Some further consequences of the axioms are analyzed in
Appendix~\ref{app:strings}.  

Recall from \cite{BohmLack:braided_mba} that a {\em counital fusion morphism}
is a pair consisting of a morphism $t\colon A^2\to A^2$ obeying Axiom
\hyperlink{AxI}{\textnormal{I}}, and a morphism $j\colon A\to I$ such
that $1j.t=1j$. It is not difficult to see that for any counital fusion
morphism $(t,j)$, there is a weakly counital fusion morphism
$(t,1,j)$. Conversely, if $(t,1,j)$ is a weakly counital fusion morphism and
the induced multiplication $j1.t$ is non-degenerate, then $(t,j)$ is a
counital fusion morphism.   

It is easy to see that braided strong monoidal functors send weakly 
counital fusion morphisms to weakly counital fusion morphisms. Since 
the braiding on \sfc makes the identity functor into a braided strong 
monoidal functor $\sfc\cong \sfc\rev$, we deduce the following 

\begin{lemma}
For morphisms $t\colon A^2 \to A^2$, $e\colon A^2\to A^2$ and $j\colon A\to I$
in a braided monoidal category $\mathsf C$, the following assertions are
equivalent.  
\begin{itemize}
\item[{(i)}] The datum $(t,e,j)$ is a weakly counital fusion morphism in
$\mathsf C$. 
\item[{(ii)}] The datum $(c^{-1}.t.c,c^{-1}.e.c,j)$ is a weakly 
counital fusion morphism in $\mathsf C^{\mathsf{rev}}$. 
\end{itemize}
\end{lemma}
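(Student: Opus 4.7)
The plan is to reduce the lemma to the observation, stated immediately above, that braided strong monoidal functors preserve weakly counital fusion morphisms. The relevant functor is the braided strong monoidal equivalence $F \colon \mathsf C \to \mathsf C^{\mathsf{rev}}$ whose underlying functor is the identity on objects and morphisms, and whose comparison isomorphism $\phi_{A,B} \colon FA \otimes^{\mathsf{rev}} FB \to F(AB)$ is the braiding $c_{B,A} \colon BA \to AB$. That this prescription makes $F$ into a braided strong monoidal equivalence is a direct unfolding of definitions: the two hexagon axioms of $c$ supply the coherence of $\phi$ with the associator and the compatibility with the braiding of $\mathsf C^{\mathsf{rev}}$, while the monoidal unit is preserved on the nose since the unit of $\mathsf C^{\mathsf{rev}}$ is still $I$.

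Granting this, the preservation principle transports $(t,e,j)$ in $\mathsf C$ to its image under the standard conjugation formula. Since $F$ is the identity on morphisms, an endomorphism $f$ of $A^2$ in $\mathsf C$ becomes, viewed as an endomorphism of $FA \otimes^{\mathsf{rev}} FA$ in $\mathsf C^{\mathsf{rev}}$, the composite $\phi_{A,A}^{-1} . f . \phi_{A,A} = c^{-1} . f . c$; and the counit $j \colon A \to I$ is fixed because $F$ is the identity on morphisms with codomain $I$. Applying this to $f = t$ and $f = e$ gives exactly the datum $(c^{-1} . t . c, \, c^{-1} . e . c, \, j)$ of (ii), yielding the implication (i) $\Rightarrow$ (ii).

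For the converse I would invoke the analogous braided strong monoidal equivalence $G \colon \mathsf C^{\mathsf{rev}} \to \mathsf C$ pseudoinverse to $F$, and apply the preservation principle to it; the two conjugations then cancel to return $(t,e,j)$. The only step requiring genuine care is the bookkeeping of the comparison isomorphisms $\phi$ and of the braiding of $\mathsf C^{\mathsf{rev}}$, which is what determines on which side $c$ versus $c^{-1}$ appears. Since the axioms of Definition \ref{def:weakly_counital_fusion_morphism} are purely diagrammatic in a braided monoidal category, the preservation principle absorbs any axiom-by-axiom verification, and no deeper obstacle is anticipated beyond this notational vigilance.
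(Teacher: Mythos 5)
Your proposal is correct and is essentially the paper's own argument: the paper deduces the lemma from the observation that braided strong monoidal functors preserve weakly counital fusion morphisms, applied to the identity functor made into a braided strong monoidal equivalence $\sfc\cong\sfc\rev$ via the braiding, which conjugates $t$ and $e$ to $c^{-1}.t.c$ and $c^{-1}.e.c$ exactly as you describe. Your extra care about the comparison isomorphism and the pseudoinverse for the converse direction is sound bookkeeping that the paper leaves implicit.
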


A regular weak multiplier bimonoid in $\mathsf C$ will be defined as a
quadruple of weakly counital fusion morphisms which obey some compatibility
conditions to be discussed next. In developing the theory of regular 
weak multiplier bimonoids, we shall repeatedly use duality principles 
in which we move between the braided monoidal category \sfc, and the 
related braided monoidal categories $\sfc\rev$, $\overline{\sfc}$, 
and $\overline\sfc\rev$. We shall discuss this more fully below, but 
for the purpose of the following lemma, we observe that under these 
duality principles the 4-tuple $(t_1,t_2,t_3,t_4)$ in \sfc will 
correspond, respectively, to 4-tuples $(t_2,t_1,t_4,t_3)$, 
$(t_3,t_4,t_1,t_2)$, and $(t_4,t_3,t_2,t_1)$ in the other three 
categories; furthermore, in the last two cases, the we work with the 
reverse multiplication $m.c^{-1}$.

\begin{lemma}\label{lem:compatibility}
Let $A$ be a semigroup in $\mathsf C$ whose multiplication $m$ is 
non-degenerate with respect to  some class containing $A$. Between some 
morphisms $t_1,t_2,t_3,t_4\colon A^2\to A^2$, consider the following relations.  
\begin{center}
\begin{tabular}{rrr}
\inlineequation[eq:c12]{
\raisebox{-22pt}{$\begin{tikzpicture}[scale=.95]
\path (1,1.5) node[arr,name=t1] {$t_1$};
\draw[braid] (.3,2) to[out=270,in=180] (.5,1) to[out=0,in=225] (t1);
\draw[braid] (.5,1) to[out=270,in=90] (.5,.5);
\draw[braid] (.7,2) to[out=270,in=135] (t1);
\draw[braid] (1.3,2) to[out=270,in=45] (t1);
\draw[braid] (t1) to[out=315,in=90] (1.3,.5);
\path (1.8,1.3) node {$=$}; 
\path (2.5,1.5) node[arr,name=t2] {$t_2$};
\draw[braid] (t2) to[out=315,in=180] (3,1) to[out=0,in=270] (3.2,2);
\draw[braid] (3,1) to[out=270,in=90] (3,.5);
\draw[braid] (2.2,2) to[out=270,in=135] (t2);
\draw[braid] (2.8,2) to[out=270,in=45] (t2);
\draw[braid] (t2) to[out=225,in=90] (2.3,.5);
\end{tikzpicture}$}} &
\inlineequation[eq:c13]{
\raisebox{-22pt}{$\quad \begin{tikzpicture}[scale=.95]
\path (.5,1.5) node[arr,name=t1] {$t_1$};
\path[braid,name path=s1] (1.2,2) to[out=270,in=180] (.8,.8) ;
\draw[braid,name path=s2] (t1) to[out=315,in=0] (.8,.8);
\fill[white, name intersections={of=s1 and s2}] (intersection-1) circle(0.1);
\draw[braid] (1.2,2) to[out=270,in=180] (.8,.8); 
\draw[braid] (.8,.8) to[out=270,in=90] (.8,.5);
\draw[braid] (.2,2) to[out=270,in=135] (t1);
\draw[braid] (.8,2) to[out=270,in=45] (t1);
\draw[braid] (t1) to[out=225,in=90] (.2,.5);
\path (1.8,1.3) node {$=$}; 
\path (2.5,1.5) node[arr,name=t3] {$t_3$};
\draw[braid,name path=s1] (t3) to[out=315,in=180] (2.9,1) to[out=0,in=270]
(2.8,2); 
\draw[braid] (2.9,1) to[out=270,in=90] (2.9,.5);
\draw[braid] (2.3,2) to[out=270,in=135] (t3);
\path[braid,name path=s2] (3.2,2) to[out=270,in=45] (t3);
\fill[white, name intersections={of=s1 and s2}] (intersection-1) circle(0.1);
\draw[braid] (3.2,2) to[out=270,in=45] (t3);
\draw[braid] (t3) to[out=225,in=90] (2.3,.5);
\end{tikzpicture}$}} &
\inlineequation[eq:c14]{
\raisebox{-22pt}{$\begin{tikzpicture}[scale=.95]
\path (1,1.5) node[arr,name=t1] {$t_1$};
\draw[braid,name path=s1] (.3,2) to[out=270,in=0] (.6,.8);
\path[braid,name path=s2] (t1) to[out=225,in=180] (.6,.8);
\fill[white, name intersections={of=s1 and s2}] (intersection-1) circle(0.1);
\draw[braid] (t1) to[out=225,in=180] (.6,.8);
\draw[braid] (.6,.8) to[out=270,in=90] (.6,.5);
\draw[braid] (.7,2) to[out=270,in=135] (t1);
\draw[braid] (1.3,2) to[out=270,in=45] (t1);
\draw[braid] (t1) to[out=315,in=90] (1.3,.5);
\path (1.8,1.3) node {$=$}; 
\path (2.5,1.5) node[arr,name=t4] {$t_4$};
\draw[braid] (t4) to[out=315,in=180] (3,1) to[out=0,in=270] (3.2,2);
\draw[braid] (3,1) to[out=270,in=90] (3,.5);
\draw[braid] (2.2,2) to[out=270,in=135] (t4);
\draw[braid] (2.8,2) to[out=270,in=45] (t4);
\draw[braid] (t4) to[out=225,in=90] (2.3,.5);
\end{tikzpicture}\ \ $}}\\
\\
\inlineequation[eq:c23]{
\raisebox{-22pt}{$\begin{tikzpicture}[scale=.95]
\path (.5,1.5) node[arr,name=t2] {$t_2$};
\path[braid,name path=s1] (1.2,2) to[out=270,in=180] (.8,.8) ;
\draw[braid,name path=s2] (t2) to[out=315,in=0] (.8,.8);
\fill[white, name intersections={of=s1 and s2}] (intersection-1) circle(0.1);
\draw[braid] (1.2,2) to[out=270,in=180] (.8,.8); 
\draw[braid] (.8,.8) to[out=270,in=90] (.8,.5);
\draw[braid] (.2,2) to[out=270,in=135] (t2);
\draw[braid] (.8,2) to[out=270,in=45] (t2);
\draw[braid] (t2) to[out=225,in=90] (.2,.5);
\path (1.8,1.3) node {$=$}; 
\path (3,1.5) node[arr,name=t3] {$t_3$};
\draw[braid] (2.3,2) to[out=270,in=180] (2.5,1) to[out=0,in=225] (t3);
\draw[braid] (2.5,1) to[out=270,in=90] (2.5,.5);
\draw[braid] (2.7,2) to[out=270,in=135] (t3);
\draw[braid] (3.3,2) to[out=270,in=45] (t3);
\draw[braid] (t3) to[out=315,in=90] (3.2,.5);
\end{tikzpicture}\ \ $}} &
\inlineequation[eq:c24]{
\raisebox{-22pt}{$\begin{tikzpicture}[scale=.95]
\path (1,1.5) node[arr,name=t2] {$t_2$};
\draw[braid,name path=s1] (.3,2) to[out=270,in=0] (.6,.8);
\path[braid,name path=s2] (t2) to[out=225,in=180] (.6,.8);
\fill[white, name intersections={of=s1 and s2}] (intersection-1) circle(0.1);
\draw[braid] (t2) to[out=225,in=180] (.6,.8);
\draw[braid] (.6,.8) to[out=270,in=90] (.6,.5);
\draw[braid] (.7,2) to[out=270,in=135] (t2);
\draw[braid] (1.3,2) to[out=270,in=45] (t2);
\draw[braid] (t2) to[out=315,in=90] (1.3,.5);
\path (1.8,1.3) node {$=$}; 
\path (3,1.5) node[arr,name=t4] {$t_4$};
\path[braid,name path=s1] (2.7,2) to[out=270,in=180] (2.5,1) to[out=0,in=225]
(t4); 
\draw[braid] (2.5,1) to[out=270,in=90] (2.5,.5);
\draw[braid,name path=s2] (2.3,2) to[out=270,in=135] (t4);
\fill[white, name intersections={of=s1 and s2}] (intersection-1) circle(0.1);
\draw[braid] (2.7,2) to[out=270,in=180] (2.5,1) to[out=0,in=225] (t4); 
\draw[braid] (3.2,2) to[out=270,in=45] (t4);
\draw[braid] (t4) to[out=315,in=90] (3.2,.5);
\end{tikzpicture}\ \ $}}&
\inlineequation[eq:c34]{
\raisebox{-22pt}{$\begin{tikzpicture}[scale=.95]
\path (1,1.5) node[arr,name=t3] {$t_3$};
\draw[braid,name path=s1] (.3,2) to[out=270,in=0] (.6,.8);
\path[braid,name path=s2] (t1) to[out=225,in=180] (.6,.8);
\fill[white, name intersections={of=s1 and s2}] (intersection-1) circle(0.1);
\draw[braid] (t3) to[out=225,in=180] (.6,.8);
\draw[braid] (.6,.8) to[out=270,in=90] (.6,.5);
\draw[braid] (.7,2) to[out=270,in=135] (t3);
\draw[braid] (1.3,2) to[out=270,in=45] (t3);
\draw[braid] (t3) to[out=315,in=90] (1.3,.5);
\path (1.8,1.3) node {$=$}; 
\path (2.5,1.5) node[arr,name=t4] {$t_4$};
\path[braid,name path=s1] (3.2,2) to[out=270,in=180] (2.8,.8) ;
\draw[braid,name path=s2] (t4) to[out=315,in=0] (2.8,.8);
\fill[white, name intersections={of=s1 and s2}] (intersection-1) circle(0.1);
\draw[braid] (3.2,2) to[out=270,in=180] (2.8,.8); 
\draw[braid] (2.8,.8) to[out=270,in=90] (2.8,.5);
\draw[braid] (2.2,2) to[out=270,in=135] (t4);
\draw[braid] (2.8,2) to[out=270,in=45] (t4);
\draw[braid] (t4) to[out=225,in=90] (2.2,.5);
\end{tikzpicture}$}}
\end{tabular}
\end{center}
\bigskip

\noindent 
If the identity involving $t_i$ and $t_j$ holds, and the identity involving
$t_i$ and $t_k$ holds, for $i,j,k$ different elements of the set
$\{1,2,3,4\}$, then also the identity involving $t_j$ and $t_k$ holds.
\end{lemma}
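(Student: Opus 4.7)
The plan is to derive the identity involving $t_j$ and $t_k$ from the two hypotheses through a diagrammatic calculation that culminates in an application of the non-degeneracy of $m$. The dualities between $\sfc$, $\sfc^{\mathrm{rev}}$, $\overline\sfc$, and $\overline\sfc^{\mathrm{rev}}$ induce on the quadruple $(t_1,t_2,t_3,t_4)$ the non-identity elements of a Klein four-group (the involutions $(12)(34)$, $(13)(24)$, $(14)(23)$ acting on the index set). This group acts freely on the twelve (common index, pair)-configurations, giving only three orbits; it therefore suffices to verify the implication for one representative of each orbit, the rest following by transporting the proof along the appropriate braided monoidal equivalence.

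Consider the representative case: \eqref{eq:c12} and \eqref{eq:c13} together imply \eqref{eq:c23}. Rather than proving \eqref{eq:c23} directly, I would prove its extension as an equation of morphisms $A^4 \to A^2$:
$$
(m \otimes 1) \cdot (1 \otimes \text{LHS of } \eqref{eq:c23}) \;=\; (m \otimes 1) \cdot (1 \otimes \text{RHS of } \eqref{eq:c23}).
$$
The left non-degeneracy of $m$ with respect to the class (which contains $A$ by hypothesis) then cancels the outer factor $m \otimes 1$ and yields \eqref{eq:c23}.

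To establish the extended identity, the plan is a chain of rewrites. The left-hand side contains a sub-composition involving $t_2$ that, after tensoring \eqref{eq:c12} on the left by $1_A$ and re-bracketing via associativity of $m$, is replaced by an expression involving $t_1$; an application of \eqref{eq:c13} (also suitably tensored and, if necessary, transported through the braiding using its naturality) converts this to an expression involving $t_3$; further invocations of associativity and braiding then bring the result to the right-hand side. The principal obstacle is the careful handling of the braiding $c$, which appears in different positions in each of the six equations and must be routed correctly through the substitutions, matching the braid pattern on both sides of the extended identity. The two remaining representative cases (with common index $j$ or $k$) are handled by analogous arguments, the padding and the sequence of applications of \eqref{eq:c12}--\eqref{eq:c34} chosen to match the particular braiding layout of the target equation, and again cancelling an outer multiplication via non-degeneracy of $m$ at the end.
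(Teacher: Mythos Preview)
Your proposal is correct and matches the paper's approach: the paper also proves only three representative implications (those with common index $i=1$, namely \eqref{eq:c12}+\eqref{eq:c13}$\Rightarrow$\eqref{eq:c23}, \eqref{eq:c12}+\eqref{eq:c14}$\Rightarrow$\eqref{eq:c24}, \eqref{eq:c13}+\eqref{eq:c14}$\Rightarrow$\eqref{eq:c34}) and declares the rest to follow ``symmetrically'', and for each representative it pads by a multiplication, rewrites through the two hypotheses and associativity, and cancels via non-degeneracy of $m$. Your explicit identification of the symmetry as the Klein four-group action and the orbit count is a cleaner way of organising what the paper leaves implicit, but the underlying argument is the same.
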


\begin{proof}
We only show three of the implications, all other implications follow
symmetrically.  

By associativity of the multiplication, \eqref{eq:c12} and \eqref{eq:c13},
$$
\begin{tikzpicture}
\path (.5,2) node[arr,name=t2] {$t_2$};
\path[braid,name path=s1] (1.2,2.5) to[out=270,in=180] (.8,1.3) ;
\draw[braid,name path=s2] (t2) to[out=315,in=0] (.8,1.3);
\fill[white, name intersections={of=s1 and s2}] (intersection-1) circle(0.1);
\draw[braid] (1.2,2.5) to[out=270,in=180] (.8,1.3); 
\draw[braid] (.8,1.3) to[out=270,in=180] (1,1.1) to[out=0,in=270] (1.6,2.5);
\draw[braid] (1,1.1) to[out=270,in=90] (1,.8);
\draw[braid] (.2,2.5) to[out=270,in=135] (t2);
\draw[braid] (.8,2.5) to[out=270,in=45] (t2);
\draw[braid] (t2) to[out=225,in=90] (.2,.8);
\path (2,1.7) node {$=$}; 

\path (2.7,2) node[arr,name=t2] {$t_2$};
\draw[braid,name path=s1] (t2) to[out=315,in=180] (3.1,1.6) to[out=0,in=270]
(3.5,2.5); 
\draw[braid,name path=s2] (3.1,1.6) to[out=270,in=0] (3.2,1);
\path[braid,name path=s3] (3.2,1) to[out=180,in=270] (3.5,1.8)
to[out=90,in=270] (3.2,2.5); 
\fill[white, name intersections={of=s1 and s3}] (intersection-1) circle(0.1);
\fill[white, name intersections={of=s3 and s2}] (intersection-1) circle(0.1);
\draw[braid] (3.2,1) to[out=180,in=270] (3.5,1.8) to[out=90,in=270] (3.2,2.5);
\draw[braid] (3.2,1) to[out=270,in=90] (3.2,.8);
\draw[braid] (2.5,2.5) to[out=270,in=135] (t2);
\draw[braid] (2.9,2.5) to[out=270,in=45] (t2);
\draw[braid] (t2) to[out=225,in=90] (2.5,.8);
\path (3.95,1.7) node {$=$};

\path (5,1.8) node[arr,name=t1] {$t_1$};
\path[braid,name path=s1] (5.1,2.5) to[out=270,in=90] (5.5,1.8)
to[out=270,in=180] (5.3,1.1) ; 
\draw[braid,name path=s2] (t1) to[out=315,in=0] (5.3,1.1);
\draw[braid,name path=s3] (5.5,2.5) to[out=270,in=45] (t1);
\fill[white, name intersections={of=s1 and s2}] (intersection-1) circle(0.1);
\fill[white, name intersections={of=s1 and s3}] (intersection-1) circle(0.1);
\draw[braid] (5.1,2.5) to[out=270,in=90] (5.5,1.8) to[out=270,in=180]
(5.3,1.1) ;  
\draw[braid] (5.3,1.1) to[out=270,in=90] (5.3,.8);
\draw[braid] (4.7,2.5) to[out=270,in=135] (t1);
\draw[braid] (4.4,2.5) to[out=270,in=180] (4.6,1.1) to [out=0,in=225] (t1);
\draw[braid] (4.6,1.1) to[out=270,in=90] (4.6,.8);
\path (5.95,1.7) node {$=$};

\path (7,1.8) node[arr,name=t3] {$t_3$};
\draw[braid] (6.4,2.5) to[out=270,in=180] (6.6,1.3) to[out=0,in=225] (t3);
\draw[braid] (6.6,1.3) to[out=270,in=90] (6.6,.8);
\draw[braid] (6.7,2.5) to[out=270,in=135] (t3);
\draw[braid] (7.3,2.5) to[out=270,in=45] (t3);
\draw[braid] (t3) to[out=315,in=180] (7.4,1.3) to[out=0,in=270] (7.6,2.5);
\draw[braid] (7.4,1.3) to[out=270,in=90] (7.4,.8);
\end{tikzpicture}
$$
and now by non-degeneracy of the multiplication this proves 
\eqref{eq:c23}. Similarly, by \eqref{eq:c12}, associativity of the 
multiplication, and \eqref{eq:c14}, 
$$
\begin{tikzpicture}
\path (1.1,1.7) node[arr,name=t2] {$t_2$};
\draw[braid,name path=s1] (.3,2.2) to[out=270,in=0] (.6,.8);
\path[braid,name path=s2] (t2) to[out=225,in=180] (.6,.8);
\fill[white, name intersections={of=s1 and s2}] (intersection-1) circle(0.1);
\draw[braid] (t2) to[out=225,in=180] (.6,.8);
\draw[braid] (.6,.8) to[out=270,in=90] (.6,.5);
\draw[braid] (.8,2.2) to[out=270,in=135] (t2);
\draw[braid] (1.4,2.2) to[out=270,in=45] (t2);
\draw[braid] (t2) to[out=315,in=180] (1.6,.8) to[out=0,in=270] (1.7,2.2);
\draw[braid] (1.6,.8) to[out=270,in=90] (1.6,.5);
\path (2.5,1.4) node {$=$}; 
\path (4,1.8) node[arr,name=t1] {$t_1$};
\draw[braid] (3.4,2.2) to[out=270,in=180] (3.6,1.5) to[out=0,in=225] (t1);
\path[braid,name path=s3] (3.6,1.5) to[out=270,in=180] (3.4,.8);
\draw[braid,name path=s4] (2.9,2.2) to[out=270,in=0] (3.4,.8);
\fill[white, name intersections={of=s3 and s4}] (intersection-1) circle(0.1);
\draw[braid] (3.6,1.5) to[out=270,in=180] (3.4,.8);
\draw[braid] (3.4,.8) to[out=270,in=90] (3.4,.5);
\draw[braid] (3.7,2.2) to[out=270,in=135] (t1);
\draw[braid] (4.3,2.2) to[out=270,in=45] (t1);
\draw[braid] (t1) to[out=315,in=90] (4.3,.5);
\path (5,1.4) node {$=$}; 

\path (6.5,1.8) node[arr,name=t1] {$t_1$};
\draw[braid,name path=s5] (5.5,2.2) to[out=270,in=0] (6.1,1.2);
\path[braid,name path=s6] (t1) to[out=225,in=180] (6.1,1.2);
\path[braid,name path=s7] (5.9,2.2) to[out=225,in=180] (5.7,.8)
to[out=0,in=270] (6.1,1.2);
\fill[white, name intersections={of=s5 and s7}] (intersection-1) circle(0.1);
\draw[braid] (5.9,2.2) to[out=225,in=180] (5.7,.8) to[out=0,in=270] (6.1,1.2);
\fill[white, name intersections={of=s5 and s6}] (intersection-1) circle(0.1);
\draw[braid] (t1) to[out=225,in=180] (6.1,1.2);
\draw[braid] (5.7,.8) to[out=270,in=90] (5.7,.5);
\draw[braid] (6.2,2.2) to[out=270,in=135] (t1);
\draw[braid] (6.8,2.2) to[out=270,in=45] (t1);
\draw[braid] (t1) to[out=315,in=90] (6.8,.5);
\path (7.3,1.4) node {$=$}; 

\path (8.3,1.5) node[arr,name=t4] {$t_4$};
\path[braid,name path=s1] (8,2.2) to[out=270,in=180] (7.8,1) to[out=0,in=225]
(t4); 
\draw[braid] (7.8,1) to[out=270,in=90] (7.8,.5);
\draw[braid,name path=s2] (7.6,2.2) to[out=270,in=135] (t4);
\fill[white, name intersections={of=s1 and s2}] (intersection-1) circle(0.1);
\draw[braid] (8,2.2) to[out=270,in=180] (7.8,1) to[out=0,in=225] (t4); 
\draw[braid] (8.5,2.2) to[out=270,in=45] (t4);
\draw[braid] (t4) to[out=315,in=180] (8.7,1) to[out=0,in=270] (8.9,2.2);
\draw[braid] (8.7,1) to[out=270,in=90] (8.7,.5);
\end{tikzpicture}
$$ 
and now by non-degeneracy of the multiplication this implies 
\eqref{eq:c24}. Finally, by \eqref{eq:c13}, \eqref{eq:c14}, and associativity
of the multiplication, 
$$
\begin{tikzpicture}
\path (1.1,2) node[arr,name=t3] {$t_3$};
\draw[braid,name path=s1] (.3,2.5) to[out=270,in=0] (.6,1.3);
\path[braid,name path=s2] (t3) to[out=225,in=180] (.6,1.3);
\fill[white, name intersections={of=s1 and s2}] (intersection-1) circle(0.1);
\draw[braid] (t3) to[out=225,in=180] (.6,1.3);
\draw[braid] (.6,1.3) to[out=270,in=90] (.6,.8);
\draw[braid] (.8,2.5) to[out=270,in=135] (t3);
\draw[braid] (1.4,2.5) to[out=270,in=45] (t3);
\draw[braid] (t3) to[out=315,in=180] (1.6,1.3) to[out=0,in=270] (1.7,2.5);
\draw[braid] (1.6,1.3) to[out=270,in=90] (1.6,.8);
\path (2.4,1.7) node {$=$}; 

\path (3.5,1.8) node[arr,name=t1] {$t_1$};
\path[braid,name path=s5] (3.6,2.5) to[out=270,in=90] (4,1.8)
to[out=270,in=180] (3.8,1.1) ; 
\draw[braid,name path=s4] (t1) to[out=315,in=0] (3.8,1.1);
\draw[braid,name path=s3] (4,2.5) to[out=270,in=45] (t1);
\fill[white, name intersections={of=s5 and s4}] (intersection-1) circle(0.1);
\fill[white, name intersections={of=s5 and s3}] (intersection-1) circle(0.1);
\draw[braid] (3.6,2.5) to[out=270,in=90] (4,1.8) to[out=270,in=180]
(3.8,1.1) ;  
\draw[braid] (3.8,1.1) to[out=270,in=90] (3.8,.8);
\draw[braid] (3.2,2.5) to[out=270,in=135] (t1);
\draw[braid,name path=s7] (2.9,2.5) to[out=270,in=0] (3.1,1.1);
\path[braid,name path=s6] (t1) to [out=225,in=180] (3.1,1.1);
\draw[braid] (3.1,1.1) to[out=270,in=90] (3.1,.8);
\fill[white, name intersections={of=s6 and s7}] (intersection-1) circle(0.1);
\draw[braid] (t1) to [out=225,in=180] (3.1,1.1);
\path (4.5,1.7) node {$=$};

\path (5.4,2) node[arr,name=t4] {$t_4$};
\draw[braid,name path=s1] (t4) to[out=315,in=180] (5.8,1.6) to[out=0,in=270]
(6.2,2.5); 
\draw[braid,name path=s2] (5.8,1.6) to[out=270,in=0] (5.9,1);
\path[braid,name path=s3] (5.9,1) to[out=180,in=270] (6.2,1.8)
to[out=90,in=270] (5.9,2.5); 
\fill[white, name intersections={of=s1 and s3}] (intersection-1) circle(0.1);
\fill[white, name intersections={of=s3 and s2}] (intersection-1) circle(0.1);
\draw[braid] (5.9,1) to[out=180,in=270] (6.2,1.8) to[out=90,in=270] (5.9,2.5);
\draw[braid] (5.9,1) to[out=270,in=90] (5.9,.8);
\draw[braid] (5.2,2.5) to[out=270,in=135] (t4);
\draw[braid] (5.6,2.5) to[out=270,in=45] (t4);
\draw[braid] (t4) to[out=225,in=90] (5.2,.8);
\path (6.7,1.7) node {$=$};

\path (7.5,2) node[arr,name=t4] {$t_4$};
\path[braid,name path=s1] (8.2,2.5) to[out=270,in=180] (7.8,1.3) ;
\draw[braid,name path=s2] (t4) to[out=315,in=0] (7.8,1.3);
\fill[white, name intersections={of=s1 and s2}] (intersection-1) circle(0.1);
\draw[braid] (8.2,2.5) to[out=270,in=180] (7.8,1.3); 
\draw[braid] (7.8,1.3) to[out=270,in=180] (8,1.1) to[out=0,in=270] (8.6,2.5);
\draw[braid] (8,1.1) to[out=270,in=90] (8,.8);
\draw[braid] (7.2,2.5) to[out=270,in=135] (t4);
\draw[braid] (7.8,2.5) to[out=270,in=45] (t4);
\draw[braid] (t4) to[out=225,in=90] (7.2,.8);
\end{tikzpicture}
$$ 
and by non-degeneracy of the multiplication this implies \eqref{eq:c34}.  
\end{proof}

\begin{corollary}\label{cor:compatibility}
In the setting of Lemma \ref{lem:compatibility}, the following assertions are
equivalent: 
\begin{itemize}
\item[{(i)}] Conditions \eqref{eq:c12}, \eqref{eq:c13} and \eqref{eq:c14}
  hold; 
\item[{(ii)}] Conditions \eqref{eq:c12}, \eqref{eq:c23} and \eqref{eq:c24}
  hold;
\item[{(iii)}] Conditions \eqref{eq:c13}, \eqref{eq:c23} and \eqref{eq:c34}
  hold;  
\item[{(iv)}] Conditions \eqref{eq:c14}, \eqref{eq:c24} and \eqref{eq:c34}
  hold. 
\end{itemize}
\end{corollary}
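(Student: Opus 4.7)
The plan is to show the four assertions are equivalent by a cyclic chain of implications $(i)\Rightarrow(ii)\Rightarrow(iii)\Rightarrow(iv)\Rightarrow(i)$, each established by two straightforward applications of Lemma \ref{lem:compatibility}. The key observation is that the lemma lets us ``transport'' pairwise compatibility identities through a shared index: given identities for pairs $\{i,j\}$ and $\{i,k\}$, we obtain the identity for $\{j,k\}$. Since each of the four sets (i)--(iv) in the corollary consists of exactly the three pairs of indices containing a fixed element of $\{1,2,3,4\}$ (namely $1$, $2$, $3$, $4$ respectively, viewing each condition \eqref{eq:c12}--\eqref{eq:c34} as labelled by a 2-element subset of $\{1,2,3,4\}$), the implications reduce to elementary combinatorics.

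Concretely, for $(i)\Rightarrow(ii)$ I would keep \eqref{eq:c12} and derive \eqref{eq:c23} from the pair $(\eqref{eq:c12},\eqref{eq:c13})$ using Lemma \ref{lem:compatibility} with $i=1$, and derive \eqref{eq:c24} from the pair $(\eqref{eq:c12},\eqref{eq:c14})$ again with $i=1$. For $(ii)\Rightarrow(iii)$ I would keep \eqref{eq:c23} and deduce \eqref{eq:c13} from $(\eqref{eq:c12},\eqref{eq:c23})$ with $i=2$, and \eqref{eq:c34} from $(\eqref{eq:c23},\eqref{eq:c24})$ with $i=2$. For $(iii)\Rightarrow(iv)$ I would keep \eqref{eq:c34} and deduce \eqref{eq:c14} from $(\eqref{eq:c13},\eqref{eq:c34})$ with $i=3$, and \eqref{eq:c24} from $(\eqref{eq:c23},\eqref{eq:c34})$ with $i=3$. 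Finally, for $(iv)\Rightarrow(i)$ I would keep \eqref{eq:c14} and deduce \eqref{eq:c12} from $(\eqref{eq:c14},\eqref{eq:c24})$ with $i=4$, and \eqref{eq:c13} from $(\eqref{eq:c14},\eqref{eq:c34})$ with $i=4$.

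There is no real obstacle; the only thing to check is that each of the eight applications of the lemma indeed has the three indices $i,j,k$ distinct and corresponds to a valid instantiation of its hypotheses. Since the hypothesis of the corollary (non-degeneracy of the multiplication with respect to a class containing $A$) is exactly the standing hypothesis of Lemma \ref{lem:compatibility}, the assumption is inherited without modification. The proof is therefore a one-line bookkeeping argument once the pairwise-implication lemma is at our disposal.
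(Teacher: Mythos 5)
Your argument is correct and is exactly the (implicit) proof the paper intends: the corollary is stated without proof as an immediate consequence of Lemma \ref{lem:compatibility}, and your cyclic chain of implications, each obtained by two applications of the lemma with the shared index $i$ chosen as you describe, is the standard bookkeeping that justifies it. All eight instantiations have distinct indices $i,j,k$ and the hypotheses are inherited verbatim, so nothing further is needed.
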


\begin{corollary}\label{cor:unique_t}
If there are morphisms $t_1,t_2,t_3,t_4$ satisfying the conditions of Corollary
\ref{cor:compatibility}, then any one of them uniquely determines all of the
others. 
\end{corollary}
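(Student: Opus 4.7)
My plan is to exploit two features of the set-up. First, by Corollary \ref{cor:compatibility} the conditions (i)--(iv) are equivalent, so whenever $(t_1,t_2,t_3,t_4)$ satisfies the hypotheses all six equations \eqref{eq:c12}--\eqref{eq:c34} hold simultaneously. Second, the four conditions are symmetric in the indices: each singles out one $t_i$ and links it directly to the other three via three of the six equations. I therefore reduce to the case $i=1$: showing that $t_1$ determines $t_2$, $t_3$, $t_4$ via \eqref{eq:c12}, \eqref{eq:c13}, \eqref{eq:c14} of condition~(i); the remaining three cases then follow by replacing condition~(i) with the analogous member of (ii)--(iv).

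With this reduction in place, each of \eqref{eq:c12}, \eqref{eq:c13}, \eqref{eq:c14} has the shape $F_1(t_1) = F_j(t_j)$, where $F_1$ and $F_j$ are operators ${\sfc}(A^2, A^2) \to {\sfc}(A^3, A^2)$ built from the multiplication $m$ and certain braidings. Once $t_1$ is fixed, $F_j(t_j)$ is determined, so it remains to show that each $F_j$ is injective. For the simplest case \eqref{eq:c12}, one has $F_2(g) = (1 \otimes m)(g \otimes 1)$; if $F_2(g) = F_2(g')$, then composing with suitable braiding isomorphisms on both sides rewrites the equality in the form $((m.c) \otimes 1)(1 \otimes \widetilde g) = ((m.c) \otimes 1)(1 \otimes \widetilde{g'})$, with $\widetilde g$ obtained from $g$ by conjugation by the braiding; this is now directly matched by the right non-degeneracy of $m$ with respect to a class containing $A$, applied with $X = A^2$ and $Y = A$, yielding $\widetilde g = \widetilde{g'}$ and hence $g = g'$. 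The operators $F_3$ and $F_4$ appearing in \eqref{eq:c13} and \eqref{eq:c14} differ from $F_2$ only by additional braidings in the string diagrams, so the same scheme works once those braidings are conjugated away.

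The main obstacle is precisely this translation between the non-degeneracy hypothesis---stated for morphisms $g \colon X \to V \otimes Y$ in a particular position relative to $m$---and the injectivity of the concrete operators $F_j$ on ${\sfc}(A^2, A^2)$, where $m$ may act in the other slot or after braidings. The crux is to notice that precomposing and postcomposing with braidings is invertible and so preserves injectivity, allowing one to transport the non-degeneracy conclusion through each of the three configurations appearing in \eqref{eq:c12}--\eqref{eq:c14}. This book-keeping is conceptually the same as that already carried out in the proof of Lemma~\ref{lem:compatibility}, where non-degeneracy of $m$ is invoked to conclude identities after manipulating braidings; once completed here, uniqueness of $t_2,t_3,t_4$ given $t_1$ is immediate, and symmetry finishes the proof.
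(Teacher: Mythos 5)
Your proof is correct and is essentially the argument the paper intends: Corollary \ref{cor:unique_t} is stated without proof as an immediate consequence of Corollary \ref{cor:compatibility} together with the non-degeneracy of $m$, and your reduction --- fix one $t_i$, use the three relations of the condition in Corollary \ref{cor:compatibility} that single out $t_i$, and recover each other $t_j$ by inverting the composition with $m$ via non-degeneracy after conjugating away the braidings --- is exactly that implicit reasoning made explicit. The braiding book-keeping you flag is the same routine step the paper performs throughout the proof of Lemma \ref{lem:compatibility}, so nothing further is needed.
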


\begin{lemma}\label{lem:interchange}
Let $A$ be a semigroup in $\mathsf C$ whose multiplication is non-degenerate
with respect to  some class containing $A$ and $A^2$. Let 
$t_1,t_2,t_3,t_4\colon A^2\to A^2$ be morphisms satisfying the 
conditions of Lemma \ref{lem:compatibility}. The following conditions 
are equivalent to each other.  
\begin{center}
\begin{tabular}{rr}
\inlineequation[eq:interchange_12]{
\raisebox{-22pt}{$\begin{tikzpicture}[scale=.95]
\path (1,1) node[arr,name=t2l]  {$t_2$} 
(1.5,1.5) node[arr,name=t1l] {$t_1$};
\draw[braid] (.7,2) to[out=270,in=135] (t2l);
\draw[braid] (1.2,2) to[out=270,in=135] (t1l);
\draw[braid] (1.8,2) to[out=270,in=45] (t1l);
\draw[braid] (t1l) to[out=225,in=45] (t2l);
\draw[braid] (t2l) to[out=225,in=90] (.7,.5);
\draw[braid] (t2l) to[out=315,in=90] (1.3,.5);
\draw[braid] (t1l) to[out=315,in=90] (1.8,.5);
\draw (2.3,1.5) node {$=$};
\path (3.1,1.5) node[arr,name=t2r]  {$t_2$} 
(3.6,1) node[arr,name=t1r] {$t_1$};
\draw[braid] (2.8,2) to[out=270,in=135] (t2r);
\draw[braid] (3.4,2) to[out=270,in=45] (t2r);
\draw[braid] (3.9,2) to[out=270,in=45] (t1r);
\draw[braid] (t2r) to[out=315,in=135] (t1r);
\draw[braid] (t2r) to[out=225,in=90] (2.8,.5);
\draw[braid] (t1r) to[out=225,in=90] (3.3,.5);
\draw[braid] (t1r) to[out=315,in=90] (3.9,.5);
\end{tikzpicture}$}} &
\inlineequation[eq:interchange_14]{
\raisebox{-22pt}{$\quad \begin{tikzpicture}[scale=.95]
\path (1,1) node[arr,name=t2l]  {$t_4$} 
(1.5,1.5) node[arr,name=t1l] {$t_1$};
\draw[braid] (.7,2) to[out=270,in=135] (t2l);
\draw[braid] (1.2,2) to[out=270,in=135] (t1l);
\draw[braid] (1.8,2) to[out=270,in=45] (t1l);
\draw[braid] (t1l) to[out=225,in=45] (t2l);
\draw[braid] (t2l) to[out=225,in=90] (.7,.5);
\draw[braid] (t2l) to[out=315,in=90] (1.3,.5);
\draw[braid] (t1l) to[out=315,in=90] (1.8,.5);
\draw (2.3,1.5) node {$=$};
\path (3.1,1.5) node[arr,name=t2r]  {$t_4$} 
(3.6,1) node[arr,name=t1r] {$t_1$};
\draw[braid] (2.8,2) to[out=270,in=135] (t2r);
\draw[braid] (3.4,2) to[out=270,in=45] (t2r);
\draw[braid] (3.9,2) to[out=270,in=45] (t1r);
\draw[braid] (t2r) to[out=315,in=135] (t1r);
\draw[braid] (t2r) to[out=225,in=90] (2.8,.5);
\draw[braid] (t1r) to[out=225,in=90] (3.3,.5);
\draw[braid] (t1r) to[out=315,in=90] (3.9,.5);
\end{tikzpicture}$}}\\
\\
\inlineequation[eq:interchange_23]{
\raisebox{-22pt}{$\begin{tikzpicture}[scale=.95]
\path (1,1) node[arr,name=t2l]  {$t_2$} 
(1.5,1.5) node[arr,name=t1l] {$t_3$};
\draw[braid] (.7,2) to[out=270,in=135] (t2l);
\draw[braid] (1.2,2) to[out=270,in=135] (t1l);
\draw[braid] (1.8,2) to[out=270,in=45] (t1l);
\draw[braid] (t1l) to[out=225,in=45] (t2l);
\draw[braid] (t2l) to[out=225,in=90] (.7,.5);
\draw[braid] (t2l) to[out=315,in=90] (1.3,.5);
\draw[braid] (t1l) to[out=315,in=90] (1.8,.5);
\draw (2.3,1.5) node {$=$};
\path (3.1,1.5) node[arr,name=t2r]  {$t_2$} 
(3.6,1) node[arr,name=t1r] {$t_3$};
\draw[braid] (2.8,2) to[out=270,in=135] (t2r);
\draw[braid] (3.4,2) to[out=270,in=45] (t2r);
\draw[braid] (3.9,2) to[out=270,in=45] (t1r);
\draw[braid] (t2r) to[out=315,in=135] (t1r);
\draw[braid] (t2r) to[out=225,in=90] (2.8,.5);
\draw[braid] (t1r) to[out=225,in=90] (3.3,.5);
\draw[braid] (t1r) to[out=315,in=90] (3.9,.5);
\end{tikzpicture}$}} &
\inlineequation[eq:interchange_34]{
\raisebox{-22pt}{$\begin{tikzpicture}[scale=.95]
\path (1,1) node[arr,name=t2l]  {$t_4$} 
(1.5,1.5) node[arr,name=t1l] {$t_3$};
\draw[braid] (.7,2) to[out=270,in=135] (t2l);
\draw[braid] (1.2,2) to[out=270,in=135] (t1l);
\draw[braid] (1.8,2) to[out=270,in=45] (t1l);
\draw[braid] (t1l) to[out=225,in=45] (t2l);
\draw[braid] (t2l) to[out=225,in=90] (.7,.5);
\draw[braid] (t2l) to[out=315,in=90] (1.3,.5);
\draw[braid] (t1l) to[out=315,in=90] (1.8,.5);
\draw (2.3,1.5) node {$=$};
\path (3.1,1.5) node[arr,name=t2r]  {$t_4$} 
(3.6,1) node[arr,name=t1r] {$t_3$};
\draw[braid] (2.8,2) to[out=270,in=135] (t2r);
\draw[braid] (3.4,2) to[out=270,in=45] (t2r);
\draw[braid] (3.9,2) to[out=270,in=45] (t1r);
\draw[braid] (t2r) to[out=315,in=135] (t1r);
\draw[braid] (t2r) to[out=225,in=90] (2.8,.5);
\draw[braid] (t1r) to[out=225,in=90] (3.3,.5);
\draw[braid] (t1r) to[out=315,in=90] (3.9,.5);
\end{tikzpicture}$}} 
\end{tabular}
\end{center}
\end{lemma}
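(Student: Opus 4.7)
My plan is to establish the cycle of implications \eqref{eq:interchange_12} $\Rightarrow$ \eqref{eq:interchange_23} $\Rightarrow$ \eqref{eq:interchange_34} $\Rightarrow$ \eqref{eq:interchange_14} $\Rightarrow$ \eqref{eq:interchange_12}, where each step swaps a single subscript in the index pair and invokes the corresponding compatibility relation from Lemma \ref{lem:compatibility}. Concretely, the swap $1 \leftrightarrow 3$ via \eqref{eq:c13} moves us between \eqref{eq:interchange_12} and \eqref{eq:interchange_23}, and between \eqref{eq:interchange_14} and \eqref{eq:interchange_34}; the swap $2 \leftrightarrow 4$ via \eqref{eq:c24} moves us between \eqref{eq:interchange_23} and \eqref{eq:interchange_34}, and between \eqref{eq:interchange_12} and \eqref{eq:interchange_14}. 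Since each compatibility relation is an equation, each implication is reversible, and the cycle yields the fourfold equivalence.

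All four steps follow the same recipe, so I describe only \eqref{eq:interchange_12} $\Rightarrow$ \eqref{eq:interchange_23}. The idea is modelled on the proof of Lemma \ref{lem:compatibility}: I adjoin an additional strand of $A$ to both sides of the target equation \eqref{eq:interchange_23} and post-compose that strand with the multiplication $m$ on the output of $t_3$ that is subject to \eqref{eq:c13}. Applying \eqref{eq:c13} to each side then trades $t_3$ for $t_1$, at the price of one adjoined multiplication and a carrier strand; after regrouping the now-accumulated multiplications by associativity, both sides become the ``decorated'' form of \eqref{eq:interchange_12} obtained by the same adjunction procedure. Hence \eqref{eq:interchange_12} yields the desired equality. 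Cancelling the adjoined multiplication via non-degeneracy of $m$ with respect to a class containing $A^2$ then delivers \eqref{eq:interchange_23} itself.

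The main obstacle I anticipate is the diagrammatic bookkeeping: each step manipulates five-strand string diagrams, and one must verify that the compatibility \eqref{eq:c13} (or \eqref{eq:c24}), once applied on each side of the equation, produces exactly the same ``decoration'' on the two sides --- in particular, the braiding crossings introduced on the right-hand sides of \eqref{eq:c13} and \eqref{eq:c24} must be arranged so that after associativity only the desired internal swap of $t_i$ for $t_j$ remains, with no leftover braiding mismatch. It is worth noting that the strengthened non-degeneracy hypothesis (with respect to a class containing both $A$ and $A^2$, as opposed to just $A$ in Lemma \ref{lem:compatibility}) is essential here: the interchange relations already carry two $A$-valued outputs on each side, so the final cancellation of the adjoined multiplication takes place past an $A^2$-valued block.
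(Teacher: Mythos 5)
Your proposal is correct and is essentially the paper's own argument: each step sandwiches the assumed interchange relation between two applications of the relevant compatibility relation (\eqref{eq:c13} or \eqref{eq:c24}) applied to a diagram decorated with an extra strand and a multiplication on the output not feeding the other fusion morphism, and then cancels the decoration by non-degeneracy with respect to a class containing $A^2$. The only difference is organizational --- the paper proves \eqref{eq:interchange_12}$\Rightarrow$\eqref{eq:interchange_14} (via \eqref{eq:c24}) and \eqref{eq:interchange_14}$\Rightarrow$\eqref{eq:interchange_34} (via \eqref{eq:c13}) explicitly and gets the rest by symmetry, whereas you close a four-step cycle of single-index swaps --- which is the same computation throughout.
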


\begin{proof}
Again, we only show that \eqref{eq:interchange_12} implies
\eqref{eq:interchange_14}, and \eqref{eq:interchange_14} implies 
\eqref{eq:interchange_34}; all other implications follow symmetrically.
 
By \eqref{eq:c24}, \eqref{eq:interchange_12} and \eqref{eq:c24} again,
$$
\begin{tikzpicture}
\path (1,1) node[arr,name=t4l]  {$t_4$} 
(1.5,1.5) node[arr,name=t1l] {$t_1$};
\draw[braid,name path=s1] (.5,2) to[out=270,in=135] (t4l);
\path[braid,name path=s2] (.9,2) to[out=270,in=180] (.6,.6) to[out=0,in=225]
(t4l);
\fill[white, name intersections={of=s1 and s2}] (intersection-1) circle(0.1);
\draw[braid] (.9,2) to[out=270,in=180] (.6,.6) to[out=0,in=225] (t4l);
\draw[braid] (1.2,2) to[out=270,in=135] (t1l);
\draw[braid] (1.8,2) to[out=270,in=45] (t1l);
\draw[braid] (t1l) to[out=225,in=45] (t4l);
\draw[braid] (.6,.6) to[out=270,in=90] (.6,.3);
\draw[braid] (t4l) to[out=315,in=90] (1.3,.3);
\draw[braid] (t1l) to[out=315,in=90] (1.8,.3);
\draw (2.2,1.2) node {$=$};

\path (3.1,1) node[arr,name=t2]  {$t_2$} 
(3.5,1.5) node[arr,name=t1] {$t_1$};
\draw[braid] (2.8,2) to[out=270,in=135] (t2);
\draw[braid,name path=s3] (2.4,2) to[out=270,in=0] (2.6,.5);
\path[braid,name path=s4] (t2) to[out=225,in=180] (2.6,.5);
\fill[white, name intersections={of=s3 and s4}] (intersection-1) circle(0.1);
\draw[braid] (t2) to[out=225,in=180] (2.6,.5);
\draw[braid] (3.2,2) to[out=270,in=135] (t1);
\draw[braid] (3.8,2) to[out=270,in=45] (t1);
\draw[braid] (t1) to[out=225,in=45] (t2);
\draw[braid] (2.6,.5) to[out=270,in=90] (2.6,.3);
\draw[braid] (t2) to[out=315,in=90] (3.3,.3);
\draw[braid] (t1) to[out=315,in=90] (3.8,.3);
\draw (4.3,1.2) node {$=$};

\path (5.3,1.5) node[arr,name=t2]  {$t_2$} 
(5.7,1) node[arr,name=t1] {$t_1$};
\draw[braid] (5,2) to[out=270,in=135] (t2);
\draw[braid,name path=s3] (4.6,2) to[out=270,in=0] (4.8,.5);
\path[braid,name path=s4] (t2) to[out=225,in=180] (4.8,.5);
\fill[white, name intersections={of=s3 and s4}] (intersection-1) circle(0.1);
\draw[braid] (t2) to[out=225,in=180] (4.8,.5);
\draw[braid] (5.6,2) to[out=270,in=45] (t2);
\draw[braid] (6,2) to[out=270,in=45] (t1);
\draw[braid] (t2) to[out=315,in=135] (t1);
\draw[braid] (4.8,.5) to[out=270,in=90] (4.8,.3);
\draw[braid] (t1) to[out=225,in=90] (5.4,.3);
\draw[braid] (t1) to[out=315,in=90] (6,.3);
\draw (6.5,1.2) node {$=$};

\path (7.3,1.5) node[arr,name=t4l]  {$t_4$} 
(7.8,1) node[arr,name=t1l] {$t_1$};
\draw[braid,name path=s1] (6.8,2) to[out=270,in=135] (t4l);
\path[braid,name path=s2] (7,2) to[out=270,in=180] (7.1,.8) to[out=0,in=225]
(t4l);
\fill[white, name intersections={of=s1 and s2}] (intersection-1) circle(0.1);
\draw[braid] (7,2) to[out=270,in=180] (7.1,.8) to[out=0,in=225] (t4l);
\draw[braid] (7.5,2) to[out=270,in=45] (t4l);
\draw[braid] (8.1,2) to[out=270,in=45] (t1l);
\draw[braid] (t4l) to[out=315,in=135] (t1l);
\draw[braid] (7.1,.8) to[out=270,in=90] (7.1,.3);
\draw[braid] (t1l) to[out=225,in=90] (7.5,.3);
\draw[braid] (t1l) to[out=315,in=90] (8.1,.3);
\end{tikzpicture}
$$
By the non-degeneracy condition on the multiplication this implies
\eqref{eq:interchange_14}. 

Similarly by \eqref{eq:c13}, \eqref{eq:interchange_14} and 
\eqref{eq:c13} again,  
$$
\begin{tikzpicture}
\path (1,1) node[arr,name=t2l]  {$t_4$} 
(1.5,1.5) node[arr,name=t3l] {$t_3$};
\draw[braid] (.7,2) to[out=270,in=135] (t2l);
\draw[braid] (1.2,2) to[out=270,in=135] (t3l);
\path[braid,name path=s1] (2.1,2) to[out=270,in=45] (t3l);
\draw[braid,name path=s2] (1.7,2) to[out=270,in=0] (1.9,.8) to[out=180,in=315]
(t3l); 
\fill[white, name intersections={of=s1 and s2}] (intersection-1) circle(0.1);
\draw[braid] (2.1,2) to[out=270,in=45] (t3l);
\draw[braid] (t3l) to[out=225,in=45] (t2l);
\draw[braid] (t2l) to[out=225,in=90] (.7,.3);
\draw[braid] (t2l) to[out=315,in=90] (1.3,.3);
\draw[braid] (1.9,.8) to[out=270,in=90] (1.9,.3);
\draw (2.5,1.2) node {$=$};

\path (3.2,1) node[arr,name=t2]  {$t_4$} 
(3.7,1.5) node[arr,name=t1] {$t_1$};
\draw[braid] (2.9,2) to[out=270,in=135] (t2);
\draw[braid] (3.4,2) to[out=270,in=135] (t1);
\draw[braid] (4,2) to[out=270,in=45] (t1);
\path[braid,name path=s3] (4.4,2) to[out=270,in=180] (4.2,.8);
\draw[braid,name path=s4] (t1) to[out=315,in=0] (4.2,.8);
\fill[white, name intersections={of=s3 and s4}] (intersection-1) circle(0.1);
\draw[braid] (4.4,2) to[out=270,in=180] (4.2,.8);
\draw[braid] (t1) to[out=225,in=45] (t2);
\draw[braid] (t2) to[out=225,in=90] (2.9,.3);
\draw[braid] (t2) to[out=315,in=90] (3.5,.3);
\draw[braid] (4.2,.8) to[out=270,in=90] (4.2,.3);
\draw (4.7,1.2) node {$=$};

\path (5.5,1.5) node[arr,name=t2]  {$t_4$} 
(6,1) node[arr,name=t1] {$t_1$};
\draw[braid] (5.2,2) to[out=270,in=135] (t2);
\draw[braid] (5.7,2) to[out=270,in=45] (t2);
\draw[braid] (6.3,2) to[out=270,in=45] (t1);
\path[braid,name path=s5] (6.7,2) to[out=270,in=180] (6.5,.5);
\draw[braid,name path=s6] (t1) to[out=315,in=0] (6.5,.5);
\fill[white, name intersections={of=s5 and s6}] (intersection-1) circle(0.1);
\draw[braid] (6.7,2) to[out=270,in=180] (6.5,.5);
\draw[braid] (t2) to[out=315,in=135] (t1);
\draw[braid] (t2) to[out=225,in=90] (5.2,.3);
\draw[braid] (t1) to[out=225,in=90] (5.8,.3);
\draw[braid] (6.5,.5) to[out=270,in=90] (6.5,.3);
\draw (7,1.2) node {$=$};

\path (7.8,1.5) node[arr,name=t2l]  {$t_4$} 
(8.3,1) node[arr,name=t3l] {$t_3$};
\draw[braid] (7.5,2) to[out=270,in=135] (t2l);
\draw[braid] (8,2) to[out=270,in=45] (t2l);
\path[braid,name path=s1] (8.9,2) to[out=270,in=45] (t3l);
\draw[braid,name path=s2] (8.5,2) to[out=270,in=0] (8.7,.8) to[out=180,in=315]
(t3l); 
\fill[white, name intersections={of=s1 and s2}] (intersection-1) circle(0.1);
\draw[braid] (8.9,2) to[out=270,in=45] (t3l);
\draw[braid] (t2l) to[out=315,in=135] (t3l);
\draw[braid] (t2l) to[out=225,in=90] (7.5,.3);
\draw[braid] (t3l) to[out=225,in=90] (8.1,.3);
\draw[braid] (8.7,.8) to[out=270,in=90] (8.7,.3);
\end{tikzpicture}
$$
By the non-degeneracy condition on the multiplication this implies 
\eqref{eq:interchange_34}.
\end{proof} 

\begin{lemma}\label{lem:induced_multiplication}
Let $A$ be a semigroup in $\mathsf C$ whose multiplication $m$ is
non-degenerate with respect to some class containing $I$ and $A$. Let
$t_1,t_2,t_3,t_4\colon A^2\to A^2$ be morphisms satisfying the conditions of
Lemma \ref{lem:compatibility}. For any morphism $j\colon A\to I$, consider the
following conditions.  
\begin{center} 
\begin{tabular}{rr}
\inlineequation[eq:multiplication_1]{
\raisebox{-22pt}{$\begin{tikzpicture}[scale=.95]
\draw[braid] (1,2) to[out=270,in=180] (1.3,1) to[out=0,in=270] (1.6,2);
\draw[braid] (1.3,1) to[out=270,in=90] (1.3,.5);
\draw (2.1,1.5) node {$=$};
\path (3,1.5) node[arr,name=t] {$t_1$}
(2.5,1) node[arr,name=n] {$\ $};
\draw[braid] (2.7,2) to[out=270,in=135] (t);
\draw[braid] (3.3,2) to[out=270,in=45] (t);
\draw[braid] (t) to[out=225,in=45] (n);
\draw[braid] (t) to[out=315,in=90] (3.3,.5);
\end{tikzpicture}$}} &
\inlineequation[eq:multiplication_2]{
\raisebox{-22pt}{$\begin{tikzpicture}[scale=.95]
\draw[braid] (1,2) to[out=270,in=180] (1.3,1) to[out=0,in=270] (1.6,2);
\draw[braid] (1.3,1) to[out=270,in=90] (1.3,.5);
\draw (2.1,1.5) node {$=$};
\path (3,1.5) node[arr,name=t2] {$t_2$}
(3.5,1) node[arr,name=u2] {$\ $};
\draw[braid] (2.7,2) to[out=270,in=135] (t2);
\draw[braid] (3.3,2) to[out=270,in=45] (t2);
\draw[braid] (t2) to[out=315,in=135] (u2);
\draw[braid] (t2) to[out=225,in=90] (2.7,.5);
\end{tikzpicture}$}}  \\
\\
\inlineequation[eq:multiplication_3]{
\raisebox{-22pt}{$\begin{tikzpicture}[scale=.95]
\draw[braid] (1,2) to[out=270,in=180] (1.3,1) to[out=0,in=270] (1.6,2);
\draw[braid] (1.3,1) to[out=270,in=90] (1.3,.5);
\draw (2.1,1.5) node {$=$};
\path (3,1.5) node[arr,name=t3] {$t_3$}
(2.5,1) node[arr,name=u3] {$\ $};
\path[braid,name path=s1] (2.7,2) to[out=270,in=45] (t3);
\draw[braid,name path=s2] (3.3,2) to[out=270,in=135] (t3);
\fill[white, name intersections={of=s1 and s2}] (intersection-1) circle(0.1);
\draw[braid] (2.7,2) to[out=270,in=45] (t3);
\draw[braid] (t3) to[out=225,in=45] (u3);
\draw[braid] (t3) to[out=315,in=90] (3.3,.5);
\end{tikzpicture}$}} &
\inlineequation[eq:multiplication_4]{
\raisebox{-22pt}{$\quad \begin{tikzpicture}[scale=.95]
\draw[braid] (1,2) to[out=270,in=180] (1.3,1) to[out=0,in=270] (1.6,2);
\draw[braid] (1.3,1) to[out=270,in=90] (1.3,.5);
\draw (2.1,1.5) node {$=$};
\path (3,1.5) node[arr,name=t4] {$t_4$}
(3.5,1) node[arr,name=u4] {$\ $};
\path[braid,name path=s3] (2.7,2) to[out=270,in=45] (t4);
\draw[braid,name path=s4] (3.3,2) to[out=270,in=135] (t4);
\fill[white, name intersections={of=s3 and s4}] (intersection-1) circle(0.1);
\draw[braid] (2.7,2) to[out=270,in=45] (t4);
\draw[braid] (t4) to[out=315,in=135] (u4);
\draw[braid] (t4) to[out=225,in=90] (2.7,.5);
\end{tikzpicture}$}} 
\end{tabular}
\end{center}

Then the following hold.
\begin{itemize}
\item[{(1)}] Conditions \eqref{eq:multiplication_1} and
  \eqref{eq:multiplication_3} are equivalent to each other.
\item[{(2)}] Conditions \eqref{eq:multiplication_2} and
  \eqref{eq:multiplication_4} are equivalent to each other.
\end{itemize}
\end{lemma}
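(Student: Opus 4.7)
I will prove part~(1); part~(2) then follows by applying part~(1) in the reversed monoidal category $\mathsf C^{\mathsf{rev}}$. Under this duality the 4-tuple $(t_1,t_2,t_3,t_4)$ corresponds to $(t_2,t_1,t_4,t_3)$, still satisfying the hypotheses of Lemma~\ref{lem:compatibility}, and the conditions \eqref{eq:multiplication_1}, \eqref{eq:multiplication_3} in $\mathsf C^{\mathsf{rev}}$ translate back to \eqref{eq:multiplication_2}, \eqref{eq:multiplication_4} in $\mathsf C$.

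For part~(1) it suffices to show the implication $\eqref{eq:multiplication_1}\Rightarrow\eqref{eq:multiplication_3}$; the converse is obtained by the same argument in $\overline{\mathsf C}$ (with the reverse multiplication $m\circ c^{-1}$), whose duality interchanges $t_1$ with $t_3$ and hence exchanges the two conditions. So assume $(j\otimes 1)\circ t_1 = m$. The argument pivots on the compatibility \eqref{eq:c13}, which I read as an equation of morphisms $A^3\to A^2$. I post-compose both sides with $j\otimes 1:A^2\to A$ to produce an equation of morphisms $A^3\to A$. On the left-hand side the attached $j$ sits on the free output of $t_1$, and by the hypothesis the block $(j\otimes 1)\circ t_1$ collapses to $m$; combining naturality of the braiding with the associativity of $m$, the result rewrites as $m\circ(m\otimes 1)\circ\beta$ for a specific braiding $\beta\colon A^3\to A^3$. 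On the right-hand side the post-composition absorbs $j$ into the free output of $t_3$, and after a further diagrammatic rearrangement the morphism $\phi_3:=(j\otimes 1)\circ t_3\circ c$ — the very expression that \eqref{eq:multiplication_3} asserts to equal $m$ — appears tensored with the identity and composed with the same braiding $\beta$.

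The identity furnished by \eqref{eq:c13} therefore reduces to $m\circ(m\otimes 1)\circ\beta = m\circ(\phi_3\otimes 1)\circ\beta$ as morphisms $A^3\to A$; since $\beta$ is invertible, this yields $m\circ(m\otimes 1)=m\circ(\phi_3\otimes 1)$. Right non-degeneracy of $m$ (part of the non-degeneracy hypothesis, applied with $X=A$ and $Y=I$, both of which lie in the given class) strips off the outer $m$ and delivers $m=\phi_3$, which is precisely \eqref{eq:multiplication_3}. The main obstacle is the bookkeeping of braidings: several instances of $c^{\pm 1}$ must be pushed through the composition via naturality and through the two forms of associativity of $m$ so that, at the end, the two reduced sides share a common braiding that can be inverted before invoking non-degeneracy. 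This is the same pattern of string-diagram manipulation used in the proofs of Lemma~\ref{lem:compatibility} and Lemma~\ref{lem:interchange}, and carries no conceptual surprise beyond that already exhibited there.
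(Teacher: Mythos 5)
Your proof is correct and follows essentially the same route as the paper: the implication \eqref{eq:multiplication_1}$\Rightarrow$\eqref{eq:multiplication_3} is obtained by combining \eqref{eq:c13} with \eqref{eq:multiplication_1}, associativity and naturality of the braiding so as to reduce to $m.(\phi_3\otimes 1)=m.(m\otimes 1)$, and then cancelling via non-degeneracy, with the remaining implications deduced by the symmetry/duality principles. The only (harmless) slip is in the final cancellation, where right non-degeneracy of $m$ should be applied with $X=A^2$ (an arbitrary object, not required to lie in the class) and $Y=I$, rather than with $X=A$.
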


\begin{proof}
Once again, we only prove that \eqref{eq:multiplication_1} implies
\eqref{eq:multiplication_3}; all other implications follow symmetrically.  

By \eqref{eq:c13}, \eqref{eq:multiplication_1}, and associativity of the
multiplication, 
$$
\begin{tikzpicture}
\path (1,1.5) node[arr,name=t3] {$t_3$}
(.5,1) node[arr,name=j] {$\ $};
\path[braid,name path=s1] (.7,2) to[out=270,in=45] (t3);
\draw[braid,name path=s2] (1.3,2) to[out=270,in=135] (t3);
\fill[white, name intersections={of=s1 and s2}] (intersection-1) circle(0.1);
\draw[braid] (.7,2) to[out=270,in=45] (t3);
\draw[braid] (t3) to[out=225,in=45] (j);
\draw[braid] (t3) to[out=315,in=180] (1.5,.8) to[out=0,in=270] (1.7,2);
\draw[braid] (1.5,.8) to[out=270,in=90] (1.5,.5);
\path (2.2,1.4) node {$=$};

\path (3,1.5) node[arr,name=t1] {$t_1$}
(2.5,1) node[arr,name=j] {$\ $};
\draw[braid] (t1) to[out=225,in=45] (j);
\draw[braid,name path=s3] (3.2,2) to[out=270,in=135] (t1);
\draw[braid,name path=s4] (3.6,2) to[out=270,in=45] (t1);
\draw[braid,name path=s5] (t1) to[out=315,in=0] (3.5,.8);
\path[braid,name path=s6] (2.8,2) to[out=270,in=90] (3.6,1.5)
to[out=270,in=180] (3.5,.8);
\fill[white, name intersections={of=s3 and s6}] (intersection-1) circle(0.1);
\fill[white, name intersections={of=s4 and s6}] (intersection-1) circle(0.1);
\fill[white, name intersections={of=s5 and s6}] (intersection-1) circle(0.1);
\draw[braid] (2.8,2) to[out=270,in=90] (3.6,1.5)
to[out=270,in=180] (3.5,.8);
\draw[braid] (3.5,.8) to[out=270,in=90] (3.5,.5);
\path (4.2,1.4) node {$=$};

\draw[braid] (5.4,2) to[out=270,in=180] (5.6,1.5) to[out=0,in=270] (5.8,2);
\draw[braid] (4.9,2) to[out=270,in=180] (5.2,1) to[out=0,in=270] (5.6,1.5);
\draw[braid] (5.2,1) to[out=270,in=270] (5.2,.5);
\path (6.4,1.4) node {$=$};

\draw[braid] (7,2) to[out=270,in=180] (7.2,1.5) to[out=0,in=270] (7.4,2);
\draw[braid] (7.2,1.5) to[out=270,in=180] (7.6,1) to[out=0,in=270] (7.9,2);
\draw[braid] (7.6,1) to[out=270,in=270] (7.6,.5);
\end{tikzpicture}
$$
By the non-degeneracy condition on the multiplication this implies
\eqref{eq:multiplication_3}. 
\end{proof}

\begin{corollary} \label{cor:induced_multiplication}
In the setting of Lemma \ref{lem:induced_multiplication}, the following
assertions are equivalent:
\begin{itemize}
\item[{(i)}] Conditions \eqref{eq:multiplication_1} and
  \eqref{eq:multiplication_2} hold;
\item[{(ii)}] Conditions \eqref{eq:multiplication_1} and
  \eqref{eq:multiplication_4} hold;
\item[{(iii)}] Conditions \eqref{eq:multiplication_2} and
  \eqref{eq:multiplication_3} hold;
\item[{(iv)}] Conditions \eqref{eq:multiplication_3} and
  \eqref{eq:multiplication_4} hold. 
\end{itemize}
\end{corollary}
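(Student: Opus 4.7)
The plan is to observe that this corollary is essentially a formal consequence of the two equivalences already established in Lemma \ref{lem:induced_multiplication}, so no new string-diagram manipulation is required. There will be no significant obstacle; the work is entirely bookkeeping.

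More precisely, Lemma \ref{lem:induced_multiplication} gives the biconditionals \eqref{eq:multiplication_1}~$\Leftrightarrow$~\eqref{eq:multiplication_3} and \eqref{eq:multiplication_2}~$\Leftrightarrow$~\eqref{eq:multiplication_4}. The four assertions (i)--(iv) of the corollary are precisely the four ways of pairing one condition from $\{\eqref{eq:multiplication_1},\eqref{eq:multiplication_3}\}$ with one condition from $\{\eqref{eq:multiplication_2},\eqref{eq:multiplication_4}\}$. Using the two biconditionals above, each of (i)--(iv) is therefore equivalent to the single statement that all four of \eqref{eq:multiplication_1}, \eqref{eq:multiplication_2}, \eqref{eq:multiplication_3}, \eqref{eq:multiplication_4} hold simultaneously.

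The execution is then a short chain of implications: for example, from (i) we get \eqref{eq:multiplication_3} by Lemma \ref{lem:induced_multiplication}(1) and \eqref{eq:multiplication_4} by Lemma \ref{lem:induced_multiplication}(2), whence (ii), (iii) and (iv); the converse implications each drop one of the derived conditions and apply the relevant biconditional in reverse. Thus the argument reduces to a single sentence citing Lemma \ref{lem:induced_multiplication}, with no genuinely new content.
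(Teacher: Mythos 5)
Your argument is correct and is exactly the intended one: the paper states the corollary without proof precisely because, as you observe, each of (i)--(iv) pairs one condition from $\{\eqref{eq:multiplication_1},\eqref{eq:multiplication_3}\}$ with one from $\{\eqref{eq:multiplication_2},\eqref{eq:multiplication_4}\}$, so the two biconditionals of Lemma \ref{lem:induced_multiplication} make all four assertions equivalent to all four conditions holding simultaneously. Nothing further is needed.
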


\begin{lemma} \label{lem:fusion_morphisms}
Let $A$ be a semigroup in $\mathsf C$ whose multiplication $m$ is
non-degenerate with respect to  some class containing $I$, $A$ and $A^2$. Let
$t_1,t_2,t_3,t_4\colon A^2\to A^2$ be morphisms satisfying the conditions of
Lemma \ref{lem:compatibility}. For a morphism $j:A\to I$ satisfying the
conditions in Lemma \ref{lem:induced_multiplication}, and morphisms
$e_1,e_2\colon A^2\to A^2$ such that 
\begin{equation}\label{eq:e_components}
\begin{tikzpicture}
\path (1.5,1.5) node[arr,name=e1] {$e_1$};
\draw[braid,name path=s1] (.4,2) to[out=270,in=180] (.8,1) to[out=0,in=225]
(e1);
\path[braid,name path=s2] (.8,2) to[out=270,in=180] (1.3,1) to[out=0,in=315]
(e1);
\fill[white, name intersections={of=s1 and s2}] (intersection-1) circle(0.1);
\draw[braid] (.8,2) to[out=270,in=180] (1.3,1) to[out=0,in=315]
(e1);
\draw[braid] (.8,1) to[out=270,in=90] (.8,.5);
\draw[braid] (1.3,1) to[out=270,in=90] (1.3,.5);
\draw[braid] (1.2,2) to[out=270,in=135] (e1);
\draw[braid] (1.8,2) to[out=270,in=45] (e1);
\draw (2.5,1.3) node {$=$};
\path (3.5,1.5) node[arr,name=e2] {$e_2$};
\draw[braid,name path=s3] (e2) to[out=225,in=180] (3.7,1) to[out=0,in=270]
(4.2,2);
\path[braid,name path=s4] (e2) to[out=315,in=180] (4.2,1) to[out=0,in=270]
(4.6,2);
\fill[white, name intersections={of=s3 and s4}] (intersection-1) circle(0.1);
\draw[braid] (e2) to[out=315,in=180] (4.2,1) to[out=0,in=270]
(4.6,2);
\draw[braid] (3.7,1) to[out=270,in=90] (3.7,.5);
\draw[braid] (4.2,1) to[out=270,in=90] (4.2,.5);
\draw[braid] (3.2,2) to[out=270,in=135] (e2);
\draw[braid] (3.8,2) to[out=270,in=45] (e2);
\end{tikzpicture},
\end{equation}
the following assertions are equivalent: 
\begin{itemize}
\item[{(i)}] The morphisms $t_1$, $j$ and $e_1$ constitute a weakly counital
  fusion morphism in $\mathsf C$; 
\item[{(ii)}] The morphisms $t_2$, $j$ and $e_2$ constitute a weakly counital
  fusion morphism in $\mathsf C^{\mathsf{rev}}$;  
\item[{(iii)}] The morphisms $t_3$, $j$ and $e_2$ constitute a weakly
  counital fusion morphism in $\overline\sfc$; 
\item[{(iv)}] The morphisms $t_4$, $j$ and $e_1$ constitute a weakly
  counital fusion morphism in $\overline\sfc\rev$. 
\end{itemize}
\end{lemma}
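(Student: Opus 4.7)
The plan is to prove the equivalence $\mathrm{(i)}\Leftrightarrow\mathrm{(ii)}$ directly, and then obtain $\mathrm{(i)}\Leftrightarrow\mathrm{(iii)}$ and $\mathrm{(i)}\Leftrightarrow\mathrm{(iv)}$ by the same argument carried out in $\overline\sfc$ and $\overline\sfc\rev$ respectively. The underlying idea is that \eqref{eq:c12} and \eqref{eq:e_components} encode precisely the way in which a weakly counital fusion morphism transforms when the monoidal product is reversed, so that $t_2$ and $e_2$ are the genuine ``$\sfc\rev$-versions'' of $t_1$ and $e_1$. Because passing from $\sfc$ to $\overline\sfc$ merely inverts all crossings in the string-diagram axioms while reshuffling \eqref{eq:c12}--\eqref{eq:c14} among themselves (e.g.\ \eqref{eq:c12} for $(t_1,t_2)$ corresponds to the analogue of \eqref{eq:c34} for $(t_3,t_4)$), the equivalence $\mathrm{(i)}\Leftrightarrow\mathrm{(ii)}$ transports automatically to the braiding-inverted categories and gives the remaining two equivalences.

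For $\mathrm{(i)}\Rightarrow\mathrm{(ii)}$ I would verify each of Axioms \hyperlink{AxI}{I}--\hyperlink{AxVIII}{VIII} for $(t_2,e_2,j)$ in $\sfc\rev$ starting from the corresponding axioms for $(t_1,e_1,j)$ in $\sfc$. The recipe is uniform: translate the $\sfc\rev$-axiom into a diagram in $\sfc$ (which reflects the picture horizontally and swaps $c$ with $c^{-1}$), postcompose or precompose with an auxiliary strand fed into $m=j1.t_1$ on one side, and then apply \eqref{eq:c12} to convert each $t_2$ into a combination involving $t_1$ and the multiplication, and \eqref{eq:e_components} to convert each $e_2$ into a combination involving $e_1$ and the multiplication. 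Associativity of $m$ together with the conditions on $j$ from Lemma \ref{lem:induced_multiplication} should then allow the equation to be massaged into the form of the original axiom for $(t_1,e_1,j)$ with both sides additionally multiplied by an extra copy of $A$; cancelling this multiplication via non-degeneracy of $m$ with respect to $\{I,A,A^2\}$ delivers the target identity. The converse $\mathrm{(ii)}\Rightarrow\mathrm{(i)}$ is then obtained symmetrically, interchanging the roles of $(\sfc,t_1,e_1)$ and $(\sfc\rev,t_2,e_2)$.

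The main obstacle will be Axioms \hyperlink{AxIV}{IV}, \hyperlink{AxVI}{VI}, and \hyperlink{AxVIII}{VIII}, which combine $t$ and $e$ with explicit braidings: after translation between $\sfc$ and $\sfc\rev$ the crossings produced do not match the target immediately, and I would need Lemma \ref{lem:interchange} (for the appropriate pair of $t_i$'s) to slide adjacent fusion morphisms past each other, together with \eqref{eq:c13} and \eqref{eq:c14} to absorb residual $c$-versus-$c^{-1}$ discrepancies. Axioms \hyperlink{AxI}{I}, \hyperlink{AxII}{II}, \hyperlink{AxIII}{III}, \hyperlink{AxV}{V} and \hyperlink{AxVII}{VII}, by contrast, are more mechanical: the fusion equation, idempotency of $e_2$, the invariance $e_2.t_2=t_2$ and the commutativity with $t_2$ should all follow from the analogous identities for $(t_1,e_1,j)$ by direct substitution through \eqref{eq:c12} and \eqref{eq:e_components} and an application of associativity, while the counit-type axiom reduces to one of the conditions on $j$ from Lemma \ref{lem:induced_multiplication}.
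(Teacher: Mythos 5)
Your per-axiom strategy for $\mathrm{(i)}\Rightarrow\mathrm{(ii)}$ is essentially the paper's: rewrite each occurrence of $t_2$ via \eqref{eq:c12} (and of $e_2$ via \eqref{eq:e_components}) as $t_1$ (resp.\ $e_1$) with an auxiliary strand fed into $m$, invoke the axioms for $(t_1,e_1,j)$ together with associativity, \eqref{eq:interchange_12} and the conditions of Lemma \ref{lem:induced_multiplication}, and cancel the extra strand by non-degeneracy. (The paper in fact routes Axioms \hyperlink{AxV}{V}--\hyperlink{AxVII}{VII} of part (ii) through the derived identities \ref{par:A5} and \ref{par:A6.5} rather than raw substitution, but that is a matter of bookkeeping.)

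The genuine gap is in your reduction of the remaining equivalences to $\mathrm{(i)}\Leftrightarrow\mathrm{(ii)}$. The dualities $\sfc\mapsto\sfc\rev$, $\sfc\mapsto\overline\sfc$, $\sfc\mapsto\overline\sfc\rev$ form a $\mathbb Z_2\times\mathbb Z_2$ action under which the labelled data permute as $(t_1,t_2,t_3,t_4)\mapsto(t_2,t_1,t_4,t_3)$, $(t_3,t_4,t_1,t_2)$, $(t_4,t_3,t_2,t_1)$. Consequently, transporting the statement ``$\mathrm{(i)}\Leftrightarrow\mathrm{(ii)}$'' along \emph{any} of these dualities yields either $\mathrm{(i)}\Leftrightarrow\mathrm{(ii)}$ again or $\mathrm{(iii)}\Leftrightarrow\mathrm{(iv)}$; the unordered pair $\{\mathrm{(i)},\mathrm{(ii)}\}$ has orbit $\{\{\mathrm{(i)},\mathrm{(ii)}\},\{\mathrm{(iii)},\mathrm{(iv)}\}\}$, so no amount of ``carrying out the same argument in $\overline\sfc$ or $\overline\sfc\rev$'' will ever produce a cross-equivalence linking $\{\mathrm{(i)},\mathrm{(ii)}\}$ to $\{\mathrm{(iii)},\mathrm{(iv)}\}$. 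You must prove one such cross-equivalence directly. This is exactly why the paper proves \emph{two} implications, $\mathrm{(i)}\Rightarrow\mathrm{(ii)}$ and $\mathrm{(i)}\Rightarrow\mathrm{(iv)}$, before appealing to symmetry; the second is based on \eqref{eq:c14} and \eqref{eq:interchange_14} and is in fact the easier one, since for it Axioms \hyperlink{AxII}{II}, \hyperlink{AxIV}{IV}, \hyperlink{AxVII}{VII} and \hyperlink{AxVIII}{VIII} of part (iv) differ from those of part (i) only by braid isomorphisms. Adding a direct proof of $\mathrm{(i)}\Leftrightarrow\mathrm{(iv)}$ (or of $\mathrm{(i)}\Leftrightarrow\mathrm{(iii)}$) along these lines would close the gap.
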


\begin{proof}
We only prove the implications (i)$\Rightarrow$(ii) and (i)$\Rightarrow$(iv);
the rest of the claims follows symmetrically.

Let us begin with (i)$\Rightarrow$(iv). The multiplication in part (iv) is the
opposite 
$\xymatrix@C=15pt{
A^2 \ar[r]^-{c^{ -1}} &
A^2 \ar[r]^-m &
A}$ 
of the multiplication $m$ in part (i); cf. \eqref{eq:multiplication_4}. 
Axiom~\hyperlink{AxII}{\textnormal{II}} in part (iv) has the same form as in
part (i). Using \eqref{eq:c14}, both sides of Axioms 
\hyperlink{AxIV}{\textnormal{IV}}, \hyperlink{AxVII}{\textnormal{VII}}, and
\hyperlink{AxVIII}{\textnormal{VIII}} of part (iv) differ from the respective 
sides of Axioms  \hyperlink{AxIV}{\textnormal{IV}},
\hyperlink{AxVIII}{\textnormal{VIII}}, and \hyperlink{AxVII}{\textnormal{VII}}
of part (i) by braid isomorphisms.  
In order to verify the other axioms of part (iv) we have to use the
non-degeneracy conditions on the multiplication. Then
Axiom~\hyperlink{AxI}{\textnormal{I}} follows by 
\begin{eqnarray*}
&&

$$
where the unlabelled equality is obtained by applying \ref{par:A5} to 
the weakly counital fusion morphism in part (iv) and using 
\eqref{eq:multiplication_4}.   
\end{proof}

\begin{definition} \label{def:reg_wmba}
A {\em regular weak multiplier bimonoid} in a braided monoidal category
$\mathsf C$ consists of morphisms $t_1,t_2,t_3,t_4\colon A^2 \to A^2$,
$e_1,e_2\colon A^2\to A^2$ and $j\colon A\to I$ satisfying the conditions in
Corollary \ref{cor:compatibility}, Lemma \ref{lem:interchange}, Corollary
\ref{cor:induced_multiplication} and Lemma \ref{lem:fusion_morphisms} (in
particular, non-degeneracy of the multiplication with respect to some class
containing $I$, $A$ and $A^2$ is required).  
\end{definition}

The notion of regular weak multiplier bimonoid is invariant under two kinds of
symmetry operations:

\begin{corollary}\label{cor:Z}
For morphisms $t_1,t_2,t_3,t_4,e_1,e_2\colon A^2 \to A^2$ and $j\colon A \to
I$ in a braided monoidal category $\mathsf C$, the following assertions are
equivalent. 
\begin{itemize}
\item $(t_1,t_2,t_3,t_4,e_1,e_2,j)$ is a regular weak multiplier bimonoid in
  $\mathsf C$;
\item
  $(c^{-1}.t_1.c,c^{-1}.t_2.c,c^{-1}.t_3.c,c^{-1}.t_4.c,c^{-1}.e_1.c,.c^{-1}.e_2.c,j)$
  is a regular weak multiplier bimonoid in $\mathsf C^{\mathsf{rev}}$.
\end{itemize}
\end{corollary}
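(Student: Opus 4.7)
The plan is to exhibit the passage $(t_i, e_k, j) \mapsto (c^{-1}.t_i.c, c^{-1}.e_k.c, j)$ as transport of structure along a braided strong monoidal equivalence $F \colon \sfc \to \sfc\rev$. The first step is to recall the observation already invoked just before the unnamed lemma preceding Lemma \ref{lem:compatibility}: the identity-on-objects functor $F$ becomes a braided strong monoidal equivalence once equipped with the structure isomorphism $\phi_{X,Y} := c_{Y,X}\colon F(X)\otimes^{\mathsf{rev}}F(Y) = YX \to XY = F(X\otimes Y)$, its quasi-inverse $\sfc\rev \to \sfc$ being the analogous functor built from $c^{-1}$. A direct computation from the definition of monoidal transport then shows that any morphism $f\colon A^2 \to A^2$ in $\sfc$ is sent by $F$ to $c^{-1}.f.c$ in $\sfc\rev$, while $j\colon A\to I$ is sent to itself since $I$ is the unit of both categories.

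The second step is to observe that every axiom entering Definition \ref{def:reg_wmba} --- those of Corollary \ref{cor:compatibility}, Lemma \ref{lem:interchange}, Corollary \ref{cor:induced_multiplication}, and Lemma \ref{lem:fusion_morphisms} --- is a string-diagram identity expressed purely in terms of composition, the monoidal product, and the braiding. Such identities are preserved by any braided strong monoidal equivalence; the paper has already applied this principle to transport weakly counital fusion morphisms in the unnamed lemma preceding Lemma \ref{lem:compatibility}, and the same uniform argument applies to all remaining axioms. A short side-check I intend to perform is that the multiplication $m=j1.t_1$ of $A$ in $\sfc$ goes under $F$ to the opposite multiplication $m.c^{-1}$, which is exactly the multiplication induced by $c^{-1}.t_1.c$ and $j$ in $\sfc\rev$ according to condition \eqref{eq:multiplication_1}; hence the non-degeneracy hypothesis (asked with respect to a class containing $I$, $A$ and $A^2$, all of which are fixed by $F$) transports as well.

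With these two ingredients in place, the forward implication in the corollary follows by applying $F$ to the given data, and the converse follows by applying its quasi-inverse to the transported data. The main obstacle, though mild, is careful index-bookkeeping: in contrast to the other duality operations using $\overline\sfc$ and $\overline\sfc\rev$ discussed before Lemma \ref{lem:compatibility}, which permute the indices of the 4-tuple $(t_1,t_2,t_3,t_4)$, the equivalence $F$ preserves them. Concretely, one must confirm that each of the characterizing identities \eqref{eq:c12}--\eqref{eq:c34} connecting the $t_i$ to the multiplication is symmetric enough under conjugation by $c$ to identify $F(t_i)$ again with the $i$-th member of the conjugated tuple; this is transparent from their diagrammatic form once one notes that $F$ merely rewires braid crossings without interchanging any of the four legs.
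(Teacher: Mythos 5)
Your proposal is correct and is essentially the argument the paper intends: Corollary \ref{cor:Z} is stated without proof precisely because it follows from the transport-of-structure principle already articulated for the lemma preceding Lemma \ref{lem:compatibility} (braided strong monoidal functors preserve all the defining identities, and the identity functor with structure isomorphism $c$ gives $\sfc\cong\sfc\rev$), together with the observation that this particular equivalence fixes the indices of the $t_i$ and preserves the non-degeneracy hypotheses. One negligible slip: with the structure isomorphism chosen so that $t_i\mapsto c^{-1}.t_i.c$, the multiplication transports to $m.c$ rather than $m.c^{-1}$; this does not affect the argument since either way it is an isomorphic twist of $m$ and is exactly the multiplication induced by the transported data.
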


\begin{corollary}\label{cor:Z2xZ2}
For morphisms $t_1,t_2,t_3,t_4,e_1,e_2\colon A^2 \to A^2$ and $j\colon A \to
I$ in a braided monoidal category $\mathsf C$, the following assertions are
equivalent. 
\begin{itemize}
\item $(t_1,t_2,t_3,t_4,e_1,e_2,j)$ is a regular weak multiplier bimonoid in
  $\mathsf C$;
\item $(t_2,t_1,t_4,t_3,e_2,e_1,j)$ is a regular weak multiplier bimonoid in
  $\mathsf C^{\mathsf{rev}}$;
\item $(c^{-1}.t_3.c,c^{-1}.t_4.c,c^{-1}.t_1.c,c^{-1}.t_2.c,c^{-1}.e_2.c,
  c^{-1}.e_1.c,j)$ is a regular weak multiplier bimonoid in
  $\overline{\mathsf C}^{\mathsf{rev}}$;
\item $(c^{-1}.t_4.c,c^{-1}.t_3.c,c^{-1}.t_2.c,c^{-1}.t_1.c,c^{-1}.e_1.c,
  c^{-1}.e_2.c,j)$ is a regular weak multiplier bimonoid in
  $\overline{\mathsf C}$.
\end{itemize}
We refer to the latter three as the {\em opposite-coopposite}, the {\em
opposite}, and the {\em coopposite} of $(t_1,t_2,t_3,t_4,e_1,e_2,j)$. 
\end{corollary}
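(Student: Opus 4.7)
The plan is to observe that each of the three symmetry operations is induced by a braided strong monoidal isomorphism of categories: the identity-on-objects functor $\sfc \to \sfc^{\rev}$ (reversal of the monoidal product, keeping the braiding), the identity functor $\sfc \to \overline\sfc$ (inversion of the braiding, keeping the monoidal product), and their composite $\sfc \to \overline\sfc^{\rev}$. Since the notion of regular weak multiplier bimonoid is formulated purely in the language of braided monoidal categories, it must be preserved by any such equivalence; the substantive content of the corollary is then the explicit identification of how the defining 7-tuple transforms.

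I would begin with the opposite operation. Under $\sfc \to \sfc^{\rev}$, the two tensor factors of $A^2$ are interchanged, so an $\mM$-morphism $(f_1,f_2) \colon X \nto A$ in $\sfc$ is reinterpreted in $\sfc^{\rev}$ as $(f_2,f_1)$. Applied componentwise to the $\mM$-morphism structure encoded by each $t_i$ (and each $e_k$), this induces the exchanges $t_1 \leftrightarrow t_2$, $t_3 \leftrightarrow t_4$, $e_1 \leftrightarrow e_2$, while $j$ is unaffected. One then checks that each condition defining a regular weak multiplier bimonoid is sent to a condition of the same type under this relabeling: the six relations \eqref{eq:c12}--\eqref{eq:c34} get permuted amongst themselves (according to the involution $1\leftrightarrow 2$, $3\leftrightarrow 4$ of indices), as do the interchange conditions \eqref{eq:interchange_12}--\eqref{eq:interchange_34} from Lemma \ref{lem:interchange} and the four multiplication conditions \eqref{eq:multiplication_1}--\eqref{eq:multiplication_4} from Corollary \ref{cor:induced_multiplication}; the weakly counital fusion morphism condition is preserved directly by the (i)$\Leftrightarrow$(ii) part of Lemma \ref{lem:fusion_morphisms}.

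For the coopposite, the isomorphism $\sfc \to \overline\sfc$ inverts the braiding but does not touch the monoidal product, so the underlying objects and homsets are the same; however, a morphism $A^2 \to A^2$ that naturally involved $c$ in its construction must be conjugated by $c$ and $c^{-1}$ when reinterpreted as a morphism of the product $A \otimes A$ formed using the other braiding. Tracking this through the data, inverting all crossings in the string diagrams of \eqref{eq:c12}--\eqref{eq:c34} reveals that \eqref{eq:c12} becomes \eqref{eq:c34}, \eqref{eq:c13} becomes \eqref{eq:c24}, and \eqref{eq:c14} becomes \eqref{eq:c23} (after the substitution $(t_1,t_2,t_3,t_4)\mapsto(c^{-1}.t_4.c,c^{-1}.t_3.c,c^{-1}.t_2.c,c^{-1}.t_1.c)$), and similarly for the remaining families; the weakly counital fusion morphism structure is transported by (i)$\Leftrightarrow$(iii) of Lemma \ref{lem:fusion_morphisms}. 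The opposite-coopposite case is obtained by composing the previous two, and can equivalently be read off from (i)$\Leftrightarrow$(iv).

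The main obstacle is entirely bookkeeping: verifying for each of the roughly fifteen defining conditions that the prescribed permutation of the 4-tuple, together with the braiding conjugations and the passage between the four ambient categories, really transports the condition in $\sfc$ to the corresponding condition in the target category. Most of the conceptual content has already been absorbed into Lemma \ref{lem:compatibility} through Lemma \ref{lem:fusion_morphisms}, which were carefully stated in the symmetric form needed here; the residual diagrammatic translations are routine but tedious.
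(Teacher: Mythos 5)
Your second and third paragraphs do describe what actually proves this corollary — and it is essentially the paper's (implicit) argument: the definition is, verbatim, the conjunction of the conditions in Corollary \ref{cor:compatibility}, Lemma \ref{lem:interchange}, Corollary \ref{cor:induced_multiplication} and Lemma \ref{lem:fusion_morphisms}, each of which was formulated symmetrically, and Lemma \ref{lem:fusion_morphisms} in particular is already an equivalence of four statements living in $\sfc$, $\sfc\rev$, $\overline\sfc$ and $\overline\sfc\rev$; the corollary then amounts to checking that the relabelled tuple satisfies the same list of conditions read in the dual category. However, the justification you put at the head of the proof is not correct, and it is presented as the reason the result must hold. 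There is in general no braided strong monoidal isomorphism $\sfc\to\overline\sfc$ (or $\sfc\to\overline\sfc\rev$): the identity functor with trivial constraints is strong monoidal but is braided only when $c=c^{-1}$, and a braided category need not be equivalent to its mirror. Passing to $\overline\sfc$ is a duality of the axiom system, not a transport of structure along an equivalence, so the phrase ``it must be preserved by any such equivalence'' has no content for the third and fourth bullets. Moreover, even in the one case where a braided equivalence does exist, namely $\sfc\cong\sfc\rev$ via the braiding, transporting the seven-tuple along it produces the \emph{conjugated} tuple in the \emph{original} order --- that is exactly Corollary \ref{cor:Z} --- and not the permuted, unconjugated tuple $(t_2,t_1,t_4,t_3,e_2,e_1,j)$ asserted here. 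So functoriality alone cannot ``identify how the defining 7-tuple transforms''; the permutation of the $t_i$ and $e_k$ is precisely the nontrivial content, and it rests on the condition-by-condition symmetry check, not on invariance under equivalences.

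A further sign that the ``routine'' bookkeeping is not quite as automatic as claimed: with the substitution $(t_1,t_2,t_3,t_4)\mapsto(c^{-1}.t_4.c,\,c^{-1}.t_3.c,\,c^{-1}.t_2.c,\,c^{-1}.t_1.c)$ the index orbits are $\{1,4\}$ and $\{2,3\}$, so \eqref{eq:c12} and \eqref{eq:c34} are interchanged, \eqref{eq:c13} and \eqref{eq:c24} are interchanged, while \eqref{eq:c14} and \eqref{eq:c23} are each carried to themselves; your claim that \eqref{eq:c14} becomes \eqref{eq:c23} is inconsistent with the permutation you wrote down (it would correspond to the orbit pattern $\{1,3\}$, $\{2,4\}$ instead). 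This does not sink the argument, because Corollary \ref{cor:compatibility} gives several equivalent generating triples of these conditions, but it should be fixed, and the proof should be reorganized so that it rests on the explicit duality of the axioms (as set up in the discussion preceding Lemma \ref{lem:compatibility} and in Lemma \ref{lem:fusion_morphisms}) rather than on nonexistent braided equivalences.
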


For a regular weak multiplier bimonoid all identities in Appendix
\ref{app:strings} hold, as well as their opposite, coopposite, and 
opposite-coopposite versions. 

\begin{example}
If $(t_1,t_2,t_3,t_4,j)$ is a regular multiplier bimonoid in a braided
monoidal category $\mathsf C$ in the sense of \cite{BohmLack:braided_mba},
whose multiplication is non-degenerate with respect to  some class containing
$I$, $A$ and $A^2$, then with the identity morphism $A^2\to A^2$ as $e_1$ and
$e_2$ it is a regular weak multiplier bimonoid.
\end{example}

\begin{example}
A regular weak multiplier bialgebra over a field, in the sense of
\cite{BohmGomezTorrecillasLopezCentella:wmba}, is a regular weak multiplier
bimonoid in the symmetric monoidal category $\mathsf{Vect}$ of
vector spaces, see \cite[Theorem 1.2]{Bohm:wmba_comod}. However, not every
regular weak multiplier bimonoid in $\mathsf{Vect}$ in the sense of Definition
\ref{def:reg_wmba} is a regular weak multiplier bialgebra in the sense of 
\cite{BohmGomezTorrecillasLopezCentella:wmba,Bohm:wmba_comod}, as axiom
(vi) in \cite[Definition 1.1]{Bohm:wmba_comod} is not required in Definition
\ref{def:reg_wmba}. (We will consider an appropriate analogue of this
`missing' axiom in Section \ref{sec:modules}.)  
\end{example}


\section{The base objects} \label{sec:base}

The key feature of the generalization of bialgebras that are known as 
{\em weak bialgebras} is the structure of the categories of modules 
\cite{WHAII,BoCaJa:wba,PastroStreet}; this remains true in any 
braided monoidal category with split idempotents, not just the 
classical case of vector spaces. As in the case of ordinary 
bialgebras, these categories of modules are monoidal. 
However, in contrast to ordinary bialgebras, their monoidal structure 
is {\em not} lifted from the base category. The {\em base 
object} of a weak bialgebra is both a subalgebra and a quotient 
coalgebra; we shall usually call it $L$. These algebra and coalgebra 
structures obey the compatibility axioms of a {\em separable 
Frobenius algebra}. As a consequence, the category of $L$-bimodules 
(equivalently, $L$-bicomodules) is a monoidal category in which
the monoidal product is given by splitting a canonical idempotent
morphism on the monoidal product of the underlying objects. The 
category of modules over a weak bialgebra is monoidal via a  
lifting of this monoidal structure of the category of bi(co)modules over its
base object $L$. 

The above properties generalize nicely to {\em regular weak multiplier
bialgebras} over fields \cite{BohmGomezTorrecillasLopezCentella:wmba} 
with full comultiplication. For such a weak multiplier bialgebra $A$, 
the base object $L$ is no longer a subalgebra of $A$ (but it is a non-unital
subalgebra of its multiplier algebra). As it has no unit, it can no longer be
a separable Frobenius algebra. But it turns out to possess the more general
structure of coseparable coalgebra (hence it is a so-called {\em firm}
algebra, see \cite{BrKaWi:cosep_coalg}). This structure is enough for the
category of $L$-bicomodules (isomorphically, of firm $L$-bimodules) to have a
monoidal structure where, again, the monoidal product is given by splitting a
canonical idempotent morphism. A suitably defined category of $A$-modules  is 
monoidal via a lifting of this monoidal structure.   

The aim of this and of the next sections is to extend the above results to
{\em regular weak multiplier bimonoids} in nice enough braided 
monoidal categories. We assume that coequalizers exist in our base 
category $\mathsf C$ and that they are preserved by taking the monoidal product
with any object; this preservation assumption is automatic if the 
monoidal category is closed (see Section~\ref{sec:closed}). We will 
also need the technical assumption that the composite 
of regular epimorphisms in $\mathsf C$ is a regular epimorphism again. We
consider regular weak multiplier bimonoids $A$ in $\mathsf C$ whose
multiplication is non-degenerate with respect to  some class 
$\mathcal Y$ containing $I$, $A$ and $A^2$. In this section we look 
for further conditions under which $A$ has a canonical quotient $L$ 
which carries the structure of a coseparable comonoid. Based on this 
result, in Section \ref{sec:modules} we present conditions for 
suitable $A$-modules to constitute a monoidal category, whose
monoidal structure is lifted from the category of $L$-bicomodules.   

\begin{example}
In abelian categories coequalizers exist, and since every epimorphism 
is regular, the composite of regular epimorphism is again a regular 
epimorphism. Thus any braided monoidal closed abelian category 
satisfies our assumptions. Several examples of this type will be 
discussed in Section~\ref{sec:closed}:  the category of modules over 
a commutative ring (and in particular the category of vector spaces 
over a given field), and the category of vector spaces graded by a 
given group.  
\end{example}

\begin{example}\label{ex:regepi-born}
The symmetric monoidal closed category of complete bornological 
spaces is not abelian, but it does have coequalizers 
\cite[Section~1.3]{Meyer}. The regular epimorphisms are those 
$f\colon X\to Y$ for which both $f$ itself and the induced function 
$f_*$ between the bornologies are surjective; it follows easily that 
the regular epimorphisms are closed under composition. 
\end{example}

\begin{example}\label{ex:Hilb}
Let \Hilb be the category whose objects are complex Hilbert spaces 
and whose morphisms are the continuous linear maps (not required to 
preserve the inner product). This has finite limits and colimits, 
and is enriched over abelian groups, but is not an abelian category. 
It is easy to see that the monomorphisms are the injective maps, 
and that a monomorphism in \Hilb is regular if and only if its 
image is closed (equivalently, it factorizes as an isomorphism in 
\Hilb followed by an inner-product preserving injection). 
The epimorphisms are those maps whose codomain is the closure of
the image, so that the closure of the image allows any morphism to be
factorized as an epimorphism followed by a regular monomorphism. 
The cokernel $q\colon K\to Q$ of a morphism $f\colon H\to K$ can be
characterized  by the following properties:         
\begin{itemize}
\item $q$ is surjective;
\item $q.f$ is zero;
\item 
$f$ corestricts to an epimorphism $H\to \mathsf{Ker}(q)$. 
\end{itemize}
In particular, the regular epimorphisms are precisely the surjections, and
these are clearly closed under composition.   
Furthermore, both regular monomorphisms and regular epimorphisms are
always split: we can use orthogonal projection 
to construct  their left and right inverses, respectively. 

If $H$ and $K$ are Hilbert spaces, their tensor product $H\otimes K$ 
as vector spaces has an inner product, but is not in general 
complete. If we define $H\hat\otimes K$ to be its completion, we 
obtain a symmetric monoidal structure on \Hilb \cite[Propositions 
2.6.5 and 2.6.12]{Kadison/Ringrose:1983}. A cokernel diagram
$
\xymatrix@C=18pt{
H \ar[r]|(.45){\,f\,} &
K \ar[r]|(.45){\,q\,} &
Q}
$
will be preserved by taking the monoidal product with any Hilbert space $L$
provided that $f \hat \otimes 1$ and $q \hat \otimes 1$ obey the properties in
the characterization of cokernels given above. The second property evidently
holds and the first one does because the monoidal product preserves regular
epimorphisms (since they are split). In order to verify the third property,
note that the monoidal product preserves epimorphisms as well. This in turn
follows from the fact that if $I$ is a dense linear subspace of a Hilbert
space $K$, then $I\otimes L$ is a dense linear subspace of $K\otimes L$, for
any Hilbert space $L$. So if $p\colon H \to Z$ is an epimorphism, then the
image of the equal paths around the diagram   
$$
\xymatrix{
H \otimes L \ \ar@{>->}[r] \ar[d]_-{p\otimes 1} &
H \hat \otimes L \ar[d]^-{p\hat \otimes 1} \\
Z\otimes L \ \ar@{>->}[r] &
Z \hat \otimes L }
$$
is dense in $Z\hat \otimes L$ so that also the image of the right column is
dense for any Hilbert space $L$. With this preservation of epimorphisms at
hand, we see that if $f$ corestricts to an epimorphism $p\colon H\to
\mathsf{Ker}(q)$ then $f\hat \otimes 1$ corestricts to an epimorphism $p\hat
\otimes 1\colon H\hat \otimes L \to \mathsf{Ker}(q)\hat \otimes
L=\mathsf{Ker}(q\hat \otimes 1)$, where the equality follows since the
inclusion $\mathsf{Ker}(q) \to K$ is a split monomorphism thus it is preserved
by taking the monoidal product with $L$.  
This proves that the monoidal product preserves cokernels (and hence
coequalizers). 
\end{example}

The non-degenerate morphisms in the categories of the above 
Examples will be investigated in Section \ref{sec:closed}. In 
particular, we shall see there that in the category of vector spaces
(even if graded by a given group), as well as in \Hilb, any morphism 
which is non-degenerate on either side with respect to the base 
field --- playing the role of the monoidal unit --- is non-degenerate 
on that side with respect to any object of the category in 
question.

If $(A, t_1,t_2,t_3,t_4,e_1,e_2,j)$ is a regular weak multiplier
bimonoid in a braided monoidal category, then the (idempotent) morphisms $e_1$
and $e_2$ can be regarded as the components of an \mM-morphism $e\colon I \nto
A^2$, cf. \eqref{eq:e_components}. Moreover, by Axiom 
\hyperlink{AxVII}{\textnormal{VII}} for the weakly counital fusion morphism
$(t_1,e_1,j)$ in $\mathsf C$, there is an \mM-morphism $\overline
\sqcap^R\colon A \nto A$ with components   
\begin{equation}\label{eq:pibarr}
\begin{tikzpicture}
\path (1,1.3) node {$\overline\sqcap^R_1:=$};
\path (2,1.5) node[arr,name=pibarr1] {$e_1$}
(2.5,1) node[arr,name=upibarr1] {$\ $};
\path[braid,name path=s1] (1.7,2) to[out=270,in=45] (pibarr1);
\draw[braid,name path=s2] (2.3,2) to[out=270,in=135] (pibarr1);
\fill[white, name intersections={of=s1 and s2}] (intersection-1) circle(0.1);
\draw[braid] (1.7,2) to[out=270,in=45] (pibarr1);
\draw[braid] (pibarr1) to[out=315,in=135] (upibarr1);
\draw[braid] (pibarr1) to[out=225,in=90] (1.7,.8);
\path (4,1.3) node {$\overline\sqcap^R_2:=$};
\path (5,1.5) node[arr,name=pibarr2] {$t_1$}
(5.5,1) node[arr,name=upibarr2] {$\ $};
\draw[braid] (4.7,2) to[out=270,in=135] (pibarr2);
\draw[braid] (5.3,2) to[out=270,in=45] (pibarr2);
\draw[braid] (pibarr2) to[out=315,in=135] (upibarr2);
\draw[braid] (pibarr2) to[out=225,in=90] (4.7,.8);
\end{tikzpicture}\ .
\end{equation}
Symmetrically, by Axiom \hyperlink{AxVII}{\textnormal{VII}} 
for the weakly counital fusion morphism $(t_2,e_2,j)$ in $\sfc\rev$, there is
an \mM-morphism $\overline \sqcap^L\colon A \nto A$ with components 
\begin{equation}\label{eq:pibarl}
\begin{tikzpicture}
\path (1,1.3) node {$\overline\sqcap^L_1:=$};
\path (2,1.5) node[arr,name=pibarl1] {$t_2$}
(1.5,1) node[arr,name=upibarl1] {$\ $};
\draw[braid] (1.7,2) to[out=270,in=135] (pibarl1);
\draw[braid] (2.3,2) to[out=270,in=45] (pibarl1);
\draw[braid] (pibarl1) to[out=225,in=45] (upibarl1);
\draw[braid] (pibarl1) to[out=315,in=90] (2.3,.8);
\path (4,1.3) node {$\overline\sqcap^L_2:=$};
\path (5,1.5) node[arr,name=pibarl2] {$e_2$}
(4.5,1) node[arr,name=upibarl2] {$\ $};
\path[braid,name path=s1] (4.7,2) to[out=270,in=45] (pibarl2);
\draw[braid,name path=s2] (5.3,2) to[out=270,in=135] (pibarl2);
\fill[white, name intersections={of=s1 and s2}] (intersection-1) circle(0.1);
\draw[braid] (4.7,2) to[out=270,in=45] (pibarl2);
\draw[braid] (pibarl2) to[out=225,in=45] (upibarl2);
\draw[braid] (pibarl2) to[out=315,in=90] (5.3,.8);
\end{tikzpicture}\ .
\end{equation}
By Axiom \hyperlink{AxVII}{\textnormal{VII}} for the weakly
counital fusion morphism $(t_4,e_1,j)$ in $\overline\sfc\rev$, 
there is an \mM-morphism $\sqcap^L\colon A \nto A$ with components
\begin{equation}\label{eq:pil}
\begin{tikzpicture}
\path (1,1.3) node {$\sqcap^L_1:=$};
\path (2,1.5) node[arr,name=pil1] {$e_1$}
(1.5,1) node[arr,name=upil1] {$\ $};
\draw[braid] (1.7,2) to[out=270,in=135] (pil1);
\draw[braid] (2.3,2) to[out=270,in=45] (pil1);
\draw[braid] (pil1) to[out=225,in=45] (upil1);
\draw[braid] (pil1) to[out=315,in=90] (2.3,.8);
\path (4,1.3) node {$\sqcap^L_2:=$};
\path (5,1.5) node[arr,name=pil2] {$t_4$}
(4.5,1) node[arr,name=upil2] {$\ $};
\path[braid,name path=s3] (4.7,2) to[out=270,in=45] (pil2);
\draw[braid,name path=s4] (5.3,2) to[out=270,in=135] (pil2);
\fill[white, name intersections={of=s3 and s4}] (intersection-1) circle(0.1);
\draw[braid] (4.7,2) to[out=270,in=45] (pil2);
\draw[braid] (pil2) to[out=225,in=45] (upil2);
\draw[braid] (pil2) to[out=315,in=90] (5.3,.8);
\end{tikzpicture}\ .
\end{equation}
Finally, by Axiom \hyperlink{AxVII}{\textnormal{VII}} for the
weakly counital fusion morphism $(t_3,e_2,j)$ in $\overline\sfc$, there is an
\mM-morphism $\sqcap^R:A \nto A$ with components
\begin{equation}\label{eq:pir} 
\begin{tikzpicture}
\path (1,1.3) node {$\sqcap^R_1:=$};
\path (2,1.5) node[arr,name=pir1] {$t_3$}
(2.5,1) node[arr,name=upir1] {$\ $};
\path[braid,name path=s3] (1.7,2) to[out=270,in=45] (pir1);
\draw[braid,name path=s4] (2.3,2) to[out=270,in=135] (pir1);
\fill[white, name intersections={of=s3 and s4}] (intersection-1) circle(0.1);
\draw[braid] (1.7,2) to[out=270,in=45] (pir1);
\draw[braid] (pir1) to[out=315,in=135] (upir1);
\draw[braid] (pir1) to[out=225,in=90] (1.7,.8);
\path (4,1.3) node {$\sqcap^R_2:=$};
\path (5,1.5) node[arr,name=pir2] {$e_2$}
(5.5,1) node[arr,name=upir2] {$\ $};
\draw[braid] (4.7,2) to[out=270,in=135] (pir2);
\draw[braid] (5.3,2) to[out=270,in=45] (pir2);
\draw[braid] (pir2) to[out=315,in=135] (upir2);
\draw[braid] (pir2) to[out=225,in=90] (4.7,.8);
\end{tikzpicture}\ .
\end{equation}
They will play crucial roles in the considerations of the paper. They
generalize simultaneously some important maps. In a regular 
multiplier bimonoid as in \cite{BohmLack:braided_mba} they all reduce 
to the composite of the counit $A \nto I$ and the \mM-morphism $I\nto 
A$ whose components are equal to the identity map $A\to A$ (cf. \cite[Theorem
2.11]{BohmGomezTorrecillasLopezCentella:wmba}). In particular, in an
ordinary bialgebra they reduce to the composite of the counit $A\to I$ with
the unit $I \to A$. In the regular weak multiplier bialgebra in \cite[Example
2.12]{BohmGomezTorrecillasLopezCentella:wmba}, spanned by the morphisms of an
arbitrary category, one half of them reduces to the (linear extension) of the
source map, the other half to the target map. If we think of the 
fusion morphisms encoding some generalized comultiplication as in 
\ref{par:d1_1}, then the above morphisms behave like generalized 
counits in the sense of \ref{par:A8} and its dual counterparts. 

The symmetries of Corollary \ref{cor:Z2xZ2} permute these morphisms: taking
the opposite corresponds to interchanging simultaneously the morphisms with
and without bar and the labels $1$ and $2$; taking the coopposite corresponds
to interchanging simultaneously the morphisms with and without bar and the
labels $L$ and $R$; finally, taking the opposite-coopposite corresponds to
interchanging simultaneously the labels $R$ with $L$ and $1$ with $2$. For
example, the opposite of $\sqcap^L_1$ is $\overline \sqcap^L_2$, the
coopposite is $\overline \sqcap^R_1$, while the opposite-coopposite is
$\sqcap^R_2$.   

Under the standing assumptions of the section, for a regular weak multiplier 
bimonoid $(t_1,t_2,t_3,t_4,e_1,e_2,j)$ with underlying object $A$ in $\mathsf
C$, consider the coequalizer
\begin{equation} \label{eq:L}
\xymatrix@C=45pt{
A^2 \ar@<2pt>[r]^-{\sqcap^L_1} 
\ar@<-2pt>[r]_-{\overline \sqcap^R_1.c^{-1}} &
A \ar[r]^-p &
L}
\end{equation}
in $\mathsf C$. We will refer to the object $L$ as the {\em base object} of
$A$; note that it is unique up to isomorphism. By the first equality of
\ref{par:A10} and \ref{par:A11}, and by \eqref{eq:components_compatibility} and
non-degeneracy of the multiplication, \eqref{eq:L} determines a unique
\mM-morphism $n\colon L\nto A$ with components occurring in the diagrams 
\begin{equation} \label{eq:n_1}
\xymatrix@C=35pt{
A^3 \ar@<2pt>[r]^-{\sqcap^L_1 1} 
\ar@<-2pt>[r]_-{\overline \sqcap^R_1 1.c^{-1}1} &
A^{2} \ar[r]^-{p1} \ar[rd]_-{\sqcap^{L}_{1}} &
LA \ar@{-->}[d]^-{n_{1}} \\
&& A }
\qquad
\xymatrix@C=35pt{
A^3 \ar@<2pt>[r]^-{1\sqcap^L_1} 
\ar@<-2pt>[r]_-{1\overline \sqcap^R_1.1c^{-1}} &
A^{2} \ar[r]^-{1p} \ar[rd]_-{\sqcap^{L}_{2}} &
AL \ar@{-->}[d]^-{n_{2}} \\
&& A .}
\end{equation}
Recall that if the multiplication $m$ of $A$ is non-degenerate with respect to
some class $\mathcal Y$ then $n_1$ is non-degenerate on the right
with respect to  $\mathcal Y$ if and only if $n_2$ is non-degenerate on the
left with respect to  $\mathcal Y$. 

\begin{theorem}\label{thm:L}
Let $\mathsf C$ be a braided monoidal category in which
coequalizers exist and are preserved by the monoidal product, and the
composite of regular epimorphisms is a regular epimorphism. Let
$(t_1,t_2,t_3,t_4,e_1,e_2,j)$ be a regular weak multiplier bimonoid in
$\mathsf C$ with underlying object $A$ such that its multiplication in Lemma
\ref{lem:induced_multiplication} is a regular epimorphism and non-degenerate
with respect to  some class $\mathcal Y$ containing $I$, $A$, $A^{2}$ and the
object $L$ from \eqref{eq:L}. Assume further that the morphism $n_{1}$ of
\eqref{eq:n_1} is non-degenerate on the right with respect to  $\mathcal
Y$. Then the following hold.    
\begin{itemize}
\item[{(1)}] There is an associative multiplication $\mu\colon L^2\to 
L$ with respect to which $n_{1}$ is an associative action. 
\item[{(2)}] There is a coassociative comultiplication $\delta\colon L \to L^2$
  rendering commutative
$$
\xymatrix{
A^2 \ar[r]^-m \ar[d]_-{t_1} &
A \ar[r]^-p &
L \ar@{-->}[d]^-\delta \\
A^2 \ar[rr]_-{pp} &&
L^2\ .}
$$
\item[{(3)}] The equality $\xymatrix@C=15pt{L\ar[r]^-\delta & L^2 \ar[r]^-\mu
  &L}=1$ holds.
\item[{(4)}] The comultiplication $\delta$ is a morphism of
  $L$-bimodules. That is, the following diagram commutes.  
$$
\xymatrix@R=10pt{
L^2 \ar[rr]^-{1\delta} \ar[dd]_-{\delta 1} \ar[rd]^-\mu &&
L^3 \ar[dd]^-{\mu 1} \\
& L \ar[rd]^-\delta \\
L^3 \ar[rr]_-{1\mu} &&
L^2}
$$
\item[{(5)}] The comultiplication $\delta$ admits a counit $\varepsilon$
satisfying $\varepsilon.p=j$. 
\end{itemize}
In particular, $L$ carries the structure of a coseparable comonoid.
\end{theorem}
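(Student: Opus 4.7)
My plan is to construct the structure morphisms $\mu$, $\delta$, $\varepsilon$ and the action $n_{1}$ as the unique factorizations through $p$ and its monoidal powers, exploiting the fact that $pp$ and $ppp$ are regular epimorphisms under the given hypotheses: each of $p\otimes 1$ and $1\otimes p$ is a regular epimorphism because $p$ is one and the monoidal product preserves coequalizers, and by hypothesis regular epimorphisms are closed under composition. Consequently, any identity between morphisms out of $L^{n}$ may be verified after precomposition with $p^{\otimes n}$, reducing the theorem to diagram chases in $\mathsf{C}$.

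For part~(1), the action axiom for $n_{1}$ reduces, after precomposition with $p\otimes 1\otimes 1$, to associativity of $m$ together with the defining identities~\eqref{eq:n_1}. To construct $\mu$, I would show that $p.m\colon A^{2}\to L$ coequalizes both defining parallel pairs for $pp$; the left-hand identity $p.m.(\sqcap^{L}_{1}\otimes 1) = p.m.(\overline\sqcap^{R}_{1}.c^{-1}\otimes 1)$ follows from Axioms~\hyperlink{AxVII}{\textnormal{VII}} and~\hyperlink{AxVIII}{\textnormal{VIII}} for the weakly counital fusion morphism $(t_{1},e_{1},j)$ together with consequences in the Appendix, and the right-hand identity follows symmetrically using the opposite structure of Corollary~\ref{cor:Z2xZ2}. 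Associativity of $\mu$ then follows from associativity of $m$ after cancelling the epic $ppp$.

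For~(2), I would dually show that $pp.t_{1}\colon A^{2}\to L^{2}$ coequalizes the defining pair of $p$, using Axioms~\hyperlink{AxV}{\textnormal{V}}--\hyperlink{AxVI}{\textnormal{VI}} together with the structure of $\sqcap^{L}_{1}$ and $\overline\sqcap^{R}_{1}$; this yields $\delta$ with $\delta.p = pp.t_{1}$, and coassociativity is then the fusion equation (Axiom~\hyperlink{AxI}{\textnormal{I}}) read modulo the coequalizer relation. For the counit in~(5), unfolding~\eqref{eq:pil} and~\eqref{eq:pibarr} gives $j.\sqcap^{L}_{1} = (j\otimes j).e_{1} = j.\overline\sqcap^{R}_{1}.c^{-1}$, so $j$ factors through $p$ as $\varepsilon$; the counit identities $(\varepsilon\otimes 1).\delta = 1 = (1\otimes\varepsilon).\delta$ then reduce, after precomposition with $p$, to the identity on $p.m$ via $m = (j\otimes 1).t_{1}$ and its $t_{2}$-counterpart from Lemma~\ref{lem:induced_multiplication}.

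The main obstacle lies in parts~(3) and~(4). For~(3), the equation $\mu.\delta = 1_{L}$ becomes, after precomposing with $p$, the identity $p.m.t_{1} = p$, which does not hold in $\mathsf{C}$ but holds modulo the coequalizer defining $L$: the short fusion equation~\eqref{eq:short} together with Axiom~\hyperlink{AxIII}{\textnormal{III}} express $m.t_{1}$ as a morphism of the form $\sqcap^{L}_{1}$ or $\overline\sqcap^{R}_{1}.c^{-1}$ applied to the input, which are identified by $p$ with the image of the identity. For~(4), the bimodule identity $(\mu\otimes 1).(1\otimes\delta) = \delta.\mu = (1\otimes\mu).(\delta\otimes 1)$ translates, after cancelling $pp$ on the right and $ppp$ on the left, into a compatibility between $t_{1}\otimes 1$, $1\otimes t_{1}$ and $m$; this requires the full strength of the fusion equation combined with the interchange relations of Lemma~\ref{lem:interchange}, and again the symmetry of Corollary~\ref{cor:Z2xZ2} reduces the work by half. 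Together, parts~(2), (3), (4), and~(5) establish the coseparable comonoid structure on $L$.
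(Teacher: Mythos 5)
Your overall strategy -- construct every structure morphism as a factorization through $p$, $pp$, $p.m$, etc., and verify identities by precomposing with these regular epimorphisms -- is the paper's strategy. But there is a genuine error at the heart of part (1) that propagates to (3) and (4). The multiplication $\mu$ is \emph{not} induced by $m$: the correct $\mu$ is the unique morphism with $\mu.pp=p.\sqcap^L_1$ (equivalently $=p.\overline\sqcap^R_1.c^{-1}$), obtained by factoring $p.n_1$ through the coequalizer $1p\colon LA\to L^2$ and using \ref{par:A10}. The coequalizer \eqref{eq:L} identifies $\sqcap^L_1$ with $\overline\sqcap^R_1.c^{-1}$, not with $m$ or with the identity, and $p.m\neq p.\sqcap^L_1$ in general (already for a genuine weak bialgebra, $\sqcap^L(\sqcap^L(a)b)=\sqcap^L(a)\sqcap^L(b)\neq\sqcap^L(ab)$). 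Consequently your candidate $\mu'$ with $\mu'.pp=p.m$ -- even granting that $p.m$ coequalizes the relevant pairs, which is not clear -- is a different morphism, and with it $n_1$ is not an action, and part (3) fails: your justification that ``the short fusion equation and Axiom III express $m.t_1$ as a morphism of the form $\sqcap^L_1$\ldots identified by $p$ with the image of the identity'' is not a consequence of the axioms. What is true is $\sqcap^L_1.t_1=m$ (identity \ref{par:A8}), and it is precisely the combination $\mu.pp=p.\sqcap^L_1$ with \ref{par:A8} that yields $\mu.\delta.(p.m)=p.\sqcap^L_1.t_1=p.m$ and hence $\mu.\delta=1$.

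A second, independent gap is that you never invoke the non-degeneracy of $n_1$ (equivalently of $n_2$), which is the distinctive hypothesis of the theorem. It is indispensable in at least three places. First, $\delta$ cannot be obtained by showing that $pp.t_1$ ``coequalizes the defining pair of $p$'' -- that does not typecheck, since $pp.t_1$ has domain $A^2$ while $p$ has domain $A$; one must factor $pp.t_1$ through the regular epimorphism $p.m\colon A^2\to L$, and showing that $pp.t_1$ coequalizes whatever $p.m$ coequalizes requires \ref{par:A21} together with the non-degeneracy of $m$ \emph{and} of $n_1$. Second, the right $L$-linearity half of (4) rests on \ref{par:A22.5} applied with the non-degeneracy of $n_1$ and $n_2$; it does not reduce to the fusion and interchange relations alone. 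Third, the counit identity $(1\varepsilon).\delta=1$ reduces to $p.\overline\sqcap^R_2=p.m$, which is \emph{not} the ``$t_2$-counterpart'' of $m=j1.t_1$ but follows from \ref{par:A12} plus the non-degeneracy of $n_2$. Without these inputs the proof does not close.
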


\begin{proof} 
(1) The top row of
$$
\xymatrix@C=45pt{
LA^{2} \ar@<2pt>[r]^-{1\sqcap^{L}_{1}} 
\ar@<-2pt>[r]_-{1\overline\sqcap^{R}_{1}.1c^{-1}} &
LA \ar[r]^-{1p} \ar[d]^-{n_{1}} &
L^{2} \ar@{-->}[d]^-{\mu} \\
&
A \ar[r]_-{p} &
L}
$$
is a coequalizer and the left-bottom path coequalizes the parallel 
arrows of the top row by
$$
\begin{tikzpicture}[scale=1]
\path (1.25,1.9) node[arr,name=pibarr1] {${}_{\overline \sqcap^R_1}$}
(1,1.2) node[arr,name=pil1] {${}_{\sqcap^L_1}$}
(1,.5) node[arr,name=p] {$\ p\ $};
\draw[braid] (.5,2.5) to[out=270,in=135] (pil1);
\path[braid,name path=i3>pibarr1] (1.5,2.5) to[out=270,in=135] (pibarr1);
\draw[braid,name path=i2>pibarr1] (1,2.5) to[out=270,in=45] (pibarr1);
\fill[white, name intersections={of=i2>pibarr1 and i3>pibarr1}] (intersection-1) circle(0.1);
\draw[braid] (1.5,2.5) to[out=270,in=135] (pibarr1);
\draw[braid] (pibarr1) to[out=270,in=45] (pil1);
\draw[braid] (pil1) to[out=270,in=90] (p);
\draw[braid] (p) to[out=270,in=90] (1,0);
\draw (2,1.2) node[empty] {$\stackrel{~\ref{par:A10}}=$};
\end{tikzpicture}
\begin{tikzpicture}[scale=1]
\path (1.25,1.2) node[arr,name=pibarr1] {${}_{\overline \sqcap^R_1}$}
(.75,2.2) node[arr,name=pil1] {${}_{\sqcap^L_1}$}
(1.25,.5) node[arr,name=p] {$\ p\ $};
\draw[braid] (.5,2.5) to[out=270,in=135] (pil1);
\draw[braid] (1,2.5) to[out=270,in=45] (pil1);
\path[braid,name path=i3>pibarr1] (1.5,2.5) to[out=270,in=135] (pibarr1);
\draw[braid,name path=pil1>pibarr1] (pil1) to[out=270,in=45] (pibarr1);
\fill[white, name intersections={of=pil1>pibarr1 and i3>pibarr1}]
(intersection-1) circle(0.1); 
\draw[braid] (1.5,2.5) to[out=270,in=135] (pibarr1);
\draw[braid] (pibarr1) to[out=270,in=90] (p);
\draw[braid] (p) to[out=270,in=90] (1.25,0);
\draw (2.2,1.2) node[empty] {$\stackrel{~\eqref{eq:L}}=$};
\end{tikzpicture}
\begin{tikzpicture}[scale=1]
\path (1.25,1.25) node[arr,name=pil1d] {${}_{\sqcap^L_1}$}
(.75,2) node[arr,name=pil1u] {${}_{\sqcap^L_1}$}
(1.25,.5) node[arr,name=p] {$\ p\ $};
\draw[braid] (.5,2.5) to[out=270,in=135] (pil1u);
\draw[braid] (1,2.5) to[out=270,in=45] (pil1u);
\draw[braid] (1.5,2.5) to[out=270,in=45] (pil1d);
\draw[braid] (pil1u) to[out=270,in=135] (pil1d);
\draw[braid] (pil1d) to[out=270,in=90] (p);
\draw[braid] (p) to[out=270,in=90] (1.25,0);
\draw (2,1.2) node[empty] {$\stackrel{~\ref{par:A10}}=$};
\end{tikzpicture}
\begin{tikzpicture}[scale=1]
\path (.75,1.25) node[arr,name=pil1d] {${}_{\sqcap^L_1}$}
(1.25,2) node[arr,name=pil1u] {${}_{\sqcap^L_1}$}
(.75,.5) node[arr,name=p] {$\ p\ $};
\draw[braid] (.5,2.5) to[out=270,in=135] (pil1d);
\draw[braid] (1,2.5) to[out=270,in=135] (pil1u);
\draw[braid] (1.5,2.5) to[out=270,in=45] (pil1u);
\draw[braid] (pil1u) to[out=270,in=45] (pil1d);
\draw[braid] (pil1d) to[out=270,in=90] (p);
\draw[braid] (p) to[out=270,in=90] (.75,0);
\end{tikzpicture}
$$
and the fact that $\sqcap^L_1=n_{1}.p1$ and $p11\colon A^{3} \to LA^{2}$ is an
epimorphism. This proves the existence of a unique morphism $\mu$ as in the
diagram. It also renders commutative the diagrams 
$$
\xymatrix{
A^2 \ar[r]^-{pp} \ar[d]_-{\sqcap^L_{1}} &
L^{2}\ar[d]^-{\mu} \\
A \ar[r]_-{p} &
L}\quad
\xymatrix{
A^2 \ar[r]^-{pp} \ar[d]_-{\overline \sqcap^R_{1}.c^{-1}} &
L^{2}\ar[d]^-{\mu} \\
A \ar[r]_-{p} &
L .}\
$$
The stated associativity properties follow by \ref{par:A10} together with the
fact that $ppp\colon A^{3} \to L^{3}$ and $pp1\colon A^{3} \to L^{2}A$ are
epimorphisms. 

(2) The top row of the diagram of part (2) is a regular epimorphism by
assumption. It follows by \ref{par:A21} --- applied together with the
non-degeneracy conditions on the multiplication and $n_1$  --- that the
left-bottom path coequalizes those morphisms whose coequalizer is in the top
row. Thus the desired (unique) morphism $\delta$ exists by universality. 

In the diagrams
$$
\xymatrix@R=10pt{
A^3 \ar[r]^-{1m} \ar[dddd]_-{t_11} &
A^2 \ar[r]^-m \ar[dddd]^-{t_1} &
A \ar[r]^-p &
L \ar[dddd]^-\delta
&
A^3 \ar[r]^-{m1} \ar[d]_-{1t_1} &
A^2 \ar[r]^-m \ar[dddd]^-{t_1} &
A \ar[r]^-p &
L \ar[dddd]^-\delta \\
&&&
&
A^3 \ar[d]_-{c1} \\
&&&
&
A^3 \ar[d]_-{1t_1} \\
&&&
&
A^3 \ar[d]_-{c^{-1}1} \\
A^3 \ar[r]^-{1m} \ar[d]_-{1t_1} &
A^2 \ar[rr]^-{pp} &&
L^2 \ar[d]^-{1\delta} 
&
A^3 \ar[r]^-{m1} \ar[d]_-{t_11} &
A^2 \ar[rr]^-{pp} &&
L^2 \ar[d]^-{\delta 1} \\
A^3 \ar[rrr]_-{ppp} &&&
L^3
&
A^3 \ar[rrr]_-{ppp} &&&
L^3}
$$
the top-left regions commute by \ref{par:A1}, and by 
\eqref{eq:short} (short fusion equation) for the weakly counital 
fusion morphism $(t_1,e_1,j)$. All other regions commute by the
construction of $\delta$. The top rows are equal epimorphisms by the
associativity of $m$. Since the left verticals are equal by Axiom
\hyperlink{AxI}{\textnormal{I}} for $t_1$, this proves the
coassociativity of $\delta$.

(3) In the diagram 
$$
\xymatrix@R=10pt{
A^2 \ar[r]^-m \ar[d]_-{t_1} &
A \ar[r]^-p &
L \ar[d]^-\delta \\
A^2 \ar[rr]^-{pp} \ar[d]_-{\sqcap^L_1}&&
L^2 \ar[d]^-\mu\\
A \ar[rr]_-p &&
L}
$$
both regions commute by the constructions of $\delta$ and $\mu$,
respectively. By \ref{par:A8} the left-bottom path and the top row are equal
epimorphisms, which proves that the right vertical is the identity morphism.  

(4) In the second of the diagrams
$$
\xymatrix@R=10pt{
A^3 \ar[r]^-{1m} \ar[d]_-{1t_1} &
A^2 \ar[r]^-{pp} &
L^2 \ar[d]^-{1\delta} 
&
A^3 \ar[r]^-{1m} \ar[d]_-{\sqcap^L_1 1} &
A^2 \ar[r]^-{pp} \ar[d]^-{\sqcap^L_1} &
L^2 \ar[d]^-{\mu} \\
A^3 \ar[rr]^-{ppp} \ar[d]_-{\sqcap^L_1 1}&&
L^3 \ar[d]^-{\mu 1} 
&
A^2 \ar[r]^-m \ar[d]_-{t_1} &
A \ar[r]^- p &
L \ar[d]^-\delta \\
A^2 \ar[rr]_-{pp} &&
L^2
&
A^2 \ar[rr]_-{pp} &&
L^2}
$$
the top-left region commutes by \ref{par:A1} and all other regions commute by
the construction of $\mu$ or $\delta$. The top rows are epimorphisms and the
left verticals are equal by the first identity in \ref{par:A7}. Hence the
right verticals are equal, which  proves the left $L$-linearity of $\delta$.
Similarly, in the first of the diagrams
$$
\xymatrix@R=10pt{
A^4 \ar[r]^-{mm} \ar[d]_-{m11} &
A^2 \ar[dd]^-{\sqcap^L_1} \ar[r]^-{pp} &
L^2 \ar[dd]^-\mu 
&
A^4 \ar[r]^-{mm} \ar[d]_-{11m} &
A^2 \ar[r]^-{pp} &
L^2 \ar[dd]^-{\delta 1} \\
A^3 \ar[d]_-{\sqcap^L_1 1} &&
&
A^3 \ar[d]_-{t_1 1}\\
A^2 \ar[r]^-m \ar[d]_-{t_1} &
A \ar[r]^-p &
A \ar[d]^-\delta 
&
A^3  \ar[rr]^-{ppp} \ar[d]_-{1 \sqcap^L_1} &&
L^3 \ar[d]^-{1\mu}\\
A^2 \ar[rr]_-{pp} &&
L^2
&
A^2 \ar[rr]_-{pp} &&
L^2}
$$
the top-left region commutes by \ref{par:A1} and all other regions commute by
the construction of $\mu$ or $\delta$. The top rows are epimorphisms and the
left-bottom paths are equal by \ref{par:A22.5}, applied together with the
non-degeneracy conditions on $n_1$ and $n_2$, which proves the right 
$L$-linearity of $\delta$.  
 
(5) The morphism $j$ evidently coequalizes the parallel arrows of
\eqref{eq:L}, which proves the existence of $\varepsilon$ as in the claim. 
The diagrams
$$
\xymatrix{
A^2 \ar[r]^-m \ar[d]_-{t_1} &
A \ar[r]^-p &
L \ar[d]^-\delta
&
A^2 \ar[r]^-m \ar[d]_-{t_1} &
A \ar[r]^-p &
L \ar[d]^-\delta \\
A^2 \ar[rr]^-{pp} \ar[d]_-{j1}&&
L^2 \ar[d]^-{\varepsilon 1}
&
A^2 \ar[rr]^-{pp} \ar[d]_-{1j}&&
L^2 \ar[d]^-{1\varepsilon} \\
A  \ar[rr]_-p &&
L
&
A \ar[rr]_-p &&
L .}
$$
commute by definition of $\delta$ and $\varepsilon$. The left-bottom composite 
in the first diagram is equal to $p.m$ by \eqref{eq:multiplication_1}, while in 
the second diagram this is true by  the first equality in
\ref{par:A12} and the non-degeneracy of $n_2$. This proves that their 
right verticals are identity morphisms; that is, $\varepsilon$ is the 
counit of $\delta$.   
\end{proof}  

\begin{remark}\label{rem:alternative_delta}
Symmetrically to the construction of the comultiplication $\delta$ in the
proof of Theorem \ref{thm:L}, we can define it as the unique morphism
rendering commutative 
\begin{equation}\label{eq:alternative_delta} 
\xymatrix@R=15pt{
A^2 \ar[r]^-m \ar[d]_-c &
A \ar[r]^-p &
L \ar@{-->}[dd]^-\delta \\
A^2 \ar[d]_-{t_4} \\
A^2 \ar[rr]_-{pp} &&
L^2\ .}
\end{equation}
It follows by \ref{par:A20}, applied together with the non-degeneracy
conditions on $n_1$ and $n_2$, that this yields the same morphism
$\delta$.    
\end{remark}

\begin{remark} \label{rem:firm}
Let us recall from \cite{BrKaWi:cosep_coalg} that in a coseparable comonoid
$(L,\delta,\varepsilon)$, the bicomodule section $\mu$ of $\delta$ is a {\em
firm} multiplication in the sense of \cite{Quillen:firm}. That is, it is an
associative multiplication such that
$$
\xymatrix@C=35pt{
L^3 \ar@<2pt>[r]^-{\mu 1} \ar@<-2pt>[r]_-{1\mu} &
L^2 \ar[r]^-\mu &
L}
$$
is a coequalizer. Consequently \cite{BohmVerc:Morita,BohmGom-Tor:FirmFrob},
left $L$-comodules can be identified with firm left $L$-modules; that is,
with associative $L$-actions $\xi\colon LX \to X$ such that 
$$
\xymatrix@C=35pt{
L^2X \ar@<2pt>[r]^-{\mu 1} \ar@<-2pt>[r]_-{1\xi} &
LX \ar[r]^-\xi &
X}
$$
is a coequalizer. Explicitly, an $L$-coaction $\tau\colon X \to LX$ determines a
firm $L$-action
$$
\xi:=\xymatrix@C=25pt{
LX \ar[r]^-{1\tau} &
L^2X \ar[r]^-{\mu 1} &
LX \ar[r]^-{\varepsilon 1} &
X}
$$
and, conversely, a firm $L$-action $\xi\colon LX \to X$ determines a unique
$L$-coaction $\tau$ such that 
$$
\xymatrix{
LX \ar[r]^-\xi  \ar[d]_-{\delta 1} &
X \ar@{-->}[d]^-\tau \\
L^2X \ar[r]_-{1\xi} &
LX}
$$
commutes. There is a symmetric correspondence between right $L$-comodules and
firm right $L$-modules. 

Summarizing, an $L$-bicomodule is the same as a left and right firm
$L$-bimodule. To avoid the use of unnecessary multiple terminology, in 
this paper we will only speak about $L$-comodules (but keeping the above
correspondence in mind).
\end{remark}

\begin{remark} \label{rem:base_vec}
Consider a regular weak multiplier bialgebra $A$ over a field; this includes
the assumptions that the multiplication is surjective and non-degenerate. In
\cite{BohmGomezTorrecillasLopezCentella:wmba} the base object of $A$ was
defined as the image of the map $\sqcap^L$ from $A$ to its multiplier
algebra. We claim that --- whenever the comultiplication of $A$ is {\em left
full} in the sense of \cite[Theorem
3.13]{BohmGomezTorrecillasLopezCentella:wmba} --- this gives the same vector
space as the coequalizer $L$ in \eqref{eq:L}.  

In this case we can write $\sqcap^L_1(a\otimes b)=\sqcap^L(a)b$ and
$\overline \sqcap^R_1(a\otimes b)= \overline \sqcap^R(a)b$ for all $a,b\in
A$, in terms of maps $\sqcap^L$ and $\overline \sqcap^R$ from $A$ to the
multiplier algebra of $A$ (see \eqref{eq:components}). By \cite[Lemma
3.8]{BohmGomezTorrecillasLopezCentella:wmba} the map $\sqcap^L$ coequalizes
the parallel morphisms of \eqref{eq:L}. Hence there is a unique epimorphism
$f\colon L \to \mathsf{Im}(\sqcap^L)$ such that $\sqcap^L(a)=f(p(a))$ for all
$a\in A$. In order to see that $f$ is injective as well, we need to show that
$f(p(a))=0$ for some $a$ if and only if $p(a)=0$. Equivalently,
$\sqcap^L(a)=0$ if and only if $a$ belongs to the image of $\overline
\sqcap^R_1.c - \sqcap^L_1$. 

By \cite[Theorem 4.7 and Lemma 4.8]{BohmGomezTorrecillasLopezCentella:wmba},
the non-unital algebra $\mathsf{Im}(\overline \sqcap^R)$ possesses local
units; and by (a symmetric variant of) \cite[Proposition
5.2]{BohmGomezTorrecillasLopezCentella:wmba} $A$ is a firm module over it (in
the sense of \cite{Quillen:firm}). Hence for any element $a$ of $A$ there is
an associated element $b$ of $A$ such that $\overline \sqcap^R(b)a=a$. Choose
$a$ such that $\sqcap^L(a)=0$. Then
$$
a=\overline \sqcap^R(b)a=\overline \sqcap^R(b)a - \sqcap^L(a)b.
$$
So we conclude that in this situation the canonical surjection 
$A \to \mathsf{Im}(\sqcap^L)$ is the coequalizer in \eqref{eq:L}. 

In fact, as we shall see in Section \ref{sec:closed_L}, in the 
category of vector spaces the morphism $n_1$ of \eqref{eq:n_1} is 
non-degenerate on the right (with respect to the base field, 
equivalently, with respect to any vector space) if and only if $L$ in 
\eqref{eq:L} is isomorphic to the image of the map $\sqcap^L$. As 
the above considerations show, these properties hold whenever the 
comultiplication of $A$ is left full. 

In this sense Theorem \ref{thm:L} gives a new insight also to regular 
weak multiplier bialgebras over fields: It says that the assumption 
about the {\em fullness of the comultiplication} in \cite[Theorem 
4.7]{BohmGomezTorrecillasLopezCentella:wmba} can be replaced by the 
{\em non-degeneracy of $n_{1}$}. 
\end{remark}


\section{The monoidal category of modules} \label{sec:modules}

If $(t_1,t_2,t_3,t_4,e_1,e_2,j)$ is a regular weak multiplier
bimonoid in a braided monoidal category $\mathsf C$ such that all
assumptions of Theorem \ref{thm:L} hold, then Theorem \ref{thm:L} says
that the base object $L$ carries the structure of a coseparable
comonoid. Consequently, whenever idempotent morphisms in $\mathsf C$ split,
the category of $L$-bicomodules is monoidal. The monoidal unit is $L$, with
both coactions given by the comultiplication $\delta$. The monoidal product of
$L$-bicomodules $V$ and $W$ --- with left coactions denoted by $\tau$ and
right coactions denoted by $\overline \tau$ --- is the object $V \circ W$
occurring in the splitting  
$$
\xymatrix{
VW \ar@{->>}[r]^-{\hat s} &
V \circ W \ \ar@{>->}[r]^-{\check s} &
VW}
$$
of the idempotent morphism 
\begin{equation} \label{eq:s}
s:=\xymatrix{
VW\ar[r]^-{\overline \tau\tau} &
VL^2W \ar[r]^-{1\mu 1} &
VLW \ar[r]^-{1\varepsilon 1} &
VW\  .}
\end{equation}
(It can be regarded as the usual $L$-comodule tensor product, equivalently, as
the module tensor product of the firm $L$-modules $V$ and $W$, see Remark
\ref{rem:firm}.) 

The aim of this section is to see --- under the hypotheses of Theorem
\ref{thm:L}, and assuming that idempotent morphisms in $\mathsf C$ split ---
what else is needed for the category of modules (in an
appropriate sense, see below) over a regular weak multiplier bimonoid to be
monoidal via the lifting of this monoidal structure on the category of
$L$-bicomodules.   
 
Throughout this section, we assume that $\mathsf C$ is a braided
monoidal category in which coequalizers exist and are preserved by the
monoidal product, and that the composite of regular epimorphisms is a regular
epimorphism. We further assume that $(A,t_1,t_2,t_3,t_4,e_1,e_2,$ $j)$ is a
regular weak multiplier bimonoid in $\mathsf C$ whose multiplication $m=j1.t_1$ is 
a regular epimorphism and non-degenerate with respect to  some class $\mathcal Y$
containing $A$, the unit object $I$, and the object $L$ from 
\eqref{eq:L}, and closed under the monoidal product.
Finally, the morphism $n_1$ of \eqref{eq:n_1} is assumed to be 
non-degenerate on the right with respect to  $\mathcal Y$; 
equivalently, $n_2$ of \eqref{eq:n_1} is assumed to be 
non-degenerate on the left with respect to  $\mathcal Y$. 

Note that, without any loss of generality, we may assume that 
$\mathcal Y$ is closed under retracts. Indeed, if $i\colon Z\to X$ is 
a monomorphism preserved by the functor $(-)Q$ for any object $Q$ 
(as happens for example if $i$ has a left inverse), and some 
morphism $\nu\colon QV \to W$ is non-degenerate on the right with 
respect to $X$, then it is non-degenerate on the right with 
respect to $Z$ as well: from the composite
$$
\xymatrix{
PV \ar[r]^-{f1} &
ZQV \ar[r]^-{1\nu} &
ZW}
$$
we can uniquely recover any $f\colon P \to ZQ$ by post-composing  with $i1$; 
applying the non-degeneracy with respect to $X$ to recover $i1.f$; 
and then using the fact that $i1:ZQ\to XQ$ is a monomorphism.

For some results in the section, we will also need to assume that idempotent
morphisms in $\mathsf C$ split. This happens, for example, if any  morphism of
$\mathsf C$ admits an epi-mono factorization, or if \sfc has 
coequalizers or equalizers. 

\begin{example}\label{ex:regepi-mono-ab}
In an abelian category any morphism $f$ has an epi-mono factorization  
through the image of $f$. This includes in particular some examples from
Section \ref{sec:closed}: the category of modules over a commutative ring (so
in particular the category of vector spaces over a given field) and the
category of group-graded vector spaces. 
\end{example}

\begin{example}\label{ex:regepi-mono-bor}
In the (non-abelian) category of complete bornological vector spaces we 
can factorize any morphism $f\colon X\to Y$ through the image
$\mathsf{Im}(f)$, computed in the category of vector spaces. We may equip
$\mathsf{Im}(f)\cong X/\mathsf{Ker(f)}$ with the quotient bornology (which can
be different, however, from the subspace bornology of $\mathsf{Im}(f)\subseteq
Y$). Since it is complete, this gives a factorization of $f$ as a composite of
a regular epimorphism $X \to \mathsf{Im}(f)$ (see Example
\ref{ex:regepi-born}) and a monomorphism $\mathsf{Im}(f) \to Y$.  
\end{example}

\begin{example}
The category \Hilb is finitely complete and cocomplete, so 
idempotents split. 
\end{example}

\begin{proposition} \label{prop:L-coactions_on_A}
Under the standing assumptions of the section, 
the object $A$ carries the structure of a bicomodule over the comonoid $L$ in
Theorem \ref{thm:L}.
\end{proposition}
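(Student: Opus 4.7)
The plan is to define left and right $L$-coactions on $A$ by factoring through the regular epimorphism $m\colon A^2\to A$, and then to check the coaction and bicomodule axioms directly from those partial definitions. Concretely, I would declare $\lambda\colon A\to LA$ to be the unique morphism satisfying $\lambda.m=p1.t_1$, and $\rho\colon A\to AL$ to be the unique morphism satisfying $\rho.m=1p.t_3$. The existence of these factorizations is the first technical point: arguing as in the construction of $\delta$ in the proof of Theorem~\ref{thm:L}(2), one invokes the appendix identity \ref{par:A21} together with the non-degeneracy hypotheses on $m$ and on $n_1$ to show that $p1.t_1$ and $1p.t_3$ coequalize the same morphisms coequalized by $m$, hence they descend to $\lambda$ and $\rho$. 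Note that both $1p.t_3$ and its $t_1$-counterpart land correctly because the comultiplication identity $\delta.p.m=pp.t_1$ (and its $t_4.c$-version from Remark \ref{rem:alternative_delta}) forces the appropriate consistency.

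Next I would check the counit and coassociativity axioms for $\lambda$. The counit axiom is immediate: $(\varepsilon 1).\lambda.m=(\varepsilon 1).p1.t_1=j1.t_1=m$, using $\varepsilon.p=j$ from Theorem~\ref{thm:L}(5), so $(\varepsilon 1).\lambda=1$ because $m$ is epic. Coassociativity $(\delta 1).\lambda=(1\lambda).\lambda$ is obtained by precomposing both sides with the regular epimorphism $m.1m=m.m1\colon A^3\to A$ (which is regular epi since tensoring preserves coequalizers and regular epis are assumed closed under composition), and reducing both composites via the definitions of $\delta$ and $\lambda$ and the short fusion equation \eqref{eq:short} to the fusion equation Axiom~\hyperlink{AxI}{I} for $t_1$, in exactly the same spirit as the coassociativity of $\delta$ in the proof of Theorem~\ref{thm:L}(2). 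The corresponding axioms for $\rho$ follow by the coopposite symmetry of Corollary~\ref{cor:Z2xZ2}.

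Finally I would verify the bicomodule compatibility $(1\rho).\lambda=(\lambda 1).\rho$ as morphisms $A\to LAL$. Precomposing with $m$ reduces the problem to the equality $(1\rho).(p1).t_1=(\lambda 1).(1p).t_3$ as morphisms $A^2\to LAL$, and precomposing once more with $1m$ (again a regular epi by preservation) allows one to fully unfold the partial definitions of both $\lambda$ and $\rho$, using associativity to pair $m$'s with the relevant factors. The resulting identity is then a purely $t_i$-level statement that one reads off from an appropriate combination of the interchange relation \eqref{eq:interchange_14} of Lemma~\ref{lem:interchange} with the compatibility \eqref{eq:c13} of Lemma~\ref{lem:compatibility}, and is then cancelled back through the non-degeneracy of $m$ (and of $n_1$, $n_2$).

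The main obstacle throughout is precisely this dance between the partial definitions $\lambda.m=p1.t_1$ and $\rho.m=1p.t_3$: because $\lambda$ and $\rho$ themselves are only characterized up to composition with $m$, every verification must first precompose with enough factors of $m$ to unfold the composites entirely in terms of the fusion morphisms $t_i$ and the projection $p$, after which the relevant appendix identity or interchange axiom can be invoked and the non-degeneracy of $m$ used to cancel the inserted copies of $m$. Modulo this bookkeeping, however, each verification is a direct translation of one of the axioms of the regular weak multiplier bimonoid.
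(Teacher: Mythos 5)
Your left coaction is the paper's: $\tau$ is characterized by $\tau.m=p1.t_1$, and your counit and coassociativity checks for it are the same reductions the paper makes. Two caveats on that half. First, the factorization of $p1.t_1$ through the regular epimorphism $m$ is not obtained from \ref{par:A21} together with non-degeneracy of $n_1$: that combination is tailored to the construction of $\delta$ (where two copies of $p$ occur, matching the two occurrences of $\sqcap^L_1$ in \ref{par:A21}); for the single-$p$ morphism $p1.t_1$ the paper instead uses \ref{par:A14} together with non-degeneracy of the multiplication and of $n_2$, and your cited lemma does not by itself deliver the required coequalizing property. Second, the compatibility of the two coactions is not simply read off from \eqref{eq:interchange_14} and \eqref{eq:c13}; the paper isolates it as the identity \ref{par:A22}, used together with the non-degeneracy of $n_1$ and $n_2$.

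The genuine gap is your right coaction. You define $\rho$ by $\rho.m=1p.t_3$, but the correct definition (the paper's) is $\overline\tau.m=1p.t_4.c$: the right coaction is the coopposite of the left one, and under Corollary \ref{cor:Z2xZ2} the coopposite replaces $t_1$ by $c^{-1}.t_4.c$, not by $t_3$ ($t_3$ appears only in the opposite-coopposite, where the multiplication is also reversed to $m.c^{-1}$). Concretely, two things fail for $1p.t_3$. Counitality would require $(1j).t_3=m$, but the axioms only give $m=j1.t_3.c$ \eqref{eq:multiplication_3} and $m=1j.t_4.c$ \eqref{eq:multiplication_4}; in fact $1j.t_3$ is, up to a braiding, the map $\sqcap^R_1$ of \eqref{eq:pir}, not $m$. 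Worse, $\rho$ need not even exist: for an ordinary bialgebra regarded as a regular weak multiplier bimonoid (so $e_1=e_2=1$, $L\cong I$, $p$ essentially $j$, and all standing hypotheses hold), one has $t_3=(1\otimes b)\Delta(a)$ and hence $1p.t_3$ sends $a\otimes b$ to $j(b)\,a$ up to the isomorphism $AL\cong A$; for a group algebra this does not factor through $m(a\otimes b)=ab$, so no $\rho$ with $\rho.m=1p.t_3$ exists. With the corrected defining equation $\overline\tau.m=1p.t_4.c$, existence follows from the coopposite of the first identity of \ref{par:A4} and non-degeneracy of $n_1$, counitality from \eqref{eq:multiplication_4}, and coassociativity from the fusion equation for $t_4$ together with the alternative description of $\delta$ in Remark \ref{rem:alternative_delta}; this is exactly how the paper completes the proof.
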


\begin{proof}
By \ref{par:A14} and the non-degeneracy conditions on the multiplication
and $n_2$, the left-bottom path of 
$$
\xymatrix{
A^2 \ar[r]^-m \ar[d]_-{t_1} &
A \ar@{-->}[d]^-\tau \\
A^2 \ar[r]_-{p1} & 
LA}
$$
coequalizes any pair of morphisms that the top row does. So we can use the
universality of the coequalizer in the top row to construct a left
$L$-coaction $\tau$ on $A$. By the constructions of $\tau$ and $\delta$, 
by the short fusion equation \eqref{eq:short} on $t_1$, by 
\ref{par:A1}, and by functoriality of the monoidal product, both diagrams 
$$
\xymatrix@R=10pt{
A^3 \ar[r]^-{m1} \ar[d]_-{1t_1} &
A^2 \ar[r]^-m \ar[dddd]^-{t_1} &
A \ar[dddd]^-\tau
&
A^3 \ar[r]^-{1m} \ar[dddd]_-{t_1 1} &
A^2 \ar[r]^-m \ar[dddd]^-{t_1} &
A \ar[dddd]^-\tau \\
A^3 \ar[d]_-{c1} \\
A^2 \ar[d]_-{1 t_1} \\
A^3 \ar[d]_-{c^{-1}1} \\
A^3 \ar[r]^-{m1} \ar[d]_-{t_11} &
A^2 \ar[r]^-{p1} &
LA \ar[d]^-{\delta 1} 
&
A^3 \ar[r]^-{1m} \ar[d]_-{1t_1} &
A^2 \ar[r]^-{p1} \ar[d]^-{1\tau} &
LA \ar[d]^-{1 \tau} \\
A^3 \ar[rr]_-{pp1} &&
L^2A 
&
A^3 \ar[r]_-{1p1} &
ALA \ar[r]_-{p11} &
L^2A}
$$
commute. Since their top rows are equal epimorphisms by the associativity of
$m$, and their left verticals are equal by Axiom~\hyperlink{AxI}{\textnormal{I}}, we conclude that $\tau$ is coassociative. Also 
$$
\xymatrix@R=10pt{
A^2 \ar[r]^-m \ar[d]_-{t_1} & 
A \ar[d]^-\tau \\
A^2 \ar[r]^-{p1} \ar[d]_-{j1} &
LA \ar[d]^-{\varepsilon 1} \\
A \ar@{=}[r] &
A}
$$
commutes thanks to the constructions of $\tau$ and $\varepsilon$. By 
\eqref{eq:multiplication_1} the left vertical is equal to the epimorphism in
the top row, proving that the right vertical is the identity morphism. That is
to say, $\tau$ is also counital. 

Similarly, by the coopposite of the first equality of \ref{par:A4} and by the
non-degeneracy of $n_1$ on the right with respect to  $A$, the left-bottom
path of  
$$
\xymatrix@R=10pt{
A^2 \ar[r]^-m \ar[d]_-c &
A \ar@{-->}[dd]^-{\overline \tau} \\
A^2 \ar[d]_-{t_4} \\
A^2 \ar[r]_-{1p} & 
AL}
$$
coequalizes any pair of morphisms that the top row does. So we can use the
universality of the coequalizer in the top row to construct a right
$L$-coaction $\overline \tau$ on $A$. By the construction of $\overline \tau$,
by \eqref{eq:alternative_delta}, the short fusion equation 
\eqref{eq:short} for $t_4$, naturality of the braiding, \ref{par:A1}, 
and by the functoriality of the monoidal product, both diagrams  
$$
\xymatrix@R=10pt{
A^3 \ar[r]^-{m1} \ar[d]_-{c_{A^2,A}} &
A^2 \ar[r]^-m \ar[d]^-c &
A \ar[ddddd]^-{\overline \tau} 
&
A^3 \ar[r]^-{1m} \ar[d]_-{c_{A,A^2}} &
A^2 \ar[r]^-m \ar[d]^-c &
A \ar[ddddd]^-{\overline \tau} \\
A^3 \ar[r]^-{1m} \ar[d]_-{1c} &
A^2 \ar[dddd]^-{t_4} &
&
A^3 \ar[r]^-{m1} \ar[d]_-{1c^{-1}} &
A^2 \ar[dddd]^-{t_4} \\
A^3 \ar[d]_-{t_4 1} &&
&
A^3 \ar[dd]_-{t_4 1} \\
A^3 \ar[d]_-{1c^{-1}}\\
A^3 \ar[d]_-{t_4 1} &&
&
A^3 \ar[d]_-{1c}\\
A^3 \ar[r]^-{1m} \ar[d]_-{1c} &
A^2 \ar[r]^-{1p} &
AL \ar[dd]^-{1\delta} 
&
A^3 \ar[r]^-{m1} \ar[d]_-{c1} &
A^2 \ar[r]^-{1p} \ar[dd]^-{\overline \tau 1} &
AL \ar[dd]^-{\overline \tau 1} \\
A^3 \ar[d]_-{1 t_4} &&
&
A^3 \ar[d]_-{t_4 1} \\
A^3 \ar[rr]_-{1pp} &&
AL^2
&
A^3 \ar[r]_-{1p1} &
ALA \ar[r]_-{11p} &
AL^2}
$$
commute. The top rows are equal epimorphisms by the associativity of $m$ and
the left verticals are equal by the coopposite of Axiom
\hyperlink{AxI}{\textnormal{I}}  (the fusion equation for $t_4$), so we
conclude that $\overline \tau$ is coassociative. The diagram  
$$
\xymatrix@R=10pt{
A^2 \ar[r]^-m \ar[d]_-c & 
A \ar[dd]^-{\overline \tau} \\
A^2 \ar[d]_-{t_4} \\
A^2 \ar[r]^-{1p} \ar[d]_-{1j} &
AL \ar[d]^-{1\varepsilon} \\
A \ar@{=}[r] &
A}
$$
is commutative by the constructions of $\overline \tau$ and $\varepsilon$. By
\eqref{eq:multiplication_4}, the left vertical is equal to the epimorphism in
the top row, proving that the right vertical is the identity morphism. Thus
$\overline \tau$ is counital. Finally, by the constructions of $\tau$ and
$\overline \tau$, by \ref{par:A1} and by the functoriality of the monoidal
product, the diagrams 
$$
\xymatrix@R=10pt{
A^3 \ar[r]^-{1m} \ar[d]_-{c1} &
A^2 \ar[r]^-m \ar[d]^-c &
A \ar[ddd]^-{\overline \tau} 
&
A^3 \ar[r]^-{1m} \ar[dd]_-{t_1 1} &
A^2 \ar[r]^-m \ar[dd]^-{t_1} &
A \ar[dd]^-\tau\\
A^3 \ar[d]_-{t_41} &
A^2 \ar[dd]^-{t_4} \\
A^3 \ar[d]_-{1c} &&
&
A^3 \ar[r]^-{1m} \ar[d]_-{1c} &
A^2 \ar[r]^-{p1} \ar[dd]^-{1 \overline \tau} &
LA \ar[dd]^-{1 \overline \tau} \\
A^3 \ar[r]^-{m1} \ar[d]_-{t_1 1} &
A^2 \ar[r]^-{1p} \ar[d]^-{\tau 1} &
AL \ar[d]^-{\tau 1}
&
A^3 \ar[d]_-{1 t_4} \\
A^3 \ar[r]_-{p11} &
LA^2 \ar[r]_-{11p} &
LAL
&
A^3 \ar[r]_-{11p} &
A^2L \ar[r]_-{p11} &
LAL}
$$
commute. Their top rows are equal epimorphisms by the associativity of $m$ and
their left-bottom paths are equal by \ref{par:A22} applied together with the
non-degeneracy conditions on $n_1$ and $n_2$. This proves that the left and
right coactions $\tau$ and $\overline \tau$ on $A$ commute.  
\end{proof}

\begin{definition} 
By a {\em module} over a semigroup $(M,m)$ we mean an object $V$ in $\mathcal Y$
together with a morphism $v\colon MV \to V$ --- called the {\em action} ---
subject to the following conditions.  
\begin{itemize}
\item $v$ is associative; that is, the following diagram commutes.
$$
\xymatrix{
M^2V \ar[r]^-{1v} \ar[d]_-{m1} &
MV \ar[d]^-v \\
MV \ar[r]_-v &
V}
$$
\item $v$ is a regular epimorphism.
\item $v$ is non-degenerate on the left with respect to the class $\mathcal Y$.
\end{itemize}
A {\em morphism of modules} is a morphism $f\colon V\to V'$ which is
compatible with the actions in the sense of the commutative diagram
$$
\xymatrix{
MV \ar[r]^-{1f}\ar[d]_-v &
MV'\ar[d]^-{v'} \\
V \ar[r]_-f &
V'\ .}
$$
\end{definition} 

\begin{theorem} \label{thm:L-coactions_on_modules}
Under the standing assumptions of the section, for any 
module $v\colon AV \to V$ of the semigroup $A$, the object $V$ admits the
structure of a bicomodule over the comonoid $L$ in Theorem \ref{thm:L}. Any
morphism of $A$-modules is a bicomodule morphism with respect to these
$L$-coactions. Hence there is a functor $U$ from the category of $A$-modules
to the category of $L$-bicomodules which acts on the morphisms as the identity
map.
\end{theorem}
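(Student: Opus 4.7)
The plan is to extend the construction of Proposition~\ref{prop:L-coactions_on_A} from the case $V=A$ (with $v=m$) to an arbitrary $A$-module $(V,v)$; the key observation is that in that proof the multiplication $m$ played only the role of a regular epimorphism coequalizing a certain pair, so that the same scheme works with the action $v$ in its place.

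First, I would construct the left $L$-coaction. Consider
$$\xi_V := (p\otimes v).(t_1\otimes 1_V)\colon A^2V \longrightarrow LV$$
and the regular epimorphism
$$v^{(2)} := v.(1_A\otimes v) = v.(m\otimes 1_V)\colon A^2V \longrightarrow V,$$
which is regular because coequalizers are preserved by monoidal products and composites of regular epimorphisms are regular epimorphisms by our standing hypotheses. The technical step is to show that $\xi_V$ coequalizes every pair of parallel morphisms coequalized by $v^{(2)}$. In the case $V=A$ this was delivered by \ref{par:A14} together with non-degeneracy of $m$ and $n_2$; in the general case it proceeds by combining \ref{par:A14} on the first two tensor factors with the associativity of $v$ and the non-degeneracy of $m$, $n_1$, $n_2$ at the auxiliary objects $V$, $AV$, $LV$, which lie in $\mathcal Y$ by the assumption that $\mathcal Y$ is closed under the monoidal product. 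Universality then yields a unique $\tau_V\colon V\to LV$ with $\tau_V.v^{(2)} = \xi_V$.

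The right $L$-coaction $\overline\tau_V\colon V\to VL$ is defined symmetrically via $t_4$, using the alternative presentation of $\delta$ from Remark~\ref{rem:alternative_delta} and the coopposite counterparts of the same identities. Coassociativity of $\tau_V$ and $\overline\tau_V$ then follows from Axiom~\hyperlink{AxI}{\textnormal{I}} (the fusion equation) together with associativity of $v$, after cancelling the epimorphism $v^{(2)}$ exactly as in the proof of Proposition~\ref{prop:L-coactions_on_A}. Counitality uses \eqref{eq:multiplication_1} and \eqref{eq:multiplication_4}. The commutation of the two coactions --- the remaining bicomodule axiom --- follows from \ref{par:A22} and the non-degeneracy of $n_1$, $n_2$, as in the closing paragraph of that proof.

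Functoriality is essentially automatic. Given a morphism of $A$-modules $f\colon V\to V'$, both $(1_L\otimes f).\tau_V$ and $\tau_{V'}.f$ become equal to $\xi_{V'}.(1_{A^2}\otimes f)$ after precomposition with $v^{(2)}$: this uses only bifunctoriality, the action-compatibility $v'.(1_A\otimes f) = f.v$, and associativity of $v'$. Cancelling the epimorphism $v^{(2)}$ gives $(1_L\otimes f).\tau_V = \tau_{V'}.f$, and the same argument applies to $\overline\tau_V$. Setting $U(V,v) := (V,\tau_V,\overline\tau_V)$ and $Uf := f$ then defines the asserted functor. The principal obstacle throughout is the coequalization claim for $\xi_V$: in Proposition~\ref{prop:L-coactions_on_A} this rested on a specific identity from the appendix for the object $A$ alone, and the generalization requires absorbing the extra tensor factor $V$ into the argument without losing control of the non-degeneracy hypotheses, which is precisely what the closure of $\mathcal Y$ under the monoidal product and the hypothesis on composites of regular epimorphisms are there to ensure.
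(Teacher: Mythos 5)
Your proposal is correct and takes essentially the same route as the paper: the paper first equips $A$ itself with the two $L$-coactions (Proposition \ref{prop:L-coactions_on_A}) and then factors $(1v).(\tau 1)$, respectively $(v1).(1c).(\overline\tau 1)$, through the regular epimorphism $v$, which after precomposition with $m\otimes 1_V$ is exactly your factorization of $(p\otimes v).(t_1\otimes 1_V)$ through $v.(m\otimes 1_V)$, and the bicomodule axioms and functoriality are obtained by cancelling epimorphisms just as you describe. Two small points of comparison: the key coequalization step in the paper also uses the non-degeneracy of the action $v$ itself (together with \eqref{eq:components_compatibility} and \eqref{eq:components_are_A-linear}), not only that of $m$, $n_1$, $n_2$; and the paper deduces coassociativity, counitality and the commutation of the two coactions on $V$ directly from the corresponding properties of the coactions on $A$ through the epimorphism $v$, rather than re-running the fusion-equation and \ref{par:A22} arguments as you propose (both work).
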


\begin{proof}
In the diagram
$$
\xymatrix@R=10pt{
A^3 \ar[dd]_-{t_3 1} &&&
A^4 \ar[r]^-{11m} \ar[d]^-{c^{-1}11} \ar[lll]_-{11m} &
A^3 \ar[d]^-{c^{-1}1} \\
&&& A^4 \ar[d]^-{m11} &
A^3 \ar[dd]^-{m1} \\
A^3 \ar[d]_-{c^{-1}1} &&
A^3 \ar[d]^-{1p1} \ar[ld]_-{\sqcap^L_2 1} &
A^3 \ar[d]^-{1m} \ar[l]_-{1t_1} \\
A^3 \ar[r]_-{1\overline \sqcap^L_1} &
A^2 &
ALA \ar[l]^-{n_2 1} &
A^2 \ar[l]^-{1\tau} \ar@{=}[r] &A^2}
$$
the large region on the left commutes by \ref{par:A14}. All other 
regions commute by the construction of $\tau$ and functoriality of 
the monoidal product. Since those of the top row are equal 
epimorphisms, we deduce the equality of the left-bottom 
and right-bottom paths. Using it in the third equality, together with 
the associativity of $v$ in the first and the penultimate equalities, 
with \eqref{eq:components_compatibility} in the second equality, and 
with \eqref{eq:components_are_A-linear} in the last equality, we obtain  
$$
\begin{tikzpicture}[scale=.86]
\path (1.8,.4) node[arr,name=vd] {$\ v\ $}
(1.5,1.2) node[arr,name=pil2] {${}_{\overline \sqcap^L_2}$}
(.8,2) node[arr,name=t3] {$t_3$}
(2.2,2) node[arr,name=vu] {$\ v\ $};
\path[braid,name path=s1] (.6,2.5) to[out=270,in=45] (t3);
\draw[braid,name path=s2] (1,2.5) to[out=270,in=135] (t3);
\fill[white, name intersections={of=s2 and s1}] (intersection-1) circle(0.1);
\draw[braid] (.6,2.5) to[out=270,in=45] (t3);
\path[braid,name path=s3] (t3) to[out=315,in=90] (.8,0);
\path[braid,name path=s4] (1.6,2.5) to[out=270,in=135] (pil2);
\draw[braid,name path=s5] (t3) to[out=225,in=45] (pil2);
\fill[white, name intersections={of=s3 and s5}] (intersection-1) circle(0.1);
\fill[white, name intersections={of=s4 and s5}] (intersection-1) circle(0.1);
\draw[braid] (t3) to[out=315,in=90] (.6,0);
\draw[braid] (1.6,2.5) to[out=270,in=135] (pil2);
\draw[braid] (2,2.5) to[out=270,in=135] (vu);
\draw[braid] (2.4,2.5) to[out=270,in=45] (vu);
\draw[braid] (pil2) to[out=270,in=135] (vd);
\draw[braid] (vu) to[out=270,in=45] (vd);
\draw[braid] (vd) to[out=270,in=90] (1.8,0);
\path (3,1.4) node {$=$};

\path (4.8,.4) node[arr,name=vd] {$\ v\ $}
(4.5,1.2) node[arr,name=pil2] {${}_{\overline \sqcap^L_2}$}
(3.8,2) node[arr,name=t3] {$t_3$};
\path[braid,name path=s1] (3.6,2.5) to[out=270,in=45] (t3);
\draw[braid,name path=s2] (4,2.5) to[out=270,in=135] (t3);
\fill[white, name intersections={of=s2 and s1}] (intersection-1) circle(0.1);
\draw[braid] (3.6,2.5) to[out=270,in=45] (t3);
\path[braid,name path=s3] (t3) to[out=315,in=90] (3.8,0);
\path[braid,name path=s4] (4.6,2.5) to[out=270,in=135] (pil2);
\draw[braid,name path=s5] (t3) to[out=225,in=45] (pil2);
\fill[white, name intersections={of=s3 and s5}] (intersection-1) circle(0.1);
\fill[white, name intersections={of=s4 and s5}] (intersection-1) circle(0.1);
\draw[braid] (t3) to[out=315,in=90] (3.6,0);
\draw[braid] (4.6,2.5) to[out=270,in=135] (pil2);
\draw[braid] (pil2) to[out=270,in=180] (4.6,.7) to[out=0,in=270] (5,2.5);
\draw[braid] (4.6,.7) to[out=270,in=135] (vd);
\draw[braid] (5.4,2.5) to[out=270,in=45] (vd);
\draw[braid] (vd) to[out=270,in=90] (4.8,0);
\path (6,1.4) node {$=$};

\path (7.8,.4) node[arr,name=vd] {$\ v\ $}
(7.8,1.2) node[arr,name=pil1] {${}_{\overline \sqcap^L_1}$}
(6.8,2) node[arr,name=t3] {$t_3$};
\path[braid,name path=s1] (6.6,2.5) to[out=270,in=45] (t3);
\draw[braid,name path=s2] (7,2.5) to[out=270,in=135] (t3);
\fill[white, name intersections={of=s2 and s1}] (intersection-1) circle(0.1);
\draw[braid] (6.6,2.5) to[out=270,in=45] (t3);
\path[braid,name path=s3] (t3) to[out=315,in=90] (6.8,0);
\path[braid,name path=s4] (7.6,2.5) to[out=270,in=180] (7.6,.8)
to[out=0,in=270] (pil1); 
\draw[braid,name path=s5] (t3) to[out=225,in=135] (pil1);
\fill[white, name intersections={of=s3 and s5}] (intersection-1) circle(0.1);
\fill[white, name intersections={of=s4 and s5}] (intersection-1) circle(0.1);
\draw[braid] (t3) to[out=315,in=90] (6.8,0);
\draw[braid] (7.6,2.5) to[out=270,in=180] (7.6,.8) to[out=0,in=270] (pil1);
\draw[braid] (7.6,.8) to[out=270,in=135] (vd);
\draw[braid] (8,2.5) to[out=270,in=45] (pil1);
\draw[braid] (8.4,2.5) to[out=270,in=45] (vd);
\draw[braid] (vd) to[out=270,in=90] (7.8,0);
\path (9,1.4) node {$=$};

\path (11,.4) node[arr,name=v] {$\ v\ $}
(10,1.2) node[arr,name=n2] {$n_2$}
(10.7,1.8) node[arr,name=tau] {$\ \tau\ $};
\draw[braid] (9.6,2.5) to[out=270,in=180] (9.8,1.7) to[out=0,in=270] (10,2.5);
\draw[braid] (9.8,1.7) to[out=270,in=135] (n2);
\draw[braid] (10.7,2.5) to[out=270,in=90] (tau);
\draw[braid] (10.7,1) to[out=270,in=135] (v);
\draw[braid] (11.2,2.5) to[out=270,in=45] (v);
\path[braid,name path=s2] (10.4,2.5) to[out=270,in=180] (10.7,1)
to[out=0,in=315] (tau);
\draw[braid,name path=s6] (tau) to[out=225,in=45] (n2);
\fill[white, name intersections={of=s6 and s2}] (intersection-1) circle(0.1);
\draw[braid] (10.4,2.5) to[out=270,in=180] (10.7,1) to[out=0,in=315] (tau);
\draw[braid] (v) to[out=270,in=90] (11,0);
\draw[braid] (n2) to[out=270,in=90] (10,0);
\path (12,1.4) node {$=$};

\path (14,.4) node[arr,name=vd] {$\ v\ $}
(13,1.2) node[arr,name=n2] {$n_2$}
(14.2,1.2) node[arr,name=vu] {$\ v\ $}
(13.7,1.8) node[arr,name=tau] {$\ \tau\ $};
\draw[braid] (12.6,2.5) to[out=270,in=180] (12.8,1.7) to[out=0,in=270] (13,2.5);
\draw[braid] (12.8,1.7) to[out=270,in=135] (n2);
\draw[braid] (13.7,2.5) to[out=270,in=90] (tau);
\path[braid,name path=s2] (13.3,2.5) to[out=270,in=135] (vd);
\draw[braid] (14.4,2.5) to[out=270,in=45] (vu);
\draw[braid,name path=s6] (tau) to[out=225,in=45] (n2);
\fill[white, name intersections={of=s6 and s2}] (intersection-1) circle(0.1);
\draw[braid] (13.3,2.5) to[out=270,in=135] (vd);
\draw[braid] (vu) to[out=270,in=45] (vd);
\draw[braid] (tau) to[out=315,in=135] (vu);
\draw[braid] (vd) to[out=270,in=90] (14,0);
\draw[braid] (n2) to[out=270,in=90] (13,0);
\path (15,1.4) node {$=$};

\path (17,.4) node[arr,name=vd] {$\ v\ $}
(16,1.2) node[arr,name=n2] {$n_2$}
(17.2,1.2) node[arr,name=vu] {$\ v\ $}
(16.7,1.8) node[arr,name=tau] {$\ \tau\ $};
\draw[braid] (15.9,2.5) to[out=270,in=135] (n2);
\draw[braid] (16.7,2.5) to[out=270,in=90] (tau);
\path[braid,name path=s2] (16.3,2.5) to[out=270,in=135] (vd);
\draw[braid] (17.4,2.5) to[out=270,in=45] (vu);
\draw[braid,name path=s6] (tau) to[out=225,in=45] (n2);
\fill[white, name intersections={of=s6 and s2}] (intersection-1) circle(0.1);
\draw[braid] (16.3,2.5) to[out=270,in=135] (vd);
\draw[braid] (vu) to[out=270,in=45] (vd);
\draw[braid] (tau) to[out=315,in=135] (vu);
\draw[braid] (vd) to[out=270,in=90] (17,0);
\draw[braid] (15.6,2.5) to[out=270,in=180] (15.8,.7) to[out=0,in=270] (n2);
\draw[braid] (15.8,.7) to[out=270,in=90] (15.8,0);
\path (17.5,0) node {$.$};
\end{tikzpicture}
$$
By the non-degeneracy of $v$, $m$ and $n_2$ on the left with respect to
$\mathcal Y$, this proves that the left-bottom path of   
\begin{equation}\label{eq:tau_v}
\xymatrix{
AV \ar[r]^-v \ar[d]_-{\tau 1} &
V \ar@{-->}[d]^-\tau \\
LAV \ar[r]_-{1v} &
LV}
\end{equation}
coequalizes any pair of morphisms that the top row does. Thus we can use the
universality of the coequalizer in the top row of \eqref{eq:tau_v} to define a
left $L$-coaction $\tau$ on $V$. 

Similarly, using the fact that $m11\colon A^4\to A^3$ is epi, it 
follows from the construction of the right $L$-coaction $\overline 
\tau$ on $A$, equation \eqref{eq:components_are_A-linear} and the 
opposite of the second equality of \ref{par:A15} that  
$$
\xymatrix@R=10pt{
A^3 \ar[r]^-{1m} \ar[d]_-{1c} &
A^2 \ar[r]^-{\overline \tau 1} &
ALA \ar[d]^-{1c^{-1}} \\
A^3 \ar[d]_-{1t_3} &&
A^2L \ar[d]^-{1n_2} \\
A^3 \ar[r]_-{c^{-1}1} &
A^3 \ar[r]_-{\sqcap^R_1 1} &
A^2}
$$
commutes. With its help one derives
$$
\begin{tikzpicture}
\path (1.2,.8) node[arr,name=vd] {$\ v\ $}
(1,1.4) node[arr,name=pir2] {${}_{\sqcap^R_2}$}
(1.4,2) node[arr,name=t3] {$t_3$}
(2,2) node[arr,name=vu] {$\ v \ $};
\draw[braid] (.8,2.5) to[out=270,in=135] (pir2);
\draw[braid,name path=s4] (1.6,2.5) to[out=270,in=135] (t3);
\path[braid,name path=s3] (1.2,2.5) to[out=270,in=45] (t3);
\fill[white, name intersections={of=s3 and s4}] (intersection-1) circle(0.1);
\draw[braid] (1.2,2.5) to[out=270,in=45] (t3);
\draw[braid] (1.8,2.5) to[out=270,in=135] (vu);
\draw[braid] (2.2,2.5) to[out=270,in=45] (vu);
\draw[braid] (t3) to[out=225,in=45] (pir2);
\draw[braid] (pir2) to[out=270,in=135] (vd);
\path[braid,name path=s2] (t3) to[out=315,in=90] (2,0);
\draw[braid,name path=s1] (vu) to[out=270,in=45] (vd);
\fill[white, name intersections={of=s1 and s2}] (intersection-1) circle(0.1);
\draw[braid] (t3) to[out=315,in=90] (2,0);
\draw[braid] (vd) to[out=270,in=90] (1.2,0);
\path (2.5,1.4) node {$=$};
\end{tikzpicture}
\begin{tikzpicture} 
\path (.5,.4) node [arr,name=vd] {$\ v\ $}
(1.6,.9) node [arr,name=n2] {$n_2$}
(1.6,1.55) node [arr,name=vu] {$\ v\ $}
(1.6,2.1) node [arr,name=taub] {$\, \overline \tau \, $};
\draw[braid] (.4,2.5) to [out=270,in=135] (vd);
\path[braid,name path=i>n2d] (.8,2.5) to[out=270,in=180] (1.2,.5)
to[out=0,in=270] (n2);
\path[braid,name path=i>n2u] (1.2,2.5) to[out=270,in=135] (n2);
\draw[braid] (1.6,2.5) to[out=270,in=90] (taub);
\draw[braid,name path=i>vu] (2,2.5) to[out=270,in=45] (vu);
\draw[braid] (taub) to[out=225,in=135] (vu);
\draw[braid,name path=vu>vd] (vu) to[out=225,in=45] (vd);
\path[braid,name path=taub>n2] (taub) to[out=315,in=45] (n2);
\draw[braid] (vd) to[out=270,in=90] (.5,0);
\draw[braid] (1.2,.5) to[out=270,in=90] (1.2,0);
\fill[white, name intersections={of=vu>vd and i>n2d}] (intersection-1) circle(0.1);
\fill[white, name intersections={of=vu>vd and i>n2u}] (intersection-1) circle(0.1);
\draw[braid] (.8,2.5) to[out=270,in=180] (1.2,.5) to[out=0,in=270] (n2);
\draw[braid] (1.2,2.5) to[out=270,in=135] (n2);
\fill[white, name intersections={of=taub>n2 and i>vu}] (intersection-1) circle(0.1);
\draw[braid] (taub) to[out=315,in=45] (n2);
\end{tikzpicture} .
$$
By the non-degeneracy of $m$, $v$ and $n_2$ on the left with respect to  $\mathcal
Y$, this implies that the left-bottom path of  
\begin{equation}\label{eq:taubar_v}
\xymatrix@R=10pt{
AV \ar[r]^-v \ar[d]_-{\overline\tau 1} &
V \ar@{-->}[dd]^-{\overline \tau} \\
ALV \ar[d]_-{1c} \\
AVL \ar[r]_-{v1} &
VL}
\end{equation}
coequalizes any pair of morphisms that the top row does. Thus we can use the
universality of the coequalizer in the top row of \eqref{eq:taubar_v} to
define a right $L$-coaction $\overline\tau$ on $V$. 

The left $L$-coaction $\tau$ on $V$ is coassociative and counital by the
coassociativity and the counitality of the left $L$-coaction $\tau$ on $A$;
the right $L$-coaction $\overline \tau$ on $V$ is coassociative and counital
by the coassociativity and the counitality of the right $L$-coaction
$\overline \tau$ on $A$; and the coactions $\tau$ and $\overline \tau$ on $V$
commute since the coactions $\tau$ and $\overline \tau$ on $A$ do.

For any morphism $f\colon V\to V'$ of $A$-modules, in the diagrams
$$
\xymatrix@R=10pt{
V \ar[rrr]^-f \ar[dddd]_-\tau &&&
V'\ \ar[dddd]^-{\tau'}
&
V \ar[rrr]^-f \ar[dddd]_-{\overline \tau} &&&
V' \ar[dddd]^-{\overline \tau'} \\
& AV \ar[lu]_(.4)v \ar[r]^-{1f} \ar[dd]_-{\tau 1} &
AV' \ar[ru]^(.4){v'} \ar[dd]^-{\tau 1} &
&
& AV \ar[lu]_(.4)v \ar[r]^-{1f} \ar[d]^-{\overline \tau 1} &
AV' \ar[ru]^(.4){v'} \ar[d]_-{\overline \tau 1} \\
&&&
&
& ALV \ar[d]^-{1c} &
ALV' \ar[d]_-{1c} \\
& LAV \ar[ld]^(.4){1v} \ar[r]^-{11f} &
LAV' \ar[rd]_(.4){1v'} &
&
& AVL \ar[ld]^(.4){v1} \ar[r]^-{1f1} &
AV'L \ar[rd]_(.4){v'1} \\
LV \ar[rrr]_-{1f} &&& 
LV' &
VL \ar[rrr]_-{f1} &&&
V'L}
$$
the regions at the middle commute by the functoriality of the monoidal product
and the naturality of the braiding. The regions on the left and on the right
commute by the constructions of the coactions $\tau$ and $\overline \tau$. The
upper and lower regions commute because $f$ is a morphism of
$A$-modules. Since $v\colon AV \to V$ is epi, this proves that $f$ is a
morphism of left and right $L$-comodules.  
\end{proof}

Our final aim is to lift the monoidal structure on the category of 
$L$-bicomodules through the functor $U$ in 
Theorem~\ref{thm:L-coactions_on_modules} to give a monoidal structure 
on the category of $A$-modules.

\begin{proposition}\label{prop:act_on_L}
Under the standing assumptions of the section, the base 
object $L$ is an $A$-module via the action  
$$ 
\xymatrix{
AL \ar[r]^-{n_2} &
A \ar[r]^-p &
L\ ,}
$$
where $p$ and $n_2$ are defined as in \eqref{eq:L} and 
\eqref{eq:n_1}, respectively.  
\end{proposition}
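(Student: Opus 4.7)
The plan is to verify the three defining conditions of an $A$-module for the candidate action $\alpha := p \circ n_2 \colon AL \to L$: associativity, that $\alpha$ is a regular epimorphism, and that $\alpha$ is non-degenerate on the left with respect to $\mathcal Y$. The linchpin is the identity $p \circ m = p \circ \sqcap^L_2$ in $\mathsf C(A^2, L)$. To establish it, I use \eqref{eq:multiplication_4} (which reads $m = (1_A \otimes j) \circ t_4 \circ c$) together with the definition $\sqcap^L_2 = (j \otimes 1_A) \circ t_4 \circ c$ from \eqref{eq:pil}. Axiom \hyperlink{AxIII}{\textnormal{III}} applied to the weakly counital fusion morphism $(t_4, e_1, j)$ in $\overline{\sfc}\rev$ (supplied by Lemma \ref{lem:fusion_morphisms}(iv)) gives $e_1 \circ t_4 = t_4$; the coequalizer equation $p \circ \sqcap^L_1 = p \circ \overline\sqcap^R_1 \circ c^{-1}$ defining $p$ reads as $p \circ (j \otimes 1) \circ e_1 = p \circ (1 \otimes j) \circ e_1$. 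Inserting an $e_1$ before $t_4$ in $p \circ m = p \circ (1\otimes j) \circ t_4 \circ c$, swapping the sides of the $j$ via the coequalizer identity, and then removing $e_1$ by a second use of Axiom \hyperlink{AxIII}{\textnormal{III}}, produces $p \circ m = p \circ \sqcap^L_2$.

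For associativity, since $1_A \otimes 1_A \otimes p \colon A^3 \to A^2 L$ is a regular epimorphism (the monoidal product preserves them), it suffices to check $\alpha \circ (m \otimes 1_L) = \alpha \circ (1_A \otimes \alpha)$ after precomposing with $1_A \otimes 1_A \otimes p$. Combining the defining relation $n_2 \circ (1_A \otimes p) = \sqcap^L_2$ from \eqref{eq:n_1}, the right $A$-linearity $n_2 \circ (m \otimes 1_L) = m \circ (1_A \otimes n_2)$ supplied by \eqref{eq:components_are_A-linear}, and the linchpin identity, both sides collapse to the common expression $p \circ \sqcap^L_2 \circ (1_A \otimes \sqcap^L_2) \colon A^3 \to L$. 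For the regular epimorphism condition, $\alpha \circ (1_A \otimes p) = p \circ \sqcap^L_2 = p \circ m$ is a composite of the regular epimorphisms $p$ and $m$, hence regular epi by the standing closure hypothesis on $\mathsf C$. Writing $p \circ m$ as the coequalizer of a pair $u, v \colon Z \rightrightarrows A^2$ and pushing this pair through the regular epi $1_A \otimes p$ shows that $\alpha$ coequalizes $(1_A \otimes p) \circ u, (1_A \otimes p) \circ v$, and a routine argument using that $1_A \otimes p$ is epi confirms $\alpha$ is itself the coequalizer, hence regular epi.

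The main obstacle is the non-degeneracy of $\alpha$ on the left with respect to $\mathcal Y$. Given $g, g' \colon X \to LY$ with $Y \in \mathcal Y$ and $(\alpha \otimes 1_Y) \circ (1_A \otimes g) = (\alpha \otimes 1_Y) \circ (1_A \otimes g')$, one sets $\tilde g := (n_2 \otimes 1_Y) \circ (1_A \otimes g)$ and $\tilde g' := (n_2 \otimes 1_Y) \circ (1_A \otimes g')$ as morphisms $AX \to AY$, and the hypothesis becomes $(p \otimes 1_Y) \circ \tilde g = (p \otimes 1_Y) \circ \tilde g'$. By the non-degeneracy of $n_2$ on the left with respect to $\mathcal Y$ (which holds because $n_1$ is non-degenerate on the right by assumption), it would then suffice to upgrade this to $\tilde g = \tilde g'$. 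The subtlety is that $p \otimes 1_Y$ is an epimorphism but not a monomorphism, so the upgrade is not automatic. The plan to bridge this gap is to exploit the right $L$-coaction $\overline\tau \colon A \to AL$ on $A$ from Proposition \ref{prop:L-coactions_on_A}: under the firm-module/comodule correspondence of Remark \ref{rem:firm}, $\overline\tau$ is a section of $n_2$, so in particular $\alpha \circ \overline\tau = p$. Combined with the preserved-coequalizer presentation of $LY$ (coequalizer of $\sqcap^L_1 \otimes 1_Y$ and $\overline\sqcap^R_1 \circ c^{-1} \otimes 1_Y$, which exists since monoidal product preserves coequalizers), this allows $\tilde g$ and $\tilde g'$ to be reconstructed from their $(p \otimes 1_Y)$-images and hence forces $g=g'$.
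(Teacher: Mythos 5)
Your treatment of the first two module axioms is sound. The key identity $p.m=p.\sqcap^L_2$ (the right-hand region of the paper's diagram \eqref{eq:act_on_L}) is derived by you from Axiom \hyperlink{AxIII}{\textnormal{III}} for $(t_4,e_1,j)$ together with the coequalizer relation \eqref{eq:L}, whereas the paper obtains it from the second identity of \ref{par:A12} and the non-degeneracy of $n_2$; both routes are valid. The deduction of associativity from this identity and \eqref{eq:components_are_A-linear}, and the regular-epimorphism argument (a regular epi followed by an epi, with the standard cancellation), coincide in substance with the paper's.

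The non-degeneracy argument, however, has a genuine gap. First, the assertion that $\overline\tau$ is a section of $n_2$ is unjustified: the firm right action corresponding to the coaction $\overline\tau$ under Remark \ref{rem:firm} is $1\varepsilon.1\mu.\overline\tau 1$, and nothing identifies this with $n_2$; indeed Remark \ref{rem:multiplier_bimonad} explicitly warns that, in contrast with $n_1$ and $\overline n_1$, the actions $n_2$ and $\overline n_2$ need not correspond to the coactions without further assumptions (the $LL^{\mathsf{op}}$-bimodule $A$ is firm on the left but not necessarily on the right). Second, even granting $n_2.\overline\tau=1_A$, the reconstruction step does not go through: $\overline\tau$ maps $A\to AL$ and so cannot undo the epimorphism $p\otimes 1_Y\colon AY\to LY$, and the coequalizer presentation of $LY$ controls morphisms \emph{out of} $LY$, not morphisms \emph{into} $AY$ such as $\tilde g$; you therefore never obtain injectivity of $(p\otimes 1_Y)\circ(-)$ on the relevant maps. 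The paper's device is different and is the essential point you are missing: from the hypothesis one forms the composite that applies $t_1$ to two $A$-strands, feeds the first leg together with the $L$-output of $f^i$ into $p.n_2$, and then pairs the result with the second leg of $t_1$ via $n_1$; since $n_1.(p\otimes 1)=\sqcap^L_1$ by \eqref{eq:n_1}, identity \ref{par:A20.5} collapses the resulting $\sqcap^L_1$--$t_1$ configuration into a plain multiplication, leaving (up to braidings) $m.\bigl((n_2.(1\otimes f^i))\otimes 1_A\bigr)\otimes 1_Y$, from which $f^1=f^2$ follows by non-degeneracy of $m$ on the right and of $n_2$ on the left. It is the right non-degeneracy of $n_1$ (a standing hypothesis), channelled through $t_1$ and \ref{par:A20.5}, that inverts $p$ here --- not any splitting of $n_2$.
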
 

\begin{proof}
The object $L$ belongs to the class $\mathcal Y$ by assumption. The region at
the right of
\begin{equation} \label{eq:act_on_L}
\xymatrix{
A^2 \ar[d]_-{1p} \ar[rr]^-m \ar[rd]^-{\sqcap^L_2} &&
A \ar[d]^-p \\
AL \ar[r]_-{n_2} &
A \ar[r]_-p &
L}
\end{equation}
commutes by the second identity of \ref{par:A12} applied together with the
non-degeneracy of $n_2$ on the right.
The top-right path is a composite of regular epimorphisms, thus 
it is a regular epimorphism, hence so is the left-bottom path. 
Since the left column is an epimorphism, this proves that the
bottom row is a regular epimorphism. It follows immediately by 
commutativity of the diagram of  \eqref{eq:act_on_L} and the 
associativity of $m$ in the top row that the bottom row of 
\eqref{eq:act_on_L} is an associative action.  

It remains to see the non-degeneracy of the stated action on the left
with respect to  $\mathcal Y$. If $p1.n_21$ coequalizes $1f^1$ and $1f^2$ for
some morphisms $f^1$ and $f^2$ to $LY$ where $Y\in \mathcal Y$, then the
morphism  
$$
\begin{tikzpicture}
\path (1,3.5) node[arr,name=t1] {$t_1$}
(1.2,2.7) node[arr,name=fi] {$f^i$}
(.8,2.1) node[arr,name=n2] {$n_2$}
(.8,1.4) node[arr,name=p] {$\ p\ $}
(1.5,.7) node[arr,name=n1] {$n_1$};
\draw[braid] (.8,4) to[out=270,in=135] (t1);
\draw[braid] (1.2,4) to[out=270,in=45] (t1);
\draw[braid,name path=s1] (1.6,4) to[out=270,in=90] (fi);
\draw[braid,name path=fi>o] (fi) to[out=315,in=90] (2,.3);
\path[braid,name path=s2] (t1) to[out=315,in=45] (n1);
\fill[white, name intersections={of=fi>o and s2}] (intersection-1) circle(0.1);
\fill[white, name intersections={of=s1 and s2}] (intersection-1) circle(0.1);
\draw[braid] (t1) to[out=315,in=45] (n1);
\draw[braid] (t1) to[out=225,in=135] (n2);
\draw[braid] (fi) to[out=225,in=45] (n2);
\draw[braid] (n2) to[out=270,in=90] (p);
\draw[braid] (p) to[out=270,in=135] (n1);
\draw[braid] (n1) to[out=270,in=90] (1.5,.3);
\path (2.5,2.5) node {$=$};

\path (3.5,3.5) node[arr,name=t1] {$t_1$}
(4.3,3.5) node[arr,name=fi] {$f^i$}
(3.5,2.1) node[arr,name=n2] {$n_2$}
(4,1) node[arr,name=pil1] {${}_{\sqcap^L_1}$};
\draw[braid] (3.3,4) to[out=270,in=135] (t1);
\draw[braid] (3.7,4) to[out=270,in=45] (t1);
\draw[braid] (4.3,4) to[out=270,in=90] (fi);
\draw[braid,name path=s1] (fi) to[out=225,in=45] (n2);
\path[braid,name path=s2] (t1) to[out=315,in=45] (pil1);
\fill[white, name intersections={of=s1 and s2}] (intersection-1) circle(0.1);
\draw[braid] (fi) to[out=315,in=90] (4.5,.3);
\draw[braid] (t1) to[out=315,in=45] (pil1);
\draw[braid] (t1) to[out=225,in=135] (n2);
\draw[braid] (n2) to[out=270,in=135] (pil1);
\draw[braid] (pil1) to[out=270,in=90] (4,.3);
\path (5.4,2.5) node {$=$};

\path (7.6,3.5) node[arr,name=fi] {$f^i$}
(6.5,2.1) node[arr,name=n2] {$n_2$};
\draw[braid] (6.3,4) to[out=270,in=135] (n2);
\draw[braid] (7.6,4) to[out=270,in=90] (fi);
\path[braid,name path=s2] (6.7,4) to[out=315,in=0] (6.7,1) to[out=180,in=270]
(n2); 
\draw[braid] (6.7,1) to[out=270,in=90] (6.7,.3);
\draw[braid,name path=s1] (fi) to[out=225,in=45] (n2);
\fill[white, name intersections={of=s1 and s2}] (intersection-1) circle(0.1);
\draw[braid] (6.7,4) to[out=315,in=0] (6.7,1) to[out=180,in=270]
(n2); 
\draw[braid] (fi) to [out=315,in=90] (7.8,.3);
\end{tikzpicture}
$$
does not depend on $i\in \{1,2\}$. The second equality follows by
\ref{par:A20.5} using that $11p\colon A^3 \to A^2L$ is epi. By the
non-degeneracy of $m$ on the right and $n_2$ on the left with respect to
$\mathcal Y$, this implies $f^1=f^2$ hence non-degeneracy of the $A$-action on
$L$ on the left with respect to  $\mathcal Y$. 
\end{proof}

\begin{lemma}\label{lem:E}
Under the standing assumptions of the section, for any 
modules $v\colon AV \to V$ and $w\colon AW\to W$ over the semigroup 
$A$, the following diagrams commute.   
$$
\xymatrix{
A^2VW \ar[r]^-{1c1} \ar[d]_-{e_111} &
AVAW \ar[r]^-{vw} &
VW \ar[d]^-s
&
A^2VW \ar[d]_-{e_211} \ar[r]^-{11s} &
A^2VW \ar[r]^-{1c1} &
AVAW \ar[d]^-{vw}\\
A^2VW\ar[r]_-{1c1} &
AVAW \ar[r]_-{vw} &
VW
&
A^2VW \ar[r]_-{1c1} &
AVAW \ar[r]_-{vw} &
VW}
$$
where $s$ is the morphism \eqref{eq:s}.
\end{lemma}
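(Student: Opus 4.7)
I would prove the first identity in detail and deduce the second via the symmetry principle of Corollary~\ref{cor:Z2xZ2}: the opposite-coopposite operation there interchanges $e_1$ with $e_2$ and swaps the roles of the left and right $L$-coactions used in the construction of $s$, so the two statements correspond under this duality. (Alternatively, one can run the same computation with $e_2$, $t_2$, $t_3$ playing the roles of $e_1$, $t_1$, $t_4$.)

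For the first identity, the plan is first to unfold the definition $s = (1 \otimes \varepsilon \otimes 1)(1 \otimes \mu \otimes 1)(\overline\tau \otimes \tau)$ and to use the defining relations of the coactions on modules from the proof of Theorem~\ref{thm:L-coactions_on_modules}, namely $\overline\tau \cdot v = (v \otimes 1)(1 \otimes c)(\overline\tau^A \otimes 1)$ and $\tau \cdot w = (1 \otimes w)(\tau^A \otimes 1)$. This brings the left-hand side $s \cdot (v \otimes w) \cdot (1 c 1)$ into a form involving only the coactions on $A$ itself. Since those coactions on $A$ are determined by their factorizations $\overline\tau^A \cdot m = (1 \otimes p) \cdot t_4 \cdot c$ and $\tau^A \cdot m = (p \otimes 1) \cdot t_1$ (from Proposition~\ref{prop:L-coactions_on_A}), and since $m$ is a regular epimorphism preserved by the monoidal product, it suffices to verify the identity after pre-composing both sides with the regular epimorphism $m \otimes m \otimes 1_V \otimes 1_W \colon A^4 V W \to A^2 V W$.

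After this pre-composition, the key algebraic simplification is the identity
$$ \varepsilon \cdot \mu \cdot (p \otimes p) \ =\ (j \otimes j) \cdot e_1, $$
which follows from $\mu \cdot (p \otimes p) = p \cdot \sqcap^L_1$, $\varepsilon \cdot p = j$, and the formula $\sqcap^L_1 = (j \otimes 1) \cdot e_1$ read off from \eqref{eq:pil}. This trades the projection $\varepsilon \cdot \mu$ onto $L$ for an application of $e_1$ at the $A$-level. Both sides now become composites of the structural morphisms $v, w, m, c, t_1, t_4, e_1$, and the required equality reduces to a string-diagrammatic identity essentially on the $A$-factors (tensored with $V, W$). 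Using the associativity of $v$, $w$, $m$, the compatibility~\eqref{eq:c14} relating $t_1$ and $t_4$, and the axioms of the weakly counital fusion morphism $(t_1, e_1, j)$---notably Axioms~III and~VII, which relate $t_1$ with $e_1$ via the counit---this should reduce to a consequence of the identities collected in Appendix~\ref{app:strings}. I expect the main technical obstacle to be the bookkeeping of the interacting braidings $c_{A, V}$ coming from $(1 c 1)$, $c_{L, V}$ coming from the formula for $\overline\tau \cdot v$, and the $c$ appearing in $\overline\tau^A \cdot m$; careful and systematic use of the hexagon axiom and naturality of the braiding will be needed to bring the two sides into a common normal form.
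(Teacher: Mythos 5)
Your plan for the first diagram is essentially the paper's argument: unfold $s$ as in \eqref{eq:s}, push the coactions on $V$ and $W$ back to the coactions on $A$ via their defining properties \eqref{eq:tau_v} and \eqref{eq:taubar_v}, pre-compose with the epimorphism $mm$, and observe that $1\varepsilon 1.1\mu 1.\overline\tau\tau\colon A^2\to A^2$ equals $e_1$ --- your identity $\varepsilon.\mu.pp=jj.e_1$ is exactly the content of the paper's diagram \eqref{eq:e_1_of_A^2}, whose key ingredient is the second equality of \ref{par:A4} rather than a fresh string computation. That half is sound.

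The gap is in the second diagram. It is not the opposite-coopposite of the first, so Corollary \ref{cor:Z2xZ2} does not deliver it: besides the fact that the duality reverses the monoidal product (so the left $A$-modules of this section would become right modules), the first diagram trades $s$ \emph{post-composed} with the diagonal action $b^{(2)}:=vw.1c1$ for $e_1$, while the second trades $11s$ \emph{pre-composed} on the $VW$-factor for $e_2$; none of the symmetries of Corollary \ref{cor:Z2xZ2} exchanges these two shapes. Your fallback of ``running the same computation with $e_2,t_2,t_3$'' also does not go through as stated, because the computation for the first diagram rests on the compatibilities $\tau.w=1w.\tau 1$ and $\overline\tau.v=v1.1c.\overline\tau 1$, which only let you move a coaction from \emph{after} an action to \emph{before} it --- and in the second diagram the coactions already sit before the action. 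The paper's actual route is different in kind: pre-compose with the epimorphism $11b^{(2)}\colon A^4VW\to A^2VW$, use associativity of $b^{(2)}$ for the multiplication $m^{(2)}=mm.1c1$ to rewrite both sides through $m^{(2)}11$, apply \eqref{eq:e_components} (which says precisely that $e_1$ and $e_2$ are the two components of an \mM-morphism $I\nto A^2$ for the semigroup $(A^2,m^{(2)})$, so that $e_2$ acting on the first $A^2$-factor may be exchanged for $e_1$ acting on the second), and only then invoke the already-proved first diagram before cancelling the epimorphism. Some such use of \eqref{eq:e_components} together with the associativity and epimorphy of the actions is indispensable; the duality principle alone cannot replace it.
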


\begin{proof}
The diagram
\begin{equation}\label{eq:E_VW}
\xymatrix{
A^2VW \ar[r]^-{1c1} \ar[d]_-{\overline \tau \tau 11} &
AVAW \ar[d]^-{\overline \tau 1 \tau 1} \ar[r]^-{vw} &
VW \ar[dd]^-{\overline \tau \tau} \\
AL^2AVW\ar[r]^-{11c_{LA,V}1} \ar[rd]_-{1c_{L^2A,V}1} \ar[dd]_-{1\mu111} &
ALVLAW \ar[d]^-{1c111} \\
& AVL^2AW \ar[r]^-{v11w} \ar[d]^-{11\mu 11} &
VL^2W \ar[d]^-{1\mu 1} \\
ALAVW \ar[r]^-{1c_{LA,V}1} \ar[d]_-{1\varepsilon 111} &
AVLAW \ar[d]^-{11\varepsilon 11} &
VLW \ar[d]^-{1\varepsilon 1} \\
A^2VW \ar[r]_-{1c1} &
AVAW \ar[r]_-{vw} &
VW}
\end{equation}
is commutative by the constructions of the $L$-coactions on $V$ and $W$, and
naturality and coherence of the braiding. Since $mm\colon A^4 \to A^2$ is epi,
commutativity of
\begin{equation} \label{eq:e_1_of_A^2}
\xymatrix{
A^2 \ar[ddd]_-{\overline \tau \tau} &
A^4 \ar[l]_-{mm} \ar[rr]^-{mm} \ar[d]^-{c11} &&
A^2 \ar[dd]^-{e_1} \\
& A^4 \ar[d]^-{t_4t_1} \\
& A^4 \ar[r]^-{1\sqcap^L_11} \ar[d]^-{1pp1} &
A^3 \ar[r]^-{1j1} \ar[d]^-{1p1} &
A^2 \ar@{=}[d] \\
AL^2A \ar@{=}[r] &
AL^2A \ar[r]_-{1\mu 1} &
ALA \ar[r]_-{1\varepsilon 1} &
A^2}
\end{equation}
implies that the left-bottom path of \eqref{eq:E_VW} is equal to $vw.1c1.e_1
11$. Hence so is the top-right path, proving commutativity of the first
diagram of the claim. In \eqref{eq:e_1_of_A^2} the 
region on the left commutes by the constructions of the coactions 
$\bar \tau$ and $\tau$, the middle region at the bottom commutes by 
the construction of $\mu$, the bottom-right region commutes by the 
construction of $\varepsilon$, and the top-right region commutes by 
the second equality of \ref{par:A4}. In order to prove commutativity 
of the second diagram of the claim, note that 
$b^{(2)}:=
\xymatrix@C=25pt{
A^2VW \ar[r]|(.46){\, 1c1\, } &
AVAW \ar[r]|(.52){\, vw\, } &
VW}$
is an associative action for the multiplication 
$m^{(2)}:=$\break
$\xymatrix@C=25pt{
A^4 \ar[r]|(.46){\, 1c1\,} &
A^4 \ar[r]|(.47){\, mm\,} &
A^2}$.
Hence the bottom regions of the diagram
$$
\xymatrix{
A^2VW \ar[dd]_-{e_211} &&
A^4VW \ar[ll]_-{11b^{(2)}} \ar[ld]_-{e_21111} \ar[rr]^-{11b^{(2)}} 
\ar[rd]^-{11e_111} &&
A^2VW \ar[dd]^-{11s} \\
& A^4VW \ar[ld]_-{11b^{(2)}} \ar[rd]^-{m^{(2)}11} &&
A^4VW \ar[rd]^-{11b^{(2)}} \ar[ld]_-{m^{(2)}11} \\
A^2VW \ar@{=}[d] &&
A^2 VW \ar[d]^-{b^{(2)}} &&
A^2VW \ar@{=}[d] \\
A^2VW \ar[rr]_-{b^{(2)}} &&
VW &&
A^2VW \ar[ll]^-{b^{(2)}}}
$$
commute. The region at the middle commutes by \eqref{eq:e_components}, the
triangle-shaped region at the top-left commutes by functoriality of the
monoidal product and the triangle-shaped region at the top-right commutes by
commutativity of the first diagram of the claim. Since the morphisms of the
top row are equal epimorphisms, this completes the proof. 
\end{proof}

\begin{lemma} \label{lem:b_0}
Under the standing assumptions of the section, for any 
modules $v\colon AV \to V$ and $w\colon AW\to W$ over the semigroup $A$, the
object $VW$ admits an associative action $b^0\colon AVW\to VW$ (which may not
obey the non-degeneracy conditions on a module), which renders commutative the
diagrams
$$
\xymatrix{
AVW \ar[r]^-{1s} \ar[rd]^-{b^0} \ar[d]_-{b^0}&
AVW \ar[d]^-{b^0}
&
A^3VW \ar[r]^-{d_211} \ar[d]_-{11b^0} &
A^2VW \ar[r]^-{1c1} &
AVAW \ar[d]^-{vw} \\
VW \ar[r]_-s & 
VW
&
A^2VW \ar[r]_-{1c1} &
AVAW \ar[r]_-{vw} &
VW}
$$
in which $s$ is the morphism \eqref{eq:s} and 
$
d_2:=\xymatrix@C=15pt{
A^3 \ar[r]^-{c^{-1}1} &
A^3 \ar[r]^-{1t_2} &
A^3 \ar[r]^-{c1} &
A^3 \ar[r]^-{1m} &
A^2}
$, of \ref{par:d1_1}.
\end{lemma}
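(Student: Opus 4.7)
I construct $b^0 \colon AVW \to VW$ via the regular epimorphism $1_{AV} \otimes w \colon AVAW \to AVW$. This is indeed a coequalizer, since $w$ is an associative non-degenerate regular epimorphism — hence the coequalizer of $m \otimes 1, 1 \otimes w \colon A^2 W \to AW$ — and by assumption the monoidal product preserves coequalizers. I define $b^0$ to be the unique morphism satisfying
$$ b^0 \circ (1_{AV} \otimes w) \;=\; (v \otimes w) \circ (1\, c\, 1) \circ (t_1 \otimes 1 \otimes 1) \circ (1\, c\, 1), $$
where the first $c$ is $c_{V,A}$ and the second is $c_{A,V}$. The existence of this factorization amounts to verifying that the right-hand side coequalizes the pair $\bigl(1_{AV} \otimes (m \otimes 1),\ 1_{AV} \otimes (1 \otimes w)\bigr)\colon AVA^2W \to AVAW$, which reduces, after using associativity of $w$, to the identity $t_1 \circ (1 \otimes m) = (1 \otimes m) \circ (t_1 \otimes 1)$; this identity is one of the consequences of the fusion equation (Axiom \hyperlink{AxI}{I}) recorded in the appendix (see \ref{par:A1}).

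Associativity of $b^0$, that is $b^0 \circ (m \otimes 1_{VW}) = b^0 \circ (1_A \otimes b^0)$, is verified after pre-composition with $1_{A^2 V} \otimes w \colon A^2 V A W \to A^2 V W$, which is again a regular epimorphism. After this pre-composition both sides are computable from the defining formula for $b^0$, and the equality then follows from the short fusion equation \eqref{eq:short} for $t_1$ together with the associativity of $m$ and of $w$. The second diagram of the lemma is handled analogously: pre-composing both sides of $(v w) \circ (1 c 1) \circ (d_2 \otimes 1_{VW}) = (vw) \circ (1 c 1) \circ (1 \otimes 1 \otimes b^0)$ with $1_{A^3 V} \otimes w \colon A^3 V A W \to A^3 V W$ and unfolding $d_2 = (1 \otimes m) \circ (c \otimes 1) \circ (1 \otimes t_2) \circ (c^{-1} \otimes 1)$ reduces the identity, modulo routine rearrangements of braidings, to the compatibility \eqref{eq:c12} between $t_1$ and $t_2$.

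The hardest part is the first diagram, comprising the two identities $s \circ b^0 = b^0$ and $b^0 \circ (1_A \otimes s) = b^0$, where $s$ is the idempotent \eqref{eq:s}. For each, I pre-compose with $1_{AV} \otimes w$ (or a further extension by $1 \otimes w$) and expand both $b^0$ by its defining formula and $s$ by the explicit construction of the $L$-coactions $\tau, \overline\tau$ on $V$ and $W$ from Theorem~\ref{thm:L-coactions_on_modules}, and of $\mu$, $\varepsilon$ from Theorem~\ref{thm:L}. This produces string-diagrammatic expressions involving nested occurrences of $t_1$ (one coming from $b^0$, one from each of the coactions), which must be collapsed back to a single occurrence of $t_1$. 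The simplifications will be carried out using the fusion equation (Axiom \hyperlink{AxI}{I}), the commutativity relations between $t_1$ and $e_1$ (Axioms \hyperlink{AxIII}{III}, \hyperlink{AxV}{V}, \hyperlink{AxVI}{VI}), the compatibility \hyperlink{AxVII}{VII}, and consequences from the appendix such as \ref{par:A8} and \ref{par:A10}. The idempotency of $e_1, e_2$ (Axiom \hyperlink{AxII}{II}) and the definition of $\varepsilon$ in terms of $j$ (Theorem~\ref{thm:L}(5)) play the central role in absorbing the idempotent $s$ into the structure of $b^0$; the main obstacle will be keeping track of which branch of the string diagram should be manipulated at each step so that the absorption terminates in exactly the formula defining $b^0$.
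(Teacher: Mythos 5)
There are two genuine problems with your proposal. The first is in the very construction of $b^0$: you define it by factoring through $1_{AV}\otimes w\colon AVAW\to AVW$, and you justify the factorization by checking that your candidate morphism coequalizes the single pair $\bigl(1_{AV}\otimes(m\otimes 1),\,1_{AV}\otimes(1\otimes w)\bigr)$. That only suffices if $w$ is the coequalizer of $(m\otimes 1,\,1\otimes w)$, i.e.\ if $W$ is a \emph{firm} $A$-module. Firmness is not among the module axioms in this section (these are: associativity, the action is a regular epimorphism, and non-degeneracy on the left), and it does not follow from them: a regular epimorphism is the coequalizer of \emph{some} pair, which you do not control, and surjectivity plus non-degeneracy of a non-unital action does not yield $A\otimes_A W\cong W$ even over a field. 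This is precisely the difficulty the paper's proof is organized around: it factors instead through $1(vw.1c1)\colon A^3VW\to AVW$, the image under $A(-)$ of the composite regular epimorphism $vw.1c1$, and verifies for an \emph{arbitrary} pair $(f,g)$ coequalized by $vw.1c1$ that the candidate $vw.1c1.(d_1 11)$ coequalizes $(1f,1g)$; that verification is where the non-degeneracy of $v$ enters (one tensors an auxiliary $A$ on the left, rewrites the composite using \eqref{eq:c12}, \ref{par:d1_1} and associativity of the actions until $vw.1c1$ appears as a right-hand factor, and then cancels the auxiliary $v$). Your plan never invokes non-degeneracy of $v$ or $w$ at this stage, which is the symptom of the gap. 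Your defining formula itself is the correct one (it matches the characterization later recorded in Lemma \ref{lem:alternative_b}\,(1)), but as argued you have not shown that a morphism satisfying it exists.

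Second, for the first displayed diagram ($s.b^0=b^0$ and $b^0.1s=b^0$) you only list the axioms you expect to use and concede that keeping track of the branches of the string diagram is the main obstacle; this is a plan, not a proof, and the missing idea is concrete. One first proves (this is Lemma \ref{lem:E}) that $s.(vw.1c1)=(vw.1c1).(e_1 11)$ and $(vw.1c1).(11s)=(vw.1c1).(e_2 11)$, by comparing the coactions $\overline\tau\tau$ on $VW$ with those on $A^2$ via \eqref{eq:e_1_of_A^2}, whose proof rests on the second identity of \ref{par:A4}. Once $s$ has been traded for $e_1$ (resp.\ $e_2$) acting on the $A$-strands, absorbing it into $b^0$ reduces to the purely algebraic identities $e_1.d_1=d_1=d_1.(1\otimes e_1)$ of \ref{par:d1_1} (which encode Axioms \hyperlink{AxIII}{III} and \hyperlink{AxIV}{IV}) together with an epimorphism argument; without this intermediate step the direct expansion you describe has no visible termination. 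Your treatment of the second diagram (reduction to the $\mM$-morphism compatibility of $(d_1,d_2)$, ultimately \eqref{eq:c12}) does match the paper's argument and is fine, but only modulo the existence of $b^0$ established above.
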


\begin{proof}
For the morphism $d_1:=
\xymatrix@C=15pt{
A^3 \ar[r]^-{1c^{-1}} &
A^3 \ar[r]^-{t_11} &
A^3 \ar[r]^-{1c} &
A^3 \ar[r]^-{m1} &
A^2}$
of \ref{par:d1_1},
$$
\begin{tikzpicture}
\path (1,.8) node[arr,name=vd] {$\ v\ $}
(1.4,1.3) node[arr,name=vl] {$\ v\ $}
(2.4,1.3) node[arr,name=vr] {$\, w\, $}
(1.5,2) node[arr,name=d1] {$d_1$};
\draw[braid] (.8,2.5) to[out=270,in=135] (vd);
\draw[braid] (1.2,2.5) to[out=270,in=135] (d1);
\draw[braid] (1.5,2.5) to[out=270,in=90] (d1);
\draw[braid] (1.8,2.5) to[out=270,in=45] (d1);
\draw[braid] (2.6,2.5) to[out=270,in=45] (vr);
\draw[braid,name path=s1] (2.2,2.5) to[out=270,in=45] (vl);
\path[braid,name path=s2] (d1) to[out=315,in=135] (vr);
\fill[white, name intersections={of=s1 and s2}] (intersection-1) circle(0.1);
\draw[braid] (d1) to[out=315,in=135] (vr);
\draw[braid] (d1) to[out=225,in=135] (vl);
\draw[braid] (vl) to[out=270,in=45] (vd);
\draw[braid] (vd) to[out=270,in=90] (1,.5);
\draw[braid] (vr) to[out=270,in=90] (2.4,.5);
\draw (3.2,1.7) node[empty] {$\stackrel{~(a)}=$};

\path (4.4,1) node[arr,name=vl] {$\ v\ $}
(5.4,1) node[arr,name=vr] {$\, w\, $}
(4.5,2) node[arr,name=d1] {$d_1$};
\draw[braid] (3.8,2.5) to[out=270,in=180] (4,1.5) to[out=0,in=225] (d1); 
\draw[braid] (4,1.5) to[out=270,in=135] (vl);
\draw[braid] (4.2,2.5) to[out=270,in=135] (d1);
\draw[braid] (4.5,2.5) to[out=270,in=90] (d1);
\draw[braid] (4.8,2.5) to[out=270,in=45] (d1);
\draw[braid] (5.6,2.5) to[out=270,in=45] (vr);
\draw[braid,name path=s1] (5.2,2.5) to[out=270,in=45] (vl);
\path[braid,name path=s2] (d1) to[out=315,in=135] (vr);
\fill[white, name intersections={of=s1 and s2}] (intersection-1) circle(0.1);
\draw[braid] (d1) to[out=315,in=135] (vr);
\draw[braid] (vl) to[out=270,in=90] (4.4,.5);
\draw[braid] (vr) to[out=270,in=90] (5.4,.5);
\draw (6.2,1.7) node[empty] {$\stackrel{~\ref{par:d1_1}}=$};

\path (7.4,1) node[arr,name=vl] {$\ v\ $}
(8.3,1) node[arr,name=vr] {$\, w\, $}
(7,2) node[arr,name=t2] {$t_2$};
\draw[braid] (6.8,2.5) to[out=270,in=135] (t2);
\draw[braid] (7.2,2.5) to[out=270,in=45] (t2);
\draw[braid,name path=s3] (7.5,2.5) to[out=270,in=0] (7.1,1.5)
to[out=180,in=225](t2); 
\path[braid,name path=s4] (7.9,2.5) to[out=270,in=0] (7.5,1.5)
to[out=180,in=315](t2); 
\fill[white, name intersections={of=s3 and s4}] (intersection-1) circle(0.1);
\draw[braid] (7.9,2.5) to[out=270,in=0] (7.5,1.5) to[out=180,in=315](t2); 
\draw[braid] (8.7,2.5) to[out=270,in=45] (vr);
\draw[braid] (7.1,1.5) to[out=270,in=135] (vl);
\draw[braid,name path=s1] (8.3,2.5) to[out=270,in=45] (vl);
\path[braid,name path=s2] (7.5,1.5) to[out=270,in=135] (vr);
\fill[white, name intersections={of=s1 and s2}] (intersection-1) circle(0.1);
\draw[braid] (7.5,1.5) to[out=270,in=135] (vr);
\draw[braid] (vl) to[out=270,in=90] (7.4,.5);
\draw[braid] (vr) to[out=270,in=90] (8.3,.5);
\draw (9.2,1.7) node[empty] {$\stackrel{~(a)}=$};

\path (10,1) node[arr,name=vd] {$\ v\ $}
(10.8,2) node[arr,name=vu] {$\ v\ $}
(11.2,1) node[arr,name=wd] {$\, w\, $}
(11.6,2) node[arr,name=wu] {$\, w\, $}
(10,2) node[arr,name=t2] {$t_2$};
\draw[braid] (9.8,2.5) to[out=270,in=135] (t2);
\draw[braid] (10.2,2.5) to[out=270,in=45] (t2);
\draw[braid] (10.6,2.5) to[out=270,in=135] (vu);
\draw[braid] (11.8,2.5) to[out=270,in=45] (wu);
\draw[braid,name path=s3] (11.5,2.5) to[out=270,in=45] (vu); 
\path[braid,name path=s4] (10.9,2.5) to[out=270,in=135] (wu); 
\fill[white, name intersections={of=s3 and s4}] (intersection-1) circle(0.1);
\draw[braid] (10.9,2.5) to[out=270,in=135] (wu); 
\draw[braid,name path=s1] (vu) to[out=270,in=45] (vd);
\path[braid,name path=s2] (t2) to[out=315,in=135] (wd);
\fill[white, name intersections={of=s1 and s2}] (intersection-1) circle(0.1);
\draw[braid] (t2) to[out=315,in=135] (wd);
\draw[braid] (t2) to[out=225,in=135] (vd);
\draw[braid] (wu) to[out=270,in=45] (wd);
\draw[braid] (vd) to[out=270,in=90] (10,.5);
\draw[braid] (wd) to[out=270,in=90] (11.2,.5);
\end{tikzpicture}
$$
where we are now using $(a)$ for associativity of  the 
actions. By the non-degeneracy of $v$ on the left with respect to  
$\mathcal Y$, this implies that the morphism in the left-bottom path of 
$$
\xymatrix{
A^3VW \ar[r]^-{11c1} \ar[d]_-{d_111} & 
A^2VAW \ar[r]^-{1vw} & 
AVW \ar@{-->}[d]^-{b^0} \\
A^2VW \ar[r]_-{1c1} & 
AVAW \ar[r]_-{vw} & 
VW}
$$
coequalizes morphisms $1f$ and $1g$ whenever $vw.1c1$ coequalizes
$f$ and $g$. The top row is the image of the coequalizer $vw.1c1$ under the
functor $A(-)$ hence it is a coequalizer; we can use its universality 
to define $b^0$. It is associative by the associativity of $d_1$: see 
the third equality of \ref{par:d1_1}.  

Concerning the commutativity of the diagrams of the claim, we use again the
associative action 
$b^{(2)}:=
\xymatrix@C=25pt{
A^2VW \ar[r]|(.46){\, 1c1\, } &
AVAW \ar[r]|(.52){\, vw\, } &
VW}$
for the multiplication 
$m^{(2)}:=$\break
$\xymatrix@C=25pt{
A^4 \ar[r]|(.46){\, 1c1\,} &
A^4 \ar[r]|(.47){\, mm\,} &
A^2}$.
Observe that the diagrams 
$$
\xymatrix{
A^3VW \ar[r]^-{1b^{(2)}} \ar[d]_-{1e_111} &
AVW \ar[d]^-{1s}
&
A^3VW \ar[r]^-{1b^{(2)}} \ar[dd]_-{d_111} &
AVW \ar[dd]^-{b^0} 
&
A^3VW \ar[r]^-{1b^{(2)}} \ar[d]_-{d_111} &
AVW \ar[d]^-{b^0} \\
A^3VW \ar[r]^-{1b^{(2)}} \ar[d]_-{d_111} &
AVW \ar[d]^-{b^0} 
&&&
A^2VW \ar[r]^-{b^{(2)}} \ar[d]_-{e_111} &
AVW \ar[d]^-{s} \\
A^2VW \ar[r]_-{b^{(2)}} &
VW
&
A^2VW \ar[r]_-{b^{(2)}} &
VW
&
A^2VW \ar[r]_-{b^{(2)}} &
VW}
$$
commute by the construction of $b^0$ and commutativity of the first diagram of
Lemma \ref{lem:E}. Since the left verticals are equal by the fourth and fifth
equalities of \ref{par:d1_1}, and the top rows are equal epimorphisms, we
deduce the equality of the right verticals; that is, commutativity of the
first diagram of the claim. In the diagram 
$$
\xymatrix{
A^3VW \ar[dd]_-{d_211} &&
A^5VW \ar[ll]_-{111b^{(2)}} \ar[ld]_-{d_21111} \ar[rr]^-{111b^{(2)}} 
\ar[rd]^-{11d_111} &&
A^3VW \ar[dd]^-{11b^0} \\
& A^4VW \ar[ld]_-{11b^{(2)}} \ar[rd]^-{m^{(2)}11} &&
A^4VW \ar[rd]^-{11b^{(2)}} \ar[ld]_-{m^{(2)}11} \\
A^2VW \ar@{=}[d] &&
A^2 VW \ar[d]^-{b^{(2)}} &&
A^2VW \ar@{=}[d] \\
A^2VW \ar[rr]_-{b^{(2)}} &&
VW &&
A^2VW \ar[ll]^-{b^{(2)}}}
$$
the bottom regions commute by the associativity of $b^{(2)}$. The region at
the middle commutes by the second equality of \ref{par:d1_1}. The
triangle-shaped region at the top-left commutes by the functoriality of the
monoidal product and the triangle-shaped region at the top-right commutes by
the construction of $b^0$. Since the morphisms of the top row are equal
epimorphisms, we deduce the equality of the left-bottom and right-bottom
paths. This proves commutativity of the second diagram of the claim. 
\end{proof}

Assume now that idempotent morphisms in $\mathsf C$ split. Then in
particular $e_1$ --- which is equal to $s\colon A^2\to A^2$ of \eqref{eq:s} by
\eqref{eq:e_1_of_A^2} --- splits by some epimorphism $\hat e_1\colon A^2 \to
A\circ A$, via some monomorphism $\check e_1\colon A \circ A \to A^2$; 
and $e_2$ splits by some epimorphism $\hat e_2\colon A^2 \to A\bullet A$, via
some monomorphism $\check e_2\colon A \bullet A \to A^2$. By the last equality
of \ref{par:d1_1} and  its opposite-coopposite, and by 
universality of the equalizers in the bottom rows of  
\begin{equation}\label{eq:dhat} 
\xymatrix{
& A^3 \ar[d]^-{d_1} \ar@{-->}[ld]_-{\hat d_1} 
&&
& A^3 \ar[d]^-{d_2} \ar@{-->}[ld]_-{\hat d_2} \\
A \circ A \ar[r]_-{\check e_1} &
A^2 \ar@<2pt>[r]^-1 \ar@<-2pt>[r]_-{e_1} &
A^2
&
A \bullet A \ar[r]_-{\check e_2} &
A^2 \ar@<2pt>[r]^-1 \ar@<-2pt>[r]_-{e_2} &
A^2\ ,}
\end{equation}
there exist unique morphisms $\hat d_1$ (equal to $\hat e_1.d_1$, in fact) and
$\hat d_2$ (equal to $\hat e_2.d_2$, in fact) rendering commutative the
diagrams.    

\begin{proposition} \label{prop:b}
If we add to the standing assumptions of the section the requirements 
that idempotent morphisms in $\mathsf C$ split and that the morphisms 
$\hat d_1$ and $\hat d_2$ in \eqref{eq:dhat} are regular 
epimorphisms, then for any modules $v\colon AV \to V$ and $w\colon 
AW\to W$, the object $V\circ W$ admits an $A$-module structure too. 
Moreover, for any morphisms of $A$-modules $f\colon V \to V'$ and 
$W\to W'$, $f\circ g\colon V\circ W \to V'\circ W'$ is a morphism of 
$A$-modules too.
\end{proposition}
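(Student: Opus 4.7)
The plan is to transfer the associative action $b^0\colon AVW\to VW$ of Lemma~\ref{lem:b_0} through the splitting $(\hat s,\check s)$ of the idempotent $s$ to obtain an action $b$ on $V\circ W$, verify the three module axioms in turn, and then check functoriality. By Lemma~\ref{lem:b_0} we have $b^0\circ 1s = b^0 = s\circ b^0$, and since tensoring with $A$ preserves the splitting of $s$, the morphism $1\hat s\colon AVW\to A(V\circ W)$ is a split epi (with section $1\check s$) that coequalizes $1s$ with the identity on $AVW$. Because $\hat s\circ b^0\circ 1s = \hat s\circ b^0$, universality yields a unique $b\colon A(V\circ W)\to V\circ W$ with $b\circ 1\hat s = \hat s\circ b^0$, equivalently $b = \hat s\circ b^0\circ 1\check s$. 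Associativity of $b$ then reduces to that of $b^0$ by a direct chase using $\check s\circ\hat s = s$, $b^0\circ 1s = b^0$, and functoriality of $\otimes$:
\begin{align*}
b\circ 1b
&= \hat s\circ b^0\circ 1\check s\circ 1\hat s\circ 1b^0\circ 11\check s
 = \hat s\circ b^0\circ 1s\circ 1b^0\circ 11\check s \\
&= \hat s\circ b^0\circ 1b^0\circ 11\check s
 = \hat s\circ b^0\circ m1\circ 11\check s
 = b\circ m1.
\end{align*}

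The main obstacle is proving that $b$ is a regular epimorphism and is non-degenerate on the left with respect to $\mathcal Y$; this is where the added hypotheses that idempotents split and that $\hat d_1,\hat d_2$ are regular epi enter. For regular-epi-ness, starting from $b^0\circ 1b^{(2)} = b^{(2)}\circ d_1 11$ in Lemma~\ref{lem:b_0} (with $b^{(2)} := vw\circ 1c1$) and post-composing with $\hat s$, the factorization $d_1 = \check e_1\circ\hat d_1$ yields
\[
b\circ(1\hat s\circ 1b^{(2)}) = \hat s\circ b^{(2)}\circ\check e_1 11\circ\hat d_1 11.
\]
The map $1\hat s\circ 1b^{(2)}$ on the left is a composite of regular epimorphisms in $\mathsf{C}$ (since $b^{(2)}$ is regular epi as $v,w$ are, tensoring preserves coequalizers, and $1\hat s$ is split epi), hence regular epi. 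On the right, $\hat d_1 11$ is regular epi by hypothesis, $\hat s\circ b^{(2)}$ is regular epi, and $\hat s\circ b^{(2)}\circ\check e_1 11$ is regular epi by the identity $\hat s\circ b^{(2)}\circ e_1 11 = \hat s\circ b^{(2)}$ from Lemma~\ref{lem:E}, which exhibits it as a retract of $\hat s\circ b^{(2)}$ through the split epi $\hat e_1 11$. The right-hand side is therefore a composite of regular epimorphisms, hence regular epi; together with the split-epi factor $1\hat s$ on the left, a cancellation argument yields that $b$ is regular epi. For non-degeneracy, $b^{(2)}$ is non-degenerate on the left with respect to $\mathcal Y$ (inherited from $v,w$ since $\mathcal Y$ is closed under monoidal products and retracts, and $c$ is iso); this property transfers to $b$ via the retract $\check s,\hat s$ of $V\circ W$ inside $VW$. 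The symmetric hypothesis on $\hat d_2$ and the second defining identity $b^{(2)}\circ 1b^0 = b^{(2)}\circ d_2 11$ from Lemma~\ref{lem:b_0} provide the dual information needed to close the cancellation argument.

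For functoriality, given morphisms of $A$-modules $f\colon V\to V'$ and $g\colon W\to W'$, Theorem~\ref{thm:L-coactions_on_modules} asserts they are $L$-bicomodule morphisms, hence $f\otimes g$ intertwines $s$ with $s'$; by the universal property of the splittings there is a unique $f\circ g\colon V\circ W\to V'\circ W'$ with $(f\circ g)\circ\hat s = \hat s'\circ(f\otimes g)$. Compatibility of $f\circ g$ with the actions $b,b'$ then follows by pre-composing with the epi $1\hat s$ and computing
\[
(f\circ g)\circ b\circ 1\hat s = \hat s'\circ(f\otimes g)\circ b^0 = \hat s'\circ(b')^0\circ 1(f\otimes g) = b'\circ 1(f\circ g)\circ 1\hat s,
\]
where the middle equality expresses that $f\otimes g$ is a morphism of associative actions $b^0\to(b')^0$, itself a consequence of $f,g$ being $A$-module morphisms for $v,w$.
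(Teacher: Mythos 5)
Your construction of $b$ via the splitting of $s$, the associativity computation, the regular-epimorphism argument (cancelling the epi $1\hat s.1b^{(2)}$ against the composite $\hat s.b^{(2)}.\check e_1 11.\hat d_1 11$ of regular epis, with the retract trick through $\hat e_1 11$ to see that $\hat s.b^{(2)}.\check e_1 11$ is regular epi), and the functoriality check are all essentially the paper's proof, and they are correct. You also correctly isolate where the new hypotheses enter.

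The one step that is not actually proved is the non-degeneracy of $b$ on the left with respect to $\mathcal Y$. As stated, ``$b^{(2)}$ is non-degenerate \dots this property transfers to $b$ via the retract $\check s,\hat s$'' is not an argument: non-degeneracy of an action does not pass to an action on a retract in any formal way, because $b$ is only related to $b^{(2)}$ through $b^0$ and the morphism $d_2$, not through $s$ alone. The actual mechanism (which you gesture at but do not execute) is the following chain. Suppose $b1$ coequalizes $1f$ and $1g$. Using $b=\hat s.b^0.1\check s$ and the second diagram of Lemma \ref{lem:b_0}, one deduces that $b^{(2)}1.(11\check s 1).(d_2 111)$ coequalizes $111f$ and $111g$. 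Now factor $d_2=\check e_2.\hat d_2$ as in \eqref{eq:dhat}; since $\hat d_2 111$ is a (regular) epimorphism commuting past $111f$ and $111g$, it cancels, leaving a statement about $\check e_2 111$, hence (post-composing with the epi $\hat e_2 111$) about $e_2 111$. Only at this point does Lemma \ref{lem:E} apply, converting $b^{(2)}.(e_2 11)$ into $b^{(2)}.(11s)$, and $s.\check s=\check s$ then reduces everything to the non-degeneracy of $v$ and $w$ together with the fact that $\check s$ is a (split) monomorphism. Without this $d_2$/$\hat d_2$/$e_2$ chain the hypothesis that $\hat d_2$ is a regular epimorphism is never actually used, so your sketch as written does not close. (A minor omission in the same spirit: you should also record explicitly that $V\circ W$ lies in $\mathcal Y$, via closure of $\mathcal Y$ under monoidal products and retracts, since membership in $\mathcal Y$ is part of the definition of a module.)
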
 

\begin{proof}
By our assumption on $\mathcal Y$ being closed under the monoidal 
product, the object $VW$ belongs to the class $\mathcal Y$. Since
$\check s\colon  V\circ W \to VW$ is a split monomorphism, it follows by
our assumption on $\mathcal Y$ being closed under retracts that 
$V\circ W$ also belongs to $\mathcal Y$.  

Thanks to the commutativity of the first diagram of Lemma \ref{lem:b_0}, the
left-bottom path coequalizes the parallel morphisms of
$$
\xymatrix{
AVW \ar@<2pt>[r]^-{111} \ar@<-2pt>[r]_-{1s} &
AVW \ar[r]^-{1\hat s} \ar[d]_-{b^0} &
A(V\circ W) \ar@{-->}[d]^-b \\
& VW \ar[r]_-{\hat s} &
V\circ W\ .}
$$
Hence we can use the universality of the coequalizer in the top row to define
$b$. It is an associative $A$-action since $b^0$ is: see Lemma \ref{lem:b_0}. 

Let us see that $b$ is a regular epimorphism. In doing so, we use again the
shorthand notation  
$b^{(2)}:=
\xymatrix@C=25pt{
A^2VW \ar[r]|(.46){\, 1c1\, } &
AVAW \ar[r]|(.52){\, vw\, } &
VW}$.
In the first commutative diagram of
$$
\xymatrix@C=14pt{
A^2VW \ar[rr]^-{b^{(2)}} \ar[dd]_-{\hat e_1 11} \ar[rdd]^-{e_1 11} &&
VW \ar[dd]^-s \ar[r]^-{\hat s} &
V \circ W \ar@{=}[dd] 
&
A^3VW \ar[rr]^-{1b^{(2)}} \ar[d]^-{d_1 11} \ar@/_2pc/[dd]_-{\hat d_1 11} &&
AVW \ar[r]^-{1\hat s} \ar[d]^-{b^0} &
A(V\circ W) \ar[d]^-b \\
&&&&
A^2VW \ar[rr]^-{b^{(2)}} \ar[d]^-{\hat e_1 11} \ar[rd]^-{e_111} &&
VW \ar[r]^-{\hat s} \ar[d]^-s &
V\circ W \ar@{=}[d]\\
(A\circ A)VW \ar[r]_-{\check e_1 11} &
A^2 VW \ar[r]_-{b^{(2)}} &
VW \ar[r]_-{\hat s} &
V \circ W
&
(A\circ A) VW \ar[r]_-{\check e_1 11} &
A^2VW \ar[r]_-{b^{(2)}} &
VW \ar[r]_-{\hat s} &
V\circ W}
$$
(where the middle region commutes by Lemma \ref{lem:E}), the top row is a
composite of regular epimorphisms; hence a regular epimorphism. Since the left
column is epi, this shows that the bottom row is a regular epimorphism. Thus
also the left-bottom path of the second commutative diagram is a regular
epimorphism. Since the top row of the second diagram is an epimorphism, this
proves that $b$ is a regular epimorphism.  

It remains to prove the non-degeneracy of $b$ on the left with respect to
$\mathcal Y$. If $b1$ coequalizes the morphisms $1f$ and $1g$ to $A(V\circ
W)Y$ for some morphisms $f$ and $g$ to $(V\circ W)Y$ and $Y\in \mathcal Y$,
then the equal paths of the diagram  
$$
\xymatrix{
A^3(V\circ W)Y \ar@{=}[rr]\ar[rd]_(.3){111\check s1} \ar[ddd]_-{d_211} &&
A^3(V\circ W)Y \ar[r]^-{11b1} &
A^2(V\circ W)Y \ar[dd]^-{11\check s} \\
& A^3VWY \ar[r]^-{11b^01} \ar[ru]_(.7){111\hat s1} 
\ar@{=}[d] &
A^2VWY \ar[rd]_-{11s1} \ar[ru]^-{11\hat s1} \\
& A^3VWY \ar[rr]^-{11b^01} \ar[d]^-{d_2111} &&
A^2VWY \ar[d]^-{b^{(2)}1} \\
A^2(V\circ W)Y \ar[r]_-{11\check s1} &
A^2VWY \ar[rr]_-{b^{(2)}1} &&
VWY}
$$
(where both regions at the bottom-right commute by Lemma \ref{lem:b_0})
coequalize the morphisms $111f$ and $111g$ to $A^3(V\circ W)Y$. Then 
using the factorization \eqref{eq:dhat} of $d_2$, we conclude
that the equal paths around  
$$
\xymatrix{
A^2(V\circ W) Y \ar[r]^-{e_211} \ar[d]^-{11\check s1} 
\ar@/_2pc/[dd]_-{11\check s 1}&
A^2(V\circ W) Y \ar[d]^-{11\check s 1} \\
A^2VWY \ar[r]^-{e_2111} \ar[d]^-{11s1} &
A^2VWY \ar[d]^-{b^{(2)}1} \\
A^2VWY \ar[r]_-{b^{(2)}1} &
VWY}
$$
(where the bottom-right region commutes by Lemma \ref{lem:E})
coequalize the morphisms $11f$ and $11g$ to $A^2(V\circ W)Y$. Since 
$v$ and $w$ are non-degenerate on the left with respect to  $\mathcal Y$ and 
$\check s\colon V\circ W \to VW$ is a (split) monomorphism, this proves 
$f=g$ hence the non-degeneracy of $b$ on the left with respect to  $\mathcal Y$.

As for functoriality of the monoidal product $\circ$, since 
\raisebox{1.5pt}{$
\xymatrix@C=15pt{
A^3VW \ar[r]^-{1b^{(2)}} &
AVW \ar[r]^-{1\hat s} &
A(V\circ W)}$}
is an epimorphism, it follows by the commutativity of
$$
\xymatrix@R=10pt{
A(V\circ W) \ar[rrrrr]^-{1(f\circ g)} \ar[ddddd]_-b &&&&&
A(V'\circ W') \ar[ddddd]^-{b'} \\
& AVW \ar[lu]_(.4){1\hat s} \ar[rrr]^-{1fg} \ar[ddd]_-{b^0} &&&
AV'W' \ar[ru]^(.4){1\hat s'} \ar[ddd]^-{b^{\prime 0}} \\
&& A^3VW \ar[lu]_(.4){1b^{(2)}} \ar[r]^-{111fg} \ar[d]_-{d_1 11} &
A^3V'W' \ar[ru]^(.4) {1b^{\prime (2)}} \ar[d]^-{d_1 11} \\
&& A^2VW \ar[ld]^(.4){b^{(2)}} \ar[r]_-{11fg} &
A^2V'W' \ar[rd]_(.4){b^{\prime (2)}} \\
& VW \ar[ld]^(.4){\hat s} \ar[rrr]_-{fg} &&&
V'W'\ar[rd]_(.4){\hat s'} \\
V\circ W \ar[rrrrr]_-{f\circ g} &&&&&
V'\circ W'\ ,}
$$
for any morphisms $f$ and $g$ of $A$-modules, that $f\circ g$ is a morphism of
$A$-modules. 
\end{proof}

Observe that, in the category of vector spaces, the requirement that $\hat d_1$
and $\hat d_2$ be regular epimorphisms becomes axiom (iv) in
\cite[Definition 2.1]{BohmGomezTorrecillasLopezCentella:wmba}; equivalently,
axiom (vi) of \cite[Definition 1.1]{Bohm:wmba_comod} that we did not
require in Definition \ref{def:reg_wmba}.

\begin{lemma} \label{lem:alternative_b}
Under the same hypotheses as in Proposition \ref{prop:b}, the action
$b\colon A(V\circ W)\to V\circ W$, constructed in the proof of Proposition
\ref{prop:b}, admits the following equivalent characterizations.
\begin{itemize}
\item[{(1)}] $b$ is the unique morphism rendering commutative
$$
\xymatrix{
A^2VW \ar[r]^-{1c1} \ar[d]_-{t_111} &
AVAW \ar[r]^-{11w} &
AVW \ar[r]^-{1\hat s} &
A(V\circ W) \ar[d]^-b \\
A^2VW \ar[r]_-{1c1} &
AVAW \ar[r]_-{vw} &
VW \ar[r]_-{\hat s} &
V\circ W\ .}
$$
\item[{(2)}] $b$ is the unique morphism rendering commutative
$$
\xymatrix@R=15pt{
A^2VW \ar[rr]^-{1v1} \ar[d]_-{c11} &&
AVW \ar[r]^-{1\hat s} &
A(V\circ W) \ar[dd]^-b \\
A^2VW \ar[d]_-{t_411} \\
A^2VW \ar[r]_-{1c1} &
AVAW \ar[r]_-{vw} &
VW \ar[r]_-{\hat s} &
V\circ W\ .}
$$
\end{itemize}
\end{lemma}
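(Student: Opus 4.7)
My strategy is to derive both characterizations from the construction of $b$ in Proposition~\ref{prop:b} together with the two defining equations for $b^0$ from Lemma~\ref{lem:b_0}, namely $b^0.1b^{(2)} = b^{(2)}.d_111$ and $b^{(2)}.11b^0 = b^{(2)}.d_211$, where $b^{(2)}:=vw.1c1$. Recall also that $b$ is uniquely determined by the identity $b.1\hat s = \hat s.b^0$.

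For claim~(1), I would establish commutativity of the diagram by pre-composing both paths with the epimorphism $11v1\colon A^3VW \to A^2VW$ (which is epi since $v$ is a regular epimorphism and the monoidal product preserves regular epimorphisms). Using naturality of the monoidal product to commute $11v1$ past $t_111$ and $1c1$, the top path reduces to an instance of the defining equation $b^0.1b^{(2)} = b^{(2)}.d_111$ after applying the fusion axiom (Axiom~\hyperlink{AxI}{\textnormal{I}}) for $t_1$---equivalently the short fusion equation~\eqref{eq:short}---and the associativity of~$m$; the bottom path reduces to the same expression via the associativity of the action~$v$. Non-degeneracy of~$v$ on the left with respect to~$\mathcal{Y}$, which applies since~$V$ lies in~$\mathcal{Y}$ and the class is closed under the monoidal product, then yields the desired equation. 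Claim~(2) is proved symmetrically by pre-composing with the epimorphism $111w\colon A^2V(AW) \to A^2VW$ and using the dual defining equation $b^{(2)}.11b^0 = b^{(2)}.d_211$ together with the fusion axiom for~$t_4$ (the coopposite of Axiom~\hyperlink{AxI}{\textnormal{I}}, cf.\ Corollary~\ref{cor:Z2xZ2}).

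For uniqueness: in claim~(2) the top morphism $1\hat s.1v1\colon A^2VW \to A(V\circ W)$ is immediately an epimorphism, since~$\hat s$ is a split epi and~$v$ is a regular epi, both preserved under the monoidal product. Uniqueness in claim~(1) is more delicate, as~$t_1$ need not itself be an epimorphism. I would obtain it by showing that any morphism $b'\colon A(V\circ W) \to V\circ W$ satisfying the claim-(1) equation must also satisfy $b'.1\hat s = \hat s.b^0$, whence $b'=b$ by Proposition~\ref{prop:b}; this identity can be verified by pre-composing with the epimorphism $1b^{(2)}\colon A^3VW \to AVW$ and applying the defining equation of~$b^0$ together with the fusion axiom.

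I expect the main obstacle to be the uniqueness argument for claim~(1), precisely because $t_1$ itself is not a regular epimorphism. A cleaner fallback is to invoke the opposite-coopposite symmetry of Corollary~\ref{cor:Z2xZ2}, which exchanges $t_1$ with $t_4$ and interchanges the roles of~$V$ and~$W$, so that the uniqueness for claim~(1) is transferred from the uniqueness already established for claim~(2) in the opposite-coopposite regular weak multiplier bimonoid.
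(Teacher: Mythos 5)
Your overall plan for part (1)---pre-compose with an epimorphism built from the actions and reduce everything to the defining property of $b^0$ and $b$---is indeed the paper's strategy, but the ingredients you list are not the ones that make the reduction close, and this suggests the computation has not actually been carried out. After pre-composing with $11v1$ (the paper uses $1v11.11c1$, which differs only by a braiding), the top path of diagram (1) collapses to $\hat s.b^{(2)}.d_111$ (up to a braid isomorphism) using \emph{only} $b.1\hat s=\hat s.b^0$ and the defining square of $b^0$ from Lemma \ref{lem:b_0}; no fusion axiom, no \eqref{eq:short}, and no associativity of $m$ enter anywhere. What is needed to match the bottom path is the expansion $d_1=m1.1c.t_11.1c^{-1}$ together with naturality and coherence of the braiding and associativity of the \emph{action} $v$ (which absorbs the $m$ inside $d_1$ via $v.m1=v.1v$). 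Your uniqueness worry for (1) is also spurious: $t_1$ does not occur in the top row of diagram (1), which is $1\hat s.11w.1c1$ and is an epimorphism ($c$ is invertible, $w$ is a regular epimorphism preserved by the monoidal product, $\hat s$ is a split epimorphism), so uniqueness is immediate in both parts and no transfer through Corollary \ref{cor:Z2xZ2} is needed; such a transfer would in any case be circular, since identifying the module structure produced by the coopposite data with $b$ is essentially what is being proved.

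The genuine gap is in part (2). The identity $b^{(2)}.11b^0=b^{(2)}.d_211$ from Lemma \ref{lem:b_0} is not a ``dual defining equation'' for $b^0$: there $b^0$ appears only pre-composed inside $b^{(2)}.11(-)$, so it constrains $b^0$ only as seen through $b^{(2)}$ with two extra copies of $A$, and it cannot be converted into the square of (2), in which $b^0$ is post-composed with $\hat s$. Nor is the coopposite fusion axiom for $t_4$ the relevant relation. The correct route (the paper's) re-uses the \emph{same} $d_1$-based defining square of $b^0$: pre-compose (2) with the epimorphism $111w.11c1\colon A^3VW\to A^2VW$, so that the top path becomes $\hat s.b^{(2)}.d_111$, and then rewrite $d_1$ as $1m.t_41.c1$ using the compatibility \eqref{eq:c14} between $t_1$ and $t_4$, finishing with naturality of the braiding and associativity of the action $w$. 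As stated, your argument for (2) would not go through without being rebuilt along these lines.
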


\begin{proof}
The exterior of 
$$
\xymatrix@R=15pt{
A^3VW \ar[r]^-{11c1} \ar[d]^-{1c^{-1}11} \ar@/_2.5pc/[dddd]_-{d_111} &
A^2VAW \ar[r]^-{1v11} &
AVAW \ar[d]^-{1c^{-1}1} \ar[r]^-{11w} &
AVW \ar[r]^-{1\hat s} &
A(V\circ W) \ar[dddd]^-b \\
A^3VW \ar[rr]^-{11v1} \ar[d]^-{t_1111} &&
A^2VW \ar[d]^-{t_111} \\
A^3VW \ar[d]^-{1c11} \ar[rr]^-{11v1} &&
A^2VW \ar[d]^-{1c1} \\
A^3VW \ar[r]^-{11c1} \ar[d]^-{m111} &
A^2VAW \ar[r]^-{1v11} \ar[d]^-{m111} &
AVAW \ar[d]^-{v11} \\
A^2VW \ar[r]_-{1c1} &
AVAW \ar[r]_-{v11} &
VAW \ar[r]_-{1w} &
VW \ar[r]_-{\hat s} &
V\circ W}
$$
commutes by the constructions of $b^0$ and $b$. All of the small regions in
the left half commute by functoriality of the monoidal product, naturality and
coherence of the braiding, and associativity of $v$. Since 
\raisebox{1.5pt}{$ \xy
\POS (0,0)*{A^3VW}="1", 
     (23,0)*{A^2VAW}="2", 
     (46,-.5)*{AVAW}="3"
\POS  "1" \ar "2" |(.47){\, 11c1\, } 
\POS  "2" \ar "3" |(.48){\, 1v11\, } 
\endxy
$}
is epi, this proves commutativity of the large region on the right; that is,
assertion (1).

Similarly, applying \eqref{eq:c14}, we obtain the alternative expression 
$$
\xymatrix{
A^3 \ar[r]^-{c1} &
A^3 \ar[r]^-{t_4 1} &
A^3 \ar[r]^-{1m} &
A^2}
$$
of $d_1$. Using it, we deduce commutativity of the leftmost region of
$$
\xymatrix@R=15pt{
A^3VW \ar[r]^-{11c1} \ar[d]^-{c111} \ar@/_2.5pc/[dddd]_-{d_111} &
A^2VAW \ar[r]^-{111w} \ar[d]^-{c111} &
A^2VW \ar[r]^-{1v1} \ar[d]^-{c11} &
AVW \ar[r]^-{1\hat s} &
A(V\circ W) \ar[dddd]^-b \\
A^3VW \ar[r]^-{11c1} \ar[d]^-{t_4111} &
A^2VAW \ar[d]^-{t_4111} &
A^2VW \ar[d]^-{t_411} \\
A^3VW \ar[r]^-{11c1} \ar[dd]^-{1m11} &
A^2VAW \ar[d]^-{1c11} &
A^2VW \ar[d]^-{1c1} \\
& AVA^2W \ar[r]^-{111w} \ar[d]^-{11m1} &
AVAW \ar[d]^-{11w} \\
A^2VW \ar[r]_-{1c1} &
AVAW \ar[r]_-{11w} &
AVW \ar[r]_-{v1} &
VW \ar[r]_-{\hat s} &
V \circ W .}
$$
Since the regions in the left and central columns commute by functoriality of 
the monoidal product, naturality of the braiding and associativity of 
$w$, and since in the top row the composite 
\raisebox{2pt}{$\xy
\POS (0,0)*{A^3VW}="1", 
     (23,0)*{A^2VAW}="2", 
     (46,0)*{A^2VW}="3"
\POS  "1" \ar "2" |(.47){\, 11c1\, } 
\POS  "2" \ar "3" |(.48){\, 111w\, } 
\endxy
$}
is epi, this proves mutativity of the large region on the 
right; that is, part (2).
\end{proof}

\begin{theorem} \label{thm:module_cat}
If we add to the standing assumptions of the section that 
idempotent morphisms in $\mathsf C$ split, and that the morphisms 
$\hat d_1$ and $\hat d_2$ in \eqref{eq:dhat} are regular 
epimorphisms, then there is a monoidal structure on the category of 
$A$-modules for which the functor $U$ in Theorem 
\ref{thm:L-coactions_on_modules} is strict monoidal.  
\end{theorem}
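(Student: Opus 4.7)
The strategy is to lift the monoidal structure on the category of $L$-bicomodules --- which exists by Theorem \ref{thm:L} together with the assumption that idempotents split, with monoidal product obtained by splitting the idempotent $s$ of \eqref{eq:s} --- through the functor $U$ of Theorem \ref{thm:L-coactions_on_modules}. Since $U$ acts as the identity on morphisms, strict monoidality amounts to (i) defining a tensor product and unit on $A$-modules whose underlying $L$-bicomodules are exactly those of the existing $L$-bicomodule monoidal structure, and (ii) verifying that the associator and unit constraints of the $L$-bicomodule monoidal structure are $A$-linear. For the tensor product, Proposition~\ref{prop:b} gives $V\circ W$ the structure of an $A$-module via $b$, functorially in $V$ and $W$. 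I would first check that applying $U$ to $(V\circ W, b)$ reproduces the bicomodule tensor product of $U(V)$ and $U(W)$: the left $L$-coaction constructed from $b$ by the procedure of Theorem \ref{thm:L-coactions_on_modules} coincides, via Lemma \ref{lem:alternative_b}(1) and the construction of $\tau_V$, with $\hat s$ post-composed with the canonical left coaction on $VW$; the right coaction is handled symmetrically through Lemma \ref{lem:alternative_b}(2) and the construction of $\bar\tau_W$.

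For the monoidal unit, $L$ itself is an $A$-module by Proposition \ref{prop:act_on_L}. I would verify that $U$ sends this $A$-module to the comonoid $L$ regarded as an $L$-bicomodule via $\delta$ on both sides, so that it serves as the monoidal unit for the bicomodule tensor product. The construction of the left coaction on $L$ in Theorem \ref{thm:L-coactions_on_modules} requires $\tau_L.p.n_2 = 1p.n_2.\tau_A 1$ after coequalization, which reduces via the construction of $\delta$ in Theorem \ref{thm:L}(2) and of $n_2$ in \eqref{eq:n_1} to the identity $\tau.m = pp.t_1.\sqcap^L_1 1$ of Proposition \ref{prop:L-coactions_on_A}; the right coaction is dealt with symmetrically using Remark \ref{rem:alternative_delta}.

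The remaining and main task is to show that the associativity and unit isomorphisms of the $L$-bicomodule monoidal structure are $A$-linear; this is where the real work lies. For the unitors $\lambda_V\colon L\circ V\to V$ and $\rho_V\colon V\circ L\to V$, which arise from the counit $\varepsilon$ and the respective coactions, $A$-linearity can be reduced via Lemma \ref{lem:alternative_b} to the identity $\varepsilon.p=j$ of Theorem \ref{thm:L}(5) combined with \eqref{eq:multiplication_1} or \eqref{eq:multiplication_4}. For the associator $\alpha_{V,W,Z}\colon (V\circ W)\circ Z\to V\circ (W\circ Z)$, which comes from comparing the two splittings of the compound idempotents, $A$-linearity requires tracing the $A$-action through both triple splittings; the key point is that the two characterizations of $b$ in Lemma \ref{lem:alternative_b} together with the splittings of $e_1, e_2$ and the factorizations of $d_1, d_2$ through $\hat d_1, \hat d_2$ in \eqref{eq:dhat} allow the $A$-action on $V\circ(W\circ Z)$ and on $(V\circ W)\circ Z$ to be traced back to the same associative action on $AVWZ$ built from $t_1$, $t_2$ and the three module actions. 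The main obstacle is this bookkeeping: the associator must be shown $A$-linear by verifying that the two ways of absorbing an element of $A$ into $VWZ$ --- one via $(V\circ W)$ first then $Z$, the other via $V$ first then $(W\circ Z)$ --- agree after splitting, which requires carefully combining the fusion-like identities on the $t_i$'s and the splitting data.

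Once $\lambda$, $\rho$, and $\alpha$ are known to be $A$-linear, the pentagon and triangle identities for these morphisms hold in the category of $A$-modules because they hold in the category of $L$-bicomodules and $U$ is faithful (being the identity on morphisms). This furnishes the desired monoidal structure on $A$-mod with respect to which $U$ is strict monoidal.
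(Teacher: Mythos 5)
Your overall architecture coincides with the paper's: the tensor product and unit of $A$-modules are supplied by Propositions \ref{prop:b} and \ref{prop:act_on_L}, the only remaining work is to show that the unit and associativity constraints of the $L$-bicomodule monoidal structure are $A$-linear, and the two characterizations of $b$ in Lemma \ref{lem:alternative_b} are indeed the right tool for that. The closing remark that pentagon and triangle are inherited because $U$ is faithful is also fine.

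The gap is in the reductions you claim for the constraints, which as stated would not close. For the left unit constraint one takes $L\circ V=V$ with $\check s=\tau$ and $\hat s=\varepsilon 1.\mu 1.1\tau$, and Lemma \ref{lem:alternative_b}(2) reduces $A$-linearity of $\lambda_V$ to a diagram whose exterior is \emph{not} a consequence of $\varepsilon.p=j$ together with \eqref{eq:multiplication_1} or \eqref{eq:multiplication_4}: those identities only handle the peripheral regions, while the core of the argument is the (coopposite of the) second identity of \ref{par:A20.6} --- a substantive compatibility of $t_1$ with $\sqcap^L_2$ and $\overline\sqcap^R_1$ whose appendix proof uses \ref{par:A12}, \ref{par:A1} and Axiom \hyperlink{AxIII}{\textnormal{III}}; the right unit constraint needs the first identity of \ref{par:A20.6}. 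Likewise, for the associator the two triple actions are expressed one entirely through $t_1$ (Lemma \ref{lem:alternative_b}(1) applied twice) and the other entirely through $t_4$ (Lemma \ref{lem:alternative_b}(2) applied twice) --- $t_2$ does not enter --- and their agreement rests on the specific identity \ref{par:A3}, which converts a braided composite of two $t_1$'s into one of two $t_4$'s, combined with associativity of $v$ and $z$ and the short fusion equation. Without invoking these identities (or proving equivalents of them), the ``bookkeeping'' you defer cannot be completed; they are precisely where the weak multiplier bimonoid axioms enter the proof.
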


\begin{proof}
In view of Propositions \ref{prop:act_on_L} and \ref{prop:b}, we only need to
show that the unit and associativity isomorphisms of the monoidal category of
$L$-bicomodules, if evaluated at $A$-modules, are $A$-module morphisms.

\begin{figure} 
\centering
\begin{sideways}
\scalebox{.8}{
\xymatrix@C=25pt@R=15pt{
A^4VWZ \ar[rr]^-{111v11} \ar[d]_-{t_111111} &&
A^3VWZ \ar[r]^-{11c11} \ar[d]_-{t_11111} &
A^2VAWZ \ar[rr]^-{111w1} &&
A^2VWZ \ar[r]^-{11 \hat s 1} &
A^2(V\circ W)Z \ar[r]^-{1c1} \ar[d]^-{t_111} &
A(V\circ W)AZ \ar[r]^-{11z} &
A(V\circ W)Z \ar[r]^-{1\hat s} &
A((V\circ W)\circ Z) \ar@{-->}[ddd] \\
A^4VWZ \ar[d]_-{1c_{A,A^2VW}1} &&
A^3VWZ \ar[r]^-{11c11} \ar[d]_-{1c_{A,AVW}1} &
A^2VAWZ \ar[rr]^-{111w1} 
&&
A^2VWZ \ar[r]^-{11\hat s 1} 
&
A^2(V\circ W)Z \ar[d]^-{1c1} \\
A^3VWAZ \ar[d]_-{t_111111} &&
A^2VWAZ \ar[r]^-{1c111} \ar[d]^-{t_11111} &
AVAWAZ \ar[rr]^-{11w11} &&
AVWAZ \ar[r]^-{1\hat s 11} &
A(V\circ W)AZ \ar[d]^-{bz} \\
A^3VWAZ \ar[rr]_-{11v111} &&
A^2VWAZ \ar[r]_-{1c111} &
AVAWAZ \ar[rr]_-{vwz} &&
VWZ \ar[r]_-{\hat s 1} &
(V\circ W) Z \ar[rrr]_-{\hat s} &&&
(V\circ W) \circ Z
\\
\\
A^4VWZ \ar[r]^-{1c_{A,A^2VW}1} \ar[d]_-{c_{A^3,A}111} &
A^3VWAZ \ar[rr]^-{1c_{A,AV}111} &&
A^2VAWAZ \ar[r]^-{11111z} \ar[d]^-{c11111} &
A^2VAWZ \ar[rr]^-{111w1} \ar[d]^-{c1111} &&
A^2VWZ \ar[r]^-{111\hat s} &
A^2V(W\circ Z) \ar[r]^-{1v1} \ar[d]^-{c11} &
AV(W\circ Z) \ar[r]^-{1\hat s} &
A(V\circ (W\circ Z)) \ar@{-->}[dddddd] \\
A^4VWZ \ar[r]^-{11c_{A^2,V}11} \ar[d]_-{t_411111} &
A^2VA^2WZ \ar[rr]^-{111c_{A,AW}1} &&
A^2VAWAZ \ar[d]^-{t_411111} &
A^2VAWZ \ar[d]^-{t_41111} &&&
A^2V(W\circ Z) \ar[d]^-{t_411} \\
A^4VWZ \ar@{=}[dd] \ar[r]^-{11c_{A^2,V}11} &
A^2VA^2WZ \ar[rr]^-{111c_{A,AW}1} &&
A^2VAWAZ \ar[d]^-{1c1111} &
A^2VAWZ \ar[rr]^-{111w1} \ar[d]^-{1c111} &&
A^2VWZ \ar[r]^-{111\hat s} 
&
A^2V(W\circ Z) \ar[d]^-{1c1} \\
&&& AVA^2WAZ \ar[d]^-{11c111} &
AVA^2WZ \ar[rr]^-{111w1} \ar[d]^-{11c11} &&
AVAWZ \ar[r]^-{111\hat s} &
AVA(W\circ Z) \ar[ddd]^-{vb} \\
A^4VWZ \ar[r]^-{111c11} \ar[d]_-{1c_{A^2,A}111} &
A^3VAWZ \ar[r]^-{1c_{A^2,VA}11} &
AVA^3WZ \ar[r]^-{1111c1} &
AVA^2WAZ \ar[dd]^-{11t_4111} &
AVA^2WZ \ar[dd]^-{11t_411}\\
A^4VWZ \ar[d]_-{1t_41111} \\
A^4VWZ \ar[r]_-{111c_{A,VW}1} &
A^3VWAZ \ar[rr]_-{1c_{A^2,V}111} &&
AVA^2WAZ \ar[r]_-{11111z} &
AVA^2WZ \ar[r]_-{111c1} &
AVAWAZ \ar[r]_-{vwz} &
VWZ \ar[r]_-{1\hat s} &
V(W\circ Z) \ar[rr]_-{\hat s} &&
V\circ (W\circ Z)}}
\thispagestyle{empty}
\end{sideways}
\caption{The $A$-actions on $(V\circ W)\circ Z$ and $V\circ (W\circ Z)$}
\label{fig:(VW)Z&V(WZ)}
\end{figure}

The object $L\circ V$ --- defined up-to isomorphism --- can be chosen to be
$V$. With this choice,
$$
\check s = \xymatrix@C=15pt{
V \ar[r]^\tau &
LV}
\quad \textrm{and} \quad
\hat s = \xymatrix@C=15pt{
LV \ar[r]^-{1\tau} &
L^2 V \ar[r]^-{\mu 1} &
LV \ar[r]^-{\varepsilon 1} &
V\ .}
$$
The left unit constraint is a morphism of $A$-modules if and only if, for any
$A$-module $v\colon AV \to V$, the resulting $A$ action $b\colon 
A(L\circ V)=AV \to V$ is equal to $v$. By Lemma 
\ref{lem:alternative_b}~(2) this is equivalent to the commutativity 
of the large central region of
\begin{equation}\label{eq:lambda_A_lin}
\xymatrix@R=10pt{
A^2LAV \ar[rr]^-{1n_211} \ar[ddd]_-{c111} \ar[rd]^-{111v} &&
A^3V \ar[rrr]^-{1\sqcap^L_1 1} \ar[d]^-{11v} &&&
A^2V \ar[ddddddd]^-{m1} \ar[ld]_-{1v} \\
& A^2LV \ar[r]^-{1n_21} \ar[dd]^-{c11} &
A^2V \ar[r]^-{1p1} &
ALV \ar[r]^-{1\hat s} &
AV \ar[dddd]_-v \\
\\
A^2LAV \ar[dddd]_-{t_4111} &
A^2LV \ar[dd]^-{t_411} \\
\\
& A^2LV \ar@{=}[d] \ar[rr]^-{b^{(2)}} &&
LV \ar[r]^-{\hat s} &
V \\
& A^2LV \ar[r]^-{1c1} &
ALAV \ar[r]^-{n_211} &
A^2V \ar[r]^-{\sqcap^L_1 1} &
AV \ar[u]^-v \\
A^2LAV \ar[rr]_-{1c11} \ar[ru]^-{111v}&&
ALA^2V \ar[r]_-{n_2 111} &
A^3V \ar[r]_-{\sqcap^L_1 11} &
A^2V \ar[r]_-{m1} \ar[u]^-{1v} &
AV . \ar[luu]_-v}
\end{equation}
Above the central region, the region on the right commutes by the
commutativity of
$$
\xymatrix@R=15pt{
A^2V \ar@{=}[r] \ar[dd]_-{1v} &
A^2V \ar[d]_-{1\tau 1} &
A^3V \ar[l]_-{1m1} \ar[d]^-{1t_11} \ar[rr]^-{1m1} &&
A^2V \ar[d]^-{\sqcap^L_11} \\
& ALAV \ar[d]_-{11v} &
A^3V \ar[l]_-{1p11} \ar[r]^-{\sqcap^L_1 11} \ar[d]^-{11v} &
A^2V \ar[r]^-{j11} 
&
AV \ar[d]^-v \\
AV \ar[r]^-{1\tau} \ar[d]_-{p1} &
ALV \ar[d]_-{p11} &
A^2V \ar[l]_-{1p1} \ar[r]^-{\sqcap^L_1 1} &
AV \ar[r]^-{j1} \ar[d]^-{p1} &
V \ar@{=}[d] \\
LV \ar[r]^-{1\tau} \ar@/_1pc/[rrrr]_-{\hat s} &
L^2V \ar[rr]^-{\mu 1} &&
LV \ar[r]^-{\varepsilon 1} &
V}
$$
(where the top-right region commutes by the second equality of \ref{par:A9}),
because the morphisms in the top row are equal epimorphisms. From this we
also obtain
$$
\xymatrix@R=20pt{
A^2LV \ar[r]_-{1c1} \ar@/^1pc/[rrrr]^-{b^{(2)}} &
ALAV \ar[r]_-{n_2 11} &
A^2V \ar[r]_-{1v} \ar[d]_-{\sqcap^L_1 1} &
AV \ar[r]_-{p1} &
LV \ar[d]^-{\hat s} \\
&& AV \ar[rr]_-v &&
V}
$$
proving commutativity of the region below the central region of
\eqref{eq:lambda_A_lin}. All other regions around the central region 
commute by functoriality of the monoidal product and the associativity of
$v$. Thus since $111v\colon  A^2LAV \to A^2LV$ is epi, commutativity of the
central region is equivalent to the commutativity of the exterior of
\eqref{eq:lambda_A_lin}. This holds by the coopposite of the second equality of
\ref{par:A20.6} (using that $11p11\colon A^4V \to A^2LAV$ is epi).  

An analogous reasoning traces back the $A$-module morphism property of the
right unit constraint to the first equality of \ref{par:A20.6}.

The associativity isomorphism is a morphism of $A$-modules if and only if, for
any $A$-modules $v\colon AV \to V$, $w\colon AW\to W$ and $z\colon AZ\to Z$, the
actions $A((V\circ W) \circ Z) \to (V\circ W) \circ Z$ and $A(V\circ (W \circ
Z)) \to V\circ (W \circ Z)$ coincide (omitting the associativity constraint in
the category of $L$-bicomodules). These actions fit the respective diagrams of
Figure \ref{fig:(VW)Z&V(WZ)}. The rightmost regions, as well as
the bottom regions on their left, commute by parts (1) and (2) of Lemma
\ref{lem:alternative_b}. All other regions commute by functoriality of the
monoidal product, and naturality and coherence of the braiding. The 
top rows are equal epimorphisms (up-to the omitted associativity 
isomorphism of the category of $L$-bicomodules) and the left-bottom 
paths are equal by the associativity of the actions $v$ and $z$, and 
\ref{par:A3}. Hence the right verticals are equal proving the claim.  
\end{proof}

Note the difference between  Theorem~\ref{thm:module_cat} and 
\cite[Theorem 5.6]{BohmGomezTorrecillasLopezCentella:wmba} in the case when
$\mathsf C$ is the category of vector spaces over a given field: In
\cite[Theorem 5.6]{BohmGomezTorrecillasLopezCentella:wmba} the weak multiplier
bialgebra in question is assumed to be {\em left full} while in Theorem
\ref{thm:module_cat} this assumption is replaced by the 
non-degeneracy of $n_{1}$ in \eqref{eq:n_1} on the right with respect 
to the chosen class $\mathcal Y$.

\begin{remark} \label{rem:multiplier_bimonad}
In \cite{BohmLack:braided_mba}, monoidality of the category of modules over a
(nice enough) multiplier bimonoid $A$ in a braided monoidal category $\mathsf
C$ was explained by the structure of the induced endofunctor $A(-)$ on $\mathsf
C$. Namely, it was shown to carry the structure of a {\em multiplier bimonad};
a generalization of bimonad (which is another name for opmonoidal
monad). Recall that a multiplier bimonad on a monoidal category is an 
endofunctor $T$ equipped with a morphism $T_0\colon T(I)\to I$ and natural
transformations  
$$
\xymatrix{
T(XT(Y)) \ar[r]^-{\overleftarrow T_2} & 
T(X)T(Y)  &
T(T(X)Y)\ar[l]_-{\overrightarrow T_2}}
$$
subject to compatibility conditions in \cite{BohmLack:braided_mba}.

A similar explanation of the monoidality of the category of modules over a
(nice enough) {\em weak} multiplier bimonoid $A$ is possible, in fact, 
but the treatment is technically more involved. For this reason, we sketch
here the construction without a detailed proof; leaving the technicalities to
the interested reader. 

In the setting of Theorem \ref{thm:L}, the base object $L$ of a 
regular weak multiplier bimonoid $A$ carries the structure $ 
(\delta,\varepsilon,\mu)$ of a coseparable comonoid; so in particular 
that of a semigroup $\mu\colon L^2\to L$. Using the braiding in the 
base category, any $L$-bicomodule can equivalently be regarded as a 
left comodule over the monoidal product comonoid $LL^{\mathsf{op}}$, 
where $L^{\mathsf{op}}$ is the comonoid with the same underlying 
object $L$, the same counit $\varepsilon$ but the opposite 
comultiplication $c^{-1}.\delta$. The comonoid 
$LL^{\mathsf{op}}$ inherits a coseparable structure of $L$. Hence as 
explained in Remark \ref{rem:firm}, any left 
$LL^{\mathsf{op}}$-comodule can equivalently be regarded as a firm 
left module over the monoidal product semigroup 
$LL^{\mathsf{op}}$, where the semigroup $L^{\mathsf{op}}$ has the 
same underlying object $L$ and opposite multiplication $\mu.c$. This 
yields an isomorphism between the category of $L$-bicomodules and the 
category of firm left $LL^{\mathsf{op}}$-modules. Since the category 
of $L$-bicomodules is monoidal --- via the monoidal product $\circ$ 
and the monoidal unit $L$ --- this isomorphism induces a monoidal 
structure --- also to be denoted by $(\circ,L)$ ---  on the category 
of firm left $LL^{\mathsf{op}}$-modules.     

For a regular weak multiplier bimonoid $A$, in addition to the $L$-actions
$n_1$ and $n_2$ in \eqref{eq:n_1}, we can introduce two more actions 
$$
\xymatrix@C=35pt{
A^3 \ar@<2pt>[r]^-{\sqcap^L_1 1} 
\ar@<-2pt>[r]_-{\overline \sqcap^R_1 1.c^{-1}1} &
A^{2} \ar[r]^-{p1} \ar[rd]_-{\overline \sqcap^{R}_{1}} &
LA \ar@{-->}[d]^-{{\overline n}_{1}} \\
&& A}
\qquad
\xymatrix@C=35pt{
A^3 \ar@<2pt>[r]^-{1\sqcap^L_1} 
\ar@<-2pt>[r]_-{1\overline \sqcap^R_1 .1c^{-1}} &
A^{2} \ar[r]^-{1p} \ar[rd]_-{\overline \sqcap^{R}_{2}} &
AL \ar@{-->}[d]^-{{\overline n}_{2}} \\
&& A}
$$
of the opposite semigroup $L^{\mathsf{op}}$. All four of these actions are
associative, and all commute with each other. Hence they make $A$ an
$LL^{\mathsf{op}}$-bimodule, with the left and right actions  
$$
\xymatrix{
L^2A \ar[r]^-{1{\overline n}_1} &
LA \ar[r]^-{n_1} &
A}\qquad
\xymatrix{
AL^2 \ar[r]^-{n_2 1} &
AL \ar[r]^-{{\overline n}_2} &
A .}
$$
There is an important difference between the left and right actions. 
In the setting of Proposition \ref{prop:L-coactions_on_A}, $n_1$ and 
${\overline n}_1$ are firm actions: they correspond (in the way described in 
Remark 3.7) to the coactions $\tau$ and $c^{-1}.\bar \tau$, 
respectively. On the contrary, $n_2$ and ${\overline n}_2$ need not 
be so without further assumptions. In other words, the 
$LL^{\mathsf{op}}$-bimodule $A$ is firm on the left but not 
necessarily on the right.   

Take now any left $LL^{\mathsf{op}}$-module $X$ (with action $x\colon L^2X \to
X$). We can define an object $A\boxtimes X$ as the usual
$LL^{\mathsf{op}}$-module tensor product of the right
$LL^{\mathsf{op}}$-module $A$ and the left $LL^{\mathsf{op}}$-module $X$; that
is, as the coequalizer 
$$
\xymatrix@C=25pt{
AL^2X \ar@<2pt>[rr]^-{{\overline n}_21.n_211}
\ar@<-2pt>[rr]_-{1x} &&
AX \ar@{->>}[r] &
A\boxtimes X .}
$$
The firm left $LL^{\mathsf{op}}$-action $n_1.1{\overline n}_1$ on $A$ induces
a firm left $LL^{\mathsf{op}}$-action on $A\boxtimes X$ (regardless the
properties of the left $LL^{\mathsf{op}}$-module $X$). In particular,
there is an endofunctor $A\boxtimes (-)$ on the category of firm left
$LL^{\mathsf{op}}$-modules.  

Now the endofunctor $A\boxtimes (-)$ on the monoidal category of firm left
$LL^{\mathsf{op}}$-modules can be equipped with the structure of a
multiplier bimonad. For any left $LL^{\mathsf{op}}$-modules $X$ and $Y$, the
structure morphisms of this multiplier bimonad are constructed using the
universality of the coequalizers in the top rows of 
$$
\xymatrix@R=12pt@C=22pt{
AXAY \ar@{->>}[r] \ar[d]_-{1c^{-1}1} &
A\boxtimes (X\circ (A\boxtimes Y)) \ar@{-->}[ddd] 
&
A^2XY \ar@{->>}[r] \ar[d]_-{c11} &
A\boxtimes ((A\boxtimes X) \circ Y)  \ar@{-->}[ddd] 
&
AL \ar@{->>}[r] \ar[ddd]_-{n_2} &
A\boxtimes L \ar@{-->}[ddd] \\
A^2XY \ar[d]_-{t_111} &
&
A^2XY \ar[d]_-{t_411} \\
A^2XY \ar[d]_-{1c1} &
&
A^2XY \ar[d]_-{1c1} \\
AXAY \ar@{->>}[r] &
(A\boxtimes X) \circ (A\boxtimes Y) 
&
AXAY \ar@{->>}[r] &
(A\boxtimes X) \circ (A\boxtimes Y) 
&
A \ar@{->>}[r]_-p &
L.}
$$
\end{remark}


\section{Uniqueness of the idempotent morphisms and the counit of a regular
  weak multiplier bimonoid}

Consider regular weak multiplier bimonoids 
$(t_1,t_2,t_3,t_4,e_1,e_2,j)$ and 
$(t'_1,t'_2,t'_3,t'_4,e'_1,e'_2,$ $j')$ on the same underlying object 
$A$. By Corollary~\ref{cor:unique_t}, we know that if $t_1=t'_1$ and 
the multiplications $j1.t_1$ and $j'1.t'_1$ are equal and 
non-degenerate with respect to some class containing  $I$, $A$, and 
$A^2$,  then $t_2=t'_2$, $t_3=t'_3$, and $t_4=t'_4$. 
The aim of this section is to find criteria for the
uniqueness of the remaining structure $e_1$, $e_2$, $j$. The findings below
generalize \cite[Lemma 3.3]{VDaWa:Banach} and \cite[Theorem
2.8]{BohmGomezTorrecillasLopezCentella:wmba} . 

\begin{lemma} \label{lem:g}
Let $\mathsf C$ be a braided monoidal category in which
coequalizers are preserved by the monoidal product. Consider a regular weak
multiplier bimonoid $(t_1,t_2,t_3,t_4,e_1,e_{2},j)$ in $\mathsf C$ such that
the induced multiplication $m:=j1.t_{1}$ is a regular epimorphism and
non-degenerate with respect to  some class of objects containing $I$, $A$ and
$A^2$. Then the following hold. 
\begin{itemize}
\item[{(1)}] There is a unique morphism $g\colon A^{2} \to A^{2}$
rendering commutative the equivalent diagrams
$$
\xymatrix{
A^3 \ar[r]^-{1m} \ar[d]_-{1t_{1}} &
A^2 \ar[d]^-g \\
A^3 \ar[r]_-{\overline \sqcap^R_2 1} &
A^2}\quad
\xymatrix{
A^3 \ar[r]^-{m1} \ar[d]_-{t_2 1} &
A^2 \ar[d]^-g \\
A^3 \ar[r]_-{1 \overline \sqcap^L_1} &
A^2 .}
$$
\item[{(2)}] The following diagrams commute.
$$
\xymatrix{
A^{3} \ar[r]^-{g1} \ar[d]_-{1c} &
A^{3} \ar[r]^-{1c} &
A^{3} \ar[d]^-{m1} \\
A^{3} \ar[r]_-{1e_{1}} &
A^{3} \ar[r]_-{m1} &
A^{2}} \quad
\xymatrix{
A^{3} \ar[r]^-{1g} \ar[d]_-{c1} &
A^{3} \ar[r]^-{c1} &
A^{3} \ar[d]^-{1m} \\
A^{3} \ar[r]_-{e_2 1} &
A^{3} \ar[r]_-{1m} &
A^{2}} 
$$
\item[{(3)}] The following diagrams commute.
$$
\xymatrix{
A^{3} \ar[r]^-{1g} \ar[d]_-{m1} &
A^{3} \ar[d]^-{m1} \\
A^{2} \ar[r]_-g &
A^{2}} \quad
\xymatrix{
A^{3} \ar[r]^-{g1} \ar[d]_-{1m} &
A^{3} \ar[d]^-{1m} \\
A^{2} \ar[r]_-g &
A^{2}}
$$
\item[{(4)}] The following diagrams commute.
$$
\xymatrix{
A^{2} \ar[r]^-{t_{1}} \ar[d]_-g \ar[rd]|-{\overline \sqcap^{R}_{2}} &
A^{2} \ar[d]^-{1j} \\
A^{2} \ar[r]_-{1j} &
A} \quad
\xymatrix{
A^{2} \ar[r]^-{t_2} \ar[d]_-g \ar[rd]|-{\overline \sqcap^L_1} &
A^{2} \ar[d]^-{j1} \\
A^{2} \ar[r]_-{j1} &
A} 
$$
\end{itemize}
\end{lemma}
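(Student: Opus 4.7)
The plan is to construct $g$ by invoking the universal property of the regular epimorphism $1m\colon A^3\to A^2$---which is a regular epimorphism because $m$ is and the monoidal product preserves coequalizers---and then to deduce the remaining parts from this construction together with the fusion axioms.

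For part~(1), I would first show that the composite $(\overline\sqcap^R_2 \otimes 1).(1\otimes t_1)\colon A^3\to A^2$ coequalizes any pair of morphisms coequalized by $1m$. This uses the short fusion equation \eqref{eq:short} for $t_1$ (to commute $t_1$ past a multiplication applied to its right-hand factors), the associativity of $m$, and the non-degeneracy of $m$, in the style of the proof of Theorem~\ref{thm:L}(2). The universal property then produces a unique $g$ fitting the first diagram. For the equivalence with the second diagram, I would use the compatibility relation \eqref{eq:c12} between $t_1$ and $t_2$, together with an identity from Appendix~\ref{app:strings} (analogous to \ref{par:A4}), to rewrite $(\overline\sqcap^R_2\otimes 1).(1\otimes t_1)$ pre-composed with $m11$ as $(1\otimes\overline\sqcap^L_1).(t_2\otimes 1)$ pre-composed with $11m$, modulo associativity of $m$. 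Since both $m11$ and $11m$ are regular epimorphisms, the two candidate definitions of $g$ must coincide.

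For parts~(2) and~(4), I would post-compose the defining equations from part~(1) with a suitable tensor factor of $j$ and invoke Axioms~\hyperlink{AxVII}{\textnormal{VII}} and~\hyperlink{AxVIII}{\textnormal{VIII}} to convert expressions in $t_1,t_2,j$ into expressions in $e_1,e_2,m$. Part~(4) is essentially a direct application of Axiom~\hyperlink{AxVIII}{\textnormal{VIII}} (respectively \hyperlink{AxVII}{\textnormal{VII}}) followed by cancellation of the regular epimorphism $1m$ (respectively $m1$). Part~(2) runs the same axioms in the opposite direction, reading off a composite of the form $(m\otimes 1).(1\otimes c).(1\otimes e_1).(1\otimes c)$ from $g\otimes 1$ after pre-composition with $1m\otimes 1$ (the required preservation of this regular epimorphism being again guaranteed by our standing assumption on $\sfc$).

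For part~(3), the compatibility diagrams of $g$ with $m$ follow from part~(1), the associativity of $m$, and functoriality of the monoidal product: pre-composing both sides of each diagram with a further regular epimorphism ($11m$ or $1m1$) reduces them, via the defining equations of $g$, to the same expression in $t_1$ and $\overline\sqcap^R_2$, and cancellation of the regular epimorphism yields the claim.

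The main obstacle will be in part~(1), specifically in verifying the equivalence of the two presentations of $g$: this requires orchestrating the compatibility relations \eqref{eq:c12}--\eqref{eq:c34} from Lemma~\ref{lem:compatibility}, the non-degeneracy hypotheses and the short fusion equations so that the pre- and post-compositions line up to permit the coequalizer cancellations. Once part~(1) is secured, parts~(2)--(4) follow routinely from the fusion axioms and the regular-epi cancellation principle.
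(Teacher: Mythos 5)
Your overall architecture---construct $g$ via the universal property of the regular epimorphism $1m$, verify the second presentation by precomposing with the regular epimorphism $mm\colon A^4\to A^2$, and deduce (2)--(4) by epi-cancellation---is the same as the paper's. The genuine gap is in the foundational step of part (1): the claim that $\overline\sqcap^R_2 1.1t_1$ coequalizes whatever $1m$ coequalizes ``using the short fusion equation, associativity and non-degeneracy'' does not work. The short fusion equation \eqref{eq:short} commutes $t_1$ past a multiplication applied to its two \emph{left}-hand inputs (it computes $t_1.(m\otimes 1)$), whereas here you must show that, after probing via non-degeneracy, $(\overline\sqcap^R_2\otimes 1).(1\otimes t_1)$ factors through the multiplication $1m$ of the two strands consumed by the inner $t_1$. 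No identity involving only $t_1,\dots,t_4$ and $j$ can deliver this: for a genuinely \emph{weak} multiplier bimonoid $\overline\sqcap^R_2=1j.t_1$ differs from $1j$, and the discrepancy is controlled precisely by the idempotents $e_1,e_2$ through Axioms VII and VIII. The paper invokes the appendix identity \ref{par:A5}, which says that the relevant probe of $(\overline\sqcap^R_2\otimes 1).(1\otimes t_1)$ equals an expression built from $e_2$ that visibly factors through $11m$; and \ref{par:A5} is itself proved from \eqref{eq:pibarr}, Axiom VII, \ref{par:A4} and \eqref{eq:e_components}. Without bringing the weak counitality axioms (or the derived identities \ref{par:A4}, \ref{par:A5}) into this step, the existence of $g$ is not justified.

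A secondary issue: your mechanism for part (2) is backwards. Part (2) is not obtained by post-composing the definition of $g$ with $j$---that is how part (4) is subsequently deduced \emph{from} part (2); rather, the paper proves (2) by precomposing with the epimorphism $1m1\colon A^4\to A^3$ and rewriting $m1.(1e_1)$ via \ref{par:A4} and \eqref{eq:components_compatibility} into the expression defining $g$. Relatedly, the first diagram of (4) rests on Axiom VII (the $\mM$-morphism property of $\overline\sqcap^R$) together with part (2), not on Axiom VIII. These points are repairable once the correct identities are in play, but as written the dependency chain (4) needs (2), and (2) needs \ref{par:A4}, is not captured, and the keystone of part (1) is missing.
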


\begin{proof}
{(1)} It follows by \ref{par:A5} and the non-degeneracy of $m$ 
with respect to $A$ that the left-bottom path of the first diagram 
coequalizes any morphisms that the top row does. Since the top row is 
a regular epimorphism, there is a unique morphism $g$ as in the first 
diagram. It obeys 

\begin{gather*}
\begin{tikzpicture}[scale=1] 
\path (.7,1) node[arr,name=g] {$\ g\ $};
\draw[braid] (0,2.5) to[out=270,in=180] (.3,1.5) to [out=0,in=270] (.6,2.5);
\draw[braid] (.8,2.5) to[out=270,in=180] (1.1,1.5) to [out=0,in=270] (1.4,2.5);
\draw[braid] (.3,1.5) to[out=270,in=135] (g);
\draw[braid] (1.1,1.5) to[out=270,in=45] (g);
\draw[braid] (g) to[out=225,in=90] (.3,0);
\draw[braid] (g) to[out=315,in=90] (1.1,0);
\draw (1.9,1.35) node {$=$};
\end{tikzpicture}
\begin{tikzpicture}[scale=1] 
\path (1.1,1.5) node[arr,name=t1] {$t_{1}$}
(.55,.7) node[arr,name=pibarr2] {${}_{\overline\sqcap^R_2}$};
\draw[braid] (0,2.5) to[out=270,in=180] (.3,2) to [out=0,in=270] (.6,2.5);
\draw[braid] (.3,2) to[out=270,in=135] (pibarr2);
\draw[braid] (.8,2.5) to[out=270,in=135] (t1);
\draw[braid] (1.4,2.5) to[out=270,in=45] (t1);
\draw[braid] (t1) to[out=225,in=45] (pibarr2);
\draw[braid] (t1) to[out=315,in=90] (1.4,0);
\draw[braid] (pibarr2) to[out=270,in=90] (.55,0);
\draw (1.8,1.5) node[empty] {$\stackrel{
\eqref{eq:components_are_A-linear}}=$};
\end{tikzpicture}\,
\begin{tikzpicture}[scale=1] 
\path (1.1,1.7) node[arr,name=t1] {$t_{1}$}
(.7,1.1) node[arr,name=pibarr2] {${}_{\overline\sqcap^R_2}$};
\draw[braid] (0,2.5) to (0,1.2) to[out=270,in=180] (.35,.5) to [out=0,in=270] (pibarr2);
\draw[braid] (.4,2.5) to[out=270,in=135] (pibarr2);
\draw[braid] (.35,.5) to[out=270,in=90] (.35,0);
\draw[braid] (.8,2.5) to[out=270,in=135] (t1);
\draw[braid] (1.4,2.5) to[out=270,in=45] (t1);
\draw[braid] (t1) to[out=225,in=45] (pibarr2);
\draw[braid] (t1) to[out=315,in=90] (1.4,0);
\draw (1.9,1.5) node[empty] {$\stackrel{~\eqref{eq:pibarr}}=$};
\end{tikzpicture}
\begin{tikzpicture}[scale=1] 
\path (1.1,2) node[arr,name=t1u] {$t_{1}$}
(.6,1.5) node[arr,name=t1d] {$t_{1}$}
(.9,1.1) node[arr,name=ju] {$\ $};
\draw[braid] (0,2.5) to (0,1.2) to[out=270,in=180] (0.2,0.5) to[out=0,in=225] (t1d);
\draw[braid] (.4,2.5) to[out=270,in=135] (t1d);
\draw[braid] (.8,2.5) to[out=270,in=135] (t1u);
\draw[braid] (1.4,2.5) to[out=270,in=45] (t1u);
\draw[braid] (t1u) to[out=225,in=45] (t1d);
\draw[braid] (0.2,0.5) to (0.2,0);
\draw[braid] (t1u) to[out=315,in=90] (1.4,0);
\draw[braid] (t1d) to[out=315,in=135] (ju);
\draw (2,1.5) node[empty] {$\stackrel{~\eqref{eq:multiplication_2}}=$};
\end{tikzpicture}
\begin{tikzpicture}[scale=1] 
\path (1.1,2) node[arr,name=t1u] {$t_{1}$}
(.6,1.5) node[arr,name=t1d] {$t_{1}$}
(.2,.8) node[arr,name=t2] {$t_{2}$}
(.9,1.1) node[arr,name=ju] {$\ $}
(.5,.4) node[arr,name=jd] {$\ $};
\draw[braid] (0,2.5) to[out=270,in=135] (t2);
\draw[braid] (.4,2.5) to[out=270,in=135] (t1d);
\draw[braid] (.8,2.5) to[out=270,in=135] (t1u);
\draw[braid] (1.4,2.5) to[out=270,in=45] (t1u);
\draw[braid] (t1u) to[out=225,in=45] (t1d);
\draw[braid] (t1d) to[out=225,in=45] (t2);
\draw[braid] (t2) to[out=225,in=90] (0,0);
\draw[braid] (t1u) to[out=315,in=90] (1.4,0);
\draw[braid] (t1d) to[out=315,in=135] (ju);
\draw[braid] (t2) to[out=315,in=135] (jd);
\draw (2,1.5) node[empty] {$\stackrel{~\eqref{eq:interchange_12}}=$};
\end{tikzpicture}
\begin{tikzpicture}[scale=1] 
\path (1.1,1.8) node[arr,name=t1u] {$t_{1}$}
(.7,1) node[arr,name=t1d] {$t_{1}$}
(.3,1.8) node[arr,name=t2] {$t_{2}$}
(1,.5) node[arr,name=jr] {$\ $}
(.4,.5) node[arr,name=jl] {$\ $};
\draw[braid] (0,2.5) to[out=270,in=135] (t2);
\draw[braid] (.6,2.5) to[out=270,in=45] (t2);
\draw[braid] (.8,2.5) to[out=270,in=135] (t1u);
\draw[braid] (1.4,2.5) to[out=270,in=45] (t1u);
\draw[braid] (t1u) to[out=225,in=45] (t1d);
\draw[braid] (t2) to[out=315,in=135] (t1d);
\draw[braid] (t2) to[out=225,in=90] (0,0);
\draw[braid] (t1u) to[out=315,in=90] (1.4,0);
\draw[braid] (t1d) to[out=315,in=135] (jr);
\draw[braid] (t1d) to[out=225,in=45] (jl);
\draw (1.9,1.5) node[empty] {$\stackrel{~\eqref{eq:multiplication_1}}=$};
\end{tikzpicture}\\
\begin{tikzpicture}[scale=1] 
\path (1.1,1.8) node[arr,name=t1u] {$t_{1}$}
(.3,1.8) node[arr,name=t2] {$t_{2}$}
(0.7,.5) node[arr,name=j] {$\ $};
\draw[braid] (0,2.5) to[out=270,in=135] (t2);
\draw[braid] (.6,2.5) to[out=270,in=45] (t2);
\draw[braid] (.8,2.5) to[out=270,in=135] (t1u);
\draw[braid] (1.4,2.5) to[out=270,in=45] (t1u);
\draw[braid] (t1u) to[out=225,in=0] (0.7,1) to[out=180,in=315] (t2);
\draw[braid] (t2) to[out=225,in=90] (0,0);
\draw[braid] (t1u) to[out=315,in=90] (1.4,0);
\draw[braid] (0.7,1) to[out=270,in=90] (j);
\draw (2,1.5) node[empty] {$\stackrel{~\eqref{eq:multiplication_2}}=$};
\end{tikzpicture}
\begin{tikzpicture}[scale=1] 
\path (1.1,1.8) node[arr,name=t1u] {$t_{1}$}
(.7,1) node[arr,name=t2d] {$t_{2}$}
(.3,1.8) node[arr,name=t2] {$t_{2}$}
(1,.5) node[arr,name=jr] {$\ $}
(.4,.5) node[arr,name=jl] {$\ $};
\draw[braid] (0,2.5) to[out=270,in=135] (t2);
\draw[braid] (.6,2.5) to[out=270,in=45] (t2);
\draw[braid] (.8,2.5) to[out=270,in=135] (t1u);
\draw[braid] (1.4,2.5) to[out=270,in=45] (t1u);
\draw[braid] (t1u) to[out=225,in=45] (t1d);
\draw[braid] (t2) to[out=315,in=135] (t1d);
\draw[braid] (t2) to[out=225,in=90] (0,0);
\draw[braid] (t1u) to[out=315,in=90] (1.4,0);
\draw[braid] (t1d) to[out=315,in=135] (jr);
\draw[braid] (t1d) to[out=225,in=45] (jl);
\draw (1.9,1.5) node[empty] {$\stackrel{~\eqref{eq:interchange_12}}=$};
\end{tikzpicture}
\begin{tikzpicture}[scale=1] 
\path (.2,2) node[arr,name=t2u] {$t_{2}$}
(.6,1.5) node[arr,name=t2d] {$t_{2}$}
(1.1,.8) node[arr,name=t1] {$t_{1}$}
(.3,1.1) node[arr,name=ju] {$\ $}
(.8,.4) node[arr,name=jd] {$\ $};
\draw[braid] (0,2.5) to[out=270,in=135] (t2u);
\draw[braid] (.4,2.5) to[out=270,in=45] (t2u);
\draw[braid] (.8,2.5) to[out=270,in=45] (t2d);
\draw[braid] (1.4,2.5) to[out=270,in=45] (t1);
\draw[braid] (t2u) to[out=315,in=135] (t2d);
\draw[braid] (t2d) to[out=315,in=135] (t1);
\draw[braid] (t2u) to[out=225,in=90] (0,0);
\draw[braid] (t1) to[out=315,in=90] (1.4,0);
\draw[braid] (t2d) to[out=225,in=45] (ju);
\draw[braid] (t1) to[out=225,in=45] (jd);
\draw (1.9,1.5) node[empty] {$\stackrel{~\eqref{eq:multiplication_1}}=$};
\end{tikzpicture}
\begin{tikzpicture}[scale=1] 
\path (.2,2) node[arr,name=t2u] {$t_{2}$}
(.6,1.5) node[arr,name=t2d] {$t_{2}$}
(.3,1.1) node[arr,name=ju] {$\ $};
\draw[braid] (0,2.5) to[out=270,in=135] (t2u);
\draw[braid] (.4,2.5) to[out=270,in=45] (t2u);
\draw[braid] (.8,2.5) to[out=270,in=45] (t2d);
\draw[braid] (1.4,2.5) to[out=270,in=0] (1.1,0.8) to[out=180,in=315] (t2d);
\draw[braid] (t2u) to[out=315,in=135] (t2d);
\draw[braid] (t2u) to[out=225,in=90] (0,0);
\draw[braid] (1.1,0.8) to[out=270,in=90] (1.1,0);
\draw[braid] (t2d) to[out=225,in=45] (ju);
\draw (1.9,1.5) node[empty] {$\stackrel{~\eqref{eq:pibarl}}=$};
\end{tikzpicture}
\begin{tikzpicture}[scale=1] 
\path (.3,1.7) node[arr,name=t2] {$t_{2}$}
(.9,1.1) node[arr,name=pibarl1] {${}_{\overline\sqcap^L_1}$};
\draw[braid] (0,2.5) to[out=270,in=135] (t2);
\draw[braid] (.6,2.5) to[out=270,in=45] (t2);
\draw[braid] (1.1,2.5) to[out=270,in=45] (pibarl1);
\draw[braid] (pibarl1) to[out=270,in=180] (1.2,.5) to[out=0,in=270] 
(1.4,2.5);
\draw[braid] (1.2,.5) to[out=270,in=90] (1.2,0);
\draw[braid] (t2) to[out=315,in=135] (pibarl1);
\draw[braid] (t2) to[out=225,in=90] (0,0);
\draw (2.02,1.5) node[empty] {$\stackrel{
\eqref{eq:components_are_A-linear}}=$};
\end{tikzpicture}
\begin{tikzpicture}[scale=1] 
\path (.3,1.5) node[arr,name=t2] {$t_{2}$}
(.8,.8) node[arr,name=pibarl1] {${}_{\overline\sqcap^L_1}$};
\draw[braid] (0,2.5) to[out=270,in=135] (t2);
\draw[braid] (.6,2.5) to[out=270,in=45] (t2);
\draw[braid] (.8,2.5) to[out=270,in=180] (1.1,2) to[out=0,in=270] (1.4,2.5);
\draw[braid] (1.1,2) to[out=270,in=45] (pibarl1);
\draw[braid] (t2) to[out=315,in=135] (pibarl1);
\draw[braid] (t2) to[out=225,in=90] (0,0);
\draw[braid] (pibarl1) to[out=270,in=90](.8,0);
\end{tikzpicture} 
\end{gather*}
where the first equality holds by definition of $g$. Since $11m\colon A^{4}
\to A^{3}$ is epi, this proves commutativity of the second diagram (which is
the opposite-coopposite of the first one). 

{(2)} The first diagram commutes by
$$
\begin{tikzpicture}[scale=1]
\path (.6,1.2) node[arr,name=e1] {$e_1$};
\draw[braid] (0,2.5) to[out=270,in=180] (.2,.5) to[out=0,in=225] (e1);
\draw[braid] (.2,.5) to[out=270,in=90] (.2,0);
\draw[braid] (.2,2.5) to[out=270,in=180] (.4,2) to[out=0,in=270] (.6,2.5);
\path[braid,name path=m>e1] (.4,2) to[out=270,in=45] (e1);
\draw[braid,name path=o>e1] (1,2.5) to[out=270,in=135] (e1);
\fill[white, name intersections={of=m>e1 and o>e1}] (intersection-1) circle(0.1);
\draw[braid] (.4,2) to[out=270,in=45] (e1);
\draw[braid] (e1) to[out=315,in=90] (.8,0);
\draw (1.5,1.5) node[empty] {$\stackrel{\ref{par:A4}}=$};
\end{tikzpicture} 
\begin{tikzpicture}[scale=1]
\path (.6,2) node[arr,name=t1] {$t_1$}
(.6,1) node[arr,name=pibarr1] {${}_{\overline \sqcap^R_1}$};
\draw[braid] (0,2.5) to[out=270,in=180] (.2,.5) to[out=0,in=270] (pibarr1);
\draw[braid] (.2,.5) to[out=270,in=90] (.2,0);
\draw[braid] (.4,2.5) to[out=270,in=135] (t1);
\draw[braid] (.8,2.5) to[out=270,in=45] (t1);
\draw[braid] (t1) to[out=225,in=135] (pibarr1);
\path[braid,name path=t1>o] (t1) to[out=315,in=90] (1.2,0);
\draw[braid,name path=o>pibarr1] (1.2,2.5) to[out=270,in=45] (pibarr1);
\fill[white, name intersections={of=t1>o and o>pibarr1}] (intersection-1) circle(0.1);
\draw[braid] (t1) to[out=315,in=90] (1.2,0);
\draw (1.9,1.5) node[empty] {$\stackrel{\eqref{eq:components_compatibility}}=$};
\end{tikzpicture} 
\begin{tikzpicture}[scale=1]
\path (.6,2) node[arr,name=t1] {$t_1$}
(.2,1) node[arr,name=pibarr2] {${}_{\overline \sqcap^R_2}$};
\draw[braid] (0,2.5) to[out=270,in=135] (pibarr2);
\draw[braid] (.4,2.5) to[out=270,in=135] (t1);
\draw[braid] (.8,2.5) to[out=270,in=45] (t1);
\draw[braid] (t1) to[out=225,in=45] (pibarr2);
\draw[braid,name path=o>m] (1.2,2.5) to[out=270,in=0] (.7,.5) 
to[out=180,in=270] (pibarr2);
\path[braid,name path=t1>o] (t1) to[out=315,in=90] (1.2,0);
\fill[white, name intersections={of=t1>o and o>m}] (intersection-1) circle(0.1);
\draw[braid] (t1) to[out=315,in=90] (1.2,0);
\draw[braid] (.7,.5) to[out=270,in=90] (.7,0);
\draw (1.7,1.3) node {$=$};
\end{tikzpicture} 
\begin{tikzpicture}[scale=1]
\path (.4,1.5) node[arr,name=g] {$\ g\ $};
\draw[braid] (0,2.5) to[out=270,in=135] (g);
\draw[braid,name path=o>m] (.8,2.5) to[out=270,in=0] (.6,2) 
to[out=180,in=270] (.4,2.5);
\draw[braid] (.6,2) to[out=270,in=45] (g);
\draw[braid,name path=o>m] (1.2,2.5) to[out=270,in=0] (.3,.5) 
to[out=180,in=225] (g);
\path[braid,name path=g>o] (g) to[out=315,in=90] (1.2,0);
\fill[white, name intersections={of=g>o and o>m}] (intersection-1) circle(0.1);
\draw[braid] (g) to[out=315,in=90] (1.2,0);
\draw[braid] (.3,.5) to[out=270,in=90] (.3,0);
\end{tikzpicture}
$$
and since $1m1\colon A^{4} \to A^{3}$ is epi, where the 
unlabelled equality holds by definition of $g$. The second diagram 
commutes by the opposite-coopposite reasoning.  
 
{(3)} The left-bottom path of the first diagram in part {(1)} is a left $A$-module morphism by 
\eqref{eq:components_are_A-linear}.
Hence so is the top-right path. Since
$11m\colon A^{4} \to A^{3}$ is also left $A$-linear and epi, this
proves commutativity of the first diagram. The second diagram commutes by the
opposite-coopposite reasoning.  

{(4)} The first diagram commutes by
$$
\begin{tikzpicture}[scale=1]
\path (.2,1.5) node[arr,name=t1] {$t_1$}
(.6,1.1) node[arr,name=j] {$\ $};
\draw[braid] (0,2) to[out=270,in=135] (t1);
\draw[braid] (.4,2) to[out=270,in=45] (t1);
\draw[braid] (t1) to[out=225,in=180] (.4,.5) to[out=0,in=270] (.8,2);
\draw[braid] (.4,.5) to[out=270,in=90] (.4,0);
\draw[braid] (t1) to[out=315,in=135] (j);
\draw (1.3,1.3) node {$=$};
\end{tikzpicture}
\begin{tikzpicture}[scale=1]
\path (.6,1.2) node[arr,name=e1] {$e_1$}
(1,.8) node[arr,name=j] {$\ $};
\draw[braid] (0,2) to[out=270,in=180] (.2,.5) to[out=0,in=225] (e1);
\draw[braid] (.2,.5) to[out=270,in=90] (.2,0);
\path[braid,name path=d] (.4,2) to[out=270,in=45] (e1);
\draw[braid,name path=u] (.8,2) to[out=270,in=135] (e1);
\fill[white, name intersections={of=u and d}] (intersection-1) circle(0.1);
\draw[braid] (.4,2) to[out=270,in=45] (e1);
\draw[braid] (e1) to[out=315,in=135] (j);
\draw (1.5,1.3) node {$=$};
\end{tikzpicture}
\begin{tikzpicture}[scale=1]
\path (.2,1.5) node[arr,name=t1] {$\ g\ $}
(.6,1.1) node[arr,name=j] {$\ $};
\draw[braid] (0,2) to[out=270,in=135] (t1);
\draw[braid] (.4,2) to[out=270,in=45] (t1);
\draw[braid] (t1) to[out=225,in=180] (.4,.5) to[out=0,in=270] (.8,2);
\draw[braid] (.4,.5) to[out=270,in=90] (.4,0);
\draw[braid] (t1) to[out=315,in=135] (j);
\end{tikzpicture}
$$
and the non-degeneracy of $m$. The first equality holds by Axiom~\hyperlink{AxVII}{\textnormal{VII}} and the second by commutativity of
the first diagram in part {(2)}. The second diagram commutes by the
opposite-coopposite reasoning. 
\end{proof}    

\begin{theorem}
Let $\mathsf C$ be a braided monoidal category in which
coequalizers are preserved by the monoidal product. Let
$(t_1,t_2,t_3,t_4,e_1,e_{2},j)$ and $(t_1,t_2,t_3,t_4,e'_1,e'_{2},j')$ be
regular weak multiplier bimonoids in $\mathsf C$ such that the induced
multiplications $m:=j1.t_{1}$ and $m':=j'1.t_{1}$ are equal regular
epimorphisms and  non-degenerate with respect to  some class of objects
containing $I$, $A$ and $A^2$. Then the following are equivalent. 
\begin{itemize}
\item[{(i)}] $e_1=e'_1$,
\item[{(ii)}] $e_2=e'_2$,
\item[{(iii)}] $g=g'$,
\item[{(iv)}] $j=j'$. 
\end{itemize}
These equivalent assertions hold true if in addition idempotent morphisms in
$\mathsf C$ split and the morphisms $\hat d_{1}$ (or $\hat d_{2}$) of 
\eqref{eq:dhat} are epimorphisms both for the unprimed and for the primed
data.   
\end{theorem}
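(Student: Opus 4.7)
The plan is to establish the chain of implications $(iv)\Rightarrow(iii)\Rightarrow(i)\Leftrightarrow(ii)\Rightarrow(iii)\Rightarrow(iv)$ using Lemma~\ref{lem:g} as the main technical tool, and then to derive the final assertion from Theorem~\ref{thm:module_cat}.

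First, $(iv)\Rightarrow(iii)$: Lemma~\ref{lem:g}(1) characterizes $g$ as the unique morphism satisfying $g\cdot 1m=(\overline\sqcap^R_2\,1)\cdot 1t_1$, and since $\overline\sqcap^R_2=1j\cdot t_1$ depends only on the fixed $t_1$ and on $j$, while $1m$ is a regular epimorphism (regular epimorphisms being preserved by the monoidal product), the equality $j=j'$ forces $g=g'$. The implications $(iii)\Leftrightarrow(i)$ and $(iii)\Leftrightarrow(ii)$ both invoke Lemma~\ref{lem:g}(2), whose two diagrams express each of $e_1$ and $e_2$ in terms of $g$ via identities such as $m1\cdot 1e_1=m1\cdot 1c\cdot g1\cdot 1c$: equality of $g$'s implies equality of the corresponding $(m1)\cdot 1e_i$'s, and then reading off the second tensor factor followed by the non-degeneracy of $m$ on the appropriate side with respect to~$\mathcal Y$ delivers $e_1=e'_1$ and, symmetrically, $e_2=e'_2$; the converse directions read the same equations in reverse. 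Finally, $(i)\Leftrightarrow(ii)$ follows from the $\mathbb M$-morphism relation~\eqref{eq:e_components} by the same non-degeneracy argument.

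The main obstacle is $(iii)\Rightarrow(iv)$. Given $g=g'$, Lemma~\ref{lem:g}(4) yields $1j\cdot t_1=1j\cdot g$ and $1j'\cdot t_1=1j'\cdot g$, while~\eqref{eq:multiplication_1} and~\eqref{eq:multiplication_2} supply $j1\cdot t_1=m=j'1\cdot t_1$ together with the $t_2$-analogues. Combining these equalities with the short fusion equation~\eqref{eq:short} and its counterpart for $t_2$ one exhibits $j$ as a composite determined intrinsically by $g$ and $m$; the non-degeneracy of $m$ with respect to $\mathcal Y$ then closes the argument and yields $j=j'$. The delicate point lies in the careful bookkeeping of braidings needed to run this chase; this generalizes the uniqueness arguments of \cite[Lemma~3.3]{VDaWa:Banach} and \cite[Theorem~2.8]{BohmGomezTorrecillasLopezCentella:wmba}.

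For the final assertion, the stated additional hypotheses are exactly what is required to apply Theorem~\ref{thm:module_cat}, producing a monoidal category of $A$-modules whose unit is the base object $L$ equipped with a counit $\varepsilon$ satisfying $\varepsilon\cdot p=j$ by Theorem~\ref{thm:L}(5). Since the endofunctor $A\boxtimes(-)$ inherits the structure of a multiplier bimonad (Remark~\ref{rem:multiplier_bimonad}) whose structure morphisms are built from $t_1,\ldots,t_4$ and $m$ alone, the base object $L$, the projection $p$, and the counit $\varepsilon$ depend only on those data; hence $j=\varepsilon\cdot p$ is uniquely determined by $t_1,\ldots,t_4$ and $m$, forcing any two choices of the extra data to coincide. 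This establishes the equivalent conditions~(i)--(iv).
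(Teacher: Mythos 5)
Your handling of the four-way equivalence mostly tracks the intended argument through Lemma~\ref{lem:g}, and the implications $(iv)\Rightarrow(iii)$, $(i)\Leftrightarrow(ii)$ and $(i)\Leftrightarrow(iii)$ are in order. However, $(iii)\Rightarrow(iv)$ is not actually proved: you name the relevant identities but then defer the ``bookkeeping of braidings'' and bring in the short fusion equation \eqref{eq:short}, which plays no role here. The step is a one-line chain using exactly the identities you already list: by \eqref{eq:multiplication_1}, Lemma~\ref{lem:g}(4) for the unprimed data, hypothesis $(iii)$, Lemma~\ref{lem:g}(4) for the primed data, and \eqref{eq:multiplication_2},
$$
j.m'=j'j.t_{1}=j'j.g=j'j.g'=j'j.t_{2}=j'.m,
$$
and cancelling the equal epimorphisms $m=m'$ gives $j=j'$. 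There is no need to ``exhibit $j$ as a composite determined intrinsically by $g$ and $m$'', and indeed that is not what happens.

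The serious gap is the final assertion. Invoking Theorem~\ref{thm:module_cat} is not legitimate here: that theorem requires existence of coequalizers, closure of regular epimorphisms under composition, a non-degeneracy class containing the base object $L$, and non-degeneracy of $n_{1}$, none of which are hypotheses of the present statement. Worse, the argument is circular: $L$ is the coequalizer \eqref{eq:L} of $\sqcap^L_1$ and $\overline\sqcap^R_1.c^{-1}$, and these morphisms are built from $e_1$ and $j$ (see \eqref{eq:pil} and \eqref{eq:pibarr}), while $\varepsilon$ is induced from $j$ itself; so the claim that $L$, $p$ and $\varepsilon$ ``depend only on $t_1,\dots,t_4$ and $m$'' assumes precisely what is to be shown. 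The intended argument uses only the splitting of idempotents and the epimorphism hypothesis on $\hat d_1$: since $d_1$ is built from $t_1$, $m$ and the braiding alone, it is common to both structures; the last equality of \ref{par:d1_1} gives $e_1.d_1=d_1=e'_1.d_1$, hence $d_1=\check e'_1.\hat d'_1$, and substituting back yields $e_1.\check e'_1.\hat d'_1=\check e'_1.\hat d'_1$, so (as $\hat d'_1$ is epi) $\check e'_1$ equalizes $e_1$ and $1$. Universality of the equalizer $\check e_1$ then produces a comparison isomorphism between $A\circ' A$ and $A\circ A$, from which one computes $e'_1.e_1=e_1$; the three symmetric variants force $e_1=e_1.e'_1=e'_1$, i.e.\ condition $(i)$.
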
     

\begin{proof}
 (i)  $\Leftrightarrow$ (ii).
This follows by \eqref{eq:e_components}, applied both to the unprimed 
and the primed data, and the non-degeneracy of $m=m'$ with respect to  $A$. 

(i) $\Leftrightarrow$ (iii).
This follows by the first diagram in part (2)  of 
Lemma~\ref{lem:g}, applied both to the unprimed and the primed data, 
and the non-degeneracy of $m=m'$ with respect to  $A$.   

(iv) $\Rightarrow$ (iii).
If  (iv)  holds then the left-bottom path of the first 
diagram of Lemma~\ref{lem:g} (1)  is the same for the primed and for 
the unprimed data. Then so is the top-right path. Since $1m$ and 
$1m'$ are equal epimorphisms, this proves $g=g'$. 

(iii) $\Rightarrow$ (iv).
This follows by the the fact that $m$ and $m'$ are equal epimorphisms 
and the calculation
$$
j.m'\stackrel{\eqref{eq:multiplication_1}}=
j'j.t_{1} =
j'j.g\stackrel{(iii)}=
j'j.g'=
j'j.t_{2}\stackrel{\eqref{eq:multiplication_2}}=
j'.m ,
$$
in which the unlabelled equalities hold by Lemma~\ref{lem:g}(4).  

Assume now that idempotent morphisms in $\mathsf C$ split.
The morphisms $d_1$ of \ref{par:d1_1}, built up from the unprimed and 
from the primed data, are the same. Applying the last equality of 
\ref{par:d1_1} to the unprimed and the primed data, respectively, we get 
\begin{equation}\label{eq:ab}
e_1.d_1=d_1 \qquad\qquad\qquad
e'_1.d_1=d_1 .
\end{equation}
Using the second equality, we obtain the equivalent expressions of $d_1$ in
$$
d_1=
e'_1.d_1=
\check e'_1.\hat e'_1.d_1=
\check e'_1.\hat d'_1.
$$
Substituting this expression of $d_1$ in the first equality of \eqref{eq:ab},
we get $e_1.\check e'_1.\hat d'_1=\check e'_1.\hat d'_1$. Since $\hat d'_1$ is
epi by assumption, this shows that $\check e'_1$ equalizes $e_1$ and $1$. Thus
universality of the equalizer 
\begin{equation} \label{eq:f}
\xymatrix{
& A\circ' A \ar[d]^-{\check e'_1} \ar@{-->}[ld]_-f \\
A\circ A \ar[r]_-{\check e_1} &
A^2 \ar@<2pt>[r]^-{e_1}\ar@<-2pt>[r]_-1 &
A^2}
\end{equation}
yields a morphism $f$ as in the diagram. It is an isomorphism with the inverse
constructed by a symmetrical reasoning.

Post-composing the morphisms around the triangle of \eqref{eq:f} 
by $\hat e'_1$ gives $\hat e'_1.\check e_1.f=\hat e'_1.\check 
e'_1=1$; so that $\hat e'_1.\check e_1$ is the inverse of $f$ and 
therefore $f.\hat e'_1.\check e_1=1$. Using this in the penultimate 
equality and commutativity of the triangular region of \eqref{eq:f} 
in the second one, we obtain 
$$
e'_1.e_1=
\check e'_1.\hat e'_1.\check e_1.\hat e_1=
\check e_1.f.\hat e'_1.\check e_1.\hat e_1=
\check e_1.\hat e_1=
e_1.
$$
Since $e_1$ on the right hand side is the first component of an \mM-morphism
$A^2 \nto A^2$ by \eqref{eq:e_components}, so must be $e'_1.e_1$ on the left
hand side. The \mM-morphism with first component $e_1=e'_1.e_1$ has
$e_2=e_2.e'_2$ as the second component, again by \eqref{eq:e_components},
applied both to the primed and the unprimed data.  

Symmetrical reasoning leads to the further equalities 
$$
e_1.e'_1=e'_1\ \textrm{and}\ e'_2.e_2=e'_2,\qquad
e'_2.e_2=e_2\ \textrm{and}\ e_1.e'_1=e_1,\qquad
e_2.e'_2=e'_2\ \textrm{and}\ e'_1.e_1=e'_1.
$$
They immediately imply $e_1=e_1.e'_1=e'_1$; that is, the first of the
equivalent assertions of the theorem.
\end{proof}    


\section{Working in a closed braided monoidal category} \label{sec:closed}

In this final section we assume that the braided monoidal category $\mathsf
C$ is also closed and investigate the consequences of this on the assumptions
and the constructions of the previous sections. 

A braided monoidal category ${\mathsf C}$ is said to be {\em closed}
if, for any object $X$, the functor $X(-)\colon {\mathsf C}\to {\mathsf C}$
possesses a right adjoint, to be denoted by $[X,-]$ (this is equivalent to
$(-)X\cong X(-)$ possessing a right adjoint). The unit and the counit of the
adjunction $X(-) \dashv [X,-]$ will be denoted by $\mathsf{coev}$ and
$\mathsf{ev}$, respectively. 

Since any left adjoint functor preserves coequalizers, in a closed braided
monoidal category coequalizers are preserved by taking the monoidal product
with any object.

\subsection{The multiplier monoid}

In this section we recall some background material from
\cite{BohmLack:cat_of_mbm}. Consider the images of the morphisms
$$
\xymatrix{A^2[A,A]\ar[r]^-{1\mathsf{ev}} & A^2\ar[r]^-m & A}
\qquad
\xymatrix{A^2[A,A]\ar[r]^-{1c} &
A[A,A]A \ar[r]^-{\mathsf{ev} 1} & 
A^2 \ar[r]^-m &
A} 
$$
under the adjunction isomorphism ${\mathsf C}(A^2[A,A],A)\cong {\mathsf
C}([A,A],[A^2,A])$. If their pullback exists, as it will in any 
abelian category, then we call it the {\em multiplier monoid} of
$A$ (a term justified by the verification of its monoid structure in
\cite{BohmLack:cat_of_mbm}) and we denote it by $\mM(A)$ as in the first 
diagram of
\begin{equation}\label{eq:M(A)}
\xymatrix{
\mM(A)\ar@{-->}[r] \ar@{-->}[d] &
[A,A]\ar[d]\\
[A,A]\ar[r]&
[A^2,A]}
\qquad\qquad
\xymatrix{
A\mM(A)A\ar@{-->}[r]^-{1h_1} \ar@{-->}[d]_-{h_2 1} &
A^2\ar[d]^-m\\
A^2\ar[r]_-m &
A.}
\end{equation}
Using the adjunction isomorphism ${\mathsf C}(\mM(A),[A^2,A])\cong {\mathsf
C}(A^2\mM(A),A)$, the first diagram in \eqref{eq:M(A)} translates to the
second one, involving the morphisms 
$
(\xymatrix{
\mM(A)A \ar[r]|-{\,h_1\,} &
A &
A\mM(A) \ar[l]|-{\,h_2\,}})
$
with the universal property that for any object $X$, the morphisms $f\colon X\to
\mM(A)$ correspond bijectively to pairs of morphisms 
$
(\xymatrix{
XA \ar[r]|-{\,f_1\,} &
A &
AX \ar[l]|-{\,f_2\,}})
$
such that 
\begin{equation}\label{eq:component-compatibility}
\xymatrix{
AXA\ar[r]^-{1f_1} \ar[d]_-{f_2 1} &
A^2\ar[d]^-m\\
A^2\ar[r]_-m &
A}
\end{equation}
commutes; that is $f_1$ and $f_2$ are components of an {\em \mM-morphism}
$X\nto A$. We call $f_1$ and $f_2$ the {\em components} also of the
corresponding morphism $f\colon X \to \mM(A)$ in $\mathsf C$. Explicitly, the
correspondence between $f\colon X\to \mM(A)$ and its components $(f_1,f_2)$ is
expressed by the commutative diagrams  
\begin{equation}\label{eq:components}
\xymatrix{
XA\ar[r]^-{f_1}\ar[d]_-{f1} &
A \ar@{=}[d]&
AX \ar[l]_-{f_2} \ar[d]^-{1f}\\
\mM(A)A\ar[r]_-{h_1}&
A&
A\mM(A)\ar[l]^-{h_2}.}
\end{equation}

The pair 
$
(\xymatrix{
\mM(A)A \ar[r]|-{\,h_1\,} &
A &
A\mM(A) \ar[l]|-{\,h_2\,}})
$
in \eqref{eq:M(A)} can be regarded as the components of the identity morphism
$\mM(A)\to \mM(A)$. By the associativity of $m$, 
$
(\xymatrix{
A^2 \ar[r]|-{\,m\,} &
A &
A^2 \ar[l]|-{\,m\,}})
$
are components of a morphism $i\colon A\to \mM(A)$. 

\begin{proposition}\label{prop:dense}
If the pullback $\mM(A)$ exists for a semigroup $A$ with non-degenerate
multiplication $m$, then $h_1$ in \eqref{eq:components} is non-degenerate on
the right and $h_2$ in \eqref{eq:components} is non-degenerate on the left. 
\end{proposition}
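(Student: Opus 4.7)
The plan is to reduce everything to the non-degeneracy of $m$ via the universal description of $\mM(A)$. By construction, a morphism $f\colon X\to \mM(A)$ is uniquely determined by the pair of components $f_1\colon XA\to A$ and $f_2\colon AX\to A$ obtained from \eqref{eq:components}, and these components satisfy the M-morphism compatibility \eqref{eq:component-compatibility}. Unfolding the definition of non-degeneracy on the right of $h_1$ with respect to $\{I\}$, and using that $(h_1.c).(1\otimes f)$ coincides with $f_1.c$ up to invertible braiding (by naturality of $c$ and the defining diagram for $h_1$ in \eqref{eq:components}), one sees that the claim amounts to injectivity of the assignment
$$\mathsf{C}(X,\mM(A))\;\longrightarrow\;\mathsf{C}(XA,A),\qquad f\;\longmapsto\; f_1.$$

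To establish this injectivity, suppose $f,g\colon X\to \mM(A)$ satisfy $f_1=g_1$. Applying \eqref{eq:component-compatibility} to both $f$ and $g$ yields
$$m.(f_2\otimes 1)\;=\;m.(1\otimes f_1)\;=\;m.(1\otimes g_1)\;=\;m.(g_2\otimes 1)$$
as morphisms $AXA\to A$. This is precisely an equation to which right non-degeneracy of $m$ with respect to $\{I\}$ applies, instantiated with the test object $AX$ and the two candidate morphisms $f_2,g_2\colon AX\to A$; it forces $f_2=g_2$. Combined with $f_1=g_1$ and the universal property of the pullback \eqref{eq:M(A)}, this gives $f=g$.

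The statement for $h_2$ is proved by the symmetric argument with the roles of the two components interchanged, now invoking left non-degeneracy of $m$; alternatively, one can obtain it by passing to $\mathsf{C}^{\mathsf{rev}}$ and applying the claim already proved for $h_1$. There is no real obstacle in this proof: the entire argument is a definitional unfolding, and the only substantive content is that the M-morphism compatibility \eqref{eq:component-compatibility} lets the known non-degeneracy of $m$ on each side propagate to the component of $f$ that is not directly being tested.
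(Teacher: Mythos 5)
Your proof is correct and follows essentially the same route as the paper: unwind the non-degeneracy of $h_1$ (via the braiding and \eqref{eq:components}) to the injectivity of $f\mapsto f_1$, use the compatibility square \eqref{eq:component-compatibility} together with the non-degeneracy of $m$ to deduce $f_1=g_1\Leftrightarrow f_2=g_2$, and conclude $f=g$ from the universal property of the pullback. The paper states this more tersely but with exactly the same content, so nothing is missing.
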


\begin{proof}
For morphisms $f,g\colon X\to \mM(A)$, it follows by
\eqref{eq:component-compatibility} that $f_1=h_1.f1$ and $g_1=h_1.g1$ are
equal if and only if $f_2=h_2.1f$ and $g_2=h_2.1g$ are equal. If this is the
case, then $f=g$ by the universality of the pullback.
\end{proof}

In the category of vector spaces, the non-degeneracy properties of $h_1$ and
$h_2$ in Proposition \ref{prop:dense} are referred to as the {\em density of
$A$ in $\mM(A)$}, see \cite{Dauns:Multiplier}.

\begin{corollary}\label{cor:mono-nd}
For a semigroup $A$ satisfying the conditions in Proposition \ref{prop:dense},
and some morphism $f\colon X \to \mM(A)$, the following are equivalent.
\begin{itemize}
\item $f$ is a monomorphism,
\item $f_1$ is non-degenerate on the right,
\item $f_2$ is non-degenerate on the left.
\end{itemize}
In particular, for such a semigroup $A$, the morphism $i\colon A\to \mM(A)$ ---
whose components are equal to $m$ --- is a monomorphism.  
\end{corollary}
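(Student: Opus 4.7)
The plan is to reduce all three equivalences to a single application of Proposition~\ref{prop:dense} combined with naturality of the braiding. Given $f\colon X \to \mM(A)$ with components $(f_1, f_2)$, and a parallel pair $g,g'\colon Y \to X$, I first observe that the components of the composite $f.g\colon Y\to \mM(A)$ are $(f.g)_1 = h_1.(f.g)1 = h_1.f1.g1 = f_1.g1$ and symmetrically $(f.g)_2 = f_2.1g$, using \eqref{eq:components} and functoriality of the monoidal product. Since the multiplication of $A$ is non-degenerate, Proposition~\ref{prop:dense} asserts that any morphism into $\mM(A)$ is determined by either one of its two components; therefore $f.g = f.g'$ if and only if $f_1.g1 = f_1.g'1$, and equivalently if and only if $f_2.1g = f_2.1g'$.

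Hence $f$ is a monomorphism if and only if, for every object $Y$, the map
$$
\mathsf{C}(Y,X) \to \mathsf{C}(YA, A), \qquad g \mapsto f_1.g1
$$
is injective, and equivalently if and only if, for every $Y$, the map $g\mapsto f_2.1g\colon \mathsf{C}(Y,X) \to \mathsf{C}(AY, A)$ is injective. The second of these conditions is literally the definition of non-degeneracy of $f_2\colon AX\to A$ on the left with respect to $\{I\}$, so this case is immediate. For the first condition, naturality of the braiding yields the identity $c_{A,X}.1g = g1.c_{A,Y}$, whence $f_1.c_{A,X}.1g = f_1.g1.c_{A,Y}$. Since $c_{A,Y}$ is an isomorphism, post-composition with it is a bijection $\mathsf{C}(YA,A)\to \mathsf{C}(AY,A)$, and so injectivity of $g\mapsto f_1.g1$ is equivalent to injectivity of $g\mapsto f_1.c_{A,X}.1g$; this last is precisely non-degeneracy of $f_1.c$ on the left, i.e.\ non-degeneracy of $f_1$ on the right.

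For the final claim, the morphism $i\colon A\to \mM(A)$ was introduced with both components equal to $m$. Under the standing non-degeneracy hypothesis on $m$, both $i_1 = m$ is non-degenerate on the right and $i_2 = m$ is non-degenerate on the left, so the equivalence just proved shows that $i$ is a monomorphism. The only thing to watch in carrying out the argument is keeping the braiding conventions straight when translating between ``non-degeneracy on the right'' (which is defined via $v.c$ and a left-hand pattern) and the naive ``injective on precomposition with $g1$'' formulation; once the naturality square is written down correctly, the rest is formal.
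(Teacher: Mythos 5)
Your argument is correct and is exactly the deduction the paper intends (it states the result as a corollary of Proposition~\ref{prop:dense} without writing out a proof): computing the components of $f.g$ as $f_1.g1$ and $f_2.1g$, invoking the injectivity of the component maps from Proposition~\ref{prop:dense}, and translating between the two formulations of non-degeneracy via naturality of the braiding. Nothing further is needed.
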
 

\subsection{A distinguished class of objects}

In a closed braided monoidal category $\mathsf C$, we can 
make the following choice of a class $\mathcal Y$ of objects in $\mathsf C$. 
Let $\mathcal Y$ contain those objects $Y$ in $\mathsf C$ which obey the 
following properties.  
\begin{itemize} 
\item[{(a)}] The functor $Y(-)\colon \mathsf C \to \mathsf C$ preserves
  monomorphisms. 
\item[{(b)}] For any objects $X$ and $Z$ of $\mathsf C$, 
$q:=\xymatrix@C=25pt{
[X,Z]Y \ar[r]^-{\mathsf{coev}} &
[X,X[X,Z]Y] \ar[r]^-{[X,\mathsf{ev}1]} &
[X,ZY]}$
is a monomorphism.
\end{itemize}

\begin{example}\label{ex:mod_k}
In the closed symmetric monoidal category of modules over a commutative ring,
property (a) characterizes the {\em flat} modules $Y$. Property (b) holds for
{\em locally projective} \cite{ZimHui} modules $Y$. Since locally projective
modules are also flat, all locally projective (so in particular all
projective) modules belong to the class $\mathcal Y$.  
\end{example}

From this immediately follows the following.

\begin{example} \label{ex:vec}
In the closed symmetric monoidal category of vector spaces, every object
belongs to the class $\mathcal Y$. 
\end{example}

More generally, we shall see that in the closed symmetric monoidal category
of group graded vector spaces every object belongs to the class $\mathcal
Y$.  We do this in the following, still more general, setting. Let 
$\mathsf C$ be a closed braided monoidal category and let $G$ be a 
cocommutative bimonoid in it. Then the category ${\mathsf C}^G$ of 
$G$-comodules (that is, the Eilenberg-Moore category of the comonad 
$G(-)$ on $\mathsf C$) is a braided monoidal category (via the 
braided monoidal structure lifted from $\mathsf C$). Consequently, in 
this case any $G$-comodule 
$\xymatrix@C=18pt{Z\ar[r]|(.4){\, z\,} & GZ}$ 
induces a commutative diagram  
\begin{equation} \label{eq:lift}
\xymatrix{
{\mathsf C}^G \ar[r]^-{Z(-)} \ar[d]_-U &
{\mathsf C}^G \ar[d]^-U \\
\mathsf C \ar[r]_-{Z(-)} &
\mathsf C }
\end{equation}
in which $U$ denotes the forgetful functor and in which $U$ and 
$Z(-)$ in the bottom row are left adjoint functors. 
Then it follows by a dual version of the {\em Adjoint Lifting 
Theorem} \cite[Theorem~2]{Johnstone} that there is an adjunction 
$Z(-) \dashv \llbracket Z,- \rrbracket \colon {\mathsf C}^G \to 
{\mathsf C}^G$ whenever the equalizer  
$$
\xymatrix@C=30pt{
\llbracket Z,Y \rrbracket \ar[r]^-g &
G[Z,Y] \ar@<2pt>[r]^-{1[Z,y]} \ar@<-2pt>[r]_-{1\widetilde z} &
G[Z,GY]}
$$
in $\mathsf C$ exists, for any $G$-comodules $\xymatrix@C=18pt{Z \ar[r]|(.4)
{\, z\,} & GZ}$ and $\xymatrix@C=18pt{Y \ar[r]|(.4){\, y\,} & GY}$, where
$\widetilde z$ is the mate of $z$ under the adjunction $Z(-)\dashv [Z,-]\colon
\mathsf C \to \mathsf C$. In particular, ${\mathsf C}^G$ is closed whenever
equalizers of coreflexive pairs exist in $\mathsf C$. 

\begin{proposition} \label{prop:Y_in_c^G}
Consider a closed braided monoidal category $\mathsf C$ in which the
equalizers of coreflexive pairs exist. Let $G$ be a cocommutative Hopf monoid
in $\mathsf C$ such that the functor $G(-)\colon \mathsf C \to \mathsf C$
preserves monomorphisms. Then an object $\xymatrix@C=18pt{Z \ar[r]|(.4){\,
z\,} & GZ}$ of ${\mathsf C}^G$ belongs to the class $\mathcal Y$ in the closed
braided monoidal category ${\mathsf C}^G$ whenever $Z$ belongs to $\mathcal Y$
in $\mathsf C$. 
\end{proposition}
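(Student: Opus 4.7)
The aim is to verify, for $(Z,z) \in \mathsf{C}^G$ with underlying object $Z$ in the class $\mathcal{Y}$ of $\mathsf{C}$, both defining properties (a) and (b) of membership in $\mathcal{Y}$ for the closed braided monoidal category $\mathsf{C}^G$ (which is closed by the Adjoint Lifting Theorem invoked just before the proposition). The key preliminary is that the forgetful functor $U\colon \mathsf{C}^G \to \mathsf{C}$ preserves monomorphisms --- it reflects them automatically, being faithful. Preservation will follow from the hypothesis that $G(-)$ preserves monos: $\mathsf{C}^G$ is comonadic over $\mathsf{C}$ via the comonad $G(-)$, so $U$ creates all limits preserved by $G(-)$; concretely one shows that a morphism $f$ in $\mathsf{C}^G$ is mono if and only if $Uf$ is mono in $\mathsf{C}$.

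Property (a) is then immediate: for $f\colon V \to W$ mono in $\mathsf{C}^G$, $Uf$ is mono in $\mathsf{C}$, so $Z \otimes Uf$ is mono by (a) for $Z$ in $\mathsf{C}$. By the commutativity of \eqref{eq:lift}, this agrees with $U(Z\otimes_{\mathsf{C}^G} f)$, and faithfulness of $U$ now yields that $Z \otimes_{\mathsf{C}^G} f$ is mono in $\mathsf{C}^G$. For property (b), writing $q^G$ and $q$ for the canonical maps in $\mathsf{C}^G$ and $\mathsf{C}$ respectively, we must show that
$$q^G\colon \llbracket X, W \rrbracket \otimes_{\mathsf{C}^G} Z \to \llbracket X, W \otimes_{\mathsf{C}^G} Z \rrbracket$$
is mono in $\mathsf{C}^G$; since $U$ reflects monos, it suffices to show $Uq^G$ is mono in $\mathsf{C}$. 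Applying $U$ throughout and exploiting the defining equalizer $g\colon \llbracket X,-\rrbracket \hookrightarrow G[X,-]$ one produces a commutative square
$$
\xymatrix@C=18pt{
U\llbracket X,W\rrbracket \otimes UZ \ar[r]^-{Uq^G} \ar@{>->}[d]_-{g \otimes 1} & U\llbracket X, W \otimes Z \rrbracket \ar@{>->}[d]^-{g} \\
G[UX, UW] \otimes UZ \ar[r]_-{G(q)} & G[UX, UW \otimes UZ]
}
$$
in $\mathsf{C}$. The two verticals are monos because $g$ is an equalizer and $(-) \otimes UZ$ preserves monos by (a) for $Z$ in $\mathsf{C}$; the bottom row is mono because $q$ is mono by (b) for $Z$ and $G(-)$ preserves monos. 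Consequently $g \circ Uq^G$ is a mono, which forces $Uq^G$ itself to be a mono.

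The main obstacle will be establishing the commutativity of the displayed square: this amounts to a careful diagrammatic computation unpacking the construction of the unit and counit of the adjunction $Z\otimes_{\mathsf{C}^G} (-) \dashv \llbracket Z, -\rrbracket$ from the lifting equalizer $g$, and tracking how the $\mathsf{coev}$ and $\mathsf{ev}$ of $\mathsf{C}^G$ arise from those of $\mathsf{C}$. A subsidiary but routine task is to give an explicit verification (rather than merely invoking comonadicity) that $U$ preserves monomorphisms when $G(-)$ does, which proceeds by using the coaction axioms of $\mathsf{C}^G$ to reduce equality of parallel comodule morphisms to equality of their underlying maps in $\mathsf{C}$.
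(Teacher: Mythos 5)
Your treatment of property (a), and the overall shape of your argument for property (b) --- sandwich $Uq^G$ between the equalizer monomorphisms $g\otimes 1$ and $g$, compare with $G(q)=1_G\otimes q$ along the bottom, and conclude from faithfulness of $U$ --- is exactly the paper's strategy. The genuine gap is the commutativity of your displayed square, which you defer as ``the main obstacle'': as drawn it does not commute. What the paper actually proves is that $g.Uq^G = G(q).\phi.(g\otimes 1)$, where $\phi$ is the endomorphism of $G[X,W]\otimes Z$ that coacts with $z$ on the $Z$-factor, braids the resulting copy of $G$ to the front and multiplies it into the existing one. This twist is unavoidable: the internal hom $\llbracket X,W\rrbracket$ of $\mathsf{C}^G$ is carved out of $G[X,W]$ rather than $[X,W]$, and the coaction on $Z$ has to enter the comparison somewhere. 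The point --- and the only place where the hypothesis that $G$ is a \emph{Hopf} monoid rather than merely a bimonoid is used --- is that $\phi$ is invertible, its inverse being built from the antipode. Your proposal never invokes the antipode, which is the tell-tale sign that the essential step is missing. Once $\phi$ is inserted the conclusion survives unchanged, since a composite of a monomorphism, an isomorphism and a monomorphism is still a monomorphism; but establishing the corrected identity and the invertibility of $\phi$ is the real content of the proof, not a routine unpacking of units and counits.

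Two smaller remarks. Preservation of monomorphisms by $U\colon \mathsf{C}^G\to\mathsf{C}$ is asserted just as tersely in the paper, but your justification via ``$U$ creates all limits preserved by $G(-)$'' does not directly yield it: preservation of monomorphisms is not the preservation of a class of limits unless kernel pairs exist and are preserved by $G(-)$, so the concrete verification you mention is genuinely needed. Also, in your square the right-hand vertical $g$ is a regular monomorphism outright; only the left-hand vertical $g\otimes 1$ requires property (a) of $Z$ in $\mathsf{C}$.
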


\begin{proof}
Property (a). By the assumption that $G(-)\colon \mathsf C \to \mathsf
C$ preserves monomorphisms, so does the forgetful functor $U\colon {\mathsf
C}^G \to \mathsf C$ and therefore the equal paths around the diagram of
\eqref{eq:lift}. Since $U$ is faithful it also reflects monomorphisms proving 
that the functor in the top row of \eqref{eq:lift} preserves monomorphisms. 

Property (b). Denote by $\delta\colon G\to G^2$, $\varepsilon\colon G \to I$
and $\mu\colon G^2\to G$ the comultiplication, the counit and the
multiplication of the Hopf monoid $G$, respectively, and for any $G$-comodules
$\xymatrix@C=18pt{X \ar[r]|(.4){\, x\,} & GX}$, 
$\xymatrix@C=18pt{Y \ar[r]|(.4){\, y\,} & GY}$ and 
$\xymatrix@C=18pt{Z \ar[r]|(.4){\, z\,} & GZ}$, denote by 
$a\colon \llbracket X,Y \rrbracket Z\to G\llbracket X,Y \rrbracket Z$ the
(diagonal) $G$-coaction. The left vertical of the commutative diagram 
$$
\xymatrix{
G[X,Y]Z \ar[d]_-{11z} \ar@{=}[r]&
G[X,Y]Z \ar[d]^-{\delta 1 z}&
\llbracket X,Y \rrbracket Z \ar[d]^-a \ar[l]_-{g1}
\ar[r]^-{\underline{\underline{\mathsf{coev}}}} 
&
\llbracket X,X \llbracket X,Y \rrbracket Z \rrbracket \ar[d]^-g
\ar[r]^-{\llbracket X, \underline{\underline{\mathsf{ev}}} 1\rrbracket} &
\llbracket X,YZ \rrbracket \ar[ddd]^-g \\
G[X,Y]GZ \ar[d]_-{1c1} &
G^2[X,Y]GZ \ar[l]^-{1\varepsilon 11} \ar[d]^-{1c_{G[X,Y],G}1} &
G\llbracket X,Y \rrbracket Z \ar[r]^-{1\mathsf{coev}} \ar[d]^-{1g1} &
G[X,X\llbracket X,Y \rrbracket Z] \ar[d]^-{1[X,1g1]} \\
G^2[X,Y]Z \ar[d]_-{\mu 11} &
G^3[Z,Y]Z \ar[r]^-{\mu 111} &
G^2[X,Y]Z \ar[d]^-{1\varepsilon 11} &
G[X,XG[X,Y]Z] \ar[d]^-{1[X,1\varepsilon 11]} \\
G[X,Y]Z \ar@{=}[rr] &&
G[X,Y]Z \ar[r]_-{1 \mathsf{coev}} &
G[X,X[X,Y]Z] \ar[r]_-{1[X,\mathsf{ev}1]} &
G[X,YZ]}
$$
is an isomorphism since $G$ is a Hopf monoid. Since the object $Z$ of $\mathsf
C$ belongs to $\mathcal Y$, $q\colon [X,Y]Z \to [X,YZ]$ is a monomorphism. The
bottom row is its image under the functor $G(-)$ hence it is a
monomorphism. The left pointing arrow of the top row is the image of the
(regular) monomorphism $g$ under the functor $G(-)$ hence it is a
monomorphism. Thus the left path around the diagram is a monomorphism, proving
that also the right pointing path of the top row is a monomorphism. 
\end{proof}

Since the category of vector spaces graded by a group $G$ is isomorphic to the
abelian category of comodules over the cocommutative Hopf algebra spanned by
$G$, from Proposition \ref{prop:Y_in_c^G} and Example \ref{ex:vec} we obtain
the following.   

\begin{example}\label{ex:G-gr}
In the closed symmetric monoidal category of group graded vector spaces every
object belongs to the class $\mathcal Y$.
\end{example}

\begin{example}\label{ex:dual}
In any closed braided monoidal category, an object which possesses a (left,
equivalently, right) dual, belongs to the class $\mathcal Y$. Indeed, if $V^*$
is the dual of some object $V$, then the functor $V^*(-)\cong [V,-]$ is right
adjoint, hence it preserves monomorphisms proving property (a). The canonical
morphism $q$ in part (b) is equal to the composite
$$
\xymatrix@C=8pt{
[X,Y]V^* \ar[r]^-{\raisebox{8pt}{${}_c$}} &
V^*[X,Y]\ar[r]^-{\raisebox{8pt}{${}_{\cong}$}} &
[V,[X,Y]] \ar[r]^-{\raisebox{8pt}{${}_{\cong}$}} &
[XV,Y] \ar[r]^-{\raisebox{8pt}{${}_{[c^{-1},1]}$}} &
[VX,Y] \ar[r]^-{\raisebox{8pt}{${}_{\cong}$}} &
[X,[V,Y]] \ar[r]^-{\raisebox{8pt}{${}_{\cong}$}} &
[X,V^*Y] \ar[r]^-{\raisebox{8pt}{${}_{[X,c^{-1}]}$}} &
[X,YV^*]}
$$
hence it is an isomorphism, for any objects $X,Y$.
\end{example}

\begin{example}\label{ex:born}
In the closed symmetric monoidal category of complete bornological
vector spaces \cite{Hogbe-Nlend,Meyer}, any object $Y$ obeying the
{\em approximation property} belongs to the class $\mathcal Y$. Indeed,
property (a) is asserted in Lemma 2.2 of \cite{Voigt}, whose Lemma 2.3
discusses a particular case of property (b) (when $Z$ is the base field). A
similar reasoning yields property (b) also for any complete bornological
vector spaces $X$ and $Z$; and $Y$ with the approximation property.
\end{example}

\begin{lemma} \label{lem:prop_P_is_monoidal}
The full subcategory of $\mathsf C$ with objects in $\mathcal Y$, is a
monoidal subcategory. That is, the following assertions hold. 

(1) The monoidal unit belongs to $\mathcal Y$.

(2) If both objects $Y$ and $Y'$ belong to $\mathcal Y$ then so does their
monoidal product $YY'$.
\end{lemma}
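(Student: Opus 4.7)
The plan is to verify the two defining properties (a) and (b) of membership in $\mathcal Y$ directly, case by case.

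For part (1), property (a) is immediate since the functor $I(-)\colon \mathsf C\to \mathsf C$ is naturally isomorphic to the identity functor via the left unit isomorphism, hence trivially preserves monomorphisms. For property (b), the morphism $q_I^{X,Z}\colon [X,Z]I\to [X,ZI]$ is by construction the mate of $\mathsf{ev}_Z\cdot 1_I\colon X[X,Z]I\to ZI$ under the adjunction $X(-)\dashv [X,-]$. Modulo the right unit isomorphisms, this mate is $1_{[X,Z]}$, so $q_I^{X,Z}$ is an isomorphism and therefore a monomorphism.

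For part (2), property (a) holds because $(YY')(-)\cong Y(Y'(-))$ by the associator, and a composite of functors preserving monomorphisms preserves monomorphisms. The substance of the argument is property (b). Here I would factor the canonical morphism as
$$
\xymatrix@C=40pt{
[X,Z]YY' \ar[r]^-{q_Y^{X,Z}\cdot 1_{Y'}} &
[X,ZY]Y' \ar[r]^-{q_{Y'}^{X,ZY}} &
[X,ZYY']\ .}
$$
The first factor is a monomorphism: $q_Y^{X,Z}$ is one by property (b) for $Y$, and the functor $(-)Y'\cong Y'(-)$ (obtained via the braiding) preserves monomorphisms by property (a) for $Y'$. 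The second factor is a monomorphism by property (b) for $Y'$ (taking $ZY$ in place of $Z$). Since composites of monomorphisms are monomorphisms, this proves (b).

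It remains to check that the displayed composite is indeed $q_{YY'}^{X,Z}$, which is a routine adjunction calculation: taking the mate under $X(-)\dashv [X,-]$, the second arrow followed by $\mathsf{ev}_{ZYY'}$ unfolds to $\mathsf{ev}_{ZY}\cdot 1_{Y'}$ by definition of $q_{Y'}^{X,ZY}$; then the whole composite transposes to
$$
\xymatrix@C=40pt{
X[X,Z]YY' \ar[r]^-{1(q_Y\cdot 1)} & X[X,ZY]Y'\ar[r]^-{\mathsf{ev}_{ZY}\cdot 1_{Y'}} & ZYY'\ ,}
$$
which by functoriality of the monoidal product and the definition of $q_Y^{X,Z}$ equals $\mathsf{ev}_Z\cdot 1_{YY'}$, whose mate is by definition $q_{YY'}^{X,Z}$. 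The only real ``work'' is this bookkeeping of adjunction transposes; there is no conceptual obstacle.
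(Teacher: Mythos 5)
Your proposal is correct and follows essentially the same route as the paper: the unit case is handled by observing $q_I$ is built from unit isomorphisms, and the product case rests on the factorization $q_{YY'}^{X,Z}=q_{Y'}^{X,ZY}\circ(q_Y^{X,Z}1_{Y'})$, with the first factor mono by (b) for $Y$ and (a) for $Y'$ and the second mono by (b) for $Y'$. The paper verifies that factorization via an explicit commutative diagram with $\mathsf{coev}$ and $[X,\mathsf{ev}1]$, while you do it by chasing mates under the adjunction — these are the same computation.
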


\begin{proof}
(1) follows since $I(-)$ is naturally isomorphic to the identity functor and
  for $Y=I$ the morphism $q$ is an isomorphism built from the right unit
  constraints.

(2) Property (a) holds since $YY'(-)$ is naturally isomorphic to the
  composite of the functors $Y'(-)$ and $Y(-)$. In order to see that property
  (b) holds, note first commutativity of the diagram
\begin{equation} \label{eq:q-coev}
\xymatrix{
ZY \ar[r]^-{\mathsf{coev}} \ar[d]_-{\mathsf{coev}1} &
[X,XZY] \ar[d]^-{[X,1\mathsf{coev}1]} \ar@{=}@/^1.3pc/[rd] \\
[X,XZ]Y \ar[r]^-{\mathsf{coev}} \ar@/_1.2pc/[rr]_-q &
[X,X[X,XZ]Y] \ar[r]^-{[X,\mathsf{ev} 1]} &
[X,XZY] }
\end{equation}
for any objects $X,Y,Z$. Using this together with the naturality of $q$ we
deduce the commutativity of 
$$
\xymatrix@C=40pt{
[X,Z]YY' \ar[r]_-{\mathsf{coev} 1} \ar@{=}[d] \ar@/^1.2pc/[rr]^-{q1} &
[X,X[X,Z]Y]Y' \ar[r]_-{[X,\mathsf{ev} 1]1} \ar[d]_-q &
[X,ZY]Y' \ar[d]^-{q} \\
[X,Z]YY' \ar[r]^-{\mathsf{coev}} \ar@/_1.2pc/[rr]_-{q} &
[X,X[X,Z]YY'] \ar[r]^-{[X,\mathsf{ev}11]} &
[X,ZYY'].}
$$
The top row is a monomorphism since $q$ is so and $(-)Y'\cong Y'(-)$
preserves monomorphisms. Since the right vertical is also mono, this 
proves that the bottom row is so.
\end{proof}

A morphism $\xymatrix@C20pt{W\ar[r]|(.48){\,j\,}&V}$ in a braided monoidal
category is said to be a {\em pure monomorphism} if 
$\xymatrix@C22pt{XW\ar[r]|(.5){\,1j\,}&XV}$ is a monomorphism for any 
object $X$; of course we see on taking $X=I$ that $j$ is a 
monomorphism. In particular, any split mono\-morphism is pure.

\begin{lemma} \label{lem:pure_mono}
Let $\xymatrix@C20pt{W\ar[r]|(.48){\,j\,}&V}$ be a pure monomorphism in
$\mathsf C$. If $V$ belongs to $\mathcal Y$ then so does $W$. 
\end{lemma}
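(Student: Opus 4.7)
My plan is to verify the two defining properties of $\mathcal Y$ for $W$ by comparing everything with the corresponding data for $V$, using the purity of $j$ to transport monomorphisms from $V$ back to $W$. Write $j\colon W \to V$ for the pure monomorphism; by hypothesis $1j\colon XW \to XV$ is monic for every object $X$.

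\medskip

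\textbf{Property (a).} Given any monomorphism $f\colon A \to B$ in $\mathsf C$, I would consider the commutative square
$$
\xymatrix{
WA \ar[r]^-{1f}\ar[d]_-{j1} & WB \ar[d]^-{j1} \\
VA \ar[r]_-{1f} & VB
}
$$
coming from functoriality of the monoidal product. The bottom arrow is monic because $V \in \mathcal Y$, and the right vertical is monic by purity of $j$. Hence the diagonal composite is monic, which forces the top arrow $1f$ to be monic. Thus $W(-)$ preserves monomorphisms.

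\medskip

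\textbf{Property (b).} Fix objects $X$ and $Z$; denote the canonical morphisms of (b) as
$$
q_{W}:=[X,\mathsf{ev}\,1_{W}]\circ \mathsf{coev}\colon [X,Z]W \to [X,ZW], \qquad
q_{V}:=[X,\mathsf{ev}\,1_{V}]\circ \mathsf{coev}\colon [X,Z]V \to [X,ZV].
$$
I would build the square
$$
\xymatrix{
[X,Z]W \ar[r]^-{q_{W}} \ar[d]_-{1j} & [X,ZW] \ar[d]^-{[X,1j]} \\
[X,Z]V \ar[r]_-{q_{V}} & [X,ZV]
}
$$
and check that it commutes. This comes from two ingredients: naturality of $\mathsf{coev}$ applied to $1j\colon [X,Z]W \to [X,Z]V$ gives $[X,X(1j)]\circ \mathsf{coev} = \mathsf{coev}\circ(1j)$, while functoriality of the monoidal product gives $\mathsf{ev}\,1_{V}\circ X(1j) = 1j\circ \mathsf{ev}\,1_{W}$; inserting these into the composite defining $q_{W}$ yields the identity $[X,1j]\circ q_{W} = q_{V}\circ (1j)$.

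\medskip

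With commutativity of that square in hand, the conclusion is immediate: $q_{V}$ is monic because $V\in\mathcal Y$, and $1j$ is monic because $j$ is a pure monomorphism, so the left-bottom composite $q_{V}\circ(1j) = [X,1j]\circ q_{W}$ is monic; this forces $q_{W}$ itself to be monic. I expect no real obstacle in this argument; the only mildly delicate step is the verification that the comparison square for $q$ commutes, which is a purely formal consequence of naturality of $\mathsf{coev}$ and functoriality of the monoidal product (the same kind of manipulation already used in the proof of Lemma~\ref{lem:prop_P_is_monoidal}).
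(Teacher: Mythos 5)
Your proof is correct and follows the paper's argument exactly: both properties are checked by the same comparison squares relating $W$ to $V$ along $j$, with purity supplying monicity of the verticals and $V\in\mathcal Y$ supplying monicity of the bottom rows. The only slip is in property (a), where the diagonal is monic because the \emph{left} vertical $j1\colon WA\to VA$ (not the right one) together with the bottom row is monic --- harmless, since both verticals are monic by purity.
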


\begin{proof} In order to check property (a) of $W$, consider a 
monomorphism $\xymatrix@C=25pt{X\ar[r]|(.5){\,f \, } & Y}$. Then in the
commutative diagram 
$$
\xymatrix{
WX \ar[r]^-{1f} \ar[d]_-{j1} &
WY \ar[d]^-{j1} \\
VX \ar[r]_-{1f} &
VY},
$$
the left-bottom path is a monomorphism by our assumptions. Then so is the
top-right path and therefore the top row.

As for property (b) of $W$, for any objects $X$ and $Z$ the left-bottom path
in the commutative diagram 
$$
\xymatrix{
[X,Z]W \ar[r]^-q \ar[d]_-{1j} &
[X,ZW] \ar[d]^-{[X,Zj]} \\
[X,Z]V \ar[r]_-q &
[X,ZV]}
$$
is a monomorphism. Then so is the top-right path and thus the top row.
\end{proof}

Lemma \ref{lem:pure_mono} tells us that, in particular, the class 
$\mathcal Y$ is closed under retracts.

\begin{proposition} \label{prop:non-deg}
For any morphism $v\colon ZV \to W$ in $\mathsf C$, the following assertions are
equivalent. 
\begin{itemize}
\item[{(i)}] For any object $X$, the map
$$
\mathsf C(X,V) \to C(ZX,W),\qquad f\mapsto 
\xymatrix@C=15pt{ZX \ar[r]^-{1f} &
ZV \ar[r]^-v &
W}
$$
is injective; that is, $v$ is non-degenerate on the left.
\item[{(ii)}] 
$\xymatrix@C=17pt{
V \ar[r]^-{\mathsf{coev}} &
[Z,ZV] \ar[r]^-{[Z,v]} &
[Z,W]}$
is a monomorphism.
\item[{(iii)}] For any object $X$, and any object $Y$ in $\mathcal Y$, the map 
$$
\mathsf C(X,VY) \to C(ZX,WY),\qquad g\mapsto 
\xymatrix@C=15pt{ZX \ar[r]^-{1g} &
ZVY \ar[r]^-{v1} &
WY}
$$
is injective; that is, $v$ is non-degenerate on the left with respect to
$\mathcal Y$. 
\item[{(iv)}] For any object $Y$ in $\mathcal Y$,
$\xymatrix@C=17pt{
VY \ar[r]^-{\mathsf{coev}} &
[Z,ZVY] \ar[r]^-{[Z,v1]} &
[Z,WY]}$
is a monomorphism.
\end{itemize}
\end{proposition}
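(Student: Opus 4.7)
The plan is to establish (i)$\Leftrightarrow$(ii) and (iii)$\Leftrightarrow$(iv) by combining the adjunction $Z(-)\dashv [Z,-]$ with the Yoneda lemma, to observe that (iii)$\Rightarrow$(i) and (iv)$\Rightarrow$(ii) follow immediately from $I\in \mathcal Y$, and to reduce the remaining direction (ii)$\Rightarrow$(iv) to the defining properties of the class $\mathcal Y$ via a canonical factorization.

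For the first reduction, under the adjunction isomorphism $\mathsf C(ZX,WY)\cong \mathsf C(X,[Z,WY])$ the assignment $g\mapsto v1.1g$ of~(iii) corresponds to post-composition with the mate $[Z,v1].\mathsf{coev}_{VY}\colon VY\to [Z,WY]$ of $v1\colon ZVY\to WY$. Hence this map is injective for every object $X$ if and only if $[Z,v1].\mathsf{coev}_{VY}$ is a monomorphism (Yoneda applied to the natural transformation $\mathsf C(-,VY)\to \mathsf C(-,[Z,WY])$). Quantifying over $Y\in \mathcal Y$ yields (iii)$\Leftrightarrow$(iv), and specialising to $Y=I$, which lies in $\mathcal Y$ by Lemma~\ref{lem:prop_P_is_monoidal}(1), gives (i)$\Leftrightarrow$(ii). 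Since $I\in \mathcal Y$, the implications (iii)$\Rightarrow$(i) and (iv)$\Rightarrow$(ii) are then immediate.

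The main content lies in (ii)$\Rightarrow$(iv). Writing $\bar v:=[Z,v].\mathsf{coev}_V\colon V\to [Z,W]$ for the mate of $v$, I would exhibit the factorization
$$
[Z,v1].\mathsf{coev}_{VY}\;=\;q\,.\,(\bar v\,1_Y)\colon\;VY\xrightarrow{\;\bar v\,1_Y\;}[Z,W]Y\xrightarrow{\;q\;}[Z,WY],
$$
in which $q$ is the canonical morphism appearing in property~(b) of $\mathcal Y$. To verify this equality one computes mates under the adjunction $Z(-)\dashv [Z,-]$: by naturality of $\mathsf{ev}$ and the triangle identity $\mathsf{ev}.1\mathsf{coev}=1$, the mate of $q$ turns out to be $\mathsf{ev}_W\,1_Y\colon Z[Z,W]Y\to WY$, so the mate of $q.(\bar v\,1_Y)$ equals $\mathsf{ev}_W\,1_Y.(1_Z\,\bar v\,1_Y)=v\,1_Y$, which also is the mate of $[Z,v1].\mathsf{coev}_{VY}$; the two morphisms therefore agree.

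Granted this factorization, $\bar v$ is a monomorphism by~(ii); property~(a) for $Y\in \mathcal Y$ tells us that $Y(-)$ preserves monomorphisms, and the naturality of the braiding converts this into the analogous preservation by $(-)Y$, so $\bar v\,1_Y$ is mono; finally $q$ is mono by property~(b). Their composite is therefore a monomorphism, which is~(iv). The only delicate point is the mate computation that exhibits the factorization; apart from that, the argument is a formal interplay between the Yoneda lemma, the adjunction $Z(-)\dashv [Z,-]$, and the two defining properties of the class $\mathcal Y$.
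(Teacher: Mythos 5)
Your proposal is correct and follows essentially the same route as the paper: (iii)$\Leftrightarrow$(iv) and (i)$\Leftrightarrow$(ii) via the adjunction/Yoneda, (iv)$\Rightarrow$(ii) by specialising to $Y=I\in\mathcal Y$, and (ii)$\Rightarrow$(iv) by factorising $[Z,v1].\mathsf{coev}_{VY}$ as $q.(\bar v\,1_Y)$ and invoking properties (a) and (b) of $\mathcal Y$. The only cosmetic difference is that you verify the factorisation by a (correct) mate computation, whereas the paper deduces it from naturality of $q$ together with its previously displayed compatibility of $q$ with $\mathsf{coev}$.
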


\begin{proof}
Composing the map of part (iii) with the isomorphism $\mathsf C(ZX,WY)\cong
\mathsf C(X,[Z,WY])$, we obtain $\mathsf C(X,[Z,v1].\mathsf{coev})$. This
proves (iii)$\Leftrightarrow$(iv). Applying it to $Y=I$ proves
(i)$\Leftrightarrow$(ii). 

(iv)$\Rightarrow$(ii) follows by Lemma  \ref{lem:prop_P_is_monoidal}~(1)
putting $Y=I$. 

(ii)$\Rightarrow$(iv). The diagram
$$
\xymatrix{
VY \ar[r]^-{\mathsf{coev} 1} \ar@{=}[d]&
[Z,ZV]Y \ar[r]^-{[Z,v]1} \ar[d]^-q &
[Z,W]Y \ar[d]^-q \\
VY \ar[r]_-{\mathsf{coev}}  &
[Z,ZVY] \ar[r]_-{[Z,v1]} &
[Z,WY]}
$$
commutes by the naturality of $q$ and \eqref{eq:q-coev}. The top row is mono
by (ii) and the fact that $(-)Y\cong Y(-)$ preserves monos. Since the right
vertical is also mono, this proves that  the bottom row is so.
\end{proof}

\subsection{The base object of a regular weak multiplier bimonoid}
\label{sec:closed_L}

If the semigroup $A$ underlying a regular weak multiplier bimonoid in a 
closed braided monoidal category $\mathsf C$ admits a multiplier 
monoid $\mM(A)$, then the \mM-morphisms with components in \eqref{eq:pibarr}, 
\eqref{eq:pibarl}, \eqref{eq:pil} and \eqref{eq:pir} determine 
morphisms $\overline \sqcap^{R}$, $\overline \sqcap^{L}$, $\sqcap^{L}$ 
and $\sqcap^{R}$, respectively, all of them from $A$ to $\mM(A)$. 

Furthermore, the components $n_{1}$ and $n_{2}$ in 
\eqref{eq:n_1} of an \mM-morphism $L\nto A$ determine a 
morphism $n\colon L \to \mM(A)$. It obeys
$$
h_1.n1.p1=
n_{1}.p1=
\sqcap^{L}_{1}=
h_{1}.\sqcap^{L}1.
$$
Hence by the non-degeneracy of $h_{1}$ on the right (see
Proposition \ref{prop:dense}), $\sqcap^{L}$ factorizes through the regular
epimorphism $p$ via the morphism $n$. This morphism $n$ is monic if and only
if $n_1$ is non-degenerate on the right (see Corollary~\ref{cor:mono-nd}).  

Consider a regular weak multiplier bialgebra over a field (that is, a regular
weak multiplier bimonoid $A$ in the closed symmetric monoidal category of
vector spaces such that the morphisms $\hat d_1$ and $\hat d_2$ in
\eqref{eq:dhat} are surjective; i.e. regular epimorphisms). Under the
additional assumption that $A$ is left full, we saw in Remark
\ref{rem:base_vec} that $p\colon A\to L$ differs by an isomorphism from the
corestriction of $\sqcap^L$ to $A\to \mathsf{Im}(\sqcap^L)$. Since 
$n\colon L \to \mM(A)$ differs by the same isomorphism from the 
canonical inclusion $\mathsf{Im}(\sqcap^L) \to \mM(A)$, we conclude 
that in this case $n$ is a monomorphism, equivalently, $n_1$ is 
non-degenerate on the right (with respect to any vector space, see 
Proposition \ref{prop:non-deg} and Example \ref{ex:vec}). 

\subsection{Non-degeneracy in the category of Hilbert spaces}
\label{sec:non-deg_Hilb}

Although the category \Hilb of complex Hilbert spaces and continuous  
maps in Example \ref{ex:Hilb} is not closed, the findings of this 
section can be used to describe non-degenerate morphisms therein.

\begin{proposition}
Let $\nu \colon  Z \hat \otimes V \to W$ be a morphism in $\Hilb$, and 
write $i \colon  Z \otimes V \to Z \hat \otimes V$ for the canonical inclusion.   
The following conditions are equivalent:   

\begin{itemize}
\item[{(i)}] 
$\xymatrix@C=16pt{
Z \hat \otimes V \ar[r]|(.55){\,\nu\,} &
W}$
is non-degenerate in $\Hilb$. 
\item[{(ii)}] 
$\xymatrix@C=16pt{
Z \otimes V \ar[r]|(.48){\,i\,} &
Z \hat \otimes V \ar[r]|(.55){\,\nu\,} &
W}$
is non-degenerate in $\Vect$. 
\item[{(iii)}] 
$\xymatrix@C=16pt{
Z \otimes V \ar[r]|(.48){\,i\,} &
Z \hat \otimes V \ar[r]|(.55){\,\nu\,} &
W}$
is non-degenerate in $\Vect$ with respect to the class of all 
complex vector spaces.  
\item[{(iv)}]  
$\xymatrix@C=16pt{
Z \hat \otimes V \ar[r]|(.55){\,\nu\,} &
W}$
is non-degenerate in $\Hilb$ with respect to the class of all complex 
Hilbert spaces.  
\end{itemize}
\end{proposition}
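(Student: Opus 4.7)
The strategy is the circular approach: verify the trivial implications (iv)$\Rightarrow$(i) and (iii)$\Rightarrow$(ii) by setting $Y=\mathbb{C}$ (the monoidal unit in both categories), and then establish (i)$\Leftrightarrow$(ii), (ii)$\Rightarrow$(iii) and (i)$\Rightarrow$(iv). The first step is to reduce both (i) and (ii) to the common elementwise assertion that $v\in V$ with $\nu(i(z\otimes v))=0$ for every $z\in Z$ forces $v=0$. This rests on three easy facts: $\Hilb(\mathbb{C},V)=V=\Vect(\mathbb{C},V)$ since continuity out of $\mathbb{C}$ is automatic; $\mathbb{C}$ generates in both categories, so injectivity at $X=\mathbb{C}$ propagates to arbitrary $X$ by testing against morphisms $\phi\colon\mathbb{C}\to X$; and $\nu$ and $\nu\circ i$ agree on elementary tensors, so the algebraic and completed criteria literally coincide on such elements.

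For (ii)$\Rightarrow$(iii), given a complex vector space $Y$ with a basis $\{y_\alpha\}$, any $g\colon X\to V\otimes Y$ decomposes uniquely as $g(x)=\sum_\alpha v_\alpha(x)\otimes y_\alpha$ with finitely many non-zero terms and with linear $v_\alpha\colon X\to V$. Vanishing of $(\nu i\otimes 1_Y)\circ(1_Z\otimes g)$ together with linear independence of the basis forces $\nu i\circ(1_Z\otimes v_\alpha)=0$ for every $\alpha$, and (ii) then yields $v_\alpha=0$, hence $g=0$. The implication (i)$\Rightarrow$(iv) follows a parallel Hilbert-space argument: fix an orthonormal basis $\{y_\alpha\}$ of $Y$ and let $P_\alpha\colon Y\to\mathbb{C}$, $y\mapsto\langle y_\alpha,y\rangle$, be the corresponding continuous coefficient functionals. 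These induce morphisms $1\hat\otimes P_\alpha\colon V\hat\otimes Y\to V$ and $1\hat\otimes P_\alpha\colon W\hat\otimes Y\to W$ in $\Hilb$. For $g\colon X\to V\hat\otimes Y$ put $v_\alpha:=(1\hat\otimes P_\alpha)\circ g$ in $\Hilb$; post-composing the hypothesis $(\nu\hat\otimes 1_Y)\circ(1_Z\hat\otimes g)=0$ with $1_W\hat\otimes P_\alpha$ yields $\nu\circ(1_Z\hat\otimes v_\alpha)=0$, and (i) forces $v_\alpha=0$ for every $\alpha$. Since every element of $V\hat\otimes Y$ admits a unique $\ell^2$-convergent expansion against the orthonormal basis, $g(x)=\sum_\alpha v_\alpha(x)\otimes y_\alpha=0$ for all $x\in X$, whence $g=0$.

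The principal obstacle will be the clean handling of the Hilbert tensor product in this last step: one must invoke boundedness of each $P_\alpha$ (so that $1\hat\otimes P_\alpha$ really is a $\Hilb$-morphism) and the uniqueness of the $\ell^2$ expansion in $V\hat\otimes Y$ relative to $\{y_\alpha\}$, both available from the standard construction of the Hilbert tensor product recalled in Example~\ref{ex:Hilb}. With these in hand, the Hilbert-space argument parallels the algebraic one verbatim, and the symmetric case of right non-degeneracy follows from the left case upon replacing $\nu$ by $\nu\circ c$.
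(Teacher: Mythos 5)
Your proof is correct, but it takes a genuinely different route from the paper's in both non-trivial implications. The paper runs the one-way cycle (i)$\Rightarrow$(ii)$\Rightarrow$(iii)$\Rightarrow$(iv)$\Rightarrow$(i): it obtains (ii)$\Rightarrow$(iii) by citing the general closed-category machinery (Proposition~\ref{prop:non-deg} together with Example~\ref{ex:vec}, i.e.\ every vector space lies in the class $\mathcal Y$ for \Vect), and it proves (iii)$\Rightarrow$(iv) by restricting along the dense inclusion $Z\otimes X\to Z\hat\otimes X$, reducing to $X=\mathbb{C}$, and then checking joint injectivity of the continuous maps $(\nu\hat\otimes 1)(z\hat\otimes -)\colon V\hat\otimes Y\to W\hat\otimes Y$ only on the algebraic tensor product $V\otimes Y$, where (iii) applies. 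You instead prove (ii)$\Rightarrow$(iii) by a bare-hands basis decomposition (which is exactly what the cited Proposition specializes to in \Vect), and you replace the paper's (iii)$\Rightarrow$(iv) by a direct (i)$\Rightarrow$(iv) entirely inside \Hilb: you slice along an orthonormal basis of $Y$ via the bounded functionals $P_\alpha$, apply (i) to each slice $v_\alpha=(1\hat\otimes P_\alpha)g$, and conclude from the $\ell^2$-expansion that $g=0$. The trade-off is that the paper's argument reuses existing machinery and transfers the completed-tensor question back to \Vect, but its last step rests on the reduction of joint injectivity of continuous maps to a dense subspace; your slice-map argument never leaves \Hilb and controls arbitrary elements of $V\hat\otimes Y$ (an element annihilated by all $1\hat\otimes P_\alpha$ is zero), so its only analytic inputs are boundedness of the slice maps and the orthonormal expansion, both standard facts about the Hilbert tensor product. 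Your remaining implications --- (i)$\Leftrightarrow$(ii) via the common elementwise criterion at $X=\mathbb{C}$, and the trivial (iii)$\Rightarrow$(ii), (iv)$\Rightarrow$(i) by putting $Y=\mathbb{C}$ --- agree in substance with the paper's corresponding steps, and together your implications do yield the full equivalence of (i)--(iv).
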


\begin{proof}
(i)$\Rightarrow$(ii). By hypothesis, the map $\Hilb(X,V) \to 
\Hilb(Z\hat \otimes X,W)$,
\begin{equation}\label{ndhil}
f \mapsto 
\xymatrix{
Z  \hat \otimes X \ar^-{1 \hat \otimes f}[r] & 
Z \hat \otimes V \ar^-{\nu}[r] & 
W}  
\end{equation}
is injective. We should prove that the map $\Vect(X,V) \to \Vect(Z   
\otimes X,W)$,
\begin{equation}\label{ndvect}
f \mapsto 
\xymatrix{
Z  \otimes X \ar^-{1 \otimes f}[r] & 
Z  \otimes V \ar[r]^-i & 
Z\hat\otimes V \ar[r]^-\nu & 
W }  
\end{equation}
is injective for every complex vector space $X$. But, due to the fact 
that vector spaces are all direct sums of copies of the base field, 
and that the algebraic tensor product preserves direct sums, we see 
that it is enough to check the injectivity for $X = \mathbb{C}$. Now, 
$\mathbb{C}$ is certainly a Hilbert space, and every linear map $f 
\colon  \mathbb{C} \to V$ is continuous. Since $Z \hat \otimes 
\mathbb{C} = Z \otimes \mathbb{C}$, we see that the injectivity of 
\eqref{ndvect} for $X = \mathbb{C}$ follows from that of 
\eqref{ndhil}. Thus, $\nu.i$ is non-degenerate. 

(ii)$\Rightarrow$(iii). This holds by Proposition 
\ref{prop:non-deg} and Example \ref{ex:vec}. 

(iii)$\Rightarrow$(iv). By hypothesis, the map $\Vect(X,V 
\otimes Y) \to \Vect(Z  \otimes X ,W \otimes Y)$, 
$$
f \mapsto 
\xymatrix{
Z  \otimes X \ar[r]^-{1  \otimes f} & 
Z  \otimes V \otimes Y \ar[r]^-{i \otimes 1} &
(Z\hat\otimes V)\otimes Y \ar[r]^-{\nu\otimes 1} & 
W \otimes Y} 
$$
is injective for all complex vector spaces $X, Y$, and we should 
prove that also the map
$\Hilb(X,V \hat \otimes Y) \to \Hilb(Z \hat \otimes X ,W \hat \otimes Y)$,
\begin{equation}\label{Ndhil}
f \mapsto 
\xymatrix{
Z  \hat \otimes X \ar[r]^-{1  \hat \otimes f} & 
Z  \hat \otimes V \hat  \otimes Y \ar[r]^-{\nu \hat \otimes 1} & 
W \hat \otimes Y}
\end{equation}
is injective for all Hilbert spaces $X,Y$. But the map 
$\Hilb(Z\hat\otimes X,W\hat\otimes Y)\to \Vect(Z\otimes 
X,W\hat\otimes Y)$ is injective, so it will suffice to show that the 
composite of \eqref{Ndhil} with this last map is injective; and for 
that, it clearly suffices to prove the case where 
$X=\mathbb{C}$.      

Thus we need to prove that the induced map 
$
V\hat\otimes Y\to \Vect(Z,W\hat\otimes Y)
$
is injective, or equivalently that for $z\in Z$ the maps
$$\xymatrix
{
V\hat\otimes Y \ar[r]^-{z\hat\otimes-} & 
Z\hat\otimes V\hat\otimes Y \ar[r]^-{\nu\hat\otimes 1} & 
W\hat\otimes Y }
$$
are jointly injective. These are continuous linear maps, so it will 
suffice to show that the maps   
$$
\xymatrix{
V\otimes Y \ar[d]_{i} \ar[r]^-{z\otimes -} & 
Z\otimes V\otimes Y \ar[r]^-{i\otimes 1} \ar[d]^-{1\otimes i} & 
Z \hat \otimes V\otimes Y \ar[r]^-{\nu\otimes 1} \ar[d]^-i & 
W\otimes Y \ar[d]^-{i} \\
V\hat\otimes Y \ar[r]_-{z\otimes-} &
Z\otimes (V\hat\otimes Y) \ar[r]_-i &
Z\hat\otimes V\hat\otimes Y \ar[r]_-{\nu\hat\otimes 1} & 
W\hat\otimes Y }
$$
are jointly injective, or equivalently that the upper horizontals are 
jointly injective.  
But this follows from (iii). 

(iv)$\Rightarrow$(i). Put $Y = \mathbb{C}$. 
\end{proof}


\appendix

\section{Some identities and their string diagrammatic proofs}
\label{app:strings}

Throughout this Appendix, $A$ will be an object in a braided monoidal category
$\mathsf C$, and $t_1,t_2,t_3,t_4\colon A^2\to A^2$, $e_1,e_2\colon 
A^2 \to A^2$ and $j\colon A\to I$ are morphisms making $A$ a regular 
weak multiplier bimonoid in $\mathsf C$. The multiplication
$$
 \ .
$$
\end{proof}
\end{subsection}


\bibliographystyle{plain}

\end{document}